\theoremstyle{plain}
\newtheorem{maintheorem}{Theorem}
\newtheorem{theorem}{Theorem}
\newtheorem{lemma}[theorem]{Lemma}
\newtheorem{proposition}[theorem]{Proposition}
\newtheorem{corollary}[theorem]{Corollary}
\newtheorem{conjecture}[theorem]{Conjecture}
\numberwithin{theorem}{section}
\numberwithin{equation}{theorem}
\theoremstyle{definition}
\newtheorem{definition}[theorem]{Definition}
\newtheorem{example}[theorem]{Example}
\newtheorem{remark}[theorem]{Remark}
\newtheorem*{question*}{Question}
\newcommand{\cwlt}{(\textup{cwlt})}
\newcommand{\af}{\textup{af}}
\newcommand{\Af}{\textup{Af}}
\newcommand{\Aff}{\textup{A}}
\newcommand{\INT}{\textup{int}}
\newcommand{\Q}{\mathbb{Q}}
\newcommand{\Z}{\mathbb{Z}}
\newcommand{\N}{\mathbb{N}}
\newcommand{\xxMD}{M\!D}
\newcommand{\MD}[2]{\xxMD_{#1}(#2:\tr)}
\DeclareMathOperator{\ch}{char}
\DeclareMathOperator{\rk}{rk}
\DeclareMathOperator{\End}{End}
\DeclareMathOperator{\LNDer}{LNDer}
\DeclareMathOperator{\Aut}{Aut}
\DeclareMathOperator{\gr}{gr}
\DeclareMathOperator{\tr}{tr}
\DeclareMathOperator{\im}{im}
\newcommand{\uni}{\textup{uni}}
\begin{document}

\title[Discriminant criterion and automorphism groups]
{The discriminant criterion and \\automorphism groups of quantized algebras}

\author{S. Ceken, J. H. Palmieri, Y.-H. Wang and J. J. Zhang}

\address{Ceken: Department of Mathematics, Akdeniz University, 07058 Antalya,
Turkey}

\email{secilceken@akdeniz.edu.tr}

\address{Palmieri: Department of Mathematics, Box 354350,
University of Washington, Seattle, Washington 98195, USA}

\email{palmieri@math.washington.edu}

\address{Wang: Department of Applied Mathematics,
Shanghai University of Finance and
Economics, Shanghai 200433, China}

\email{yhw@mail.shufe.edu.cn}

\address{Zhang: Department of Mathematics, Box 354350,
University of Washington, Seattle, Washington 98195, USA}

\email{zhang@math.washington.edu}

\begin{abstract}
We compute the automorphism groups of some quantized algebras,
including tensor products of quantum Weyl algebras and some skew
polynomial rings.
\end{abstract}

\subjclass[2000]{Primary 16W20, 11R29}


\keywords{automorphism group, skew polynomial ring,
quantum Weyl algebra, discriminant,
affine automorphism, triangular automorphism,
elementary automorphism, locally nilpotent derivation}


\maketitle


\section*{Introduction}

It is well-known that every automorphism of the polynomial ring
$k[x]$, where $k$ is a field, is determined by the assignment
$x\mapsto ax+b$ for some $a\in k^\times:=k\setminus \{0\}$ and $b\in
k$. Every automorphism of $k[x_1,x_2]$ is \emph{tame}, that is, it is
generated by affine and elementary automorphisms (defined below). This
result was first proved by Jung \cite{Ju} in 1942 for characteristic
zero and then by van der Kulk \cite{vdK} in 1953 for arbitrary
characteristic. A structure theorem for the automorphism group
of $k[x_1,x_2]$ was also given in \cite{vdK}.  The automorphism
group of $k[x_1,x_2,x_3]$ has not yet been fully understood, and the
best result in this direction is the existence of wild automorphisms
(e.g.~the Nagata automorphism) by Shestakov-Umirbaev \cite{SU}.

The automorphism group of the skew polynomial ring
$k_q[x_1,\dots,x_n]$, where $q\in k^\times$ is not a root of unity and
$n\geq 2$, was completely described by Alev and Chamarie \cite[Theorem
1.4.6]{AlC} in 1992.  Since then, many researchers have been
successfully computing the automorphism groups of classes of
interesting infinite-dimensional noncommutative algebras, including
certain quantum groups, generalized quantum Weyl algebras, skew
polynomial rings and many more -- see \cite{AlC, AlD, AnD, BJ, GTK,
SAV, Y1, Y2}, among others. In particular, Yakimov has proved the
Andruskiewitsch-Dumas conjecture and the Launois-Lenagan conjecture by
using a rigidity theorem for quantum tori, see \cite{Y1, Y2}, each of
which determines the automorphism group of a family of quantized
algebras with parameter $q$ being not a root of unity. See also
\cite{GY} for a uniform approach to these two conjectures.

Determining the automorphism group of an algebra is generally a
very difficult problem. In \cite{CPWZ} we introduced the discriminant
method to compute automorphism groups of some noncommutative algebras.
In this paper we continue to develop new methods and extend ideas
from \cite{CPWZ} for both discriminants and automorphism groups.

Suppose $A$ is a filtered algebra with filtration
$\{F_i A\}_{i\geq 0}$ such that the associated graded algebra $\gr A$
is generated in degree 1. An automorphism $g$ of $A$ is
\emph{affine} if $g(F_1 A)\subset F_1 A$.  An
automorphism $h$ of the polynomial extension $A[t]$
is called \emph{triangular} if there is a $g\in \Aut(A)$,
$c\in k^\times$ and  $r$ in the center of $A$ such that
\[
h(t)=ct+r \quad {\text{and}}\quad
h(x)=g(x)\in A \quad {\text{for all $x\in A$}}.
\]
As in \cite{CPWZ}, we use the discriminant to control
automorphisms and locally nilpotent derivations.
Let $C(A)$ denote the center of $A$.
Here is the discriminant criterion for affine
automorphisms.

\begin{maintheorem}
\label{yythm0.1}
Assume $k$ is a field of characteristic $0$.  Let $A$ be a filtered
algebra, finite over its center, such that the associated graded ring
$\gr A$ is a connected graded domain. Suppose that the
$v$-discriminant $d_v(A/C(A))$ is dominating
for some $v \geq 1$. Then the following hold.
\begin{enumerate}
\item
Every automorphism of $A$ is affine, and $\Aut(A)$ is
an algebraic group that fits into the exact
sequence
\begin{equation}
\label{0.1.1}\tag{*}
 1\to (k^\times)^r\to \Aut(A)\to S\to 1,
\end{equation}
where $r\geq 0$ and $S$ is a finite group. Indeed, $\Aut(A)=S\ltimes
(k^\times)^r$.
\item
Every automorphism of the polynomial extension $A[t]$
is triangular.
\item
Every locally nilpotent derivation of $A$ is zero.
\end{enumerate}
\end{maintheorem}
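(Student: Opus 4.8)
\emph{Proof plan.}

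The plan is to make the affineness of automorphisms the engine of the whole argument. The starting observation is the invariance of the discriminant: for any $g\in\Aut(A)$, the automorphism $g$ restricts to an automorphism of the center $C(A)$, and since the $v$-discriminant is defined intrinsically from the trace pairing, $g$ carries $d_v(A/C(A))$ to a $C(A)^\times$-multiple of itself. Because $\gr A$ is a connected graded domain, the only units of $A$ (hence of $C(A)$) lie in $F_0A=k$: if $ab=1$ then $\deg a+\deg b=0$ in the domain $\gr A$, forcing $a,b\in k$. Thus $g(d_v(A/C(A)))=c\,d_v(A/C(A))$ for some $c\in k^\times$. The hypothesis that $d_v(A/C(A))$ is dominating is exactly what upgrades this scalar-invariance into a filtration constraint, forcing $g(F_1A)\subseteq F_1A$, i.e. $g$ is affine. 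This gives the first assertion of part (1).

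For the group structure in part (1), note that since $\gr A$ is connected, generated in degree $1$, and finite over its center, the space $V:=F_1A$ is finite-dimensional, and $A$ is generated as an algebra by $V$. Hence restriction $g\mapsto g|_V$ embeds $\Aut(A)$ as a subgroup of $GL(V)$, and it is closed because ``$g$ is an algebra homomorphism'' is a system of polynomial conditions on the matrix entries; so $\Aut(A)$ is an algebraic group. I would then show its identity component is a torus by ruling out nontrivial unipotents: a unipotent $u\neq 1$ generates a copy of $\mathbb{G}_a$ in $\Aut(A)$ whose tangent derivation $\delta=\tfrac{d}{ds}\bigl|_{s=0}\exp(s\log u)$ is a \emph{nonzero} derivation of $A$, locally nilpotent since it is nilpotent on the finite-dimensional generating space $V$. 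This contradicts part (3), which we establish independently below. A connected algebraic group with no nontrivial unipotent elements is a torus $(k^\times)^r$, and being diagonalizable it splits off the finite component group $S=\Aut(A)/\Aut(A)^{\circ}$, yielding $\Aut(A)=S\ltimes(k^\times)^r$ and the exact sequence $(\ast)$.

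Parts (2) and (3) go together, with (3) reduced to (2). For part (2), I would view $A[t]=A\otimes_k k[t]$ with $t$ a central indeterminate in filtration degree $1$; then $C(A[t])=C(A)[t]$, and the discriminant of $A[t]$ over its center is the base-changed copy of $d_v(A/C(A))$. Showing that this element is again dominating in $A[t]$ makes every $h\in\Aut(A[t])$ affine, exactly as in part (1). Since $t$ is central and transcendental while the dominating element $d_v(A/C(A))$ involves only the generators of $A$ and not $t$, a closer analysis of $h$ on $F_1(A[t])=k\oplus(\gr A)_1\oplus kt$ shows that $h$ must restrict to an automorphism $g$ of $A$ and send $t\mapsto ct+r$ with $c\in k^\times$ and $r\in C(A)$; that is, $h$ is triangular. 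For part (3), given a locally nilpotent derivation $\delta$ of $A$, the map $\exp(t\delta)\colon a\mapsto\sum_{n\ge 0}\tfrac{\delta^n(a)}{n!}\,t^n$ is an automorphism of $A[t]$ fixing $t$. By part (2) it is triangular, hence carries $A$ into $A$; but $\exp(t\delta)(a)=a+\delta(a)t+\cdots$ lies in $A$ only if $\delta(a)=0$, so $\delta=0$.

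The main obstacle is part (2): one must verify that adjoining the central variable $t$ preserves the dominating property of the discriminant (so that affineness survives the polynomial extension) and then pin down the affine automorphisms of $A[t]$ to exactly the triangular shape, controlling precisely where $t$ and the generators of $A$ are allowed to go. By contrast, the affineness in (1) is essentially the defining role of the dominating hypothesis, and once (2) is in hand both the vanishing of locally nilpotent derivations and the toral structure of $\Aut(A)^{\circ}$ follow cleanly.
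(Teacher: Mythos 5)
Your overall architecture is the same as the paper's: part (1) via the $g$-invariance of the discriminant up to a unit of $k^\times$ together with the dominating hypothesis; part (2) via the identification of $d_v(A[t]/C(A[t]))$ with $d_v(A/C(A))$; and part (3) reduced to part (2) through the automorphism $\exp(t\delta)$ of $A[t]$. This is precisely the route the paper takes through Lemma \ref{yylem1.4}(4), Lemma \ref{yylem1.12}(2) and the cited lemmas of \cite{CPWZ}. However, there is a genuine error in your mechanism for part (2). You assert that $d_v(A/C(A))$ is ``again dominating in $A[t]$'' and that consequently every automorphism of $A[t]$ is affine. Both assertions are false. The discriminant is generated by $x_1,\dots,x_n$ and does not involve $t$, so it cannot satisfy condition (b) of Definition \ref{yydef1.6}(2) with respect to the enlarged generating space $Y\oplus kt$: taking $y_i=x_i$ for $i\le n$ and $y_{n+1}$ of degree $\ge 2$ leaves $f(y_1,\dots,y_n)=f$ of degree exactly $\deg f$, not strictly larger. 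In the paper's terminology the discriminant is only $(-1)$-dominating in $A[t]$ (Lemma \ref{yylem4.4}(2)). Correspondingly, $\Aut(A[t])$ really does contain non-affine elements: since $A$ is finite over $C(A)$ and infinite-dimensional, $C(A)$ contains central elements $r$ of arbitrarily large degree, and $t\mapsto t+r$, $x\mapsto x$ for $x\in A$ is a triangular automorphism with $\deg h(t)\ge 2$. So the step ``affineness survives the polynomial extension,'' which you single out as the main obstacle and propose to overcome this way, fails outright.

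The repair is what the paper actually does: use the weaker $(-1)$-dominating property to force $\deg h(x_i)=1$ for all the generators of $A$ (the single permitted exceptional direction is shown to be $t$), conclude $h(A)\subseteq A$, apply the same to $h^{-1}$ to get $h|_A\in\Aut(A)$, and only then determine $h(t)=ct+r$ with $r\in C(A)$ by a separate computation. Your ``closer analysis'' sentence points at the correct conclusion, but as written it rests on the false intermediate claim. The rest of your proposal is sound and agrees with the paper: the deduction of (3) from (2) via $\exp(t\delta)$ and the exclusion of unipotents in $\Aut(A)^{\circ}$ via locally nilpotent derivations are exactly the arguments of \cite{CPWZ} that the paper invokes. (Two smaller points you gloss over: the splitting $\Aut(A)=S\ltimes (k^\times)^r$ of the extension by the torus is not automatic for an arbitrary extension of a finite group by a torus, and ``connected algebraic group without nontrivial unipotents is a split torus $(k^\times)^r$'' needs $k$ algebraically closed or a further splitness argument.)
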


The terminology will be explained in Section 1. This is proved below
(in slightly more general form) as Theorem \ref{yythm1.13}.

The discriminant criterion is very effective in computing the
automorphism group for a large class of noncommutative algebras
(examples can be found in \cite{CPWZ} and in this paper), but the
computation of the discriminant can be difficult.  It would be nice to
develop new theories and efficient computational tools for the
discriminant in the setting of noncommutative algebra.

In this paper we apply our methods to two families of
quantized algebras: quantum Weyl algebras and skew polynomial
rings. We recall these next.

Let $q$ be a nonzero scalar in $k$ and let $A_q$ be the
\emph{$q$-quantum Weyl algebra}, the algebra generated by $x$ and $y$
subject to the relation $yx=qxy+1$ (we assume that $q\neq 1$, but $q$
need not be a root of unity). Consider the tensor product
$B:=A_{q_1}\otimes \cdots \otimes A_{q_m}$ of quantum Weyl algebras,
where $q_i\in k^\times \setminus \{1\}$ for all $i$.  Since we are not
assuming that the $q_i$ are roots of unity, $B$ need not be finite
over its center and so
the hypotheses of Theorem \ref{yythm0.1} might fail; however, the
conclusions hold.

\begin{maintheorem}
\label{yythm0.2} Let $k$ be a field.
Let $B=A_{q_1}\otimes \cdots \otimes A_{q_m}$ and
assume that $q_i\neq 1$ for all $i=1,\dots,m$. Then the following hold.
\begin{enumerate}
\item 
Every automorphism of $B$ is affine, and $\Aut(B)$ is an algebraic
group that fits into an exact sequence of the form \eqref{0.1.1}, with
$r=m$.
\item
The automorphism group of $B[t]$ is triangular.
\item
If $\ch k=0$, then every locally nilpotent derivation of $B$ is zero.
\end{enumerate}
\end{maintheorem}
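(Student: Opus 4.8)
The plan is to run the argument behind Theorem \ref{yythm0.1} (i.e.\ Theorem \ref{yythm1.13}), but to replace the discriminant---which is unavailable once $B$ fails to be finite over its center---by the distinguished normal elements of $B$. For each $i$ set $e_i=(q_i-1)x_iy_i+1$. A direct computation using $y_ix_i=q_ix_iy_i+1$ shows that $e_i$ is normal: $e_ix_i=q_ix_ie_i$ and $e_iy_i=q_i^{-1}y_ie_i$, while $e_i$ commutes with $x_j,y_j$ for $j\neq i$. Since $q_i\neq1$, the element $e_i$ has filtration degree exactly $2$, with leading term $(q_i-1)x_iy_i$ in $\gr B$. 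Note that $\gr B$ is the quantum affine space generated by the images of the $x_i,y_i$, hence a connected graded domain; thus every hypothesis of Theorem \ref{yythm0.1} except finiteness over the center is in force, and the whole task reduces to producing a usable substitute for the dominating discriminant.

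First I would prove part (1). Let $g\in\Aut(B)$. Because $g$ carries normal elements to normal elements, each $g(e_i)$ is normal. The key step is a classification of the normal elements of $B$: modulo $k^\times$ they should be exactly the monomials $e_1^{a_1}\cdots e_m^{a_m}$ (together with the extra central elements that appear when some $q_i$ is a root of unity). Granting this, the quasi-commutation character attached to $e_i$---nontrivial only on the $i$-th pair of generators---forces $g(e_i)=c_ie_{\pi(i)}$ for some scalar $c_i\in k^\times$ and some permutation $\pi$, and matching the spectra of the associated conjugating automorphisms forces $q_{\pi(i)}\in\{q_i,q_i^{-1}\}$. In particular $\deg g(e_i)=2$. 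Since $g(e_i)=(q_i-1)g(x_i)g(y_i)+1$ and $\gr B$ is a domain, degrees are additive, so $\deg g(x_i)+\deg g(y_i)=2$; as $g$ is injective and $x_i,y_i$ are non-scalar we have $\deg g(x_i),\deg g(y_i)\geq1$, whence both equal $1$. Thus $g(F_1B)\subset F_1B$ and $g$ is affine.

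With affineness established, $g$ induces a graded automorphism of $\gr B$, and a direct analysis of the affine automorphisms compatible with the relations $y_ix_i=q_ix_iy_i+1$ yields the structure of $\Aut(B)$: the connected part consists of the rescalings $x_i\mapsto\lambda_ix_i,\ y_i\mapsto\lambda_i^{-1}y_i$, giving the torus $(k^\times)^m$ (so $r=m$), while the finite quotient $S$ records the permutations of the tensor factors together with the flips $x_i\leftrightarrow y_i$ that are admissible among indices with $q_i=q_j$ or $q_i=q_j^{-1}$; this produces the split exact sequence \eqref{0.1.1}. For part (2), the elements $e_i$ remain normal in $B[t]$ and $t$ is central, so repeating the degree argument shows any $h\in\Aut(B[t])$ restricts to an affine automorphism of $B$ and satisfies $h(t)=ct+r$ with $r\in C(B)$, i.e.\ $h$ is triangular, exactly as in Theorem \ref{yythm0.1}(2). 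For part (3), when $\ch k=0$ a locally nilpotent derivation $\delta$ exponentiates to a one-parameter family of automorphisms of $B$; these are affine and preserve each $e_i$ up to scalar by part (1), which forces $\delta(e_i)$ to be proportional to $e_i$ and then, tracing through the generators, $\delta=0$.

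The main obstacle is the classification of normal elements of $B$ that underlies the whole of part (1). For $q_i$ not a root of unity this is clean---the normal elements are just the scalar multiples of monomials in the $e_i$---but when some $q_i$ is a root of unity the center of $B$ enlarges (for instance $x_i^{\ell},y_i^{\ell},e_i^{\ell}$ become central when $q_i$ is a primitive $\ell$-th root of unity), so there are many more normal elements and one must rule out every candidate for $g(e_i)$ other than a scalar multiple of some $e_j$. I expect to handle this root-of-unity regime through the discriminant itself: there $B$ \emph{is} finite over its center, the $v$-discriminant is a scalar multiple of a power of $\prod_i e_i$ and is dominating, so Theorem \ref{yythm0.1} applies directly; the remaining mixed cases are then treated by combining the normal-element argument and the discriminant argument factor by factor.
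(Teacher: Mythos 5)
Your strategy is genuinely different from the paper's, but it has a real gap at its foundation. The entire argument for part (1) rests on the claim that the normal elements of $B$ are, up to scalars and central factors, exactly the monomials $e_1^{a_1}\cdots e_m^{a_m}$; you state this as something that ``should be'' true and never prove it. Even in the generic case this requires an argument (e.g.\ passing to $\gr B$, classifying the homogeneous normal elements of the associated skew polynomial ring, and lifting), and in the root-of-unity case you concede the statement is false as written. More seriously, the case you defer --- some $q_i$ roots of unity and others not --- is precisely the case the theorem is about: when every $q_i$ is a root of unity, $B$ is finite over its center and the result already follows from Theorem \ref{yythm0.1} together with Proposition \ref{yypro5.2} and the tensor-product result of \cite{CPWZ} (this is Corollary \ref{yycor5.4}). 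Your proposal to treat the mixed case ``factor by factor'' is circular: an automorphism of $B$ is not known a priori to respect the tensor factorization, and establishing that it does (after affineness is known) is exactly the content of Lemma \ref{yylem5.6}. So the one case that needs a new idea is the one the sketch does not supply.

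For comparison, the paper closes this gap by mod-$p$ reduction (Lemma \ref{yylem4.6} and Theorem \ref{yythm5.5}): a putative non-affine automorphism is defined over a finitely generated $\Z$-subalgebra $K$ of $k$ containing the elements $(q_i-1)^{-1}$, and reducing modulo a maximal ideal lands in a finite field where every $\bar q_i$ is automatically a root of unity different from $1$; Corollary \ref{yycor5.4} then yields a contradiction. This uniformly reduces all parameter values to the PI case and avoids any classification of normal elements. If you want to salvage your approach, you would need to actually prove the normal-element classification for arbitrary $q_i\neq 1$, including the mixed case, which appears to be at least as hard as the theorem itself. Note also that your assertion that the discriminant in the root-of-unity case is a scalar multiple of a power of $\prod_i e_i$ is essentially Conjecture \ref{yycon5.3} of the paper, which is open; only the leading term is proved (Proposition \ref{yypro5.2}), though that is all that is needed for dominance.
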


See Section \ref{yysec5} for the proof. 
As a consequence of Theorem \ref{yythm0.2},
the following hold [Theorem \ref{yythm5.7}]:
\begin{itemize}
\item
If $q_i\neq \pm1$ and $q_i\neq q_j^{\pm 1}$ for all $i\neq j$, then
$\Aut(B)=(k^\times)^m$.
\item
If $q_i=q\neq \pm 1$ for all $i$, then $\Aut(B)=S_m \ltimes (k^\times)^m$.
\end{itemize}

Let $\{p_{ij}\in k^\times\mid 1\leq i<j\leq n\}$ be a set of
parameters, and set $p_{ji}=p_{ij}^{-1}$ and $p_{ii}=p_{jj}=1$ for all
$i<j$. In this paper, a \emph{skew polynomial ring} is defined to be
the algebra generated by $x_1,\dots,x_n$ subject to the relations $x_j
x_i=p_{ij} x_i x_j$ for all $i<j$, and is denoted by
$k_{p_{ij}}[x_1,\dots,x_n]$.  Recall from \cite[Chapter 13]{MR} that a
\emph{PI algebra} is one which satisfies a \emph{polynomial
identity}. Skew polynomial rings are PI if and only if they are finite
over their center; hence the skew polynomial ring
$k_{p_{ij}}[x_1,\dots,x_n]$ is PI if and only if each $p_{ij}$ is a
root of unity.  The automorphism groups of skew polynomial rings have
been studied by several authors \cite{AlC, Y1}. The next result says
that the discriminant criterion works well for PI skew polynomial
rings.

\begin{maintheorem}[Theorem \ref{yythm3.1}]
\label{yythm0.3}
Let $A=k_{p_{ij}}[x_1,\dots,x_n]$ be a PI skew polynomial ring over
the commutative domain $k$. Then the following are equivalent.
\begin{enumerate}
\item
$d_w(A/C(A))$ is dominating, where $w=\rk(A/C(A))$.
\item
Every automorphism of $A$ is affine.
\item
Every automorphism of $A[t]$ is triangular.
\item
$C(A) \subset k\langle x_1^{\alpha_1},\dots, x_n^{\alpha_n}
\rangle$ for some $\alpha_1,\dots, \alpha_n\geq 2$.
\end{enumerate}
If $\Z \subset k$, then the above are also equivalent to
\begin{enumerate}
\setcounter{enumi}{4}
\item
Every locally nilpotent derivation is zero.
\end{enumerate}
\end{maintheorem}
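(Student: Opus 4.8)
The plan is to prove the implications as a cycle pivoting on (4):
\[
(4)\Rightarrow(1)\Rightarrow(2)\Rightarrow(4),\qquad (1)\Rightarrow(3)\Rightarrow(4),\qquad (1)\Rightarrow(5)\Rightarrow(4),
\]
the last row under the extra hypothesis $\Z\subset k$. The implications out of (1) are immediate from Theorem~\ref{yythm1.13}: since $A$ carries its standard connected grading by total degree, is a domain, and is module-finite over $C(A)$ (being PI), the hypotheses of that theorem hold once $d_w(A/C(A))$ is dominating, and it then yields that every automorphism of $A$ is affine (giving (2)), that every automorphism of $A[t]$ is triangular (giving (3)), and --- when $\Z\subset k$ --- that every locally nilpotent derivation vanishes (giving (5)). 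Thus the substance lies in $(4)\Rightarrow(1)$ and in the three converses, which I would prove in contrapositive form.

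First I would record that a monomial $x_1^{a_1}\cdots x_n^{a_n}$ is central exactly when $\prod_i p_{ik}^{a_i}=1$ for every $k$, so the central exponent vectors form a sublattice $L\subseteq\Z^n$; it has full rank because $x_l^{\alpha_l}$ is central for $\alpha_l=\operatorname{lcm}_k\operatorname{ord}(p_{lk})$, whence $A$ is module-finite over $C(A)=k[L\cap\Z_{\ge0}^n]$. Letting $\gamma_l\ge1$ generate the coordinate projection $\pi_l(L)\subseteq\Z$, condition~(4) says precisely that $\gamma_l\ge2$ for all $l$. For the converses I would use one construction. If (4) fails at a coordinate $j$, then $\gamma_j=1$, so $L$ contains a vector whose $j$-th entry is $-1$; adding suitable positive multiples of the central vectors $\alpha_l e_l$ with $l\ne j$ turns it into a monomial $m=\prod_{i\ne j}x_i^{a_i}$ of total degree $\ge2$, not involving $x_j$, whose commutation character matches that of $x_j$, i.e.\ $x_k m=p_{jk}\,m\,x_k$ for all $k$. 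This single $m$ yields all three obstructions: $x_j\mapsto x_j+m$ (fixing the other generators) is a non-affine automorphism of $A$; on $A[t]$ the map $x_j\mapsto x_j+tm$, $t\mapsto t$, is an automorphism carrying $x_j$ out of $A$, hence non-triangular; and $D(x_j)=m$, $D(x_i)=0\ (i\ne j)$ defines a nonzero derivation that is locally nilpotent since it strictly lowers $x_j$-degree. In each case well-definedness reduces to the same character identity, so these verifications are brief.

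The core is $(4)\Rightarrow(1)$: the computation of $d_w(A/C(A))$ with $w=\rk(A/C(A))=[\Z^n:L]$. Writing $A=\bigoplus_i C(A)\,x^{\bar a_i}$ with the $\bar a_i$ a monomial transversal of $\Z^n/L$, the regular trace satisfies $\tr(x^b)=0$ unless $x^b$ is central and $\tr(x^b)=w\,x^b$ when it is; hence the Gram matrix $\bigl(\tr(x^{\bar a_i}x^{\bar a_j})\bigr)$ has a single nonzero entry in each row, namely at the unique $j$ with $\bar a_i+\bar a_j\in L$. Being a monomial matrix, its determinant is, up to a nonzero scalar, the central monomial $x^{2\sum_i\bar a_i}$, so $d_w(A/C(A))=(\text{scalar})\prod_l x_l^{c_l}$ with $c_l=2\sum_i(\bar a_i)_l$. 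Under (4) every $c_l$ is positive: since $x_l\in A=\bigoplus_i C(A)\,x^{\bar a_i}$ while every element of $C(A)$ has $x_l$-degree divisible by $\gamma_l\ge2$, some basis monomial $x^{\bar a_i}$ must carry positive $x_l$-degree, forcing $c_l>0$. Thus $d_w(A/C(A))$ is a scalar times a monomial supported at every variable, and by the criterion for dominating elements established in Section~1 such an element is dominating, which is (1).

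I expect the main obstacle to be the discriminant computation $(4)\Rightarrow(1)$: not the determinant, which is a clean monomial-matrix evaluation, but the two structural inputs it relies on --- that $A$ is $C(A)$-free on a monomial transversal of $\Z^n/L$ with the stated trace formula, and that a full-support monomial meets the definition of dominating from Section~1. Establishing freeness and the vanishing of $\tr$ off the center requires care with the choice of transversal (the naive box $\{0\le b_l<\alpha_l\}$ need not be a basis when $L\supsetneq\prod_l\alpha_l\Z$), and confirming that the computed monomial genuinely satisfies the dominating criterion --- rather than merely having full support --- is where the discriminant method is actually brought to bear. By contrast the three converses are routine once the monomial $m$ is produced; there the only delicate point is the lattice bookkeeping guaranteeing that $m$ has nonnegative exponents and total degree at least~$2$.
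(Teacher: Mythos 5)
Your architecture is reasonable and most of it matches the paper: the implications out of (1) are indeed immediate from Theorem \ref{yythm1.13}, and your three contrapositives are exactly the paper's constructions --- the monomial $m$ with the commutation character of $x_j$ is an element of $X^{(T_j)_{\geq 2}}$, and your maps are the elementary automorphism \eqref{2.10.1}, a $t$-twisted version of it on $A[t]$, and the locally nilpotent derivation \eqref{2.10.2}. The lattice bookkeeping identifying ``(4) fails at $j$'' with ``$T_j\neq\emptyset$'' is also correct (it is Lemma \ref{yylem2.9}(2) together with the argument in Theorem \ref{yythm3.8}).

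The gap is in the step you correctly identify as the core, $(4)\Rightarrow(1)$. You write $A=\bigoplus_i C(A)\,x^{\bar a_i}$ for a monomial transversal of $\Z^n/L$, but under hypothesis (4) the algebra $A$ need not be free over $C(A)$ at all: by Lemma \ref{yylem2.3}, freeness is equivalent to $C(A)$ being a polynomial ring of the form $k[x_1^{a_1},\dots,x_n^{a_n}]$, which is strictly stronger than (4). Example \ref{yyex3.7} (center generated by $x_i^4$, $x_1^2x_2^2x_3^2$, $x_1^2x_2^2x_4^2$, $x_3^2x_4^2$, so (4) holds with $\alpha_i=2$) and Example \ref{yyex1.3}(5) are explicit cases where (4) holds and $A$ is not free; no cleverer choice of transversal helps, since a coset of $L$ can meet $\N^n$ in a set with no componentwise minimum. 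In the non-free case $d_w(A/C(A))$ is by definition the gcd in $A$ of \emph{all} the elements $d_w(Z,Z':\tr)$, and your Gram-matrix evaluation produces only one such element (for one monomial semi-basis and its ``dual''). Knowing that one element is a nonzero full-support monomial does not show the gcd has full support: in Example \ref{yyex1.3}(3) the ideal $\MD{4}{C}$ is generated by the monomials $x_1^4x_2^ix_3^{4-i}$, yet their gcd is $x_1^4$. What is actually needed is that $x_s$ divides \emph{every} $d_w(Z,Z':\tr)$ whenever $T_s=\emptyset$; this is Theorem \ref{yythm2.11}(2), and its proof uses the quasi-basis structure of $A$ over $C(A)$ plus a degree count (the nonvanishing of some $d_w(Z,Z':\tr)$, and the fact that the rank $w$ is nonzero in $k$, also require Lemma \ref{yylem2.7}). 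Without this, or an equivalent argument, the implication $(4)\Rightarrow(1)$ is not established except in the special case where $C(A)$ is a polynomial ring.
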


Note that the implication (1) $\Rightarrow$ (5) fails when $\ch k \neq 0$
\cite[Example 3.9]{CPWZ}.

One example is $k_q[x_1,\dots,x_n]$ with $n$ even and
$q\neq 1$ a primitive $\ell$th root of unity. In this case,
$C(A)=k[x_1^{\ell},\dots,x_n^{\ell}]$, so part (4) of the above holds.
Therefore all of (1)--(5) hold. By part (2),
$\Aut(k_q[x_1,\dots,x_n])$ is affine. An easy computation shows that
\begin{equation}
\label{0.3.1}\tag{0.3.1}
\Aut(k_q[x_1,\dots,x_n])=\begin{cases} (k^\times)^n & {\text{if}}\quad
q\neq \pm 1,\\
S_n \ltimes (k^\times)^n & {\text{if}}\quad q=-1.\end{cases}
\end{equation}
If $n$ is odd and $q$ is a root of unity, then
$\Aut(k_q[x_1,\dots,x_n])$ is not affine -- see Example \ref{yyex1.8}
-- and is much more
complicated. The structure of
$\Aut(k_q[x_1,\dots,x_n])$ is not well understood for $n$ odd, even
when $n=3$.

We have some results concerning automorphisms of not necessarily PI
skew polynomial rings. We need to introduce some notation.  For any
$1\leq s\leq n$, let
\[
T_s=\{(d_1,\dots, \widehat{d}_s,\dots, d_n)\in \N^{n-1}
\mid \prod_{\substack{j=1\\j\neq s}}^n p_{ij}^{d_j}=p_{is} \; \forall \;
i\neq s\}.
\]
We show in Theorem \ref{yythm3.8} that in the PI case,
if $T_s=\emptyset$ for all $s$, then every automorphism of
$A$ is affine. Note also that in the PI case, if $T_s$ is nonempty, then
$T_s$ is in fact infinite.  If we drop the PI assumption and
we allow at most one $T_s$ to be infinite, we can still understand the
automorphism group, as described in the next result.

An automorphism $g$ of $k_{p_{ij}}[x_1,\dots,x_n]$ is called
\emph{elementary} if there is an $s$ and an element $f$
generated by $x_1,\dots, \widehat{x}_s,\dots,x_n$ such that
\[
g(x_i)=\begin{cases} x_i & i\neq s\\ x_s+f & i=s.\end{cases}
\]
An automorphism of $k_{p_{ij}}[x_1,\dots,x_n]$ is called \emph{tame}
if it is generated by affine and elementary automorphisms.

\begin{maintheorem}
\label{yythm0.4} Let $A=k_{p_{ij}}[x_1,\dots,x_n]$ be a (not
necessarily PI) skew polynomial algebra over the commutative domain
$k$, and suppose that $x_i$ is not
central in $A$ for all $i$.  Let $s_0$ be some integer between $1$ and
$n$. Suppose that $T_s$ is finite for all $s\neq s_0$. Then every
automorphism of $A$ is tame.
\end{maintheorem}

This is proved as a consequence of Theorem \ref{yythm3.11}.

The paper is laid out as follows. In Section~\ref{yysec1}, we
introduce the notion of the discriminant and prove Theorem
\ref{yythm0.1} -- note that this result can be viewed as a
generalization of \cite[Theorem 3]{CPWZ}. In Section~\ref{yysec2}, we
compute the discriminants of skew polynomial rings over their
center.  In Section~\ref{yysec3}, we prove that
$\Aut(k_{p_{ij}}[x_1,\dots,x_n])$ is affine if and only if
the discriminant is dominating and then prove Theorems \ref{yythm0.3} and
\ref{yythm0.4}.  We discuss some properties of automorphisms and
discriminants in Section~\ref{yysec4}. In the final
section, we prove Theorem~\ref{yythm0.2}.

\section{The discriminant controls automorphisms}
\label{yysec1}

Throughout the rest of the paper let $k$ be a commutative domain,
and sometimes we further assume that $k$ is a field. Modules,
vector spaces, algebras, tensor products, and morphisms are over
$k$. All algebras are associative with unit.

The beginning of this section overlaps with the paper \cite{CPWZ}.  We
start by recalling the concept of the discriminant in the
noncommutative setting. Let $R$ be a commutative algebra and let $B$
and $F$ be algebras both of which contain $R$ as a subalgebra. In
applications, $F$ would be either $R$ or a ring of fractions of
$R$. An $R$-linear map $\tr: B\to F$ is called a \emph{trace map} if
$\tr(ab)=\tr(ba)$ for all $a,b\in B$.

If $B$ is the $w\times w$-matrix algebra $M_w(R)$ over $R$, we have the
internal trace $\tr_{\INT}: B\to R$ defined
to be the usual matrix trace, namely, $\tr_{\INT}((r_{ij}))=
\sum_{i=1}^w r_{ii}$.
Let $B$ be an $R$-algebra, let $F$ be a localization of
$R$, and suppose that $B_F:=B\otimes_R F$ is
finitely generated free over $F$. Then left
multiplication defines a natural embedding of $R$-algebras $lm:B\to
B_F\to \End_F(B_F)\cong M_w(F)$, where $w$ is the rank $\rk(B_F/F)$.
Then we define the \emph{regular trace map}
by composing:
\[
\tr_{\textup{reg}}: B\xrightarrow{lm} M_w(F)\xrightarrow{\tr_{\INT}} F.
\]
Usually we use the regular trace even if other trace maps
exist. The following definition is well-known; see Reiner's book \cite{Re}.
Let $R^\times$ denote the set of invertible elements in $R$.
If $f,g\in R$ and $f=cg$ for some $c\in R^\times$, then we write
$f=_{R^\times} g$.

\begin{definition} \cite[Definition 1.3]{CPWZ}
\label{yydef1.1} Let $\tr: B\to F$ be a trace map and $v$ be a fixed
integer. Let $Z:=\{z_i\}_{i=1}^v$ be a subset of $B$.
\begin{enumerate}
\item
The \emph{discriminant} of $Z$ is defined to be
\[
d_v(Z:\tr)=\det(\tr(z_iz_j))_{v\times v}\in F.
\]
\item
\cite[Section 10, p.~126]{Re}.
The \emph{$v$-discriminant ideal} (or \emph{$v$-discriminant
$R$-module}) $D_v(B:\tr)$ is the $R$-submodule of $F$ generated by
the set of elements $d_v(Z:\tr)$ for all $Z=\{z_i\}_{i=1}^v\subset
B$.
\item
Suppose $B$ is an $R$-algebra which is finitely generated free over
$R$ of rank $w$. In this case, we take $F=R$. The \emph{discriminant}
of $B$ over $R$ is defined to be
\[
d(B/R)=_{R^\times} d_w(Z:\tr),
\]
where $Z$ is an $R$-basis of $B$.  Note that $d(B/R)$ is well-defined
up to a scalar in $R^\times$ \cite[p.~66, Exer 4.13]{Re}.
\end{enumerate}
\end{definition}

We refer to the books \cite{AW, Re, St} for the classical definition
of discriminant and its connection with the above definition.

To cover a larger class of algebras, in
particular those that are not free over their center, we need a
modified version of the discriminant. Let $B$ be a domain. A normal
element $x \in B$ \emph{divides} $y \in B$ if $y=wx$ for some $w \in
B$.  If $\mathcal{D}:=\{d_i\}_{i\in I}$ is a set of elements in $B$, a
normal element $x\in B$ is called a \emph{common divisor} of
$\mathcal{D}$ if $x$ divides $d_i$ for all $i\in I$. We say a normal
element $x\in B$ is the \emph{greatest common divisor} or \emph{gcd}
of $\mathcal{D}$, denoted by $\gcd \mathcal{D}$, if
\begin{enumerate}
\item
$x$ is a common divisor of $\mathcal{D}$, and
\item
any common divisor $y$ of $\mathcal{D}$ divides $x$.
\end{enumerate}
It follows from part (2) that the gcd of any subset $\mathcal{D}
\subset B$ (if it exists) is unique up to a scalar in $B^\times$.

Note that the gcd in $B$ may be different from the gcd in $R$, if both
exist. For example, the gcd in $R$ could be 1 while the gcd in $B$ is
non-trivial. By definition, the gcd in $R$ is a divisor of the gcd in
$B$. Of course, the gcd in $B$ may be more difficult to
compute since $B$ is typically noncommutative.

\begin{definition}
\label{yydef1.2} Let $\tr: B\to R$ be a trace map and $v$ a
positive integer. Let $Z=\{z_i\}_{i=1}^v$ and $Z'=\{z'_i\}_{i=1}^v$ be
$v$-element subsets of $B$.
\begin{enumerate}
\item
The \emph{discriminant} of the pair $(Z,Z')$ is defined to be
\[
d_v(Z,Z':\tr)=\det(\tr(z_iz'_j))_{v\times v}\in R.
\]
\item
The \emph{modified $v$-discriminant ideal} $\MD{v}{B}$ is
the ideal of $R$ generated by
the set of elements $d_v(Z,Z':\tr)$ for all $Z, Z' \subset B$.
\item
The \emph{$v$-discriminant} $d_v(B/R)$ is defined to be the gcd
in $B$
of the elements $d_v(Z,Z':\tr)$ for all $Z, Z' \subset B$. Equivalently,
the $v$-discriminant $d_v(B/R)$ is the gcd
in $B$ of the elements in $\MD{v}{B}$.
\end{enumerate}
\end{definition}

If $d_v(B/R)$ exists, then the ideal $(d_v(B/R))$ of $B$ generated by
$d_v(B/R)$ is the smallest principal ideal of $B$ which is generated
by a normal element and contains $\MD{v}{B}B$.

It is clear that $D_v(B:\tr)\subset \MD{v}{B}$. Equality should
hold under reasonable hypotheses. For example, if $B$ is an
$R$-algebra which is finitely generated free over $R$ and if
$w=\rk(B/R)$, then $\MD{w}{B}$ equals $D_w(B:\tr)$, both of which
are generated by the single element $d(B/R)$. In this case it is also
true that $d(B/R) =_{B^\times} d_w(B/R)$.
This follows from \eqref{1.10.2},
which states that if $Z$ and $Z'$ are two $R$-bases of $B$, then
\[
d(B/R)=_{R^\times} d_w(Z,Z':\tr).
\]
If $v_1 < v_2$ and if $d_{v_1}(B/R)$ and $d_{v_2}(B/R)$ exist, then
$d_{v_1}(B/R)$ divides $d_{v_2}(B/R)$, by Lemma~\ref{yylem1.4}(5), and
if $v>\rk(B/R)$, then $d_v(B/R)=0$ [Lemma \ref{yylem1.9}(2)].

If $B$ is not free as an $R$-module, then to use
Definition~\ref{yydef1.2}, we let $F$ be a localization of $R$,
typically its field of fractions, we let $\tr: B \to F$ be the regular
trace, and we assume that the image of $\tr$ is in $R$. (This happens
frequently when $R$ is the center -- see Lemma \ref{yylem2.7}(9), for
example.)

In \cite{CPWZ}, we computed some discriminants.
Here are some new examples.

\begin{example}
\label{yyex1.3}
Let $k$ be a commutative domain such that $2$ is nonzero in $k$.
In parts (2) and (3) we further assume that $3$ is nonzero in $k$ and
that $\xi\in k$ is a primitive third root of unity. Some details
in the computations are omitted.
\begin{enumerate}
\item
Let $R$ be a commutative domain, $0\neq x\in R$, and let
$A=\begin{pmatrix} R& R\\ xR & R\end{pmatrix}$. Then the center of $A$
is $R$ and $Z:=\{e_{11},e_{12}, xe_{21}, e_{22}\}$ is an $R$-basis
of $A$. By using the regular trace $\tr$, we have
\[
\tr(e_{11})=2,\quad
\tr(e_{12})=0,\quad
\tr(xe_{21})=0,\quad
\tr(e_{22})=2.
\]
Using these traces and the fact $\tr$ is $R$-linear, we have the
matrix
\[
(\tr(z_iz_j))_{4\times 4}=\begin{pmatrix} 2 &0 &0 &0 \\
0& 0 &2x&0\\ 0&2x&0&0\\0&0&0&2
\end{pmatrix}
\]
and the discriminant $d(A/R)$ is $-2^4 x^2$.
\item
Let $B=k_{p_{ij}}[x_1,x_2,x_3]$, where $p_{12}=-1$,
$p_{13}=\xi$ and $p_{23}=1$. Then the center $R$
is the polynomial ring generated by $x_1^6$, $x_2^2$ and $x_3^3$.
The algebra $B$ is a free $R$-module with basis
\[
Z:=\{x_1^{i_1}x_2^{i_2}x_3^{i_3}\mid
0\leq i_1\leq 5, 0\leq i_2\leq 1, 0\leq i_3\leq 2\}.
\]
The rank of $B$ over $R$ is $36$. One can check that
the regular traces are
\[
\tr(1)=36, \quad \tr(f)=0 \ \ \forall \ f\in Z\setminus \{1\}.
\]
The discriminant $d(B/R)$ is $(x_1^5 x_2x_3^2)^{36}$
[Proposition \ref{yypro2.8}].
\item
Let $C=k_{p_{ij}}[x_1,x_2,x_3]$, where $p_{12}=-1$,
$p_{13}=-1$, and $p_{23}=1$. Then the center $R$ is generated by
$x_1^2$, $x_2^2$, $x_3^2$, and $x_2x_3$. So $R$ is not a polynomial ring
and $C$ is not free over $R$. The rank of $C$ over $R$ is 4 and
$C$ is generated by the set $\{1, x_1, x_2, x_3, x_1x_2, x_1x_3\}$
over $R$. If $F$ is the field of fractions of $R$, then one can show
that the image of the regular trace $\tr: B \to F$ is in $R$.  By a
degree argument, the regular traces are
\[
\tr(1)=4, \quad \tr(x_1)=\tr(x_2)=\tr(x_3)=\tr(x_1x_2)=\tr(x_1x_3)=0.
\]
Since $C$ is not free over $R$, we compute the modified discriminant
ideal. A non-trivial computation shows that $\MD{4}{C}$ is the ideal
generated by $x_1^4 x_2^i x_3^{4-i}$ for $i=0,1,2,3,4$ and
$d_4(C/R)=_{k^\times} x_1^4$. In this example, it is also possible to
compute $d_v(C/R)$ for other $v$:
\[
d_v(C/R)=_{k^\times}
\begin{cases} 0 & v>4,\\ x_1^2 &v=3,\\ 1 & v<3.\end{cases}
\]
\item
Let $D=k_{p_{ij}}[x_1,x_2,x_3]$, where $p_{12}=-1$,
$p_{13}=-1$, and $p_{23}=i$ where $i^2=-1$. Then the center $R$ is generated by
$x_1^2$, $x_2^4$, $x_3^4$, and $x_1x_2^2x_3^2$. As in the last example,
$R$ is not a polynomial ring and $D$ is not free over $R$, but the
image of the regular trace is in $R$. The rank of $D$
over $R$ is 16 and $D$ is generated by $x_1^i x_2^j x_3^k$,
where $0\leq i\leq 1$,
$0\leq j,k\leq 3$ and $(i,j,k)\neq (1,2,2)$. One can check that
\[
\tr(1)=16, \quad \tr(x_1^ix_2^j x_3^k)=0
\]
for all $0\leq i\leq 1$, $0\leq j,k\leq 3$, and $(i,j,k)\neq (1,2,2)$
[Lemma \ref{yylem2.7}(8)].
The modified discriminant ideal $\MD{16}{D}$ is generated by
$x_2^{48} x_3^{48} \cdot f$, where $f$ ranges over elements of
the form
$(x_1 x_2^{-2} x_3^2)^{i_1} (x_1 x_2^2 x_3^{-2})^{i_2}
(x_1 x_2^{-2} x_3^{-2})^{i_3}$ for all $0\leq i_1, i_2, i_3\leq 8$.
As a consequence, $d_{16}(D/R)=_{k^\times} x_2^{16}x_3^{16}$
[Lemma \ref{yylem1.11}(4)].
\item
Let $E=k_{p_{ij}}[x_1,x_2,x_3]$, where $p_{12}=-1$,
$p_{13}=\xi$, and $p_{23}=-1$. Then the center $R$
is generated by $x_1^6$, $x_2^2$, $x_3^6$, and $x_1^3x_3^3$, which is not
a polynomial ring. The modified discriminant ideal
$\MD{36}{D}$ is generated by
\[
(x_1^2 x_2x_3^2)^{36} x_1^{3i} x_3^{3(36-i)} \quad {\text{for}}
\quad 0\leq i\leq 36,
\]
and so $d_{36}(E/R)=_{k^\times} (x_1^2 x_2x_3^2)^{36}$ [Lemma
\ref{yylem1.11}(4)].
\end{enumerate}
\end{example}

One of key lemmas is the following, which suggests that the
discriminant controls automorphisms.

\begin{lemma}
\label{yylem1.4} Retain the notation as in Definitions
{\rm{\ref{yydef1.1}}}
and {\rm{\ref{yydef1.2}}}. Suppose that $\tr$ is the regular trace and
that the image of $\tr$ is in $R$.
Let $g$ be an automorphism of $B$
such that $g$ and $g^{-1}$ preserve $R$.
\begin{enumerate}
\item \cite[Lemma 1.8(5)]{CPWZ}
The discriminant ideal $D_w(B:\tr)$ is
$g$-invariant, where $w=\rk(B/R)$.
\item \cite[Lemma 1.8(6)]{CPWZ}
If $B$ is a finitely generated free module over $R$, then the
discriminant $d(B/R)$ is $g$-invariant up to a unit of $R$.
\item
The modified discriminant ideal $\MD{v}{B}$ is
$g$-invariant for all $v$.
\item
The $v$-discriminant $d_v(B/R)$ is $g$-invariant up to a unit in $B$,
for all $v$.
\item
For integers $v_1<v_2$, $\MD{v_2}{B} \subset \MD{v_1}{B}$.
So if $d_{v_1}(B/R)$ and $d_{v_2}(B/R)$ exist, then $d_{v_1}(B/R)$
divides $d_{v_2}(B/R)$. As a consequence, the quotient
$d_{v_2}(B/R)/d_{v_1}(B/R)$ is $g$-invariant up to a unit in $B$.
\end{enumerate}
\end{lemma}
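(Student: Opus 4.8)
The plan is to reduce all three new parts (3)--(5) to one enabling fact, since parts (1) and (2) are quoted from \cite{CPWZ}: I would first prove that the regular trace is equivariant for $g$, that is, $\tr(g(b))=g(\tr(b))$ for all $b\in B$. Because $g$ and $g^{-1}$ preserve $R$, the automorphism $g$ extends to the localization $F$ and to $B_F$, and left multiplication by $g(b)$ equals $g$ conjugated by left multiplication by $b$. Transporting an $F$-basis $\{e_i\}$ of $B_F$ to $\{g(e_i)\}$ turns the multiplication matrix $(a_{ij})$ of $b$ into $(g(a_{ij}))$ for $g(b)$, and taking internal traces gives $\tr(g(b))=\sum_i g(a_{ii})=g(\tr(b))$. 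I expect this equivariance to be the main obstacle, as it is the only step that uses the specific structure of the regular trace and the hypothesis that $\tr$ has image in $R$; the remaining arguments are formal.

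With equivariance in hand, part (3) should follow quickly. For $v$-element subsets $Z=\{z_i\}$ and $Z'=\{z'_i\}$ I would compute entrywise $\tr(g(z_i)g(z'_j))=\tr(g(z_iz'_j))=g(\tr(z_iz'_j))$, and then, because the determinant is a polynomial in its entries and $g|_R$ is a ring automorphism of $R$, conclude $d_v(g(Z),g(Z'):\tr)=g\bigl(d_v(Z,Z':\tr)\bigr)$. Since $g$ permutes the $v$-element subsets of $B$, it carries the generating set of $\MD{v}{B}$ onto itself, so the ideal is $g$-invariant.

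For part (4) I would use that an algebra automorphism preserves normal elements and the divisibility relation, hence sends a gcd in $B$ to a gcd in $B$: one checks the two defining properties of the gcd for $g(x)$ directly, using $g$ for one and $g^{-1}$ for the other. Applying this to the set $\mathcal{D}=\{d_v(Z,Z':\tr)\}$, which part (3) shows is $g$-stable as a set, identifies $g(d_v(B/R))$ as a gcd of the very same set $\mathcal{D}$, hence equal to $d_v(B/R)$ up to a unit of $B$ by uniqueness of the gcd modulo $B^\times$.

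Part (5) I would split into three moves. First, the inclusion $\MD{v+1}{B}\subset\MD{v}{B}$, which chains to give the claim for $v_1<v_2$: expanding a $(v+1)\times(v+1)$ generator by cofactors along its last row writes it as an $R$-linear combination $\sum_j \pm\,\tr(z_{v+1}z'_j)\,M_j$, where each $v\times v$ minor $M_j$ is exactly the generator $d_v(\{z_1,\dots,z_v\},\{z'_{j'}:j'\neq j\}:\tr)$ and the coefficients lie in $R$. Second, since $d_{v_1}(B/R)$ divides every element of $\MD{v_1}{B}\supset\MD{v_2}{B}$, it is a common divisor of the $d_{v_2}$-generators and hence divides their gcd $d_{v_2}(B/R)$. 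Third, writing $d_{v_2}(B/R)=q\,d_{v_1}(B/R)$ in the domain $B$ and invoking part (4) for both discriminants, I would cancel $d_{v_1}(B/R)$ on the right to conclude that $g(q)$ and $q$ are associates, so the quotient is $g$-invariant up to a unit of $B$. The only delicate bookkeeping here is the noncommutative meaning of ``up to a unit'' together with the normality of the gcd; the cofactor expansion itself is routine.
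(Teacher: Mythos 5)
Your proposal is correct and follows essentially the same route as the paper: the trace equivariance $\tr(g(b))=g(\tr(b))$ (which the paper simply cites from \cite[Lemma 1.8(2)]{CPWZ} rather than reproving) drives parts (3) and (4), and part (5) rests on expanding a larger discriminant determinant into smaller ones with coefficients in $R$. The only cosmetic difference is in (5), where the paper uses a single Laplace expansion by complementary minors, writing $d_{v_2}(Z,Z':\tr)$ as a signed sum of products $d_{v_1}(X,X':\tr)\,d_{v_2-v_1}(Y,Y':\tr)$, whereas you iterate an ordinary cofactor expansion one row at a time; both yield $\MD{v_2}{B}\subset\MD{v_1}{B}$.
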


\begin{proof} (3) By \cite[Lemma 1.8(2)]{CPWZ}, $\tr(g(x))=
g(\tr(x))$ for all $x\in B$. This implies that $g(d_v(Z,Z':\tr))
=d_v(g(Z),g(Z'):\tr)$ for any $Z,Z'\subset B$. Therefore
$g(\MD{v}{B})\subset \MD{v}{B}$. Similarly,
$g^{-1}(\MD{v}{B})\subset \MD{v}{B}$. These imply that
$g(\MD{v}{B})= \MD{v}{B}$. The proof of (4) is similar.

(5) Let $Z$ and $Z'$ be any $v_2$-element subsets of $B$ as in
Definition \ref{yydef1.2}. Use $X$ for any $v_1$-element subset of
$Z$ and $Y$ for $Z\setminus X$. We similarly define $X'$ and $Y'$. By
linear algebra,
\[
\begin{aligned}
d_{v_2}(Z,Z':\tr)&=\det(\tr(z_iz'_j))_{v_2\times v_2}\\
&=\sum_{X\subset Z,X'\subset Z'}\pm
\det(\tr(x_ix'_j))_{v_1\times v_1}
\det(\tr(y_iy'_j))_{(v_2-v_1)\times (v_2-v_1)}\\
&=\sum_{X\subset Z,X'\subset Z'}\pm
d_{v_1}(X,X':\tr)d_{v_2-v_1}(Y,Y':\tr),
\end{aligned}
\]
which is in $\MD{v_1}{B}$. Hence $\MD{v_2}{B}\subset
\MD{v_1}{B}$ and the second assertion follows. The
consequence is clear.
\end{proof}

The next proposition says that the discriminant controls locally nilpotent
derivations. Recall that a $k$-linear map $\partial: B\to B$ is called a
\emph{derivation} if the Leibniz rule
\[
\partial (xy)=\partial(x)y+x\partial(y)
\]
holds for all $x,y\in B$. We call $\partial$ \emph{locally nilpotent}
if for every $x\in B$, $\partial^n(x)=0$ for some $n$. Given a
locally nilpotent derivation $\partial$ (and assuming that
$\Q \subset k$), the exponential map $\exp(\partial):
B\to B$ is defined by
\[
\exp(\partial)(x)=\sum_{i=0}^{\infty} \frac{1}{i!} \partial^i(x)
\quad {\text{for all $x\in B$.}}
\]
Since $\partial$ is locally nilpotent, $\exp(\partial)$ is an
algebra automorphism of $B$ with inverse $\exp(-\partial)$.

\begin{proposition}
\label{yypro1.5}
Assume that $\Q \subset k$ and that $B^\times =k^\times$. Let $R$ be
the center of $B$. Suppose that $\tr$ is the regular trace and that
the image of $\tr$ is in $R$, and suppose that $d_v(B/R)$ exists. If
$\partial$ is a locally nilpotent derivation of $B$, then
$\partial(d_v(B/R))=0$.  Similarly, if $B$ is finitely generated free
over $R$, then $\partial(d(B/R))=0$.
\end{proposition}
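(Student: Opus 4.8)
The plan is to exponentiate $\partial$ into a one-parameter family of automorphisms and then use Lemma~\ref{yylem1.4} to control how these automorphisms move $d:=d_v(B/R)$. We may assume $d\neq 0$, since otherwise there is nothing to prove, and we recall that $B$ is a domain (as is implicit in the definition of $d_v(B/R)$). First I would check that $\partial$ preserves the center: for $z\in R$ and any $x\in B$, applying $\partial$ to the identity $zx-xz=0$ and using that $z$ is central yields $[\partial(z),x]=0$ for all $x$, so $\partial(z)\in R$. Consequently, for every scalar $t\in k$ the derivation $t\partial$ is again locally nilpotent and preserves $R$, so (using $\Q\subset k$) the automorphism $g_t:=\exp(t\partial)$ and its inverse $g_{-t}=\exp(-t\partial)$ both preserve $R$. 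Lemma~\ref{yylem1.4}(4) then applies to each $g_t$ and gives
\[
g_t(d)=u(t)\,d\qquad\text{with } u(t)\in B^\times=k^\times .
\]
In the finitely generated free case one argues identically, using Lemma~\ref{yylem1.4}(2) in place of (4); note that $R^\times=k^\times$, because $k\subseteq R\subseteq B$ forces $k^\times\subseteq R^\times\subseteq B^\times=k^\times$.

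The heart of the matter is to extract the infinitesimal information $\partial(d)\in k\,d$ from the relation $g_t(d)=u(t)d$. Since $\partial$ is locally nilpotent, say $\partial^{M}(d)=0$, the left-hand side is the $B$-valued polynomial
\[
g_t(d)=\sum_{i=0}^{M-1}\frac{t^{i}}{i!}\,\partial^{i}(d)
\]
in the variable $t$, whose value at every $t\in k$ lies in the $k$-line $kd$. Choosing $M$ distinct scalars $t_0,\dots,t_{M-1}\in k$ (which exist because $\Q\subset k$ makes $k$ infinite) turns the membership relations $g_{t_j}(d)\in kd$ into a linear system whose coefficient matrix is the Vandermonde matrix $V=(t_j^{\,i})$. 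Multiplying by the adjugate of $V$ yields $\det(V)\,\partial^{i}(d)\in kd$ for each $i$; in particular
\[
\delta\,\partial(d)=\mu\,d
\]
for some $\mu\in k$ and $\delta:=\det(V)=\prod_{i<j}(t_j-t_i)\neq 0$. I expect this step — isolating $\partial(d)$ as a scalar multiple of $d$ while $k$ is only a domain, so that the quotient $B/kd$ may have $k$-torsion and one cannot simply invert $\det(V)$ — to be the main obstacle; the adjugate/Vandermonde device is precisely what lets one stay inside $B$ and avoid passing to $\Frac(k)$.

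It then remains to deduce $\partial(d)=0$ from $\delta\,\partial(d)=\mu\,d$. Because $\partial$ is $k$-linear and annihilates the scalars $\delta,\mu$, an easy induction gives $\delta^{\,i}\partial^{i}(d)=\mu^{\,i}d$ for all $i\geq 1$. Taking $i=M$ and using $\partial^{M}(d)=0$ shows $\mu^{M}d=0$; since $B$ is a domain and $d\neq 0$, this forces $\mu^{M}=0$ and hence $\mu=0$ (as $k$ is a domain). Therefore $\delta\,\partial(d)=0$, and since $\delta\neq 0$ and $B$ is torsion-free over $k$, we conclude $\partial(d)=0$. The argument for $d(B/R)$ in the finitely generated free case is word-for-word the same, with Lemma~\ref{yylem1.4}(2) supplying the invariance up to a unit of $R^\times=k^\times$.
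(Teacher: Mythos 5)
Your proposal is correct and follows essentially the same route as the paper's proof: exponentiate $\partial$ to a family of automorphisms, invoke Lemma~\ref{yylem1.4}(4) (resp.\ (2)) together with $B^\times=k^\times$ to get $\exp(c\partial)(d)\in kd$, use a Vandermonde determinant to extract $\partial(d)\in kd$ (up to the nonzero scalar $\delta$), and then use local nilpotency and the domain hypothesis to force the scalar to vanish. Your extra care with the adjugate and with $k$ being only a domain, and your explicit check that $\partial$ preserves the center, are minor refinements of the same argument.
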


\begin{proof}
For any $c\in k$, consider the algebra automorphism
\[
\exp(c \partial): x\longmapsto
\sum_{i=0}^{\infty} \frac{c^i}{i!} \partial^i(x)
\quad {\text{for all $x\in B$}}.
\]
Let $x=d_v(B/R)$ (or $d(B/R)$ in the second case).
Then, by Lemma \ref{yylem1.4}(4),
$\exp(c\partial) (x)=\lambda_c x\in kx$ for some $\lambda_c\in k^\times$.
This is true for all $c\in \Q$. Since $\partial$ is locally nilpotent,
there are only finitely many nonzero $\partial^i(x)$ terms for
$i=0,1,2,\dots$. By using the
Vandermonde determinant, $\partial^i(x)\in kx$ for all $i$.
If $\partial(x)=ax$, then
$\partial^i(x)=a^i x$ for all $i$. Since
$\partial$ is locally nilpotent, $a=0$ and $\partial(x)=0$.
\end{proof}

This proposition fails when $k$ has positive characteristic
\cite[Example 3.9]{CPWZ}.

Let $C=\bigoplus_i C_i$ be a graded algebra over $k$. We say $C$
is \emph{connected graded} if $C_i=0$ for $i<0$ and $C_0=k$, and
$C$ is \emph{locally finite} if each $C_i$ is finitely generated over
$k$. We now consider filtered rings $A$. Let $Y$ be a finitely
generated free $k$-submodule of $A$ such that $k\cap
Y=\{0\}$. Consider the \emph{standard filtration} defined by $F_n A:=
(k+Y)^n$ for all $n\geq 0$. Assume that this filtration is exhaustive
and that the associated graded ring $\gr A$ is connected graded. For
each element $f\in F_n A\setminus F_{n-1} A$, the associated element
in $\gr A$ is defined to be $\gr f=f+F_{n-1} A\in (\gr_F A)_n$. The
degree of a nonzero element $f\in A$, denoted by $\deg f$, is defined to be
the degree of $\gr f$.


Suppose now $A$ is generated by $Y=\bigoplus_{i=1}^n kx_i$, so with
the standard filtration, the nonzero elements of $Y$ have degree 1. A
monomial $x_1^{b_1}\cdots x_n^{b_n}$ is said to have degree
\emph{component-wise less than} (or, \emph{cwlt}, for short)
$x_1^{a_1}\cdots x_n^{a_n}$ if $b_i\leq a_i$ for all $i$ and
$b_{i_0}<a_{i_0}$ for some $i_0$.  We write $f=cx_1^{b_1}\cdots
x_n^{b_n}+\cwlt$ if $f-cx_1^{b_1}\cdots x_n^{b_n}$ is a linear
combination of monomials with degree component-wise less than
$x_1^{b_1}\cdots x_n^{b_n}$.

\begin{definition}
\label{yydef1.6} Retain the above notation.
Suppose that $Y=\bigoplus_{i=1}^n kx_i$ generates
$A$ as an algebra.
\begin{enumerate}
\item
A nonzero element $f\in A$ is called
\emph{locally $(-s)$-dominating} if, up to a permutation,
$f$ can be written as $f(x_1,x_2,\dots,x_{n-s})$ such that,
for every $g\in \Aut(A)$,  one has
\begin{enumerate}
\item
$\deg f(y_1,\dots,y_{n-s})\geq \deg f$, where
$y_i=g(x_i)$ for all $i\leq n-s$, and
\item
$\deg f(y_1,\dots,y_{n-s})> \deg f$ if, further,
$\deg y_{i_0}>1$ for some $i_0\leq n-s$.
\end{enumerate}
\item 
Suppose $\gr A$ is a connected graded domain.  A nonzero element $f\in
A$ generated by $\{x_1,\dots,x_{n-s}\}$ (up to a permutation of
$\{x_i\}_{i=1}^n$) is called \emph{$(-s)$-dominating} if, for every
$\N$-filtered PI algebra $T$ with $\gr T$ a connected graded
domain, and for every subset $\{y_1,\dots,y_{n-s}\}\subset T$ that is
linearly independent in the quotient $k$-module $T/F_0 T$,
there is a lift of $f$, say $f(x_1,\dots,x_{n-s})$, in the free algebra
$k\langle x_1,\dots,x_{n-s}\rangle$, such that the following hold: either
$f(y_1,\dots,y_{n-s})=0$ or
\begin{enumerate}
\item
$\deg f(y_1,\dots,y_{n-s})\geq \deg f$, and
\item
$\deg f(y_1,\dots,y_{n-s})> \deg f$ if, further,
$\deg y_{i_0}>1$ for some $i_0\leq n-s$.
\end{enumerate}
\end{enumerate}
\end{definition}

If $f=x_1^{b_1}\cdots x_{n-s}^{b_{n-s}}+\cwlt$ for some $b_1,\dots,
b_{n-s}\geq 1$, then $f$ is $(-s)$-dominating: see the proof of
\cite[Lemma 2.2]{CPWZ}. It is easy to check that $(-s)$-dominating
elements are indeed locally $(-s)$-dominating.

Note that the notation of ``$0$-dominating'' is exactly
the notation of ``dominating'' of \cite[Definition 2.1(2)]{CPWZ}
and the notation of ``locally $0$-dominating'' is exactly
the notation of ``locally dominating'' of
\cite[Definition 2.1(1)]{CPWZ}.

\begin{definition}
\label{yydef1.7} Let $(A,Y)$ be defined as above.
In particular, $Y=\bigoplus_{i=1}^n kx_i$ generates
$A$ as an algebra.
\begin{enumerate}
\item
An algebra automorphism $g$ of $A$ is said to be \emph{$(-s)$-affine}
if $\deg g(x_i)=1$ for all but $s$-many values of $i$. A $0$-affine
automorphism is also called an \emph{affine} automorphism
\cite[Definition 2.4(1)]{CPWZ}.
\item
Let $C$ be an algebra over $k$.  A $k$-algebra automorphism $g$ of
$A\otimes C$ is said to be \emph{$(-s)$-$C$-affine} if $g(x_i)\subset
(Y\oplus k)\otimes C$ for all but $s$-many values of $i$. A
$0$-$C$-affine automorphism is also called a \emph{$C$-affine}
automorphism.
\end{enumerate}
\end{definition}

Note that any elementary automorphism is $(-1)$-affine.
The next example shows that not every automorphism is affine.

\begin{example}\label{yyex1.8}
For $q\in k^\times$, let $k_q[x_1,\dots,x_n]$ be the $q$-skew
polynomial ring generated by $\{x_1,\dots,x_n\}$ and subject to the
relations $x_j x_i=q x_i x_j$ for all $i<j$.  Suppose $q$ is a
primitive $\ell$th root of unity for some $\ell>1$.  If $n$ is odd,
then there is an automorphism which is elementary and $(-1)$-affine,
but not affine:
\[
x_i \mapsto
\begin{cases}
x_i & \text{if $i<n$}, \\
x_n+x_1^{\ell-1} x_2 \cdots x_{n-2}^{\ell-1}x_{n-1} & \text{if $i=n$.}
\end{cases}
\]
On the other hand, if $n$ is even, then every automorphism of
$k_q[x_1,\dots,x_n]$ is affine: see the next section.
\end{example}

The Nagata automorphism of the ordinary polynomial algebra
$k[x_1, x_2, x_3]$ is $(-2)$-affine but not $(-1)$-affine \cite{SU}.

The definition of a $(-s)$-affine automorphism (and that of a
$(-s)$-dominating element) depends on $Y$ (or on the filtration of
$A$). But in most cases, there is an obvious choice of filtration.

We conclude this section by proving Theorem~\ref{yythm1.13}. This is a
generalization of the main result of \cite{CPWZ}, namely,
\cite[Theorem 3]{CPWZ}.  We need to develop a few tools, first.
Let $R$ be a central subalgebra of $A$ and
let $F$ be a ring of fractions of $R$ (for example, the field of
fractions of $R$). Write $A_F:=A\otimes_R F$ and suppose that $A_F$ is
finitely generated free over $F$.

Here is a list of linear algebra facts without proof.

\begin{lemma}
\label{yylem1.9} Suppose that $A_F$ is finitely generated free over
$F$ and that $v$ is a positive integer. Let $\tr$ be the regular
trace map $\tr: A_F\to F$. Let $Z:=\{z_i\}_{i=1}^v$ and $Z':=
\{z'_i\}_{i=1}^v$ be subsets of $A$, and suppose $y_1 \in A$.
\begin{enumerate}
\item
Let
$Z_2=\{y_1, z_2,\dots,z_v\}$ and $Z_3=\{y_1+z_1,z_2,\dots,z_v\}.$
Then
\[
d_v(Z_3,Z':\tr)=d_v(Z,Z':\tr)+d_v(Z_2,Z':\tr).
\]
\item
If $Z$ is linearly dependent over $F$, then
$d_v(Z,Z':\tr)=0$.
\item
If $Z_1=\{c z_1,z_2,\dots,z_v\}$ for $c\in F$, then
$d_v(Z_1,Z':\tr)=c d_v(Z,Z':\tr)$.
\item
Let $X$ be a generating set of $A$ over $R$.
Then $d_v(Z,Z':\tr)$ is an $R$-linear combination of
elements $d_v(X_1,X_2:\tr)$, where $X_1$ and $X_2$ consist of $v$
elements in $X$.
\end{enumerate}
\end{lemma}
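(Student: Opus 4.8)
The plan is to deduce all four statements from a single structural observation: the quantity $d_v(Z,Z':\tr)=\det(\tr(z_iz'_j))_{v\times v}$ of Definition~\ref{yydef1.2}(1) is a multilinear, alternating function of the two $v$-tuples $(z_1,\dots,z_v)$ and $(z'_1,\dots,z'_v)$, with coefficients in $F$. This holds because $\tr\colon A_F\to F$ is $F$-linear and $F$ is central in $A_F$ (it is the localization $1\otimes F$ of the central subalgebra $R$), so each matrix entry $\tr(z_iz'_j)$ is $F$-bilinear in the pair $(z_i,z'_j)$; the determinant is then multilinear and alternating in its rows and columns by the usual rules. Once this is in hand, the four parts are formal consequences.

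For part (1), I would note that the first row of the matrix defining $d_v(Z_3,Z':\tr)$ has entries $\tr((y_1+z_1)z'_j)=\tr(y_1z'_j)+\tr(z_1z'_j)$, which is exactly the sum of the first rows of the matrices defining $d_v(Z_2,Z':\tr)$ and $d_v(Z,Z':\tr)$, while the remaining $v-1$ rows coincide; additivity of the determinant in a single row yields the identity. Part (3) is the companion homogeneity statement: replacing $z_1$ by $cz_1$ with $c\in F$ multiplies the first-row entries $\tr((cz_1)z'_j)$ by $c$, using centrality of $c$ to write $cz_1z'_j=c(z_1z'_j)$ and then $F$-linearity of $\tr$, so the determinant scales by $c$.

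For part (2), suppose $\sum_i c_iz_i=0$ in $A_F$ with $c_i\in F$ not all zero. Applying $\tr(\,\cdot\,z'_j)$ and invoking $F$-linearity gives $\sum_i c_i\tr(z_iz'_j)=0$ for every $j$, so the rows of the defining matrix satisfy a nontrivial $F$-linear relation and the determinant vanishes in $F$; this is precisely $d_v(Z,Z':\tr)=0$. Part (4) is then a bookkeeping consequence of (1)--(3): write each $z_i$ and each $z'_j$ as an $R$-linear combination of elements of the generating set $X$, and expand $d_v(Z,Z':\tr)$ by multilinearity in all $2v$ arguments. This produces an $R$-linear combination (the coefficients being products of the expansion scalars, which lie in $R$) of terms $d_v(X_1,X_2:\tr)$ whose rows are drawn from $X$ and whose columns are drawn from $X$. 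Any term repeating an element of $X$ among its rows (or among its columns) has a linearly dependent $X_1$ (resp.\ $X_2$) and hence vanishes by part (2), leaving only terms in which $X_1$ and $X_2$ are genuine $v$-element subsets of $X$, as claimed.

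I do not expect a genuine obstacle here—this is why the paper lists these as linear algebra facts without proof—and the only points meriting care are verifying that $F$ is central in $A_F$ (needed to pull scalars through $\tr$ in part (3)) and checking that the multilinear expansion in part (4) indeed collapses onto honest $v$-element subsets after discarding the degenerate terms killed by part (2).
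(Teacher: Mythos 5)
Your proof is correct, and it supplies exactly the standard multilinearity/alternation argument that the authors have in mind: the paper states Lemma~\ref{yylem1.9} as ``a list of linear algebra facts without proof,'' so there is no written proof to diverge from. The two points you flag as needing care (centrality of $F$ in $A_F$, so that $\tr$ is $F$-linear, and the collapse of the multilinear expansion in part~(4) onto genuine $v$-element subsets via part~(2)) are indeed the only nontrivial checks, and you handle both correctly.
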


\begin{definition}\label{yydef1.10}
A subset $b=\{b_1,\dots, b_w\}\subset A$ is called a \emph{semi-basis}
of $A$ if it is an $F$-basis of $A_F$, where $b_i$ is viewed as
$b_i\otimes 1\in A_F$. In this case $w$ is the rank of $A$ over $R$.
The set $b$ is called a \emph{quasi-basis} of $A$ (with respect to
$X$) if
\begin{enumerate}
\item
$b=\{b_1,\dots, b_w\}$  is a semi-basis of $A$, and
\item
There is a set of elements $X=\{x_j\}_{j\in J}$ containing $b$ such
that $A$ is generated by $X$ as an $R$-module and every element
$x_j\in X$ is of the form $c b_i$ for some $c\in F$ and $b_i\in b$.
We denote the element $c$ by $(x_j:b_i)$.
\end{enumerate}
Let $Z:=\{z_1,\dots,z_w\}$ be a subset of $A$. If $b$ is a semi-basis,
then for each $i$,
\[
z_i=\sum_{j=1}^w a_{ij} b_j \quad {\text{for some $a_{ij}\in F$}}.
\]
The $w\times w$-matrix $(a_{ij})$ is denoted by $(Z:b)$.
Let $X$ be a set of generators of $A$ as an $R$-module, and assume
that $X$ contains
$b$. Let $X/b$ denote the subset of $F$ consisting of nonzero scalars
of the form $\det(Z:b)$ for all $Z\subset X$ with
$|Z|=w$. Let
\[
\mathcal{D}(X/b)= \{ d_w(b:\tr) ff' \mid f, f' \in X/b \}.
\]
Note that if $Z$ and $Z'$ are $w$-element subsets of $X$,
then
\begin{align}
\label{1.10.1}
d_w(Z,Z':\tr)&=\det(\tr(z_iz'_j))=\det ((Z:b)(\tr(b_ib_j))(Z':b)^t)\\
&\notag
=\det(Z:b) \det(Z':b)\det
(\tr(b_ib_j))\\
&\notag
=\det(Z:b) \det(Z':b) d_w(b:\tr)\in \mathcal{D}(X/b).
\end{align}
For any integer $v$, define
\[
\mathcal{D}_v(X)=\{d_v(Z,Z':\tr)\mid Z, Z'\subset X\}.
\]
Then $\mathcal{D}_w(X)=\mathcal{D}(X/b)$. As a consequence
of \eqref{1.10.1}, if $Z$ and $Z'$ are two $R$-bases of $A$, then
\begin{equation}
\label{1.10.2}
d_w(Z,Z':\tr)=_{R^\times} d_w(b:\tr).
\end{equation}
If $b=\{b_1, \dots, b_w\}$ is a quasi-basis with respect to
$X=\{x_j\}_{j \in J}$, then for each $i$, let $C_i$ be the set of
nonzero elements of the form $(x_j:b_i)$ for all $j$. It is easy to
see that every element in $X/b$ is of the form $c_1 c_2\cdots c_w$,
where $c_i\in C_i$ for each $i$. Let
\[
\mathcal{D}^c(X/b)=\{ d_w(b:\tr) \prod_{i=1}^w (c_ic'_i)
\mid c_i, c'_i \in C_i\}.
\]
If $b$ is a quasi-basis with respect to $X$, then
$\mathcal{D}(X/b)=\mathcal{D}^c(X/b)$.
\end{definition}

\begin{lemma}
\label{yylem1.11} Let $X$ be a set of generators of $A$ as an $R$-module and
$w=\rk(A/R)$.
\begin{enumerate}
\item
For any $v\geq 1$, the modified $v$-discriminant ideal $\MD{v}{A}$ is
generated by $d_v(Z,Z':\tr)$ for all $Z,Z'\subset X$.
\item
For any $v\geq 1$, the $v$-discriminant $d_v(A/R)$ is the gcd of
$\mathcal{D}_v(X)$.
\item
If $b$ is a semi-basis of $A$, then $d_w(A/R)=\gcd \mathcal{D}(X/b)$.
\item
If $b$ is a quasi-basis of $A$ with respect to $X$, then
$d_w(A/R)=\gcd \mathcal{D}^c(X/b)$.
\end{enumerate}
\end{lemma}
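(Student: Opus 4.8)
The plan is to prove the four parts as a chain, each resting on its predecessor. Part (1) is essentially a restatement of Lemma~\ref{yylem1.9}(4): by definition $\MD{v}{A}$ is generated by all $d_v(Z,Z':\tr)$ with $Z,Z'$ ranging over $v$-element subsets of $A$, and Lemma~\ref{yylem1.9}(4) writes each such $d_v(Z,Z':\tr)$ as an $R$-linear combination of the $d_v(X_1,X_2:\tr)$ with $X_1,X_2\subset X$. Since the latter are themselves among the generators, the two generating sets yield the same ideal. Concretely, I would expand each $z_i$ and $z'_j$ as an $R$-combination of elements of $X$ and invoke multilinearity of the determinant together with $R$-linearity of $\tr$.

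First I would then leverage part (1) to prove part (2), whose content is that shrinking the index set from $\{Z,Z'\subset A\}$ to $\{Z,Z'\subset X\}$ does not change the gcd. Since $\mathcal{D}_v(X)$ sits inside the full family $\{d_v(Z,Z':\tr)\mid Z,Z'\subset A\}$, every common divisor of the full family is automatically a common divisor of $\mathcal{D}_v(X)$. For the converse I use that common divisors (in the gcd sense) are normal elements of $A$: if $y$ is such a common divisor of $\mathcal{D}_v(X)$, then each $m\in\mathcal{D}_v(X)$ equals $w_m y$ for some $w_m\in A$, and by part (1) an arbitrary $d_v(Z,Z':\tr)=\sum_k r_k m_k$ with $r_k\in R$ central and $m_k\in\mathcal{D}_v(X)$, so $d_v(Z,Z':\tr)=(\sum_k r_k w_{m_k})\,y$ and $y$ divides it too. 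Hence both families have exactly the same normal common divisors, and therefore the same gcd (in particular one exists iff the other does), which is part (2).

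Parts (3) and (4) are then the specialization $v=w$ combined with the identities recorded in Definition~\ref{yydef1.10}. By \eqref{1.10.1}, for $w$-element subsets $Z,Z'\subset X$ one has $d_w(Z,Z':\tr)=\det(Z:b)\det(Z':b)\,d_w(b:\tr)$, so the nonzero members of $\mathcal{D}_w(X)$ are exactly the products $d_w(b:\tr)\,ff'$ with $f,f'\in X/b$, i.e.\ $\mathcal{D}(X/b)$, and the vanishing members are harmless for the gcd. Thus $\gcd\mathcal{D}_w(X)=\gcd\mathcal{D}(X/b)$, and part (3) follows from part (2). For part (4) the quasi-basis hypothesis forces each matrix $(Z:b)$ to be monomial, its single nonzero entry in row $l$ being some $(x_{j_l}:b_i)\in C_i$; the determinant is nonzero exactly when the column indices form a permutation, giving $\pm\prod_{i=1}^w c_i$ with $c_i\in C_i$, and every such product occurs. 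So $\mathcal{D}(X/b)$ and $\mathcal{D}^c(X/b)$ coincide up to the unit signs $\pm 1$, whence $\gcd\mathcal{D}(X/b)=\gcd\mathcal{D}^c(X/b)$ and part (4) follows from part (3).

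The hard part is the normality bookkeeping in part (2): one must verify that a normal common divisor genuinely factors through the $R$-linear combinations supplied by part (1), and that it is the coincidence of the two common-divisor sets --- not merely an a priori assumption that both gcds exist --- which legitimizes the gcd statement. After that, parts (3) and (4) are routine, provided one stays alert to the fact that zero determinants and the signs $\pm 1$ do not affect a gcd taken up to units.
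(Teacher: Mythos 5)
Your proof is correct and follows the same route as the paper, whose own proof is just the two lines ``(1) follows from Lemma~\ref{yylem1.9}(4); (2), (3) and (4) follow from the definition and part (1)''; you have simply supplied the details (the reduction of common divisors via $R$-linear combinations for (2), and the identities $\mathcal{D}_w(X)=\mathcal{D}(X/b)=\mathcal{D}^c(X/b)$ already recorded in Definition~\ref{yydef1.10} for (3) and (4)). No gaps.
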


\begin{proof} (1) This follows from Lemma \ref{yylem1.9}(4).

(2), (3) and (4) follow from the definition and part (1).
\end{proof}

Let $C$ be an algebra. We say that $A\otimes C$ is \emph{$A$-closed} if, for
every $0\neq f\in A$ and $x,y\in A\otimes C$, the equation $xy=f$ implies
that $x,y\in A$ up to units of $A\otimes C$. For example,
if $C$ is connected graded and $A\otimes C$ is a domain,
then $A\otimes C$ is $A$-closed.

\begin{lemma}
\label{yylem1.12}
Let $C$ be a $k$-flat commutative algebra such that $A\otimes C$ is a
domain and let $v$ be a positive integer.
\begin{enumerate}
\item
$\xxMD_v(A\otimes C:\tr\otimes C)=\MD{v}{A}\otimes C$.
\item
Suppose $A\otimes C$ is $A$-closed.
If $d_v(A/R)$ exists, then $d_v(A\otimes C/R\otimes C)$ exists and
equals $d_v(A/R)$.
\end{enumerate}
\end{lemma}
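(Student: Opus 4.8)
The plan is to prove the two statements in turn: part (1) is a multilinearity computation, while part (2) is a gcd-descent argument driven by the $A$-closed hypothesis, whose descent step is where the real work lies. For part (1), note first that $k$-flatness of $C$ makes $\MD{v}{A}\otimes C$ a submodule of $R\otimes C$, so the claimed identity of ideals is meaningful. The inclusion $\MD{v}{A}\otimes C\subseteq\xxMD_v(A\otimes C:\tr\otimes C)$ is immediate, since a generator $d_v(Z,Z':\tr)$ of $\MD{v}{A}$ with $Z,Z'\subset A$ coincides with the generator $d_v(Z,Z':\tr\otimes C)$ of the right-hand ideal. For the reverse inclusion I would exploit that $d_v(Z,Z':\tr\otimes C)=\det((\tr\otimes C)(z_iz'_j))$ is multilinear in the $z_i$ and in the $z'_j$ (Lemma~\ref{yylem1.9}(1),(3)); expanding each $z_i$ and each $z'_j$ as a sum of simple tensors reduces the claim to the case $z_i=a_i\otimes c_i$, $z'_j=a'_j\otimes c'_j$. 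There $(\tr\otimes C)(z_iz'_j)=\tr(a_ia'_j)\otimes c_ic'_j$, so the matrix $((\tr\otimes C)(z_iz'_j))$ factors as $D\,(N_0\otimes 1)\,D'$, where $D=\mathrm{diag}(1\otimes c_i)$, $D'=\mathrm{diag}(1\otimes c'_j)$, and $N_0=(\tr(a_ia'_j))$ is a matrix over $R$. Taking determinants yields $d_v(Z,Z':\tr\otimes C)=d_v(\{a_i\},\{a'_j\}:\tr)\otimes\bigl(\prod_i c_i\prod_j c'_j\bigr)\in\MD{v}{A}\otimes C$, giving the reverse inclusion and hence part (1).

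For part (2), abbreviate $d=d_v(A/R)$, the gcd in $A$ of $\MD{v}{A}$; by part (1) it suffices to show that $d\otimes 1$ is the gcd in $A\otimes C$ of $\MD{v}{A}\otimes C$. That $d\otimes 1$ is a common divisor is straightforward: $d$ is normal in $A$, so $d\otimes 1$ is normal in $A\otimes C$, and since $d$ divides every element of $\MD{v}{A}$ in $A$, the element $d\otimes 1$ divides every element of the ideal $\MD{v}{A}\otimes C$ in $A\otimes C$. (If $\MD{v}{A}=0$ both gcds are $0$ and there is nothing to prove, so assume $\MD{v}{A}\neq 0$.) The substance is maximality: I must show that every normal common divisor $y$ of $\MD{v}{A}\otimes C$ divides $d\otimes 1$. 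To reach $A$ I would first invoke the $A$-closed hypothesis. Picking $0\neq m\in\MD{v}{A}$, we have $m\otimes 1=zy$ for some $z\in A\otimes C$; as $m\otimes 1$ is a nonzero element of $A$, $A$-closedness forces $z$ and $y$ to lie in $A$ up to units of $A\otimes C$. Replacing $y$ by an associate, I may assume $y=a$ with $a\in A$ and $a$ normal in $A\otimes C$.

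The crux, and the step I expect to be the main obstacle, is the descent of this $a$ to $A$: I must check that $a$ is normal in $A$ and divides every $m\in\MD{v}{A}$ inside $A$. In the situation of interest, where $C$ is connected graded, the augmentation $C\to C_0=k$ induces an algebra retraction $\pi\colon A\otimes C\to A$ with $\pi|_A=\mathrm{id}$; applying $\pi$ to $a(A\otimes C)=(A\otimes C)a$ gives $aA=Aa$, and applying $\pi$ to a factorization $m\otimes 1=za$ gives $m=\pi(z)a\in Aa$, so both properties descend at once. For a general $k$-flat $C$ one argues instead with the flat base change of $0\to Aa\to A\to A/Aa\to 0$, using $(A\otimes C)(a\otimes 1)=(Aa)\otimes C$ to transfer the divisibilities back to $A$. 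Once the descent is in place, $a$ is a common divisor of $\MD{v}{A}$ in $A$, so the gcd property of $d$ yields $a\mid d$ in $A$, whence $y$—an associate of $a\otimes 1$—divides $d\otimes 1$ in $A\otimes C$. This proves maximality, so $d_v(A\otimes C/R\otimes C)$ exists and equals $d\otimes 1=d_v(A/R)$, completing part (2).
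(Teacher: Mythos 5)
Your proof is correct and follows essentially the same route as the paper's: part (1) is the multilinearity reduction (the paper phrases it as ``a generating set of $A$ over $R$ is also a generating set of $A\otimes C$ over $R\otimes C$'' and cites Lemma~\ref{yylem1.11}(1), which rests on the same expansion into simple tensors), and part (2) shows that any normal common divisor in $A\otimes C$ can be moved into $A$ by $A$-closedness and then compared with $d_v(A/R)$ via its gcd property. The only difference is one of detail: the paper simply asserts ``hence $d'$ divides $d$'' after descending to $A$, whereas you correctly identify and fill the gap there, checking that normality and divisibility actually transfer from $A\otimes C$ down to $A$ (via the augmentation retraction when $C$ is connected graded, which covers the paper's applications).
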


\begin{proof}
(1) Let $X$ be a set of generators of $A$ as an $R$-module. Then
$X$ is also a set of generators of $A\otimes C$ as an $R\otimes C$-module.
The assertion follows from Lemma \ref{yylem1.11}(1).

(2) Suppose $d:=d_v(A/R)$ exists. Then it is the gcd of $d_v(Z,Z':\tr)$ in
$A$ for all $Z,Z'\subset X$ [Lemma \ref{yylem1.11}(2)].
Let $d'$ be a common divisor of $d_v(Z,Z':\tr)$ in $A\otimes C$ for
all $Z,Z'\subset X$. Then we may assume that $d'$ is in $A$ by
the $A$-closedness
of $A\otimes C$. Hence $d'$ divides $d$. Therefore $d$ is the gcd of
$\{d_v(Z,Z':\tr)\mid Z,Z'\subset X\}$ in $A\otimes C$.
The assertion follows from Lemma \ref{yylem1.11}(2).
\end{proof}

As before let $A$ be a filtered algebra with standard filtration
$F_n A=(k\oplus Y)^n$, where $Y:=\bigoplus_{i=1}^n kx_i$ generates $A$,
and assume that the associated graded ring $\gr A$ is a connected graded domain.
Let $C(A)$ denote the center of $A$.
The discriminant of $A$ can also control the automorphism group of
$A[t]$. For any $g\in \Aut(A)$, $c\in k^\times$ and $r\in C(A)$, the map
\begin{equation}\label{1.12.1}
 \sigma: t\mapsto ct+r, \quad  x\mapsto g(x) \quad {\text{for all $x\in A$}}
\end{equation}
determines uniquely a \emph{triangular} automorphism of $A[t]$.  The
non-affine automorphisms given in Example \ref{yyex1.8} can be viewed
as elementary triangular automorphisms of the Ore extension
$D[x_n;\tau]$, where $D$ is the subalgebra generated by
$\{x_1,\dots,x_{n-1}\}$. We associate the triangular automorphism
$\sigma$ \eqref{1.12.1} with the upper triangular matrix
$\begin{pmatrix} g & r\\0& c\end{pmatrix}$.  The triangular
automorphisms form a subgroup of $\Aut(A[t])$, denoted by
$\begin{pmatrix} \Aut(A)& C(A)\\0& k^\times\end{pmatrix}$ or
$\Aut_{\tr}(A[t])$.  Explicit examples are computed in \cite[Theorems
4.10 and 4.11]{CPWZ}.

Now we are ready to prove Theorem \ref{yythm0.1},
which is a discriminant criterion for affine automorphisms.

\begin{theorem}
\label{yythm1.13} 
Let $A$ be an algebra and let $Y$ be a $k$-subspace of $A$ which
generates $A$ as an algebra. Give $A$ the standard filtration $F_n A =
(k+Y)^n$ and suppose that the associated graded ring $\gr A$ is a
connected graded domain. Suppose also that $A$ has finite rank over its
center $C(A)$. Assume that there is an integer $v\geq 1$
such that the $v$-discriminant $d_v(A/C(A))$ is locally dominating
with respect to $Y$.  In parts {\rm{(2--5)}} we further assume that
$d_v(A/C(A))$ is dominating with respect to $Y$. Then the following
hold.
\begin{enumerate}
\item
Every automorphism of $A$ is affine.
\item
$\Aut(A[t])=\Aut_{\tr}(A[t]).$
\end{enumerate}
 Suppose  that
$\Z\subset k$ in parts {\rm{(3,4,5)}} and further
that $k$ is a field in
part {\rm{(5)}}.
\begin{enumerate}
\setcounter{enumi}{2}
\item
Every locally nilpotent derivation $\partial$ of $A[t]$ is of the form
\[
\partial(x)=0 \quad {\text{for all $x\in A$}},
\quad \partial(t)=r \quad {\text{for some $r\in R$}}.
\]
\item
Every locally nilpotent derivation of $A$ is zero.
\item
$\Aut(A)$ is an algebraic group that fits into an exact
sequence
\[
1\to (k^\times)^r\to \Aut(A)\to S\to 1
\]
for some finite group $S$. Indeed, $\Aut(A)=S\ltimes
(k^\times)^r$.
 \end{enumerate}
\end{theorem}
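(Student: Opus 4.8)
The plan is to treat the five parts in the stated order, extracting everything from two facts established above. First, the center $C(A)$ is characteristic, so any $g\in\Aut(A)$ and its inverse preserve $C(A)$, and Lemma~\ref{yylem1.4}(4) applies: the $v$-discriminant $f:=d_v(A/C(A))$ satisfies $g(f)=\lambda f$ for some unit $\lambda$. Second, because $\gr A$ is a connected graded domain, $A$ is a domain with $A^\times=k^\times$, so ``up to a unit'' means ``up to a nonzero scalar,'' and in particular $\deg g(f)=\deg f$. For part (1), $f$ is locally dominating with respect to $Y$, so writing $f=f(x_1,\dots,x_n)$ and $y_i=g(x_i)$ we have $g(f)=f(y_1,\dots,y_n)$. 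If some $\deg y_{i_0}>1$, then Definition~\ref{yydef1.6}(1)(b) forces $\deg f(y_1,\dots,y_n)>\deg f$, contradicting $\deg g(f)=\deg f$. Hence $\deg g(x_i)=1$ for all $i$ (it cannot be $0$, as $g$ is injective and $x_i\notin k$), i.e.\ $g$ is affine.

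For part (2) I would first transport the discriminant to $A[t]$. The center of $A[t]$ is $C(A)[t]$, and since $k[t]$ is connected graded and $A[t]$ is a domain, $A[t]$ is $A$-closed, so Lemma~\ref{yylem1.12}(2) gives $d_v(A[t]/C(A)[t])=f$, still a polynomial in $x_1,\dots,x_n$ only. As $A$ is finite over its center, $A[t]$ is PI with $\gr(A[t])=(\gr A)[\bar t]$ a connected graded domain, so the (now strengthened) dominating hypothesis applies with $T=A[t]$ and the linearly independent set $\{g(x_i)\}$: exactly as in part (1) one obtains $\deg g(x_i)=1$, say $g(x_i)=u_i+c_it$ with $u_i\in F_1A$ and $c_i\in k$. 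Next, $g(t)$ commutes with each $g(x_i)$ and with itself, and these generate $A[t]$, so $g(t)$ is central, i.e.\ $g(t)\in C(A)[t]$. The remaining step, which I expect to be the main obstacle, is to show that all $c_i=0$ and that $g(t)=ct+r$ with $c\in k^\times$ and $r\in C(A)$ --- precisely triangularity. The leverage is that the central variable $t$ drops out of commutators, $[g(x_i),g(x_j)]=[u_i,u_j]$, so that $g$ preserving the noncommutative relations of $A$ forces the $c_i$ to vanish; combined with the $t$-degree valuation $\nu$ on the domain $A[t]$ and the free decomposition $A[t]=\bigoplus_k g(A)\,g(t)^k$, one then pins down $g(A)=A$ and $\nu(g(t))=1$ with a scalar leading coefficient, giving the triangular form.

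For parts (3) and (4), assume $\Z\subset k$ (extending scalars to $\Q$ so that $\exp$ is available, as in Proposition~\ref{yypro1.5}). Given a locally nilpotent derivation $\partial$ of $A[t]$, each $\exp(c\partial)$ is an automorphism, hence triangular by part (2), so $\exp(c\partial)(x_i)\in A$ and $\exp(c\partial)(t)\in kt\oplus C(A)$ for all $c$. A Vandermonde argument then gives $\partial^j(x_i)\in A$ and $\partial^j(t)\in kt\oplus C(A)$ for all $j$; since $\partial$ is locally nilpotent, the $t$-coefficient of $\partial(t)$ vanishes, so $\partial(t)=r\in C(A)$, and $\partial$ restricts to a locally nilpotent derivation of $A$ with $\exp(c\partial)|_A$ affine by part (1). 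It remains to prove $\partial|_A=0$ (which is part (4)): Proposition~\ref{yypro1.5} gives $\partial(f)=0$, and affineness plus Vandermonde gives $\partial(x_i)\in F_1A$, so $\partial$ preserves the filtration and induces a degree-zero derivation $\bar\partial$ of $\gr A$, again locally nilpotent and with $\bar\partial(\gr f)=0$. The final step is to conclude that a degree-zero derivation (nilpotent on $\gr_1$) annihilating the dominating form $\gr f$ must vanish, using the domain structure of $\gr A$; this yields $\bar\partial=0$ and then $\partial=0$. Part (4) is the case $\partial(t)=0$ of part (3).

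For part (5), with $k$ a field, part (1) shows every $g$ preserves the finite-dimensional space $F_1A=k\oplus Y$ and the flag $k\subset F_1A$, and $g$ is determined by $g|_{F_1A}$ because $Y$ generates $A$. Thus $\Aut(A)$ embeds as the subgroup of the affine group of $Y$ cut out by the polynomial conditions that the defining relations of $A$ be preserved, so it is a linear algebraic group. Parts (3) and (4) say it contains no nontrivial one-parameter unipotent subgroup (these correspond to nonzero locally nilpotent derivations), so the identity component $\Aut(A)^\circ$ is a connected algebraic group with no additive part, i.e.\ a torus $(k^\times)^r$; an algebraic group has finitely many components, so $S:=\Aut(A)/\Aut(A)^\circ$ is finite, giving \eqref{0.1.1}. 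The splitting $\Aut(A)=S\ltimes(k^\times)^r$ I would obtain by realizing $S$ inside the normalizer of the maximal torus, acting by permutations as in the skew-polynomial examples.
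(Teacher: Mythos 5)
Your part (1) is correct and is exactly the paper's argument (Lemma~\ref{yylem1.4}(4) plus the locally dominating property, i.e.\ the proof of \cite[Lemma 2.6]{CPWZ}), and your outline of part (5) matches the intended algebraic-group argument. But there are two genuine gaps. The first is the step you yourself flag in part (2): showing that the coefficients $c_i$ of $t$ in $h(x_i)=u_i+c_it$ vanish. Your proposed mechanism (that the relations of $A$, via commutators, force $c_i=0$) is a heuristic that works in specific presentations but is not an argument for a general $A$ satisfying only the stated hypotheses. The actual mechanism is built into Definition~\ref{yydef1.6}(2): ``dominating'' quantifies over \emph{every} $\N$-filtered PI algebra $T$ with $\gr T$ a connected graded domain, so one may take $T=A[t]$ with a filtration in which $\deg t\geq 2$ (and $\deg x_i=1$). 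Since $d_v(A[t]/C(A[t]))=d_v(A/C(A))$ involves only the $x_i$, its degree is unchanged under this re-weighting, while $h(f)=\lambda f$ with $\lambda\in k^\times$ still holds; if some $h(x_{i_0})$ had a nonzero $t$-component it would have degree at least $2$ in the new filtration, forcing $\deg f(h(x_1),\dots,h(x_n))>\deg f$, a contradiction. This gives $h(x_i)\in Y\oplus k$ directly; your subsequent $t$-degree argument pinning down $h(t)=ct+r$ is then fine.

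The second gap is the final step of part (4), on which your part (3) also depends: the claim that a degree-zero locally nilpotent derivation of $\gr A$ annihilating the dominating element $\gr f$ must vanish is not justified, and I do not see how to justify it at this level of generality --- killing a single normal element is far too weak a constraint, and the passage from $\bar\partial=0$ back to $\partial=0$ is itself not automatic for filtered algebras. The intended argument derives (4) from (2) by a different device: given a locally nilpotent derivation $\partial$ of $A$, the map $G\colon a\mapsto\sum_{j\geq 0}\tfrac{t^j}{j!}\partial^j(a)$, $G(t)=t$, is an automorphism of $A[t]$ (with inverse built from $-\partial$); by part (2) it is triangular, so $G(a)\in A$ for every $a\in A$, which forces $\partial^j(a)=0$ for all $j\geq 1$, i.e.\ $\partial=0$. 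This is the content of \cite[Lemma 3.3]{CPWZ}, which the paper invokes again in the proofs of Theorems~\ref{yythm3.10} and~\ref{yythm5.7}. With (4) established this way, your reductions in parts (3) and (5) go through as sketched (modulo the splitting $\Aut(A)=S\ltimes(k^\times)^r$, which you only gesture at, as does the paper).
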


\begin{proof} (1) Let $g\in \Aut(A)$.
By Lemma \ref{yylem1.4}(4), $d_v(A/C(A))$ is $g$-invariant.
By \cite[Lemma 2.6]{CPWZ}, $g$ is affine.

(2) Note that $A\otimes k[t]$ is $A$-closed (taking $C=k[t]$),
and $(A[t])^\times=A^\times$.
By Lemma \ref{yylem1.12}(2),
$d_v(A/C(A))=_{A^\times} d_v(A[t]/C(A[t])$. Then the proof of
\cite[Lemma 3.2]{CPWZ} works for $d_v(A/C(A))$. Let $h\in \Aut(A[t])$.
By \cite[Lemma 3.2(2)]{CPWZ}, $h(x_i)\in
Y\oplus k\subset A$, or $h(A)\subset A$.
Applying \cite[Lemma 3.2(3)]{CPWZ}
to $h':=h^{-1}$, we have $h'(A)\subset A$. Thus
$h|_A$ and $h'|_A$ are inverse to each other and
hence $h|_A\in \Aut(A)$. The rest is the same as the proof of
\cite[Theorem 3.5]{CPWZ}.

(3), (4) and (5). By localizing the commutative domain $k$,
we may assume that $k$ is a field of characteristic zero.
The rest of the proof follows from
the proof of \cite[Theorem 3.5(2,3,4)]{CPWZ}.
\end{proof}

In this paper we only consider standard filtrations.
As explained in \cite[Example 5.8]{CPWZ}, the ideas presented
here may be applied to non-standard filtrations.

\section{The discriminant and skew polynomial rings}
\label{yysec2}

In the first half of this section we discuss some properties related
to the center of skew polynomial rings. In the second half of
the section, we compute the discriminant of the skew polynomial ring
over its center.

Recall that the skew polynomial
ring $k_{p_{ij}}[x_1,\dots,x_n]$ is a  connected graded Koszul
algebra that is generated by $x_i$ with $\deg x_i=1$,
and subject to the quadratic relations $x_jx_i=p_{ij}x_ix_j$ for all
$i<j$, where $p_{ij}\in k^\times$ for all $i<j$.
We also write $k_{p_{ij}}[\underline{x}_n]$
for the skew polynomial ring $k_{p_{ij}}[x_1,\dots,x_n]$.
It is well-known that, if $k$ is a field, then
$k_{p_{ij}}[\underline{x}_n]$ is a noetherian domain of
Gelfand-Kirillov dimension,
Krull dimension, and global dimension $n$ \cite{MR}.
If the parameters $p_{ij}$ are generic (and $\ch k=0$),
then $\Aut(k_{p_{ij}}[\underline{x}_n])=(k^\times)^n$
\cite{AlC, Y1}. In this paper we are interested in the case
when the $p_{ij}$ are not generic.

Consider the following two conditions:
\begin{enumerate}
\item[(H1)]
$x_i$ is not central in $k_{p_{ij}}[\underline{x}_n]$ for all
$i=1,\dots, n$.
\item[(H2)]
 $p_{ij}$ is a root of unity for all $i<j$.
\end{enumerate}

Throughout the rest of this section let
$A=k_{p_{ij}}[\underline{x}_n]$. Note that every monomial
$x_1^{d_1}\cdots x_n^{d_n}$ is normal in $A$. Condition (H1) ensures
that $A$ is not a commutative polynomial ring. Condition (H2) implies
that $A$ is PI.
Since $A$ is $\Z^n$-graded with $\deg x_i=(0,\dots, 1,\dots,0)$,
where $1$ is in the $i$th position, the center of $A$ is
$\Z^n$-graded.
Thus the center of $A$ has a $k$-linear basis consisting of
monomials.

\begin{definition}\label{yydef2.1}
For each $i$, define an automorphism $\phi_i$ of $A$, called a
\emph{conjugation automorphism} or \emph{conjugation by $x_i$}, by
\[
\phi_i(x_j)=p_{ij} x_j\quad \forall \; i,j
\]
(where, as earlier, $p_{ii}=1$ for all $i$ and $p_{ij}=p_{ji}^{-1}$ if
$i > j$).
\end{definition}

For each monomial $f:=x_1^{d_1}\cdots x_n^{d_n}$,
$\phi_i(f)=\prod_{j=1}^n p_{ij}^{d_j} f$.  Hence $\phi_i(f)=f$ if and
only if $\prod_{j=1}^n p_{ij}^{d_j}=1$. Since $\phi_i$ is conjugation
by $x_i$, $x_i$ commutes with $f$ if and only if $\phi_i(f)=f$, and
then if and only if $\prod_{j=1}^n p_{ij}^{d_j}=1$.  Define
\[
T=\{(d_1,\dots,d_n)\in \N^n\mid \prod_{j=1}^n p_{ij}^{d_j}=1\;
\forall\;  i\}.
\]
If $W$ is any subset of $\N^n$, let
\[
X^W=\{x_1^{d_1}\cdots x_n^{d_n} \mid (d_1,\dots,d_n)\in W\}.
\]

\begin{lemma}
\label{yylem2.2} Retain the above notation. Then
the following hold.
\begin{enumerate}
\item
The center $C(A)$ of $A$ has a  monomial basis
$\{f\mid f\in X^T\}$.
\item
Assume {\rm{(H2)}} and that $k$ is a field. Then $C(A)$ is
Cohen-Macaulay.
\end{enumerate}
\end{lemma}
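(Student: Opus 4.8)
The plan is to prove the two parts separately; part~(1) is essentially immediate from the preceding discussion, while part~(2) rests on identifying $C(A)$ with a commutative affine semigroup ring and invoking Hochster's theorem. For part~(1), recall that the center is $\Z^n$-graded and hence spanned by monomials. A monomial $m_d:=x_1^{d_1}\cdots x_n^{d_n}$ lies in the center iff it commutes with every $x_i$, which, as noted above, happens iff $\phi_i(m_d)=m_d$, i.e.\ iff $\prod_{j}p_{ij}^{d_j}=1$ for all $i$, i.e.\ iff $d\in T$. Thus $\{m_d\mid d\in T\}=X^T$ is the desired monomial basis.

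For part~(2), I would first record the multiplicative structure of $C(A)$. Writing the product of two central monomials in normal form gives $m_d m_{d'}=\mu(d,d')\,m_{d+d'}$, where $\mu(d,d')=\prod_{j<l}p_{jl}^{\,d_l d'_j}\in k^\times$; so $C(A)$ is the twisted semigroup algebra $k_\mu[T]$ for the normalized $2$-cocycle $\mu$. This cocycle is genuinely nontrivial in general --- for instance, in $k_q[x_1,x_2,x_3]$ with $q$ a primitive cube root of unity one has $(x_1x_2^2x_3)^2=q^2\,x_1^2x_2^4x_3^2$ --- so $C(A)$ is not literally the untwisted semigroup algebra $k[T]$. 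However, since $C(A)$ is commutative, $\mu$ is \emph{symmetric}. The decisive step is to show that $\mu$ is a coboundary, so that rescaling $m_d\mapsto\lambda(d)m_d$ yields a $k$-algebra isomorphism $C(A)\cong k[T]$. Here $G:=\langle T\rangle\subseteq\Z^n$ is free abelian of finite rank, and $\mu$ is the restriction to $T$ of the bicharacter $\beta(d,d')=\prod_{j<l}p_{jl}^{\,d_ld'_j}$ on $\Z^n$. The alternating part $\beta(d,d')\beta(d',d)^{-1}$ is a bicharacter that is trivial on $T\times T$ (as $\mu$ is symmetric), hence trivial on all of $G\times G$ since $T$ generates $G$; thus $\beta|_G$ is a symmetric $2$-cocycle on a free abelian group. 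Such cocycles are coboundaries because $\Ext^1_{\Z}(G,k^\times)=0$; concretely, fixing a basis $g_1,\dots,g_m$ of $G$ one checks that $\lambda\bigl(\sum_i n_i g_i\bigr)=\prod_i\beta(g_i,g_i)^{\binom{n_i}{2}}\prod_{i<j}\beta(g_i,g_j)^{n_i n_j}$ does the job. Restricting $\lambda$ to $T$ trivializes $\mu$ and gives $C(A)\cong k[T]$.

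It then remains to prove that $k[T]$ is Cohen-Macaulay, for which I would show $T$ is a normal affine semigroup and quote Hochster's theorem (which is valid over any field). Using (H2), write $p_{ij}=\zeta^{a_{ij}}$ with $\zeta$ a primitive $N$th root of unity, so that $T=L\cap\N^n$ with $L=\{d\in\Z^n\mid\sum_j a_{ij}d_j\equiv0\pmod N\ \forall i\}$ a finite-index subgroup of $\Z^n$. Since $Ne_i\in T$ for each $i$, the cone generated by $T$ is the full orthant $\mathbb{R}_{\ge0}^n$; and because $\langle T\rangle\subseteq L$ we get $\langle T\rangle\cap\mathbb{R}_{\ge0}^n\subseteq L\cap\N^n=T$, so $T=\langle T\rangle\cap\mathrm{cone}(T)$ is saturated, while finite generation follows from Gordan's lemma. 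Hence $T$ is a normal affine semigroup, and Hochster's theorem shows $k[T]\cong C(A)$ is Cohen-Macaulay.

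The main obstacle is the middle step, namely showing that the twisting cocycle $\mu$ is trivial up to rescaling: the naive identification $C(A)=k[T]$ is false, and one must genuinely combine the symmetry of $\mu$ (from commutativity of the center) with the freeness of $\langle T\rangle$. By contrast, the normality of $T$ and the application of Hochster's theorem are routine once the semigroup description is in place.
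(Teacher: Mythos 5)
Your proof is correct, but it takes a genuinely different route from the paper's. For part (2) the paper argues via invariant theory: in characteristic $0$ it simply cites \cite[Theorem 2.2(3)]{SVdB}, and in characteristic $p$ it writes $C(A)=A^S$ for the finite abelian group $S$ generated by the conjugation automorphisms $\phi_i$, observes that $|S|$ is coprime to $p$ (finite subgroups of $k^\times$ are cyclic of order prime to $p$), so $kS$ is semisimple, and then invokes \cite[Lemma 3.2(b)]{KKZ2} on fixed subrings. You instead identify $C(A)$ intrinsically: you note the monomial basis multiplies via a symmetric bicharacter $\mu$, trivialize $\mu$ as a coboundary on the free abelian group $\langle T\rangle$ (your explicit $\lambda$ does check out, and the symmetry of $\mu$ on $T\times T$ does propagate to $\langle T\rangle$ by bimultiplicativity), so $C(A)\cong k[T]$, and then verify that $T=L\cap\N^n$ is a saturated, finitely generated affine semigroup whose cone is the full orthant, so Hochster's theorem applies. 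Both arguments are sound. The paper's is shorter given the references it already relies on, and fits the $\phi_i$/fixed-ring viewpoint used throughout Section 2; yours is uniform in the characteristic, avoids any case split, and yields strictly more information -- an explicit presentation of $C(A)$ as a normal affine semigroup ring, hence in particular the normality of the center -- at the cost of the cocycle-trivialization step, which you rightly flag as the only non-routine point.
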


\begin{proof} (1) This is clear.

(2) If $\ch k=0$, this is well-known \cite[Theorem 2.2(3)]{SVdB}.
Now we assume that $\ch k=p>0$.
Let $S$ be the abelian group generated by the conjugation
automorphisms $\phi_i$.  Then $S$ is a finite
group and $C(A)$ is the fixed subring $A^S$. The order of
$\phi_i$ equals the order of the subgroup $G$ of $k^{\times}$ generated by
$\{p_{i1},p_{i2},\dots,p_{in}\}$. Since $G$ is a subgroup of $k^\times$,
it is cyclic. We may assume that the base field
$k$ is finite. Then $|k|=p^N$ for some $N$
and $k^\times$ is a cyclic group of order $p^N-1$. Thus the order of $G$
is coprime to $p$. Since each $\phi_i$ has order coprime to $p$,
the order of $S$ is coprime to $p$. As a consequence, the group
algebra $k S$ is semisimple.
Then $A^S$ is Cohen-Macaulay by \cite[Lemma 3.2(b)]{KKZ2} (note
that the  proof of \cite[Lemma 3.2(b)]{KKZ2} only uses the fact
$k S$ is semisimple, not the hypothesis $\ch k=0$).
\end{proof}

When $n$ is large, it is not easy to understand $C(A)$ or $T$ completely.
The following lemma is useful in a special case.

\begin{lemma}
\label{yylem2.3} Assume {\rm{(H2)}}. The following are equivalent.
\begin{enumerate}
\item
The center $C(A)$ is a polynomial ring.
\item
There are positive integers $a_1,\dots,a_n$ such that
$(d_1,\dots,d_n)\in T$ if and only if $a_i\mid d_i$ for all $i$.
In other words, $T$ is generated by $(0,\dots, 0,a_i, 0,\dots, 0)$
for $i=1,\dots,n$, where $a_i$ is in the $i$th position,
and the $\N^n$-solutions $(d_1,\dots, d_n)$
to the system of equations
\begin{equation}\label{2.2.1}
\prod_{j=1}^n p_{ij}^{d_j}=1, \quad {\text{for all $i=1,\dots,n$}}
\end{equation}
form the set $\{\sum_{i=1}^n b_i (0,\dots, 0,a_i, 0,\dots, 0)
\mid b_i\geq 0\}$.
\item
There are positive integers $a_1,\dots,a_n$ such that
$C(A)$ is generated by $x_i^{a_i}$ for $i=1,\dots, n$.
\item
$A$ is finitely generated free over $C(A)$.
\end{enumerate}
\end{lemma}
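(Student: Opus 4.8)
The plan is to prove the cycle (1) $\Rightarrow$ (2) $\Leftrightarrow$ (3) $\Rightarrow$ (1) together with the extra loop (3) $\Rightarrow$ (4) $\Rightarrow$ (1). Throughout I would lean on Lemma~\ref{yylem2.2}(1), which says $C(A)$ has the monomial $k$-basis $\{x_1^{d_1}\cdots x_n^{d_n}\mid d\in T\}$; thus $C(A)$ is an $\N^n$-graded subalgebra of $A$ with one-dimensional graded pieces, and $T$ is a submonoid of $\N^n$. Since each $p_{ij}$ is a root of unity by (H2), for every $i$ there is a least positive integer $a_i$ with $a_ie_i\in T$ (take $a_i$ to be the lcm of the orders of the $p_{ji}$), so $T$ always contains the ``coordinate'' submonoid $\bigoplus_i \N a_ie_i$; the content of the lemma is exactly to decide when $T$ equals it.

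The routine implications come first. For (2) $\Leftrightarrow$ (3): condition (2) says $X^T$ is precisely the set of monomials in $x_1^{a_1},\dots,x_n^{a_n}$, and because these are central and multiply with no reordering scalar, this is the same as saying $C(A)$ is generated as an algebra by $x_1^{a_1},\dots,x_n^{a_n}$. For (3) $\Rightarrow$ (1): distinct monomials in the central elements $x_i^{a_i}$ are distinct members of the $k$-basis of $A$, hence $k$-linearly independent, so there is no polynomial relation among the $x_i^{a_i}$ and $C(A)=k[x_1^{a_1},\dots,x_n^{a_n}]$ is genuinely a polynomial ring. For (3) $\Rightarrow$ (4): the $\prod_i a_i$ monomials $\{x_1^{c_1}\cdots x_n^{c_n}\mid 0\le c_i<a_i\}$ form a $C(A)$-basis of $A$, using that any $d\in\N^n$ splits uniquely into a multiple-of-$a_i$ part plus a remainder and that $x^{d'}x^{d''}$ is a unit scalar times $x^{d'+d''}$.

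The first substantial step is (1) $\Rightarrow$ (2). Assuming $C(A)$ is a polynomial ring, it has Krull dimension $n$ (the group generated by $T$ has rank $n$), so it is a polynomial ring in exactly $n$ variables. Because $C(A)$ is $\N^n$-graded and connected for the total-degree $\N$-grading, its space of indecomposables $C(A)_{\ge1}/C(A)_{\ge1}^2$ is $\N^n$-graded, so a minimal generating set may be chosen to consist of $\N^n$-homogeneous elements, that is, of monomials $x^{d^{(1)}},\dots,x^{d^{(n)}}$ with $d^{(i)}\in T$; these are algebraically independent and generate, so comparing monomial bases gives $T=\sum_i\N d^{(i)}$ with the $d^{(i)}$ $\Q$-linearly independent. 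Now for each $p$ write $a_pe_p=\sum_j c_j d^{(j)}$ with $c_j\in\N$; comparing the $q$-th coordinate for $q\ne p$ forces $c_j d^{(j)}_q=0$, so every generator actually occurring (with $c_j>0$) is supported on the single index $p$. Each of the $n$ coordinate directions is thereby realized by a distinct single-index generator, and since there are exactly $n$ generators, after reindexing $d^{(i)}=m_ie_i$ for positive integers $m_i$; hence $T=\bigoplus_i\N m_ie_i$, which is (2).

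Finally, (4) $\Rightarrow$ (1), via regularity. Here $A$ is an iterated Ore extension of $k$, hence (for $k$ a field) Artin--Schelter regular of global dimension $n$, and $C(A)\hookrightarrow A$ is a flat (indeed free) local graded homomorphism; descent of regularity along flat local homomorphisms then gives $C(A)$ finite global dimension, and a connected $\N$-graded commutative noetherian algebra of finite global dimension is a polynomial ring, which is (1). The two points I expect to require the most care are the extraction of \emph{monomial} minimal generators in (1) $\Rightarrow$ (2), which rests on the compatibility of the $\N^n$-grading with the total-degree grading together with graded Nakayama, and the regularity-descent step in (4) $\Rightarrow$ (1), which really wants $k$ to be a field; over a general commutative domain one should first reduce to the field case (e.g.\ by passing to $\Frac(k)$, noting that freeness of $A$ over $C(A)$ and the monomial description of $T$ are unaffected by this base change).
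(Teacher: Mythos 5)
Your proof is correct and takes essentially the same route as the paper: the substance in both cases is (1) $\Rightarrow$ (2) via the count of $n$ minimal generators together with the $\N^n$-grading (the paper shows the $x_i^{a_i}$ span $\mathfrak{m}/\mathfrak{m}^2$, you show every $\N^n$-homogeneous minimal generator is forced to be a pure power --- equivalent arguments), and (4) $\Rightarrow$ (1) by passing to $\Frac(k)$ and descending finite global dimension along the free extension $C(A)\hookrightarrow A$, which is exactly the paper's appeal to Lemma 1.11 of Kirkman--Kuzmanovich--Zhang. The only cosmetic difference is that you close the loop with an extra direct implication (3) $\Rightarrow$ (1), whereas the paper runs the single cycle (1) $\Rightarrow$ (2) $\Rightarrow$ (3) $\Rightarrow$ (4) $\Rightarrow$ (1).
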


\begin{proof} (1) $\Rightarrow$ (2) By localizing $k$,
we may assume that $k$ is a field.
Since $C(A)$ and $A$ have the same Gelfand-Kirillov dimension,
the number of generators in $C(A)$ must be $n$.
Let $a_i$ be the minimal integer such that $(0,\dots, 0,a_i, 0,\dots, 0)
\in T$. Then $x_i^{a_i}\in C(A)$, but is not generated by any other elements
in $C(A)$. Let $\mathfrak{m}$ be the graded ideal $C(A)_{\geq 1}$.
Then the images of $x_i^{a_i}$ in $\mathfrak{m}/\mathfrak{m}^2$
(still denoted by $x_i^{a_i}$)
are linearly independent elements; since $\mathfrak{m}/\mathfrak{m}^2$
is a free $k$-module of rank $n$, we have
$\mathfrak{m}/\mathfrak{m}^2=\oplus_{i=1}^n k x_i^{a_i}$.
Thus $C(A)$ is generated by $x_i^{a_i}$. The assertion follows.

(2) $\Rightarrow$ (3) and
(3) $\Rightarrow$ (4) are clear.

(4) $\Rightarrow$ (1) Let $F$ be the field of fractions of $k$.
Then $A\otimes F$ is finitely generated free over $C(A)\otimes F$.
Since $F$ is a field, $C(A)\otimes F$ has global dimension $n$
\cite[Lemma 1.11]{KKZ1}. The only connected graded commutative
algebra of finite global dimension
is the  polynomial ring. So $C(A)\otimes F$ is a
polynomial ring.  By the proof of
(1) $\Rightarrow$ (2) for $k=F$, $C(A)\otimes F$ is generated by
$x_i^{a_i}$s. Therefore $C(A)$ is generated by $x_i^{a_i}$.
Thus $C(A)$ is a polynomial ring.
\end{proof}

\begin{example}
\label{yyex2.4} Let $q$ be a primitive $\ell$th root of unity,
and write $p_{ij}=q^{\phi_{ij}}$ for some integers $\phi_{ij}$.
\begin{enumerate}
\item
 If  $\det(\phi_{ij})$ is invertible in $\Z/(\ell)$,
then the center
of $A$ is $k[x_1^{\ell},\dots,x_n^{\ell}]$. To see this, let
$x_1^{d_1}\cdots x_n^{d_n}$ be in the center. By the
definition of $T$, we have $\prod_{j=1}^n p_{ij}^{d_j}=1$
for all $i$, or equivalently,
\[
\sum_{j=1}^n \phi_{ij} d_j\equiv 0 \pmod{\ell}.
\]
Since $\det(\phi_{ij})_{n\times n}$ is invertible in $\Z/(\ell)$,
$d_i \equiv 0\pmod{\ell}$, or $\ell \mid d_i$. It is clear that
$x_i^{\ell}\in C(A)$. Thus $C(A)=k[x_1^{\ell},\dots,x_n^{\ell}]$.

Note that we may take $\phi_{ii}=0$
and $\phi_{ji}=-\phi_{ij}$. Then the matrix  $(\phi_{ij})$ is
skew-symmetric. Hence $\det(\phi_{ij})$ being
invertible can only  happen when $n$ is even.
\item
A special case of (1) is when $\phi_{ij}=1$ for all $i<j$ (or
$p_{ij}=q$ for all $i<j$). When $n$ is even, then, by linear
algebra, $\det(\phi_{ij})=1$, which is invertible for any $\ell$.
In this case the center of $k_q[x_1,\dots,x_n]$ is
$k[x_1^{\ell},\dots,x_n^{\ell}]$.
\item
When $n$ is odd, there are different kinds of examples
for which $C(A)$ is a polynomial ring.
Let $n=3$ and $q$ be a primitive $\ell$th root of unity. Suppose $\ell=abc$,
where $a,b,c\geq 2$ are pairwise coprime. Let $p_{12}=q^{ab}$,
$p_{13}=q^{ac}$, and $p_{23}=q^{bc}$. Then one can check that the center of
$k_{p_{ij}}[x_1,x_2,x_3]$ is $k[x_1^{bc},x_2^{ac},x_3^{ab}]$.
Higher dimensional examples can be constructed in a similar way.
\item
Again let $n=3$, $q$ be a primitive $\ell$th root of unity,
and $\ell=abc$, where $a,b,c\geq 2$ are pairwise coprime.
Let $p_{12}=q^a$, $p_{13}=q^{-b}$, and $p_{23}=q^c$.
Then the center $C(A)$ is not a polynomial ring. To see this, note that the
monomials $x_1^{\ell}, x_2^{\ell},x_3^{\ell},
x_1^cx_2^bx_3^a$, and so on, are generators of $C(A)$,
but $x_1^c$ is not in the  center. By Lemma \ref{yylem2.3},
$C(A)$ is not a polynomial ring, and in this case,
$C(A)\subset k\langle  x_1^c, x_2^b, x_3^a\rangle$.
\end{enumerate}
\end{example}

Note that under the hypothesis (H2), the subgroup of
$k^\times$ generated by $\{p_{ij}\}$ is $\langle q\rangle$
for some  root of unity $q$.

\begin{lemma}
\label{yylem2.5}
Assume {\rm{(H1)}} and {\rm{(H2)}}.
Assume  that the group generated by $\{p_{ij}\}$
is $\langle q\rangle$, where $q$ is a primitive $\ell$th root of
unity and $\ell$ is a prime number. If $C(A)$ is not a polynomial ring,
then there is a solution $(d_1,d_2,\dots,d_n)\in \N^n$ to
the system of equations
\[
\prod_{j=1}^n p_{ij}^{d_j}=1, \quad {\text{for all $i=1,\dots,n$}}
\]
such that $d_s=1$ for some $s$.
\end{lemma}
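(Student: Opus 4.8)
The plan is to translate the problem into linear algebra over the field $\Z/(\ell)$ and then exploit that $\ell$ is prime. Since the group generated by the $p_{ij}$ is $\langle q\rangle$ with $q$ of exact order $\ell$, I would first write $p_{ij}=q^{\phi_{ij}}$ for uniquely determined $\phi_{ij}\in\Z/(\ell)$, taking $\phi_{ii}=0$ and $\phi_{ji}=-\phi_{ij}$ so that the matrix $(\phi_{ij})$ is skew-symmetric. Under this substitution the defining system $\prod_{j=1}^n p_{ij}^{d_j}=1$ becomes the homogeneous linear congruence system $\sum_{j=1}^n \phi_{ij}\,d_j\equiv 0 \pmod{\ell}$ for all $i$, whose solvability depends only on the residue of $(d_1,\dots,d_n)$ modulo $\ell$. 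Because $\ell$ is prime, $\Z/(\ell)$ is a field, so the reductions of the solutions form a linear subspace $V\subseteq(\Z/(\ell))^n$, namely the kernel of $(\phi_{ij})$ acting on $(\Z/(\ell))^n$. Thus $T$ is exactly the set of $(d_1,\dots,d_n)\in\N^n$ whose reduction modulo $\ell$ lies in $V$, and reducing modulo $\ell$ gives a surjection $T\to V$ (lift each coordinate to a representative in $\{0,1,\dots,\ell-1\}$).

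Next I would identify the failure of polynomiality in terms of $V$. By (H1) no $x_i$ is central, so for each $i$ the $i$th column of $(\phi_{ij})$ (equivalently, by skew-symmetry, the $i$th row) has a nonzero entry $\phi_{mi}$; since $\ell$ is prime this entry is a unit, and $\phi_{mi}\,a\equiv 0\pmod\ell$ forces $\ell\mid a$. Hence the least positive integer $a$ with $a\,e_i\in T$ equals $\ell$, for every $i$. Consequently, if the integers $a_1,\dots,a_n$ of Lemma~\ref{yylem2.3}(2) exist they must all equal $\ell$, so that condition~(2) there is equivalent to the statement $T=\{(d_1,\dots,d_n)\in\N^n\mid \ell\mid d_i\ \forall i\}$; and this in turn says precisely that every element of $T$ reduces to $0$ modulo $\ell$, i.e.\ that $V=0$. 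Via the equivalence (1)$\Leftrightarrow$(2) of Lemma~\ref{yylem2.3}, $C(A)$ is a polynomial ring if and only if $V=0$, so the standing hypothesis that $C(A)$ is \emph{not} a polynomial ring gives $V\neq 0$.

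Finally I would produce the desired solution by a scaling argument, which is exactly where primality is essential. Choose any nonzero vector $\bar v\in V$ and a coordinate $\bar v_s\neq 0$; since $\Z/(\ell)$ is a field, $\bar v_s$ is invertible, so $\bar v_s^{-1}\bar v\in V$ has $s$th coordinate equal to $1$. Lifting this vector coordinatewise to representatives in $\{0,1,\dots,\ell-1\}$ yields $(d_1,\dots,d_n)\in\N^n$ with $d_s=1$ whose reduction lies in $V$, hence an element of $T$ solving the system---exactly the required conclusion. I expect the only genuine work to lie in the first two paragraphs: setting up the residue matrix $(\phi_{ij})$ and correctly invoking Lemma~\ref{yylem2.3} to pin down $a_i=\ell$ and the equivalence ``$C(A)$ polynomial $\iff V=0$''. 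Once that is in place the conclusion is immediate, and it is precisely the primality of $\ell$ (making $\Z/(\ell)$ a field) that powers both the computation $a_i=\ell$ and the final rescaling of a nonzero coordinate to $1$.
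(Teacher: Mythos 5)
Your proposal is correct and follows essentially the same route as the paper's proof: the paper likewise uses (H1) and the primality of $\ell$ to see that $x_i^a$ is central exactly when $\ell\mid a$, invokes Lemma \ref{yylem2.3} to produce a solution with some coordinate not divisible by $\ell$, rescales that coordinate to $1$ modulo $\ell$ using invertibility in $\Z/(\ell)$, and then replaces it by $1$ outright since $p_{is}^\ell=1$. Your packaging via the kernel $V$ of the skew-symmetric matrix $(\phi_{ij})$ over $\Z/(\ell)$ is just a more explicit rendering of the same steps, and in fact spells out the appeal to Lemma \ref{yylem2.3} that the paper compresses into a single assertion.
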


\begin{proof} Since $x_i \not \in C(A)$ and $x_i^\ell \in C(A)$, we
have
\[
\ell = \min_{a > 0} \{x_i^a \in C(A) \}.
\]
Since $C(A)$ is not a polynomial ring, there is a solution
$d:=(d_1,d_2,\dots,d_n)$ to system of equations given in the lemma
such that some $d_s$ is not divisible by $\ell$.  Note that any
multiple of $d$ is still a solution. By replacing $d$ by a multiple of
$d$, we have $d_s\equiv 1\pmod{\ell}$ (as $\ell$ is prime).  Finally,
by replacing $d_s$ by $1$ (as $p_{is}^\ell=1$) we obtain the desired
solution.
\end{proof}

Next we compute the discriminant $d(A/R)$ when $R$ is a polynomial ring. We
start with an easy lemma. Let $\Lambda$ be
an abelian group and let $B$ be a $\Lambda$-graded algebra.
Then the center of $B$ is also $\Lambda$-graded.

\begin{lemma}
 \label{yylem2.6} Let $B$ be a
 $\Lambda$-graded algebra and $R$ a central graded subalgebra of $B$.
Suppose that
 $R^\times =k^\times$. For every $v\geq 1$ and any sets of homogeneous
elements $Z=\{z_i\}_{i=1}^v$ and $Z'=\{z'_i\}_{i=1}^v$, the
discriminant $d_v(Z,Z':\tr)$ is either $0$ or homogeneous of degree
$\sum_{i=1}^v (\deg z_i+\deg z'_i)$. As a consequence, if
$B$ is a finitely generated graded free module over
$R$, then $d(B/R)$ is homogeneous.
\end{lemma}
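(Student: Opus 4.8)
The plan is to reduce everything to a single observation: the regular trace is a \emph{graded} map, and consequently every term in the determinant expansion of $d_v(Z,Z':\tr)$ carries the same degree.

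First I would establish that $\tr$ respects the grading. Since $R$ is $\Lambda$-graded with $R^\times=k^\times$, the localization $F$ of $R$ used to define the regular trace may be taken to be $\Lambda$-graded (for instance the localization of $R$ at its nonzero homogeneous elements); then $B_F=B\otimes_R F$ is $\Lambda$-graded and finitely generated free over $F$, and hence admits a homogeneous $F$-basis $\{b_1,\dots,b_w\}$. For a homogeneous element $x\in B$ of degree $\lambda$, write $xb_j=\sum_i a_{ij}b_i$ with $a_{ij}\in F$. Comparing degrees of the two sides forces $\deg a_{ij}=\lambda+\deg b_j-\deg b_i$ whenever $a_{ij}\neq 0$, so each diagonal coefficient $a_{ii}$ is homogeneous of degree $\lambda$. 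As $\tr(x)=\sum_i a_{ii}$ is independent of the chosen basis, it follows that $\tr(x)$ is either $0$ or homogeneous of degree $\lambda=\deg x$.

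Next, for homogeneous $z_i$ and $z'_j$ the product $z_iz'_j$ is homogeneous of degree $\deg z_i+\deg z'_j$, so by the previous step each matrix entry $\tr(z_iz'_j)$ is either $0$ or homogeneous of degree $\deg z_i+\deg z'_j$. Expanding the determinant in the style of the proof of Lemma~\ref{yylem1.4}(5),
\[
d_v(Z,Z':\tr)=\sum_{\pi}\pm\prod_{i=1}^v \tr(z_iz'_{\pi(i)}),
\]
where $\pi$ ranges over all permutations of $\{1,\dots,v\}$, each summand has degree $\sum_{i=1}^v(\deg z_i+\deg z'_{\pi(i)})=\sum_{i=1}^v \deg z_i+\sum_{j=1}^v \deg z'_j$, which is independent of $\pi$. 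Thus every nonzero term of the expansion is homogeneous of the common degree $\sum_{i=1}^v(\deg z_i+\deg z'_i)$, and hence so is the whole sum; this proves the first assertion. For the consequence, when $B$ is finitely generated graded free over $R$ it admits a homogeneous $R$-basis $Z$, and by Definition~\ref{yydef1.1}(3) together with \eqref{1.10.2} one has $d(B/R)=_{R^\times}d_w(Z:\tr)=d_w(Z,Z:\tr)$, which is homogeneous by the first part. Since $R^\times=k^\times$ consists of degree-zero scalars, adjusting by a unit preserves homogeneity, so $d(B/R)$ is homogeneous.

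The main obstacle is the first step: the regular trace is a priori defined only after passing to a localization $F$ of $R$ and is computed via an $F$-basis, so one must check that $F$ and $B_F$ can be equipped with compatible $\Lambda$-gradings and that a homogeneous $F$-basis exists. Once this is in place the degree bookkeeping in the determinant is routine, since the permutation merely reindexes the factors $\deg z'_{\pi(i)}$ without changing their total.
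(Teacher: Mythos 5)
Your proof is correct and follows essentially the same route as the paper: choose a homogeneous (semi-)basis over a graded localization to see that the regular trace sends a homogeneous element to $0$ or a homogeneous element of the same degree, then expand the determinant as a signed sum over permutations and note that every nonzero summand has the common degree $\sum_i(\deg z_i+\deg z'_i)$. Your explicit verification that the trace is graded (via the coefficients $a_{ij}$ of left multiplication) is a detail the paper leaves implicit, but the argument is the same.
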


\begin{proof} The consequence is clear, so we prove the main
assertion.

Let $F$ be the graded field of fractions of $C(B)$.  Since $C(B)$ is
graded, we can choose a semi-basis $b=\{b_1,\dots,b_w\}$ of $B$
consisting of homogeneous elements $b_i$, where $w=\rk(B/C(B))$.  Then
$B$ is a finitely generated graded free module over $F$ with basis
$b$. For each homogeneous element $f$, $\tr(f)$ is either 0 or
homogeneous of degree $\deg (f)$.  In particular, $\tr(z_iz'_j)$ is
either 0 or homogeneous of degree
$\deg(z_iz'_j)=\deg(z_i)+\deg(z'_j)$. By definition, $d_v(Z,Z':\tr)$ is
the determinant $\det (\tr(z_iz'_j))_{v\times v}$, which is a signed
sum of elements
\[
\sum_{\sigma\in S_v} \tr(z_1 z'_{\sigma(1)})\tr(z_2 z'_{\sigma(2)})
\cdots \tr(z_v z'_{\sigma(v)}).
\]
Each above element is either 0 or homogeneous of degree
$\sum_{i=1}^v (\deg z_i+\deg z'_i)$. Hence
$d_v(Z,Z':\tr)$ is either 0 or homogeneous of degree
$\sum_{i=1}^v (\deg z_i+\deg z'_i)$.
\end{proof}

We may consider $k_{p_{ij}}[\underline{x}_n]$
as either $\Z$-graded or $\Z^n$-graded.
Let $k_{p_{ij}}[\underline{x}_n^{\pm 1}]$ denote the
algebra $k_{p_{ij}}[x_1^{\pm 1},\dots, x_n^{\pm 1}]$.
By a monomial in $k_{p_{ij}}[\underline{x}_n^{\pm 1}]$, we mean an element of
the form $c x_1^{a_1}\cdots x_n^{a_n}$ for some $a_i\in \Z$
and some $0\neq c\in k$.

\begin{lemma}
\label{yylem2.7} Let $A=k_{p_{ij}}[\underline{x}_n]$ and
$B=k_{p_{ij}}[\underline{x}_n^{\pm 1}]$ with the natural
$\Z^n$-grading. Let $C(A)$ be the center of $A$.
In parts {\rm{(6)--(9)}} suppose {\rm{(H2)}} and let
$\tr: A\to F$ be the regular trace, where $F$ is the field of
fractions of $C(A)$.
\begin{enumerate}
\item
Every homogeneous element in $B$ is a monomial.
\item
Let $f$ be a homogeneous element in $B$.
Then $f\in A$ if and only if $\deg f\in \N^n$.
\item
For any set $\mathcal{D}$ of monomials in $A$, $\gcd \mathcal{D}$
exists and is a monomial.
\item
The center of $A$ {\rm{(}}respectively, $B${\rm{)}} is a
$\Z^n$-graded
subalgebra of $A$ {\rm{(}}respectively, $B${\rm{)}}.
\item
There is a generating set $X$ of the $C(A)$-module $A$ consisting
of monomials.
\item
If $A$ satisfies {\rm{(H2)}}, then $A$ has a quasi-basis $b$.
\item
The rank $\rk(A/C(A))$ is nonzero in $k$.
\item
For every monomial $f\in B$, $\tr(f)\neq 0$ if and only if
$f\in C(B)$.
\item
The image of $\tr: A\to F$ is in $C(A)$.
\end{enumerate}
\end{lemma}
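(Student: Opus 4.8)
The plan is to treat the first five parts as consequences of the $\Z^n$-grading and then to isolate the single combinatorial object that governs the remaining parts. First I would record that, in the Laurent ring $B$, each $\Z^n$-graded component is the one-dimensional $k$-space spanned by a single monomial $x_1^{a_1}\cdots x_n^{a_n}$; part (1) is then immediate, and part (2) follows because such a monomial lies in $A$ exactly when every exponent is nonnegative, i.e.\ $\deg f\in\N^n$. Part (4) is the general fact (noted just before Lemma~\ref{yylem2.6}) that the center of a $\Lambda$-graded algebra is graded, applied with $\Lambda=\Z^n$. For part (5) I would simply take $X$ to be the set of all monomials with exponents in $\N^n$, which already spans $A$ over $k$ and hence over $C(A)$. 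Part (3) needs slightly more: I would first argue that any normal common divisor of a family of monomials may be taken homogeneous --- in the domain $A$ the lowest and highest total degrees of a product add, so a divisor of a homogeneous element is forced to be homogeneous, hence (by part (1)) a monomial --- and then observe that among monomials the gcd is visibly $x_1^{c_1}\cdots x_n^{c_n}$ with $c_i$ the componentwise minimum of the exponents.

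The heart of the argument is the character homomorphism $\Psi\colon \Z^n\to(k^\times)^n$, $\Psi(\alpha)_i=\prod_{j=1}^n p_{ij}^{\alpha_j}$, which records how $\phi_i$ scales the monomial $x^\alpha$. Its kernel $L$ satisfies $T=L\cap\N^n$, and under (H2) we have $\ell e_i\in T$ for a common $\ell$, so $L=\langle T\rangle$ has finite index in $\Z^n$ and every coset of $L$ meets $\N^n$. For part (6) I would choose one monomial representative $b_i\in A$ from each of the $w:=[\Z^n:L]$ cosets; since a homogeneous element of $F=\Frac(C(A))$ of degree $\gamma$ exists precisely when $\gamma\in\langle T\rangle=L$, the $b_i$ form a semi-basis and every monomial of $X$ is an $F$-multiple of exactly one $b_i$, which is the definition of a quasi-basis. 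This simultaneously shows $\rk(A/C(A))=w=|\im\Psi|$. For part (7) I would then note that $\im\Psi$ is a finite subgroup of $(k^\times)^n$ with every coordinate in $\langle q\rangle$, so its order is coprime to $\ch k$; hence $w$ is nonzero in $k$ (trivially so when $\ch k=0$).

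For part (8) the key device is that each $\phi_i$ fixes $F$ pointwise (it fixes every central monomial), so the equivariance $\tr\circ\phi_i=\phi_i\circ\tr$ together with $\tr(f)\in F$ gives $\tr(\phi_i(f))=\tr(f)$. Writing a monomial $f=cx^\gamma$, we have $\phi_i(f)=\Psi(\gamma)_i\,f$, hence $\tr(f)=\Psi(\gamma)_i\tr(f)$ for every $i$; if $f\notin C(B)$ then some $\Psi(\gamma)_i\neq1$ and, $F$ being a field, $\tr(f)=0$. If instead $f\in C(B)$ then $\gamma\in L$, so $f\in F$ and left multiplication by $f$ is scalar multiplication on the rank-$w$ free module $A_F$, giving $\tr(f)=wf\neq0$ by part (7). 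Part (9) is then immediate by $k$-linearity: on the monomial basis of $A$, $\tr$ sends each non-central monomial to $0$ and each central monomial $x^\alpha\in C(A)$ to $wx^\alpha\in C(A)$, so the image lies in $C(A)$.

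The main obstacle I anticipate is pinning down the rank in parts (6)--(7): one must verify carefully that $L=\langle T\rangle$ (not merely a finite-index overgroup), that coset representatives can be chosen inside $\N^n$, and that $w=|\im\Psi|$ is genuinely prime to $\ch k$. This last point is where (H2) and the absence of $p$-torsion in $k^\times$ are essential, and it is exactly what makes the scalar $w$ in parts (8)--(9) invertible, so that the regular trace detects centrality and its image lands in the center.
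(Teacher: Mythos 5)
Your proposal is correct and follows essentially the same route as the paper: parts (1)--(5) by the $\Z^n$-grading, part (6) by producing a monomial (quasi-)basis indexed by the cosets of the lattice $L$ of central degrees, part (7) by identifying $\rk(A/C(A))$ with the order of a finite subgroup of $(k^\times)^n$ (hence coprime to $\ch k$), and parts (8)--(9) by the fact that the regular trace kills non-central monomials and multiplies central ones by $w$. The only cosmetic differences are that you package (7) via the character map $\Psi$ and its image where the paper embeds the basis into the dual group $S^\vee$ of the group generated by the conjugations $\phi_i$, and you derive (8) from $\phi_i$-invariance of $\tr$ where the paper uses degree-zero homogeneity of $\tr$ --- these are equivalent formulations of the same $\Z^n/L$-grading argument.
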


\begin{proof} (1)--(5) are straightforward.

(6) Since $B$ is a graded division ring,
its center is a graded field. Hence $B$ is finitely generated
graded free over $C(B)$ with a basis $b\subset X$. It is easy
to check that $b$ is a quasi-basis.

(7) Since $\rk(A/C(A))=\rk(B/C(B))$, it suffices to show that
$\rk(B/C(B))$ is nonzero in $k$. By localizing $k$, we may
assume that $k$ is a field. If $\ch k=0$, the assertion is trivial,
so we assume that $\ch k=p>0$.

Following the proof of Lemma \ref{yylem2.2}(2), let $S$ be the
abelian group generated by the automorphisms $\phi_i$.  Then $S$ is a
finite group and $C(B)=B^S$. Since each $p_{ij}$ is a root of unity,
by replacing $k$ by the subfield generated by the $p_{ij}$'s, we may
assume $k$ is finite. Then $|k|=p^N$ for some $N$. By the proof of
Lemma \ref{yylem2.2}(2), the order of $S$ is coprime to $p$.
Since $C(B)$ is a $\Z^n$-graded field, $B$ is a finite
dimensional free module over $C(B)$ with a monomial basis
$b=\{b_1=1,\dots,b_w\}$. Let $S^\vee$ be the dual group of $S$.
Define a map $\Phi: b\to S^\vee$ by $\Phi(b_i)(\phi_j)=\phi_j(b_i)
b_i^{-1}$. If $b_ib_j=b_k c$ for some $c\in C(B)$, then one
can check that $\Phi(b_i)\Phi(b_j)=\Phi(b_k)$. This observation
implies that $\Phi$ is injective and the image of $\Phi$ is a
subgroup of $S^\vee$. Therefore the order of $b$, namely, $\rk(B/C(B))$,
is a divisor of $|S|$, which is coprime to $p$. Equivalently,
$\rk(B/C(B))\neq 0$ in $k$.

(8) The regular trace map $\tr: A\to F$ (or $\tr: B\to F$) can be
defined by composing
\[
\tr: A\to A\otimes_{C(A)} C(B)=B\xrightarrow{lm} M_w(C(B))
\xrightarrow{\tr_{\INT}} C(B)\xrightarrow{=} F,
\]
where $lm$ is the left multiplication map. For any monomial
$f$ in $A$ (or in $B$), $tr(f)$ is either zero or of
degree equal to $\deg(f)$ -- that is, the map $\tr$ is homogeneous of
degree 0 with respect to the $\Z^n$-grading. Thus if $\tr(f) \neq
0$, then $\tr(f) \in C(B)$ is a scalar multiple of $f \in B$, so $f$
is in $C(B)$. If $f\in C(B)$, then $\tr(f)=w f$, where $w=\rk(A/C(A))$
is nonzero in $k$, by part (7).

(9) Since the map $\tr$ is homogeneous of degree 0,
the image $\im \tr (A)$
is in $A$ by part (2). Hence $\im \tr(A) \subset A\cap C(B)=C(A)$.
\end{proof}

\begin{proposition}
\label{yypro2.8} Consider $A$ as a $\Z^n$-graded algebra.
Let $R=k[x_1^{\alpha_1},\dots,x_n^{\alpha_n}]$ be a central
subalgebra of $A$, where the $\alpha_i$ are positive integers.
Let $r=\prod_{i=1}^n \alpha_i$. Then
\[
 d(A/R)=_{k^\times} r^r
(\prod_{i=1}^n x_i^{\alpha_i-1})^r.
\]
As a consequence, if $R$ is the center of
$A$ and $\alpha_i>1$ for all $i$, then
$d(A/R)$ is dominating.
\end{proposition}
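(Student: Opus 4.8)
The plan is to compute the trace matrix on an explicit $R$-basis and evaluate its determinant directly. First I would record that $A$ is free over $R$ with the monomial basis $Z=\{x_1^{i_1}\cdots x_n^{i_n}\mid 0\le i_k\le \alpha_k-1\}$: indeed every monomial $x_1^{d_1}\cdots x_n^{d_n}$ factors uniquely as $\bigl(\prod_k (x_k^{\alpha_k})^{q_k}\bigr)\cdot\prod_k x_k^{s_k}$ with $0\le s_k<\alpha_k$. Hence $w:=\rk(A/R)=\prod_k\alpha_k=r$ and $\tr(1)=r$. Next I would pin down the trace of a monomial $f=\prod_k x_k^{a_k}$: left multiplication by $f$ sends each basis vector to a scalar times a basis vector obtained by reducing exponents modulo the $\alpha_k$, so the diagonal entry at a given basis vector is nonzero only when $a_k\equiv 0\pmod{\alpha_k}$ for every $k$. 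Therefore $\tr(f)\ne 0$ if and only if $\alpha_k\mid a_k$ for all $k$, and in that case $f\in R$ and $\tr(f)=r\,f$. (This is the analogue of Lemma~\ref{yylem2.7}(8)(9) with $R$ in place of $C(A)$, and follows from the same degree-$0$ homogeneity of $\tr$.)

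Now consider the matrix $M:=(\tr(z_iz_j))_{r\times r}$. For $z_i=\prod_k x_k^{i_k}$ and $z_j=\prod_k x_k^{j_k}$, the product $z_iz_j$ is a scalar times $\prod_k x_k^{i_k+j_k}$ with $0\le i_k+j_k\le 2\alpha_k-2$, so by the previous step $\tr(z_iz_j)\ne0$ if and only if, for each $k$, either $i_k=j_k=0$ or $i_k+j_k=\alpha_k$ (these being the only multiples of $\alpha_k$ in range). Thus each row of $M$ has exactly one nonzero entry, located in the column indexed by the involution $\iota$ with $\iota(i)_k=(\alpha_k-i_k)\bmod\alpha_k$; so $M$ is a generalized permutation matrix and
\[
\det M=\operatorname{sign}(\iota)\prod_{i}\tr(z_i z_{\iota(i)}),
\qquad
\tr(z_iz_{\iota(i)})=r\,c_i\!\!\prod_{k:\,i_k\ge1}\!\! x_k^{\alpha_k},
\]
where each $c_i\in k^\times$ is a commutation scalar.

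Finally I would collect terms. The $r$ basis elements contribute the scalar $r^r$; the sign and the $c_i$ combine into a single unit of $k^\times$; and for the monomial part I count, for each $k$, the number of $i\in Z$ with $i_k\ge1$, namely $(\alpha_k-1)\prod_{l\ne k}\alpha_l=(\alpha_k-1)r/\alpha_k$, so that $x_k$ occurs to the total power $\alpha_k\cdot(\alpha_k-1)r/\alpha_k=(\alpha_k-1)r$. This yields $\det M=_{k^\times} r^r\bigl(\prod_k x_k^{\alpha_k-1}\bigr)^{r}$, which is $d(A/R)$ by Definition~\ref{yydef1.1}(3). As a consistency check, Lemma~\ref{yylem2.6} forces $d(A/R)$ to be homogeneous of $\Z^n$-degree $((\alpha_1-1)r,\dots,(\alpha_n-1)r)$, matching the monomial found; note however that homogeneity alone does \emph{not} pin down the scalar $r^r$, which is precisely why the explicit trace computation is needed when $r$ is not a unit of $k$. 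For the consequence, when $R=C(A)$ and each $\alpha_i>1$, the discriminant is a unit times the single monomial $\prod_k x_k^{(\alpha_k-1)r}$ with every exponent $(\alpha_k-1)r\ge1$; by the remark following Definition~\ref{yydef1.6} (an element of the form $x_1^{b_1}\cdots x_n^{b_n}+\cwlt$ with all $b_k\ge1$ is $0$-dominating), $d(A/R)$ is dominating.

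The main obstacle is the bookkeeping in the determinant step: correctly identifying $M$ as a generalized permutation matrix through the involution $\iota$, and then counting, variable by variable, the total power to which each $x_k$ occurs. Tracking the scalar $r^r$ separately from the units, rather than absorbing everything into $k^\times$, also requires care precisely because $r$ need not be invertible in $k$.
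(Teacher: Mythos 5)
Your proof is correct and follows essentially the same route as the paper: the same monomial basis, the same pairing $z_j\mapsto z'_j$ (your involution $\iota$) making the trace matrix a generalized permutation matrix, and the same exponent count that the paper leaves as ``an easy combinatorial argument.'' The only quibble is in the consequence: over a general domain $k$ the scalar $r^r$ need not be a unit, so rather than calling the discriminant ``a unit times'' the monomial you should invoke Lemma~\ref{yylem2.7}(7) (as the paper does) to see that $r$, hence the leading coefficient, is nonzero in $k$.
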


\begin{proof}
First note that there is a graded basis $Z:=\{x_1^{\beta_1}\cdots x_n^{\beta_n}
\mid 0\leq \beta _i <\alpha_i \; \forall i\}$ of $A$ over $R$, so the
rank of $A$ over $R$ is
$r=\prod_{i=1}^n \alpha_i$.

Let $b:=\{z_1=1,z_2, \dots, z_r\}$ be a monomial basis of $A$ over
$R$. For every element $z_j:=x_1^{\beta_1}\cdots x_n^{\beta_n}$ in
the basis $b$, let $z'_j$ be the monomial $x_1^{\beta'_1}\cdots
x_n^{\beta'_n}$, where
\[
\beta'_i=\begin{cases} 0& \text{if $\beta_i=0,$}
\\ \alpha_i-\beta_i& \text{if $\beta_i\neq 0$.}
\end{cases}
\]
One can check that $z'_j$ is the unique element in the basis such that
$z_j z_j'\in R$. For example, $z'_1=1$.
Then $\tr(z_j z_s)=0$ unless $z_s=z_j'$, and in that case
$\tr(z_j z_j')=r z_jz_j'$. Therefore
$\det (\tr(z_iz_j))=_{k^\times} r^r \prod_{j=1}^r
z_jz_j'$. An easy combinatorial argument gives the result.

For the consequence, note that the rank $r$ is nonzero in $k$ by Lemma
\ref{yylem2.7}(7).  Then $d(A/R)$ is of the form given in \cite[Lemma
2.2(1)]{CPWZ}, which is dominating.
\end{proof}

From Lemma \ref{yylem2.3}, we see that if the center of $A$ is a
commutative polynomial ring, then the center is of the form
$k[x_1^{\alpha_1},\dots, x_n^{\alpha_n}]$. So as an immediate
consequence of this, together with Proposition \ref{yypro2.8} and
Theorem \ref{yythm0.1}, if (H2) holds and if the center of $A$ is a
commutative polynomial ring, then every automorphism of $A$ is affine
and every automorphism of $A[t]$ is triangular.

We also consider the discriminant when $C(A)$
is not a polynomial ring. The goal is an explicit
condition that ensures that the discriminant is dominating.
We recall some notation.
Fix a parameter set $\{p_{ij}\mid 1\leq i<j\leq n\}$ and impose the
usual conditions ($p_{ji}=p_{ij}^{-1}$, $p_{ii}=1$) to define $p_{ij}$
for all $1 \leq i,j \leq n$. For any $1\leq s\leq n$, let
\[
T_s=\{(d_1,\dots, \widehat{d}_s,\dots, d_n)\in \N^{n-1}
\mid \prod_{\substack{j=1\\j\neq s}}^n p_{ij}^{d_j}=p_{is} \; \forall \;
i\neq s\}.
\]

\begin{lemma}
\label{yylem2.9} Retain the above notation.
\begin{enumerate}
\item
If $(d_1,\dots,\widehat{d}_s,\dots,d_n)\in T_s$, then the
equation
$\prod_{j=1,j\neq s}^n p_{ij}^{d_j}=p_{is}$ also holds
for $i=s$.
\item
$T_s=\{(d_1,\dots, \widehat{d}_s,\dots, d_n)\in \N^{n-1}
\mid x_1^{d_1}\cdots x_s^{-1} \cdots x_n^{d_n} \in C(k_{p_{ij}}
[\underline{x}_n^{\pm 1}])\}.$
\end{enumerate}
\end{lemma}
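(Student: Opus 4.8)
The plan is to recast both parts in terms of the centrality condition for a single Laurent monomial, and then to isolate the one genuinely non-formal step, which turns out to be part (1).

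First I would fix $s$ and, for a candidate tuple $(d_1,\dots,\widehat{d}_s,\dots,d_n)\in\N^{n-1}$, introduce the exponent vector $(e_1,\dots,e_n)\in\Z^n$ with $e_s=-1$ and $e_j=d_j$ for $j\neq s$; this is the exponent vector of the monomial $m:=x_1^{d_1}\cdots x_s^{-1}\cdots x_n^{d_n}\in B$. Recall from the discussion preceding Lemma~\ref{yylem2.2} that, since $\phi_i$ is conjugation by $x_i$ and $\phi_i(x_j)=p_{ij}x_j$, the monomial $m$ commutes with $x_i$ if and only if $\phi_i(m)=m$, i.e.\ if and only if $\prod_{j=1}^n p_{ij}^{e_j}=1$. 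Writing $c_i:=\prod_{j=1}^n p_{ij}^{e_j}$ and using $p_{ss}=1$, the condition $c_i=1$ for $i\neq s$ is precisely $\prod_{j\neq s}p_{ij}^{d_j}=p_{is}$; hence $(d_j)_{j\neq s}\in T_s$ if and only if $c_i=1$ for all $i\neq s$, while $m\in C(B)$ if and only if $c_i=1$ for all $i$ (now including $i=s$). Both descriptions range over $\N^{n-1}$, so part (2) is exactly the assertion that the conditions $c_i=1$ for $i\neq s$ force $c_s=1$, which is the content of part (1). Thus everything reduces to proving (1).

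For part (1) I would exploit the antisymmetry built into the parameters, namely $p_{ji}=p_{ij}^{-1}$ and $p_{ii}=1$. Consider the product
\[
P:=\prod_{1\leq i,j\leq n} p_{ij}^{e_i e_j}.
\]
Pairing the factor indexed by $(i,j)$ with the one indexed by $(j,i)$ for $i\neq j$ gives $p_{ij}^{e_i e_j}p_{ji}^{e_j e_i}=(p_{ij}p_{ji})^{e_i e_j}=1$, while the diagonal factors satisfy $p_{ii}^{e_i^2}=1$; hence $P=1$. On the other hand, grouping the same product by the outer index $i$ gives $P=\prod_{i=1}^n c_i^{e_i}$. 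Since $c_i=1$ for all $i\neq s$ by hypothesis, this collapses to $P=c_s^{e_s}=c_s^{-1}$, because $e_s=-1$. Comparing the two evaluations of $P$ yields $c_s^{-1}=1$, i.e.\ $c_s=1$; and since $c_s=\prod_{j\neq s}p_{sj}^{d_j}$ (using $p_{ss}=1$), this is exactly the $i=s$ instance $\prod_{j\neq s}p_{sj}^{d_j}=1=p_{ss}$ of the equation. This establishes (1), and part (2) then follows as explained above.

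The only real subtlety lies in part (1): one must notice that the product $P$ evaluates formally to $1$ via $p_{ij}p_{ji}=1$ and $p_{ii}=1$, and simultaneously factors as $\prod_i c_i^{e_i}$, and — crucially — that the prescribed exponent at position $s$ is exactly $-1$, so that the surviving factor $c_s^{e_s}$ equals $c_s^{-1}$ and permits the cancellation to $c_s=1$ rather than merely $c_s^{m}=1$ for some $m>1$. Everything else is routine bookkeeping with the conventions $p_{ii}=1$ and $p_{ij}=p_{ji}^{-1}$.
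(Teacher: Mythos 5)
Your proof is correct. The paper dismisses both parts with ``Both are easy to check,'' so there is no argument to compare against; your reduction of (2) to (1) via the quantities $c_i=\prod_j p_{ij}^{e_j}$, and your verification of (1) by evaluating the double product $P=\prod_{i,j}p_{ij}^{e_ie_j}$ in two ways (using the antisymmetry $p_{ij}p_{ji}=1$, $p_{ii}=1$, and the fact that $e_s=-1$ is a unit), is exactly the natural way to supply the omitted details.
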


\begin{proof} Both are easy to check.
\end{proof}

By Lemma \ref{yylem2.9}(1), $(d_1,\dots, \widehat{d}_s,\dots, d_n)\in T_s$
if and only if $(d_1,\dots, \widehat{d}_s,\dots, d_n)$ is an
$\N^{n-1}$-solution
to the system of equations
\begin{equation}
\label{2.9.1}
\prod_{\substack{j=1\\j\neq s}}^n p_{ij}^{d_j}=p_{is}, \; \forall \; i.
\end{equation}

The next lemma is easy and the proof is omitted.

\begin{lemma}
\label{yylem2.10} Let $B$ be a $\Lambda$-graded domain, where
$\Lambda$ is a linearly ordered group. Let $c$ be a homogeneous
element in $B$ and $a,b\in B$ such that $ab=c$. Then both $a$
and $b$ are homogeneous.
\end{lemma}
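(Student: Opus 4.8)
The plan is to argue via the leading and trailing homogeneous components of $a$ and $b$, using the domain hypothesis to rule out cancellation and the linear order on $\Lambda$ to single out extremal degrees. First I would dispose of the trivial case: if $c=0$ then, since $B$ is a domain, $a=0$ or $b=0$, and the zero element is homogeneous by convention, so there is nothing to prove. Hence I may assume $c\neq 0$, and therefore $a\neq 0$ and $b\neq 0$.

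Next I would write $a=\sum_\lambda a_\lambda$ and $b=\sum_\mu b_\mu$ as their (finite) decompositions into nonzero homogeneous components. Because $\Lambda$ is linearly ordered, among the finitely many degrees occurring in $a$ there is a largest one $\lambda_+$ and a smallest one $\lambda_-$, and similarly $\mu_+,\mu_-$ for $b$. The key observation is that the degree-$(\lambda_++\mu_+)$ component of the product $ab$ equals $a_{\lambda_+}b_{\mu_+}$: translation invariance of the order gives $\lambda+\mu\le \lambda_++\mu_+$ for every degree $\lambda$ of $a$ and $\mu$ of $b$, with equality forcing $\lambda=\lambda_+$ and $\mu=\mu_+$ by cancellation in the group $\Lambda$. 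Since $B$ is a domain, $a_{\lambda_+}b_{\mu_+}\neq 0$, so $ab$ has a genuinely nonzero component in degree $\lambda_++\mu_+$. The same reasoning applied to the minimal degrees shows that $ab$ has a nonzero component $a_{\lambda_-}b_{\mu_-}$ in degree $\lambda_-+\mu_-$.

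Now I would bring in the hypothesis that $c=ab$ is homogeneous, say of degree $\gamma$. Its only nonzero homogeneous component sits in degree $\gamma$, so both extremal components found above must live there: $\lambda_++\mu_+=\gamma=\lambda_-+\mu_-$. Rearranging gives $\lambda_+-\lambda_-=\mu_--\mu_+$; the left-hand side is $\ge 0$ and the right-hand side is $\le 0$ in the linear order, so both vanish. Thus $\lambda_+=\lambda_-$ and $\mu_+=\mu_-$, meaning that $a$ and $b$ each have a single nonzero homogeneous component and are therefore homogeneous.

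There is no serious obstacle here; the only point requiring care is the non-cancellation of the extremal terms, which is exactly where the domain hypothesis enters, together with the translation invariance and cancellativity of the ordered group $\Lambda$, used to guarantee that the extremal degrees of the product are attained uniquely.
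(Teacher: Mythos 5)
The paper gives no proof of this lemma (it is dismissed as ``easy and the proof is omitted''), so there is nothing to compare against line by line; your argument via the extremal homogeneous components is exactly the standard one the authors surely intend, and the main body of it is correct. The decomposition into finitely many nonzero homogeneous components, the identification of the top (resp.\ bottom) component of $ab$ as $a_{\lambda_+}b_{\mu_+}$ (resp.\ $a_{\lambda_-}b_{\mu_-}$) using translation invariance and cancellativity of the ordered group, the non-vanishing of these products from the domain hypothesis, and the final equation $\lambda_+-\lambda_-=\mu_--\mu_+$ forcing both sides to be the identity are all sound.

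The one slip is your treatment of the case $c=0$: from $ab=0$ and $B$ a domain you get $a=0$ or $b=0$, but that only makes \emph{one} of the two factors homogeneous --- if $a=0$ then $b$ is completely unconstrained and need not be homogeneous, so ``there is nothing to prove'' is not right. In fact the lemma as literally stated is false for $c=0$ (take $a=0$ and $b$ any inhomogeneous element). The correct resolution is that the lemma is implicitly about $c\neq 0$, which is how it is used in the paper (the element $c$ there is a nonzero discriminant/monomial); you should simply add the hypothesis $c\neq 0$, or note that the conclusion is vacuous only for the nonzero factor, rather than claim the zero case is trivially fine.
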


For
$d:=(d_1,\dots,\widehat{d}_s,\dots,d_n)$ in $T_s$,
define $f_d=x_1^{d_1}\cdots \widehat{x}_s\cdots x_n^{d_n}$.
Then by Lemma \ref{yylem2.9}(2), $x_i f_d=p_{si} f_d x_i$
for all $i$. Therefore the map
\begin{equation}\label{2.10.1}
g(cf_d,s): x_i\mapsto \begin{cases}
             x_i &{\text{if $i\neq s$,}}\\
             x_s+cf_d &{\text{if $i=s$}}
            \end{cases}
\end{equation}
extends to an algebra automorphism of $A$, where $c\in k$.
The map
\begin{equation}
\label{2.10.2}
\partial(cf_d, s): x_i\mapsto \begin{cases}
             0 &\qquad\quad{\text{if $i\neq s$,}}\\
             cf_d &\qquad\quad{\text{if $i=s$}}
            \end{cases}
\end{equation}
extends to a locally nilpotent derivation of $A$.
By slight abuse of notation, we let
\[
X^{T_s}=\{x_1^{d_1}\cdots \widehat{x}_s \cdots x_n^{d_n} \mid
(d_1,\dots,\widehat{d}_s,\dots ,d_n)\in T_s\}.
\]
If $F$ is a linear combination of monomials in $X^{T_s}$, we can
define $g(F,s)$ and $\partial(F,s)$ similarly. Automorphisms of the
form $g(F, s)$ are called \emph{elementary} automorphisms. It is easy
to check that $g(F,s) g(F',s)=g(F+F',s)$ as long as both $F$ and $F'$
are linear combinations of monomials in $X^{T_s}$.  As a consequence,
$g(F,s)^{-1}=g(-F,s)$.

\begin{theorem}
\label{yythm2.11}
Let $A=k_{p_{ij}}[\underline{x}_n]$ be a skew polynomial ring
satisfying {\rm{(H2)}}. Let $w=\rk(A/C(A))$.
\begin{enumerate}
\item
For any positive integer $v$, the $v$-discriminant $d_v(A/C(A))$
exists. Furthermore, $d_w(A/C(A))$ is nonzero.
\item
For any $1\leq s\leq n$,
$T_s= \emptyset$ if and only if $x_s\mid d_w(A/C(A))$.
\item
$T_i= \emptyset$ for all $i=1,\dots,n$ if and only if
$d_w(A/(C(A))$ is dominating.
\end{enumerate}
\end{theorem}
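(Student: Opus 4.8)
The plan is to reduce everything to combinatorics on the finite abelian group $G:=\Z^n/L$, where $L\subset\Z^n$ is the lattice of exponents of the central monomials of $B:=k_{p_{ij}}[\underline{x}_n^{\pm 1}]$; that is, $L=\{d\in\Z^n : \prod_j p_{ij}^{d_j}=1\ \forall i\}$. Under (H2) the group $G$ is finite, and Lemma \ref{yylem2.7}(6) supplies a monomial quasi-basis $b=\{b_1,\dots,b_w\}$ of $A$ whose classes $[b_i]$ run once through $G$. For part (1), I would first take $X$ to be the monomial generating set of $A$ over $C(A)$ from Lemma \ref{yylem2.7}(5); then every $d_v(Z,Z':\tr)$ with $Z,Z'\subset X$ is built from trace values of monomials, hence by Lemma \ref{yylem2.6} is either $0$ or homogeneous, and since homogeneous elements of $B$ are monomials (Lemma \ref{yylem2.7}(1)) it is $0$ or a monomial. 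As the gcd of any set of monomials exists and is a monomial (Lemma \ref{yylem2.7}(3)), Lemma \ref{yylem1.11}(2) yields the existence of $d_v(A/C(A))$ for all $v$. For nonvanishing of $d_w$, I would use \eqref{1.10.1} to reduce to $d_w(b:\tr)=\det(\tr(b_ib_j))\neq0$. By Lemma \ref{yylem2.7}(8), $\tr(b_ib_j)\neq0$ exactly when $[b_i]=-[b_j]$ in $G$; since $G$ is a group this selects one column per row, giving an involution $\iota$ that makes $(\tr(b_ib_j))$ a generalized permutation matrix with determinant $\pm w^w\prod_i(b_ib_{\iota(i)})$, which is nonzero because $A$ is a domain and $w=\rk(A/C(A))\neq0$ in $k$ by Lemma \ref{yylem2.7}(7).

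The heart of the argument is an exact formula for the $x_s$-exponent $\deg_s$ of the monomial $d_w(A/C(A))$. Taking $X$ to be all monomials of $A$, I would expand $d_w(A/C(A))=\gcd\mathcal{D}^c(X/b)$ via Lemma \ref{yylem1.11}(4): each element has the form $d_w(b:\tr)\prod_i c_ic_i'$ with $c_i,c_i'\in C_i=\{x^d/b_i : [d]=[b_i]\}$, and minimizing $\deg_s$ over the independent choices in each $C_i$, together with $\deg_s d_w(b:\tr)=2\sum_i\deg_s b_i$ from part (1), produces a telescoping that yields
\[
\deg_s d_w(A/C(A))=2\sum_{i=1}^w\Big(\min_{[d]=[b_i]}d_s\Big)=2\sum_{\gamma\in G}\nu_s(\gamma),\qquad \nu_s(\gamma):=\min\{d_s : d\in\N^n,\ [d]=\gamma\}.
\]
(As a check, in the polynomial-center case $L=\bigoplus\alpha_i\Z e_i$ this recovers $\deg_s d_w=(\alpha_s-1)r$ of Proposition \ref{yypro2.8}.) Hence $x_s\mid d_w$ iff some $\nu_s(\gamma)\geq1$, i.e.\ iff $G\neq\langle[e_j]:j\neq s\rangle_{\N}$ (nonnegative combinations); while by Lemma \ref{yylem2.9}(2), $T_s\neq\emptyset$ iff $[e_s]\in\langle[e_j]:j\neq s\rangle_{\N}$. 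Since $G$ is finite, $-[e_j]$ is a nonnegative multiple of $[e_j]$, so $\langle[e_i]:\text{all }i\rangle_{\N}=G$; thus $[e_s]\in\langle[e_j]:j\neq s\rangle_{\N}$ is equivalent to $G=\langle[e_j]:j\neq s\rangle_{\N}$, the negation of $x_s\mid d_w$. This gives $x_s\mid d_w\iff T_s=\emptyset$. (The implication $T_s\neq\emptyset\Rightarrow x_s\nmid d_w$ also follows directly: $d\in T_s$ gives the elementary automorphism \eqref{2.10.1} sending $x_s\mapsto x_s+f_d$ and fixing the other $x_i$, under which $d_w$ is invariant up to a unit by Lemma \ref{yylem1.4}(4), impossible if $x_s\mid d_w$.)

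For part (3), by part (2) the condition $T_i=\emptyset$ for all $i$ is equivalent to $x_i\mid d_w$ for all $i$, i.e.\ to the monomial $d_w(A/C(A))$ involving every variable. If it does, it has the form $x_1^{m_1}\cdots x_n^{m_n}$ with all $m_i\geq1$ and is dominating by the criterion following Definition \ref{yydef1.6}. Conversely, if some $m_s=0$, then property (b) of Definition \ref{yydef1.6}(2) fails on taking $y_i=x_i$ for $i\neq s$ and any $y_s$ with $\deg y_s>1$, so $d_w$ is not dominating; alternatively $T_s\neq\emptyset$ then produces (as $T_s$ is infinite in the PI case) a nonaffine elementary automorphism, contradicting dominance through Theorem \ref{yythm1.13}(1). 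Either way part (3) follows.

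The main obstacle is the exact degree computation in part (2): correctly identifying $C_i$ for the all-monomials generating set, carrying the possibly negative exponents of the $c_i\in F$ through $\mathcal{D}^c(X/b)$, and verifying the telescoping that collapses $\deg_s d_w(b:\tr)$ against the basis contributions $-2\sum_i\deg_s b_i$ to leave precisely $2\sum_{\gamma}\nu_s(\gamma)$. Once that formula is established, the remaining steps are routine finite-group bookkeeping matching $x_s\mid d_w$ with $T_s=\emptyset$.
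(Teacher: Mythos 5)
Your proposal is correct, and in part (2) it takes a genuinely different route from the paper. The paper handles the two directions of (2) by two separate arguments: for $T_s=\emptyset\Rightarrow x_s\mid d_w(A/C(A))$ it shows that any $w$-element subset $Z$ of a monomial generating set with $\det(Z:b)\neq 0$ is itself a quasi-basis and therefore must contain a monomial of positive $x_s$-degree (because $x_s=z_ic$ with $c\in C(B)$ forces $x_s\mid z_i$ when $T_s=\emptyset$), and then invokes the $\Z^n$-degree formula of Lemma \ref{yylem2.6}; for $T_s\neq\emptyset\Rightarrow x_s\nmid d_w(A/C(A))$ it chooses $f_{d'}\in X^{T_s}$ of degree exceeding $\deg d_w(A/C(A))$ and uses the elementary automorphism $g(f_{d'},s)$ of \eqref{2.10.1} together with the $g$-invariance of the discriminant up to units (Lemma \ref{yylem1.4}(4)) to force the $x_s$-exponent to be zero. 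You instead extract a closed formula $\deg_s d_w(A/C(A))=2\sum_{\gamma\in G}\nu_s(\gamma)$ from Lemma \ref{yylem1.11}(4) applied to the set of all monomials, and read off both directions (and, as you note, Proposition \ref{yypro2.8} as a special case) by bookkeeping in the finite group $G=\Z^n/L$. This buys strictly more --- the exact exponent vector of the discriminant monomial rather than just its support --- at the cost of verifying the quasi-basis mechanics for $X=\{\text{all monomials}\}$: in particular that each $(x^d:b_i)=x^{d-e_i}$ genuinely lies in $F=\Frac(C(A))$ (true, since $x^l=x^{l+N(\ell,\dots,\ell)}\cdot (x^{N(\ell,\dots,\ell)})^{-1}$ for $N\gg 0$), and that the coordinatewise minimum over the independent choices $c_i,c_i'\in C_i$ computes the gcd of the resulting monomials. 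Both points are routine but should be written out.

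One caveat in part (3): the direct claim that condition (b) of Definition \ref{yydef1.6}(2) fails for $y_i=x_i$ ($i\neq s$) and \emph{any} $y_s$ with $\deg y_s>1$ is not right as stated, because the definition permits choosing a lift in the free algebra, and a lift containing a multiple of a defining relation involving $x_s$ can evaluate to something of degree greater than $\deg f$ for a generic $y_s$. The failure of (b) really requires $y_s$ to satisfy the same commutation relations as $x_s$ (e.g.\ $y_s=x_s+f_d$ with $d\in (T_s)_{\geq 2}$), which presupposes $T_s\neq\emptyset$. Your alternative argument --- $T_s\neq\emptyset$ yields, via Lemma \ref{yylem3.5}, a non-affine elementary automorphism, contradicting dominance through Theorem \ref{yythm1.13}(1) --- is the correct way to close that direction and is in effect what the paper relies on.
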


\begin{proof} (1) By Lemma \ref{yylem2.7}(5), there is a generating
set $X$ of $A$ over $C(A)$ consisting of monomials. For any
$v$-element subsets $Z,Z'\subset X$, $d_v(Z,Z':\tr)$ is homogeneous by
Lemma \ref{yylem2.6}, and is a monomial in $A$ by Lemma
\ref{yylem2.7}(2).  Applying Lemma \ref{yylem2.7}(3) to the set of
monomials of the form $d_v(Z,Z':\tr)$ for all such $Z$ and $Z'$, we
see that $d_v(A/C(A))$ exists.

For the second assertion it suffices to show that there are
$Z, Z'$ such that
\[
d_w(Z,Z':\tr)\neq 0,
\]
as $d_w(A/C(A))$
is the gcd of such elements. Let $Z=\{z_1=1,z_2,\dots,z_w\}$ be a
quasi-basis of $A$. For each $i$, define $z'_i\in A$ to be a nonzero
monomial such that $z_iz'_i\in C(A)$, and let $Z'=\{z'_i\}_{i=1}^w$.
Then $z_iz'_j\not\in C(A)$ for all $i\neq j$, whence by Lemma
\ref{yylem2.7}(8),
\[
\tr(z_iz'_j)=\begin{cases} w z_iz'_i & i=j,\\ 0& i\neq j.
\end{cases}
\]
Hence $d_w(Z,Z':\tr)=w^w \prod_{i=1}^w (z_iz'_i)$ which is a nonzero
monomial as $w\neq 0$ [Lemma \ref{yylem2.7}(7)].

(2)  By using Lemma \ref{yylem2.9}(2), if $T_s$ is empty, then
$x_1^{d_1}\cdots x_s^{-1} \cdots x_n^{d_n}$ is not in $C(B)$ for
any $d_i\in \N$ for all $i\neq s$ (with $B$ as defined in Lemma
\ref{yylem2.7}).

Let $b=\{b_1, \dots, b_w \}$ be a quasi-basis with respect to a
generating set $X$ [Lemma \ref{yylem2.7}(6)]; we may assume that $X$ contains $x_s$.
Let $Z=\{z_1,\dots,z_w\}$ be a subset of $X$.
We claim that $x_s$ divides $d_w(Z,Z':\tr)$ for all $Z'$.
If $d_w(Z,Z':\tr)=0$, then the claim follows.  If $\det(Z:b)=0$,
then $d_w(Z,Z':\tr)=0$,
so we assume that $\det(Z:b)\neq 0$. Since $\deg d_w(Z,Z':\tr)=
\deg (\prod_{i=1}^w z_i z'_i)$ [Lemma \ref{yylem2.6}],
it's enough to show that $x_s$ divides $z_i$ for some $i$.
Since $b$ is a quasi-basis,
up to a permutation, for each $i$, $z_i=b_i c_i$ for some $0\neq
c_i\in C(B)$. Hence
$Z$ is a quasi-basis of $A$. Therefore, there is an $i$ such that
$x_s=z_i c$ for some
$c\in C(B)$, or $z_i=x_s c^{-1}$. Since the $x_s$-degree of $c$
can not be $1$,
the $x_s$-degree of $x_s c^{-1}$ is not zero. This means that
$x_s$-degree of $z_i$
is nonzero, or $x_s\mid z_i$.

If $T_s$ is non-empty, pick an element  in $T_s$ of the form
\[
d'=(d_1'+m\ell, d_2',\dots,
\widehat{d}_s',\dots,d_n')
\]
with $m\gg 0$. Hence there is
a monomial $f_{d'}$ in $X^{T_s}$ with degree larger than the
degree of $d:=d_w(A/A(C))$. Let $g=g(f_{d'},s)$ be the automorphism
constructed in \eqref{2.10.1}. Then $\deg g(x_s)>\deg d$.
It follows from Lemmas \ref{yylem2.6}
and \ref{yylem2.7} that $d$ is homogeneous,
whence it is a nonzero monomial, say $cx_1^{a_1}\cdots x_n^{a_n}$.
Then we have
\[
\deg d=\deg g(d)=\deg (g(x_1))^{a_1}\cdots
(g(x_n))^{a_n}=\sum_i a_i \deg g(x_i).
\]
If $a_s>0$, then
\[
\deg g(x_s)\leq a_s\deg g(x_s)\leq \sum_i a_i \deg g(x_i)=\deg d,
\]
which contradicts the fact $\deg g(x_s)>\deg d$. Therefore $a_s=0$ and
$x_s$ does not divide $d$.

(3)  Since $d_w(A/C(A))$ is a monomial,
it is of the form $x_1^{a_1}\cdots x_n^{a_n}$, up to a scalar.
The assertion follows from part (2).
\end{proof}

\begin{corollary}
\label{yycor2.12}
Let $A=k_q[\underline{x}_n]$ be a $q$-skew polynomial ring and $q$
a primitive $\ell$th root of unity for some $\ell\geq 2$. Let $w$ be the rank of
$A$ over its center. Then
\[
d_w(A/C(A))=\begin{cases} c \prod_{i=1}^n x_i^{\ell^n(\ell-1)} & {\text{if $n$
is even}}\\c  & {\text{if $n$ is odd}},\end{cases}
\]
for some $0\neq c\in k$.
As a consequence, $\Aut(A)$ is affine if and only if $n$ is even.
\end{corollary}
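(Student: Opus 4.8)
The plan is to split on the parity of $n$, since the character of the center $C(A)$ changes: for $n$ even it is a polynomial ring, whereas for $n$ odd it is not. Once the displayed formula for $d_w(A/C(A))$ is in hand, the affineness statement follows quickly from the dominating criterion of Theorem \ref{yythm1.13}.

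For $n$ even I would first recall from Example \ref{yyex2.4}(2) that $C(A)=k[x_1^\ell,\dots,x_n^\ell]$, so that $A$ is finitely generated free over its center with basis $\{x_1^{\beta_1}\cdots x_n^{\beta_n}\mid 0\le \beta_i<\ell\}$ and $w=\rk(A/C(A))=\ell^n$. Applying Proposition \ref{yypro2.8} with every $\alpha_i=\ell$ (so that $r=\ell^n=w$) gives $d(A/C(A))=_{k^\times} r^r\bigl(\prod_i x_i^{\ell-1}\bigr)^r=_{k^\times}\prod_i x_i^{\ell^n(\ell-1)}$, where the scalar $r^r$ is a unit because $r=w\ne 0$ in $k$ by Lemma \ref{yylem2.7}(7). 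Since $A$ is free over its center, this discriminant agrees with $d_w(A/C(A))$ up to a unit of $A$; as $A$ is a connected graded domain its units lie in $k^\times$, and the even-case formula follows.

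For $n$ odd the goal is to show that no variable $x_s$ divides $d_w(A/C(A))$, for then the nonzero element $d_w(A/C(A))$ of Theorem \ref{yythm2.11}(1), which is a monomial by Lemmas \ref{yylem2.6} and \ref{yylem2.7}(2), must be a scalar. By Theorem \ref{yythm2.11}(2) it suffices to prove $T_s\ne\emptyset$ for every $s$, and by Lemma \ref{yylem2.9}(2) this amounts to exhibiting a central monomial of $B:=k_q[\underline{x}_n^{\pm 1}]$ whose $x_s$-exponent is $-1$ and whose remaining exponents lie in $\N$. Writing the centrality conditions as $\sum_j\phi_{ij}e_j\equiv 0\pmod{\ell}$ with $\phi_{ij}=\mathrm{sign}(j-i)$, I would subtract consecutive equations to obtain $e_i+e_{i+1}\equiv 0$, hence $e_i\equiv(-1)^{i-1}e_1$; the one remaining equation $e_2+\cdots+e_n\equiv 0$ is automatically satisfied precisely because $n$ is odd (the alternating sum of an odd number of signs vanishes). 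Choosing $e_1\equiv(-1)^s$ forces $e_s\equiv -1$, and reducing the other exponents modulo $\ell$ (which preserves centrality, since each $\phi_{ij}\ell\equiv 0$) yields nonnegative values, producing the required element of $T_s$. The hard part of the whole argument is exactly this linear-algebra-modulo-$\ell$ step: identifying when the skew sign-system is solvable with a prescribed $-1$ exponent, and seeing that solvability hinges on the parity of $n$.

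Finally I would deduce the automorphism statement. For $n$ even the discriminant $\prod_i x_i^{\ell^n(\ell-1)}$ has every exponent $\ge 1$, hence is of the form $x_1^{b_1}\cdots x_n^{b_n}+\cwlt$ with each $b_i\ge 1$ and is therefore dominating by the remark following Definition \ref{yydef1.6}; Theorem \ref{yythm1.13}(1) then shows every automorphism of $A$ is affine. For $n$ odd, taking $s=n$ in the construction above produces an element $f_d\in X^{T_n}$, and the associated elementary automorphism $g(f_d,n)$ is precisely the map of Example \ref{yyex1.8}, which is $(-1)$-affine but not affine; hence $\Aut(A)$ is not affine.
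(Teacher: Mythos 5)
Your proof is correct and follows essentially the same route as the paper: Example \ref{yyex2.4}(2) plus Proposition \ref{yypro2.8} for $n$ even, and nonemptiness of every $T_s$ plus Theorem \ref{yythm2.11}(2) for $n$ odd, with the affineness statement coming from Theorem \ref{yythm1.13}(1) and the construction \eqref{2.10.1}. The only difference is cosmetic: where the paper simply exhibits the tuples $(\ell-1,1,\dots,1,\ell-1)$ and $(1,\ell-1,\dots,\ell-1,1)$ in $T_s$, you derive them by solving the congruences $e_i+e_{i+1}\equiv 0 \pmod{\ell}$, arriving at the same elements.
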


\begin{proof} First we assume that $n$ is even. By Example \ref{yyex2.4}(2),
the center of $A$ is $k[x_1^{\ell},\dots,x_n^{\ell}]$. Then the
discriminant is given by Proposition \ref{yypro2.8}. By Theorem \ref{yythm1.13},
$\Aut(A)$ is affine. An easy computation gives the formula \eqref{0.3.1}.

If $n$ is odd, then $(\ell-1,1,\ell-1,\dots,\widehat{d}_s,\dots,1, \ell-1)\in T_s$
when $s$ is odd and $(1,\ell-1,\dots,\widehat{d}_s,\dots,\ell-1,1)\in T_s$
when $s$ is even. By Theorem \ref{yythm2.11}(2), $d_w(A/C(A))$ is a
constant. By construction \eqref{2.10.1}, $\Aut(A)$ is not affine.
\end{proof}

\section{Affine and tame automorphisms of skew polynomial rings}
\label{yysec3}

In this section we reprove and extend some results
of Alev and Chamarie about the automorphism groups of skew polynomial
rings \cite{AlC}.  Here is one of the main results in this section.
Let $\LNDer(B)$ denote the set of all locally nilpotent derivations of
an algebra $B$. As in the previous section, let $A$ be
$k_{p_{ij}}[\underline{x}_n]$.

\begin{theorem}
\label{yythm3.1}
Let $A=k_{p_{ij}}[\underline{x}_n]$ be a skew polynomial ring
satisfying {\rm{(H2)}}. The following are equivalent.
\begin{enumerate}
\item
$\Aut(A)$ is affine.
\item
$C(k_{p_{ij}}[\underline{x}_n^{\pm 1}])
\subset k\langle x_1^{\pm \alpha_1},\dots, x_n^{\pm \alpha_n}
\rangle$
for some $\alpha_1,\dots, \alpha_n\geq 2$.
\item
$C(k_{p_{ij}}[\underline{x}_n])
\subset k\langle x_1^{\alpha_1},\dots, x_n^{\alpha_n}
\rangle$
for some $\alpha_1,\dots, \alpha_n\geq 2$.
\item
$T_s=\emptyset$ for all $s=1,\dots,n$.
\item
$d_w(A/C(A))$ is dominating where $w=\rk(A/C(A))$.
\item
$d_w(A/C(A))$ is locally dominating where $w=\rk(A/C(A))$.
\end{enumerate}
If $\Z\subset k$, then the above are also equivalent to
\begin{enumerate}
\item[(7)]
$\LNDer(A)=\{0\}$.
\end{enumerate}
\end{theorem}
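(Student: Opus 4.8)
The plan is to prove the theorem as a single cycle of implications, leaning on the machinery already assembled. Three of the links are immediate. The equivalence (4)$\Leftrightarrow$(5) is exactly Theorem~\ref{yythm2.11}(3) (the vanishing of all $T_s$ is equivalent to $d_w(A/C(A))$ being dominating). The implication (5)$\Rightarrow$(6) holds because every dominating element is locally dominating (the remark following Definition~\ref{yydef1.6}). And (6)$\Rightarrow$(1) is Theorem~\ref{yythm1.13}(1), whose hypotheses hold here since $\gr A$ is a connected graded domain and $A$ is finite over its center by (H2). So it remains to insert the center conditions (2),(3), to close the loop with (1)$\Rightarrow$(4), and---under $\Z\subset k$---to insert (7).

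For the center conditions I would work with the \emph{centralizing lattice}
\[
L=\{(e_1,\dots,e_n)\in\Z^n \mid \prod_{j} p_{ij}^{e_j}=1 \ \forall\, i\},
\]
so that the center of $k_{p_{ij}}[\underline{x}_n^{\pm1}]$ is spanned by the monomials $x_1^{e_1}\cdots x_n^{e_n}$ with $e\in L$, and $C(A)$ by those with $e\in T:=L\cap\N^n$ (Lemma~\ref{yylem2.2}(1)). Let $\varepsilon_j\in\Z^n$ denote the $j$th standard basis vector and let $\alpha_j$ be the least positive integer with $x_j^{\alpha_j}$ central; by (H2) this is finite, and $\alpha_j\varepsilon_j\in L$. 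Let $c_i\geq 1$ generate the projection $\pi_i(L)\subseteq\Z$; then $L\subseteq\bigoplus_i c_i\Z$ automatically, so condition (2) is equivalent to $c_i\geq 2$ for all $i$. Since any $e\in L$ equals $\bigl(e+\sum_j M_j\alpha_j\varepsilon_j\bigr)-\sum_j M_j\alpha_j\varepsilon_j$, a difference of two elements of $T$ once the $M_j$ are large, $L$ is generated by $T$, whence (2)$\Leftrightarrow$(3) with the same $\alpha_i$. The key point is (2)$\Leftrightarrow$(4). By Lemma~\ref{yylem2.9}(2), $T_s\neq\emptyset$ precisely when some $e\in L$ has $e_s=-1$ and $e_j\geq 0$ for $j\neq s$; such an $e$ forces $c_s=1$, giving (2)$\Rightarrow$(4). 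Conversely, if $c_i=1$ there is $e\in L$ with $e_i=1$, and I would set $e':=-e+\sum_{j\neq i}m_j\alpha_j\varepsilon_j$ with each $m_j$ large enough that $e'_j\geq 0$; then $e'\in L$, $e'_i=-1$, and $e'_j\geq 0$ for $j\neq i$, so $T_i\neq\emptyset$. This yields (4)$\Rightarrow$(2) and binds (2),(3),(4),(5),(6) together.

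To close with (1)$\Rightarrow$(4) I would argue contrapositively: if some $T_s\neq\emptyset$, pick $d\in T_s$ and raise its degree by repeatedly adding the axis tuples coming from $\alpha_j\varepsilon_j$ with $j\neq s$, which keep the tuple in $T_s$ while increasing $\deg f_d$; once $\deg f_d\geq 2$ the elementary automorphism $g(f_d,s)$ of \eqref{2.10.1} has $\deg g(x_s)=\deg f_d\geq 2$ and so is not affine, contradicting (1). Finally, under $\Z\subset k$: (5)$\Rightarrow$(7) is Theorem~\ref{yythm1.13}(4), since a dominating discriminant kills every locally nilpotent derivation; and (7)$\Rightarrow$(4) is again contrapositive, as $T_s\neq\emptyset$ produces via \eqref{2.10.2} the nonzero locally nilpotent derivation $\partial(f_d,s)$.

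I expect the main obstacle to be the normalization step inside (4)$\Rightarrow$(2): one must convert an arbitrary lattice vector having a coordinate equal to $1$ into one that is $-1$ in that slot and nonnegative in all others, and then check against the exact shape of $T_s$ in Lemma~\ref{yylem2.9}(2) that the result really lands in $T_s$. A secondary point needing care is the degree boost in (1)$\Rightarrow$(4): a bare element of $T_s$ can give an \emph{affine} elementary automorphism (for instance when $\deg f_d\leq 1$), so it is essential first to manufacture an element of $T_s$ with $\deg f_d\geq 2$, which is exactly where a second variable is used.
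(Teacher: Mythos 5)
Your proof is correct, and it draws on the same core ingredients as the paper --- Lemma \ref{yylem2.9}(2), the elementary automorphisms \eqref{2.10.1} and derivations \eqref{2.10.2}, Theorem \ref{yythm2.11}(3), and the discriminant criterion Theorem \ref{yythm1.13} --- but you assemble them along a genuinely different route. The paper delegates the equivalence of (1)--(4) to Theorem \ref{yythm3.8}, whose implication (4)$\Rightarrow$(1) rests on the unipotent-automorphism analysis of Theorem \ref{yythm3.4} and Lemma \ref{yylem3.5}, and it handles (7) via Theorem \ref{yythm3.6} (the same unipotent machinery applied to $\exp(c\partial)$). You instead close the whole loop through the discriminant, using (4)$\Rightarrow$(5)$\Rightarrow$(6)$\Rightarrow$(1) and (5)$\Rightarrow$(7) via Theorem \ref{yythm1.13}(1,4), with only the easy contrapositives (1)$\Rightarrow$(4) and (7)$\Rightarrow$(4) done by hand from \eqref{2.10.1} and \eqref{2.10.2}; this bypasses Theorems \ref{yythm3.4}, \ref{yythm3.6} and \ref{yythm3.8} entirely, at the cost of not recovering the sharper non-PI statements those results contain. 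Your handling of (2) and (3) via the lattice $L$, its coordinate projections $c_i\Z$, and the fact that $L$ is generated by $T=L\cap\N^n$ is a cleaner, more explicit version of the paper's minimal-exponent argument in the step (4)$\Rightarrow$(2) of Theorem \ref{yythm3.8}; the normalization you flag (turning a lattice vector with $i$th coordinate $1$ into one with $i$th coordinate $-1$ and the rest nonnegative by adding multiples of the central axis vectors) is exactly the paper's ``multiplying by $x_i^{-b_i}$ if necessary'' and goes through as you describe. Likewise the degree boost in (1)$\Rightarrow$(4) --- manufacturing an element of $(T_s)_{\geq 2}$ from a bare element of $T_s$ by adding central axis vectors in the other variables --- is precisely the content of Lemma \ref{yylem3.5}, and your explicit verification of it is correct.
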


The proof of Theorem \ref{yythm3.1} is given in the middle of
the section. One immediate question is, for what kind of
noetherian connected graded Koszul PI algebras is some version
of Theorem \ref{yythm3.1} still valid?

Let $B$ be a connected $\N$-graded algebra generated
in degree 1. Let
$\Aut_{\gr}(B)$ be the subgroup of graded automorphisms of $B$.
An automorphism $g$ of $B$ is called \emph{unipotent} if
$g(v)=v+{\text{(higher degree terms)}}$
for all $v\in B_1$. Let $\Aut_{\uni}(B)$ denote the subgroup of
$\Aut(B)$ consisting of unipotent automorphisms.

In what follows, we do not assume (H2) unless explicitly stated.

\begin{lemma}
\label{yylem3.2} The following are equivalent for $A$.
\begin{enumerate}
\item
$A$ satisfies {\rm{(H1)}}, namely, $x_i$ is not central for all $i$.
\item
For each $i$, there is a $j$ such that $p_{ij}\neq 1$.
\item
For every commutative domain $C\supseteq k$ and for every
$k$-algebra automorphism $g$ of $A\otimes C$,
the constant term of $g(x_i)$ is zero.
\item
$\Aut(A)=\Aut_{\gr}(A)\ltimes \Aut_{\uni}(A)$.
\item
For every commutative domain $C\supseteq k$ and for every
$k$-algebra derivation $\partial$ of $A\otimes C$,  the constant
term of $\partial(x_i)$ is zero.
\item
For every commutative domain $C\supseteq k$ and for every
$k$-algebra locally nilpotent
derivation $\partial$ of $A\otimes C$,  the constant
term of $\partial(x_i)$ is zero.
\end{enumerate}
\end{lemma}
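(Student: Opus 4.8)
The plan is to prove the equivalences by establishing the implications
\[
(1)\Leftrightarrow(2),\quad (2)\Rightarrow(3),\quad (2)\Rightarrow(5)\Rightarrow(6),\quad (3)\Rightarrow(4),
\]
and then closing the loop by showing that if $(1)$ fails then each of $(3),(4),(5),(6)$ fails. The single structural fact I would use throughout is that for any commutative domain $C\supseteq k$ the tensor product $A\otimes C$ is again the skew polynomial ring $C_{p_{ij}}[\underline{x}_n]$ over the domain $C$, hence an $\N$-graded domain whose degree-$0$ component is exactly $C$; the \emph{constant term} of an element is its image under the projection onto this degree-$0$ component. The equivalence $(1)\Leftrightarrow(2)$ is immediate, since $x_i$ is central precisely when $p_{ij}=1$ for all $j$.

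For $(2)\Rightarrow(3)$, let $g\in\Aut(A\otimes C)$ and suppose toward a contradiction that some $g(x_i)$ has nonzero constant term $\gamma_i\in C$; then $\gamma_i$ is the lowest nonzero homogeneous component of $g(x_i)$. Since $A\otimes C$ is a graded domain, the lowest component of a product is the product of the lowest components. Choosing by $(2)$ an index $j$ with $p_{ij}\ne1$ and comparing lowest components on the two sides of $g(x_j)g(x_i)=p_{ij}g(x_i)g(x_j)$, I obtain $\lambda_j\gamma_i=p_{ij}\gamma_i\lambda_j$, where $\lambda_j$ is the lowest component of $g(x_j)$; as $\gamma_i$ is a central scalar this forces $(1-p_{ij})\gamma_i\lambda_j=0$, hence $\lambda_j=0$ and $g(x_j)=0$, contradicting injectivity. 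The derivation statement $(2)\Rightarrow(5)$ runs in parallel but one degree higher: applying a derivation $\partial$ to $x_jx_i=p_{ij}x_ix_j$ and extracting the degree-$1$ component (writing $\gamma_i$ for the constant term of $\partial(x_i)$) yields $(1-p_{ij})\gamma_jx_i+(1-p_{ij})\gamma_ix_j=0$, and since $x_i,x_j$ are $C$-independent in degree $1$, a $j$ with $p_{ij}\ne1$ forces $\gamma_i=0$. Finally $(5)\Rightarrow(6)$ is trivial.

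For $(3)\Rightarrow(4)$ — the step I expect to be the main obstacle — I would proceed as follows. Applying $(3)$ to both $g$ and $g^{-1}$, the augmentation ideal $\mathfrak m=A_{\ge1}$ satisfies $g(\mathfrak m)=\mathfrak m$, so $g$ induces an automorphism of $\mathfrak m/\mathfrak m^2\cong A_1$; thus the linear part $L_i$ of each $g(x_i)$ makes the linear-part map on $A_1$ invertible. Extracting the degree-$2$ component of $g(x_j)g(x_i)=p_{ij}g(x_i)g(x_j)$ gives $L_jL_i=p_{ij}L_iL_j$, so $x_i\mapsto L_i$ defines a graded automorphism $g_{\gr}\in\Aut_{\gr}(A)$; then $g_{\gr}^{-1}g$ fixes each $x_i$ modulo higher-degree terms and is therefore unipotent, and $g=g_{\gr}\,(g_{\gr}^{-1}g)$. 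The remaining bookkeeping — that $\Aut_{\uni}(A)$ is a subgroup normalized by $\Aut_{\gr}(A)$ and meeting it trivially, yielding the semidirect product — is routine, the only genuine content being the invertibility of the linear part established via $\mathfrak m/\mathfrak m^2$.

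To close the cycle I would show $\neg(1)$ implies the failure of $(3),(4),(5)$ and $(6)$ simultaneously. If some $x_{i_0}$ is central, then the translation $x_{i_0}\mapsto x_{i_0}+1$ (fixing the other generators) is an automorphism with nonzero constant term, violating $(3)$; and because every element of $\Aut_{\gr}(A)\ltimes\Aut_{\uni}(A)$ has zero constant terms, this same automorphism is not of that form, violating $(4)$. Likewise the partial derivative $\partial(x_{i_0})=1$, $\partial(x_j)=0$ for $j\ne i_0$ is a well-defined locally nilpotent derivation, precisely because $x_{i_0}$ is central, whose value on $x_{i_0}$ has constant term $1$, violating $(5)$ and $(6)$. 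Together with the forward implications above, this establishes the equivalence of all six conditions.
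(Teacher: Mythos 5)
Your proposal is correct and follows essentially the same route as the paper: the same central-generator counterexamples (the translation $x_{i_0}\mapsto x_{i_0}+1$ and the derivation $\partial(x_{i_0})=1$) when (1) fails, the same lowest-degree-component comparison in the relation $x_jx_i=p_{ij}x_ix_j$ for (2)$\Rightarrow$(3) and (2)$\Rightarrow$(5), and the same factorization $g=(\gr g)\bigl((\gr g)^{-1}g\bigr)$ for (4). The only differences are cosmetic: you merge the paper's two-step comparison (constant terms, then lowest components) into a single lowest-component computation, and you close the cycle via (4)$\Rightarrow$(1) directly rather than through the equivalence (3)$\Leftrightarrow$(4).
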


\begin{proof} It is clear that (1) $\Leftrightarrow$ (2) and that
(5) $\Rightarrow$ (6).

(3) $\Rightarrow$ (1) If $x_i$ is central, then
$g: x_j\to x_j+ \delta_{ij}$ defines an algebra automorphism
for which the constant term of $g(x_i)$ is not zero.

(2) $\Rightarrow$ (3) Suppose $g\in \Aut(A\otimes C)$ such that
$g(x_i)=c_i+ y_i$, where $c_i\in C$ is the constant term of $g(x_i)$.
Suppose $c_i\neq 0$ for some $i$. Pick $j$ such that
$p_{ij}\neq 1$. Applying $g$ to the equation
$x_j x_i=p_{ij}x_i x_j$ we have
\[
(c_j+y_j)(c_i+y_i)=p_{ij}(c_i+y_i)(c_j+y_j).
\]
By comparing constant terms, we have
$c_jc_i=p_{ij}c_ic_j$. Since $p_{ij}\neq 1$ and $c_i\neq 0$,
we have $c_j=0$ (as $C$ is a domain), and
\[
y_j(c_i+y_i)=p_{ij}(c_i+y_i)y_j.
\]
Let $(y_j)_t$ be the nonzero homogeneous component of the
lowest degree part of $y_j$. Then, by comparing the
lowest degree components of the above equation,
we have $c_i (y_j)_t=p_{ij}c_i (y_j)_t$. Thus,
$(y_j)_t=0$ as $A\otimes C$ is a domain, contradiction.

(3) $\Leftrightarrow$ (4) Let $g$ be an automorphism of
$A$. Since $g(x_i)$ has zero constant term, $\gr g\in \Aut_{\gr}(A)$
and $g(\gr g)^{-1}\in \Aut_{\uni}(A)$. Hence (4) is equivalent to (3)
when $C=k$. Then we use the fact that (3) $\Leftrightarrow$ (1), which
is independent of $C$.

(6) $\Rightarrow$ (1) If $x_s$ is central for some $s$, then
$\partial: x_i\to \delta_{is}, c\to 0$ for all $c\in C$ defines
a locally nilpotent derivation
such that the constant term of $\partial(x_s)$ is not zero.

(2) $\Rightarrow$ (5) Suppose $\partial(x_i)=c_i+f_i$,
where $c_i\in C$ is the constant term of $\partial(x_i)$.
Suppose $c_s\neq 0$.
Applying $\partial$ to the equation
$x_i x_s=p_{si} x_s x_i$ for $i\neq s$ we have
\[
(c_i+f_i)x_s+x_i(c_s+f_s)=p_{si}((c_s+f_s)x_i+x_s(c_i+f_i)).
\]
The degree 1 part of the above equation is
\[
c_i x_s+c_s x_i=p_{is}(c_s x_i+c_i x_s).
\]
Since $p_{is}\neq 1$ for some $i$, we have
$c_sx_i+c_i x_s=0$, which contradicts
$c_s\neq 0$. Therefore the assertion holds.
\end{proof}

By Lemma \ref{yylem3.2}(4), to describe $\Aut(A)$, we need understand
both $\Aut_{\gr}(A)$ and $\Aut_{\uni}(A)$.  The next theorem takes care
of $\Aut_{\uni}(A)$ for many cases; this can be viewed as an extension
of results in \cite{AlC}, as we give some necessary and sufficient
conditions so that $\Aut(A)=\Aut_{\gr}(A)$.

Let $(T_s)_{\geq 2}$ be the subset of $T_s$ consisting of elements
$(d_1,\dots, \widehat{d}_s, \dots, d_n)$ with $\sum_j d_j\geq
2$. Recall that $X^W= \{x_1^{d_1}\cdots x_n^{d_n}\mid (d_1, \dots,
d_n)\in W\}$.

Let $C$ be a commutative domain containing $k$ and let $g\in
\Aut_{\uni}(A\otimes C)$.  For each fixed $s$, write
\[
g(x_s)=x_s(1+h')+g_s,
\]
where $g_s$ is in the subalgebra generated by $C$ and
$x_1,\dots,\widehat{x}_s,\dots,x_n$. If $g_s\neq 0$,
it is further decomposed as
\[
 g_s=h_{t_s}+{\text{higher degree terms}},
\]
where $t_s$ is the lowest possible degree of a nonzero
homogeneous component of $g_s$. Define a bigrading
on $g$ by $\deg g=(a, b)$, where
\[
a=\min\{ t_s\mid g_s \neq 0 \ \text{and} \ 1\leq s\leq n\}
\]
and $b=\min\{s\mid t_s=a\}$. If $g_s=0$
for all $s$, then we write $\deg g=(-\infty,-\infty)$. Otherwise,
$\deg g \in \{2,3,4,\dots\}\times \{1,2,\dots,n\}$. For
pairs of integers $(a_1,b_1)$ and $(a_2,b_2)$, we define
$(a_1,b_1)<(a_2,b_2)$ if either $a_1< a_2$ or $a_1=a_2$
and $b_1<b_2$.

\begin{lemma}
 \label{yylem3.3}
Let $g, g_1, g_2\in \Aut_{\uni}(A)$.
\begin{enumerate}
\item
If $\deg g=(-\infty, -\infty)$, then $g$ is the identity.
\item
$\deg g_1 g_2\geq \min\{\deg g_1, \deg g_2\}$ and equality holds
if $\deg g_1\neq \deg g_2$.
\item
If $g$ is not the identity, there is
$(d_1,\dots,\widehat{d}_s, \dots,d_n)\in (T_s)_{\geq 2}$
and $F$, a linear combination of $f_d\in X^{(T_s)_{\geq 2}}$
of the same total degree, such that
$\deg g(F,s)=\deg g$ and $\deg g(F,s) g> \deg g$. \textup{(}Here, $g(F,s)$ is
as defined in \eqref{2.10.1}.\textup{)}
\end{enumerate}
\end{lemma}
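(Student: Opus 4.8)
The plan is to handle all three parts through the quotient homomorphisms $\rho_s\colon A\to A/(x_s)$. Since $(x_s)=x_sA=Ax_s$ is two-sided and $A/(x_s)\cong k_{p_{ij}}[x_1,\dots,\widehat{x}_s,\dots,x_n]$, the map $\rho_s$ is an algebra homomorphism whose kernel is the span of the monomials involving $x_s$; identifying $A/(x_s)$ with the $x_s$-free subalgebra, the decomposition $g(x_s)=x_s(1+h')+g_s$ gives $g_s=\rho_s(g(x_s))$, so $\deg g=(a,b)$ records the least degree $a=t_b$ among the nonzero $\rho_s(g(x_s))$ and the least index $b$ attaining it. For (1), $\deg g=(-\infty,-\infty)$ means $g(x_s)\in(x_s)$ for every $s$; write $g(x_s)=x_sv_s$ with $v_s=1+h'_s\in 1+A_{\ge 1}$. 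Here I would use crucially that $g$ is an \emph{automorphism}, not merely an endomorphism (the claim is false for endomorphisms: the injective, non-surjective map $x_1\mapsto x_1(1+cx_2^2)$, $x_2\mapsto x_2$ of $k_{-1}[x_1,x_2]$ is a counterexample). Localizing $k$ to its fraction field, $A$ is a noetherian domain and $(x_s)$ is completely prime of height one. Then $g((x_s))=g(x_s)A=x_sv_sA\subseteq x_sA=(x_s)$, while $g$ induces an isomorphism $A/(x_s)\cong A/g((x_s))$; thus $A/g((x_s))$ is a domain of the same Gelfand--Kirillov dimension as $A/(x_s)$, the surjection $A/g((x_s))\twoheadrightarrow A/(x_s)$ is an isomorphism, and $g((x_s))=(x_s)$. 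Cancelling $x_s$ in $x_sv_sA=x_sA$ gives $v_sA=A$, so $v_s$ is a unit; since $A$ is a connected graded domain, $A^\times=k^\times$, and $v_s=1+(\text{positive degree})$ forces $v_s=1$ and $g=\mathrm{id}$.

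For (2), I would exhibit $\deg$ as a non-archimedean valuation by expanding $(g_1g_2)_s=\rho_s\big(g_1(g_2(x_s))\big)$ through the homomorphism $\rho_s\circ g_1$. Writing $g_2(x_s)=x_su+g_{2,s}$, this splits as $\rho_s(g_1(x_s))\cdot\rho_s(g_1(u))+\rho_s(g_1(g_{2,s}))$. Since $g_1$ is unipotent, $\rho_s(g_1(u))=1+(\text{higher})$ and $\rho_s(g_1(g_{2,s}))$ has the same leading term as $g_{2,s}$ (projection is degree-preserving and $g_1$ fixes leading terms). Hence the two summands contribute leading terms in degrees $t_{1,s}$ and $t_{2,s}$. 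Taking the minimum over $s$ yields $\deg g_1g_2\ge\min\{\deg g_1,\deg g_2\}$ in the lexicographic order on pairs $(a,b)$, with equality unless the two leading terms occur in the same bidegree and cancel -- which cannot happen when $\deg g_1\neq\deg g_2$.

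Part (3) is the heart, and the key point is that the leading component $h_a:=h_{t_b}$ of $g_b$ lies in the span of $X^{(T_b)_{\ge 2}}$. I would get this from the single homomorphism $\rho_b\circ g\colon A\to A/(x_b)$: applying it to $x_bx_i=p_{ib}x_ix_b$ gives $g_b\cdot\overline{g(x_i)}=p_{ib}\,\overline{g(x_i)}\cdot g_b$ in $A/(x_b)$, where $\overline{g(x_i)}=\rho_b(g(x_i))=x_i+(\text{higher})$ and $g_b=h_a+(\text{higher})$. Comparing lowest-degree parts -- the degree-$(a+1)$ term is a product of leading terms in the domain $A/(x_b)$, hence uncancelled -- yields $h_ax_i=p_{ib}\,x_ih_a$ for all $i\neq b$. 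Monomial by monomial, each $f_d$ occurring in $h_a$ must satisfy $\prod_{j\neq b}p_{ij}^{d_j}=p_{ib}$ for all $i\neq b$, which is precisely the condition $d\in T_b$ (see Lemma~\ref{yylem2.9}); since $\deg f_d=a\ge 2$ we obtain $d\in(T_b)_{\ge 2}$. Taking $F=-h_a$, the elementary automorphism $g(F,b)$ has $\deg g(F,b)=(a,b)=\deg g$, and the degree-$a$ part of $(g(F,b)g)_b=\rho_b\big(g(F,b)(g(x_b))\big)$ is $F+h_a=0$, so $t_b$ strictly increases.

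The main obstacle is the leading-order bookkeeping underlying (2) and (3): one must verify that $\rho_s$ and $\rho_s\circ g_1$ preserve leading terms exactly (no spurious lower-degree term is created by the unipotent $g_1$, and the degree-$(a+1)$ term in $A/(x_b)$ is not killed by cancellation), and that the increase in (3) is strict in the \emph{lexicographic} sense on $(a,b)$. Once (2) is available, the last point reduces to checking that $g(F,b)$ produces no $x_s$-free term of degree $\le a$ for $s\neq b$: for $s<b$ the substitution $x_b\mapsto x_b+F$ with $\deg F=a$ only creates such terms in degree $>t_s>a$, and any residual contribution at degree $a$ occurs at an index $s>b$, so either $a$ rises or the minimal attaining index exceeds $b$, giving $\deg g(F,b)g>\deg g$.
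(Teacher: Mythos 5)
Your parts (2) and (3) are sound, and part (3) is in substance the paper's own argument: the paper applies $g$ to $x_ix_s=p_{si}x_sx_i$ and then ``removes all terms with $x_s$'', which is exactly your quotient map $\rho_s$, and it compares lowest-degree components in the domain $A/(x_s)$ to conclude that every monomial of the leading term $h_{t_s}$ lies in $X^{(T_s)_{\geq 2}}$. Your bookkeeping for why $\deg g(F,s)g>\deg g$ in the lexicographic order (namely $t'_s>t_s=a$ while $t'_i=t_i$ for $i\neq s$, so either $a$ increases or the least index attaining it does) matches the paper's claim that $h'_{t_i}=h_{t_i}$ for $i\neq s$ and that the new $x_s$-free leading term is $h_{t_s}+F=0$. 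Part (2) is literally ``left to the reader'' in the paper, and your computation of $(g_1g_2)_s$ through $\rho_s\circ g_1$, with cancellation excluded when the two bidegrees differ, is a correct way to fill it in.

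The one place to tighten is part (1). You deduce $g((x_s))=(x_s)$ from the assertion that a surjection $A/g((x_s))\twoheadrightarrow A/(x_s)$ between domains of the same Gelfand--Kirillov dimension must be an isomorphism. That principle is false for general affine noncommutative domains: a proper quotient of a domain need not drop GK dimension. It does hold here, but only because of extra structure which you should invoke explicitly --- for instance, if $x_s\notin g((x_s))$ then the image of $x_s$ is a nonzero regular \emph{normal} element of the domain $A/g((x_s))$, and factoring out a regular normal element drops GK dimension by at least one; alternatively one can appeal to catenarity of quantum affine spaces. The paper sidesteps all of this with a one-line argument: $x_s$ is not a product of two non-units, this property is preserved by automorphisms, and $A^\times=k^\times$ because $\gr A$ is a connected graded domain, so in $g(x_s)=x_s(1+h')$ the factor $1+h'$ must be a unit, hence equal to $1$. (Equivalently: apply the unipotent $g^{-1}$ to $x_sv_s=g(x_s)$ and compare degrees to see $v_s\in k$.) Your remark that the statement fails for mere endomorphisms is correct and worth keeping --- both your proof and the paper's genuinely use invertibility of $g$.
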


\begin{proof}
(1) Since $\deg g=(-\infty, -\infty)$, by definition, $g_s=0$ for all
$s$, or $g(x_s)=x_s(1+h')$, where the constant term of $h'$ is zero.
Since $x_s$ is not a product of two non-units, $g(x_s)\neq
x_s(1+h')$ unless $h'=0$. Thus $g(x_s)=x_s$ for all $s$ and $g$
is the identity.

(2) Left to the reader.

(3) By part (1), $\deg g\neq (-\infty,-\infty)$.
Let $\deg g=(a,s)$. Since $g$ is unipotent,
\[
g(x_i)=x_i (1+h') + h_{t_i}+{\text{higher degree terms}},
\]
where $h_{t_i}$ is the nonzero component of lowest degree that does
not involve $x_i$. By definition, $t_s=a$ and if $h_{t_i} \neq 0$,
then $t_i\geq a$ for all $i$. Note that $h_{t_s}$ is a linear
combination of certain monomials $x_1^{d_1}\cdots\widehat{x}_s \cdots
x_n^{d_n}$. We claim that each $(d_1,\dots,\widehat{d}_s,\dots, d_n)$
is in $(T_s)_{\geq 2}$.  Applying $g$ to the equation $x_i x_s=p_{si}
x_s x_i$ for each $i$ and removing all terms with $x_s$, we obtain
that
\[
 x_i h_{t_s}+{\text{higher degree terms}}
=p_{si} h_{t_s} x_i+{\text{higher degree terms}}.
\]
For any nonzero monomial component $c x_1^{d_1}\cdots
\widehat{x}_s\cdots x_n^{d_n}$ of $h_{t_s}$, the above equation yields
\[
\prod_{j=1, j\neq s}^n p_{ji}^{d_j}=p_{si},
\]
which is the equation defining $T_s$.
Note that $t_s=\sum_j d_j\geq 2$, so
$(d_1,\dots,\widehat{d}_s,\dots,d_n)$ is in $(T_s)_{\geq 2}$. The claim is proved.

Let $F=-h_{t_s}$, which is a linear combination of elements of $f_d\in
X^{(T_s)_{\geq 2}}$ of total degree $t_s$, and then let $g'=g(F,s)
g$. One can show that, for any $i\neq s$, $h'_{t_i}=h_{t_i}$ and
$h'_{t_s}= h_{t_s}-F=0$. By definition, $\deg g'>\deg g=\deg g(F,s)$.
\end{proof}

\begin{theorem}
 \label{yythm3.4} 
 Let $A=k_{p_{ij}}[\underline{x}_n]$ be a skew polynomial ring
 satisfying {\rm{(H1)}}. The following are equivalent.
 \begin{enumerate}
  \item
  Every automorphism of $A$ is affine. Equivalently, $\Aut_{\uni}(A)$
is trivial.
  \item
  For any commutative domain $C$ containing $k$,
  every $k$-algebra automorphism of $A\otimes C$ is $C$-affine.
  \item
$ (T_s)_{\geq 2}=\emptyset$ for all $s$.
\end{enumerate}
If, in addition, $\Z\subset k$, then {\rm{(1)--(3)}}
are also equivalent to the next two.
\begin{enumerate}
\item[(4)]
Every locally nilpotent
derivation of $A$ of nonzero degree is zero.
\item[(5)]
  For any commutative domain $C$ containing $k$,
every locally nilpotent
derivation of $A\otimes C$ of nonzero degree {\rm{(}}with respect
to the $x_i$-grading{\rm{)}} is zero when restricted to $A$.
 \end{enumerate}
\end{theorem}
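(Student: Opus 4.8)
The plan is to establish the chain $(1)\Leftrightarrow(2)\Leftrightarrow(3)$ first and then, under $\Z\subset k$, to bootstrap $(4)$ and $(5)$ from it. Throughout I rely on Lemma~\ref{yylem3.2}: under (H1) every automorphism and every derivation of $A\otimes C$ has generators with zero constant term, and $\Aut(A)=\Aut_{\gr}(A)\ltimes\Aut_{\uni}(A)$. This last fact gives the parenthetical reformulation in $(1)$ for free: since constant terms vanish, an automorphism is affine exactly when $g(x_i)\in Y$, i.e.\ $g\in\Aut_{\gr}(A)$, so ``every automorphism is affine'' is the same as ``$\Aut_{\uni}(A)$ is trivial.''

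For $(1)\Leftrightarrow(3)$ I would argue both directions by contraposition through the explicit correspondence between $(T_s)_{\ge2}$ and elementary maps. If $(T_s)_{\ge2}\neq\emptyset$, pick $d\in(T_s)_{\ge2}$; then $f_d$ has degree $\ge2$ and the elementary automorphism $g(f_d,s)$ of \eqref{2.10.1} sends $x_s\mapsto x_s+f_d$, which has degree $\ge2$, so $g$ is not affine and $(1)$ fails. Conversely $(3)\Rightarrow(1)$ is essentially Lemma~\ref{yylem3.3}(3): a non-identity $g\in\Aut_{\uni}(A)$ forces some $(T_s)_{\ge2}$ to be nonempty, so if all $(T_s)_{\ge2}$ are empty then $\Aut_{\uni}(A)=\{1\}$ and every automorphism is affine. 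The direction $(2)\Rightarrow(1)$ is then just specialization to $C=k$, leaving $(3)\Rightarrow(2)$ as the substantive point. Given $g\in\Aut(A\otimes C)$, write $g(x_i)=L_i+H_i$ with $L_i=\sum_j c_{ij}x_j$ the degree-$1$ part (the constant part being $0$) and $H_i$ of degree $\ge2$. The degree-$2$ component of $g(x_j)g(x_i)=p_{ij}g(x_i)g(x_j)$ reads $L_jL_i=p_{ij}L_iL_j$, so $(c_{ij})$ respects the skew relations and defines a graded endomorphism $g_0$; comparing degree-$1$ parts of $gg^{-1}=\mathrm{id}$ shows $(c_{ij})\in GL_n(C)$, and applying the relation-extraction to $g^{-1}$ shows the inverse matrix also respects the relations, so $g_0\in\Aut_{\gr}(A\otimes C)$. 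Then $g_0^{-1}g$ is unipotent, and since Lemma~\ref{yylem3.3} holds with the same proof over $A\otimes C$ (its degree machinery was already set up there, and $T_s$ depends only on the $p_{ij}$), condition $(3)$ forces $g_0^{-1}g=\mathrm{id}$; thus $g=g_0$ is $C$-affine.

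Finally, under $\Z\subset k$ I would close the cycle $(3)\Rightarrow(5)\Rightarrow(4)\Rightarrow(3)$. Here $(5)\Rightarrow(4)$ is again $C=k$, and $(4)\Rightarrow(3)$ is the derivation analogue of the construction above: from $d\in(T_s)_{\ge2}$ the map $\partial(f_d,s)$ of \eqref{2.10.2} is a nonzero locally nilpotent derivation, homogeneous of degree $\deg f_d-1\ge1\ne0$, contradicting $(4)$. For the key implication $(3)\Rightarrow(5)$ I would localize $k$ so that $\Q\subset k$ and use the exponential together with the already-proved $C$-affineness $(2)$. A homogeneous locally nilpotent derivation $\partial$ of $A\otimes C$ of degree $a$ satisfies $\partial(x_i)=0$ automatically when $a<0$, for degree reasons and Lemma~\ref{yylem3.2}(6); when $a>0$ the map $\exp(\partial)$ lies in $\Aut_{\uni}(A\otimes C)$, hence is $C$-affine by $(2)$, and since $\exp(\partial)(x_i)=x_i+\partial(x_i)+\cdots$ has $\partial(x_i)$ as its unique lowest component, of degree $a+1\ge2$, $C$-affineness forces $\partial(x_i)=0$. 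Thus $\partial|_A=0$, giving $(5)$.

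The main obstacle is the $(3)\Rightarrow(2)$ step over a general coefficient ring $C$: one cannot simply invoke $\Aut(A)=\Aut_{\gr}\ltimes\Aut_{\uni}$, and must instead verify by hand that the linear coefficient matrix over $C$ both respects the defining relations and is invertible, so that the graded/unipotent factorization survives base change and Lemma~\ref{yylem3.3} applies to $A\otimes C$. Checking that the degree function and the relation-extraction argument of Lemma~\ref{yylem3.3}(3) go through unchanged over $A\otimes C$, where scalars from $C$ now appear in every coefficient, is the part requiring the most care, although no genuinely new idea beyond the $C=k$ case is needed.
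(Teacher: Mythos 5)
Your proposal is correct and follows essentially the same route as the paper: the cycle $(2)\Rightarrow(1)\Rightarrow(3)\Rightarrow(2)$ via the elementary automorphisms $g(f_d,s)$ of \eqref{2.10.1} and Lemma \ref{yylem3.3}(3) applied over the base ring $C$, and then $(5)\Rightarrow(4)\Rightarrow(3)$ and $(2)/(3)\Rightarrow(5)$ via the derivations \eqref{2.10.2} and the exponential after localizing to characteristic zero. The only cosmetic difference is that you extract the linear coefficient matrix $g_0$ by hand where the paper passes to $\gr g$ via the $\mathfrak{m}$-adic filtration (these are the same map), and in $(3)\Rightarrow(5)$ you treat homogeneous derivations, which is reconciled with the general case by Lemma \ref{yylem4.11}.
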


\begin{proof}
(2) $\Rightarrow$ (1) Trivial.

(1) $\Rightarrow$ (3) Suppose that $(T_s)_{\geq 2}$ is
non-empty for some $s$. Then the system
\eqref{2.9.1} has a solution
\[
(d_1,d_2,\dots,d_{s-1},d_{s+1},\dots,d_n),
\]
where $d_i\geq 0$ and $\sum_i d_i\geq 2$. Let
$f=x_1^{d_1}\cdots x_{s-1}^{d_{s-1}}x_{s+1}^{d_{s+1}}\cdots x_x^{d_n}$;
this has degree at least 2. Then, by \eqref{2.10.1},  the map
\[
g: x_i\to \begin{cases}
             x_i &{\text{if $i\neq s$,}}\\
             x_s+f &{\text{if $i=s$}}
            \end{cases}
\]
extends to a non-affine algebra automorphism of $A$.

(3) $\Rightarrow$ (2)
Let $\mathfrak{m}$ be the graded ideal $A_{\geq 1}\otimes C$.
Suppose that $g$ is a non-$C$-affine automorphism of
$A\otimes C$. Since each $x_i$ is not central,
each $g(x_i)$ has zero constant term [Lemma \ref{yylem3.2}]. Consequently,
$g(x_i)\in \mathfrak{m}$. Thus $g$ preserves the
ideal $\mathfrak{m}$. Using the $\mathfrak m$-adic filtration,
$\gr g$ is a $C$-affine automorphism of $\gr A\otimes C$, which is
isomorphic to $A\otimes C$. Hence $h:= g(\gr g)^{-1}$ is an algebra
automorphism of $A\otimes C$ such that $h|_C=Id_C$,
and $h(x_i)=x_i+
{\text{higher degree terms}}$ for all $i$. That is, $h$ is
a unipotent automorphism of the $C$-algebra $A\otimes C$. Since
$g$ is not $C$-affine, neither is $h$.
The assertion follows from Lemma \ref{yylem3.3}(3) (when
working with the base commutative ring $C$).

(5) $\Rightarrow$ (4) Trivial.

(4) $\Rightarrow$ (3) Suppose that, for some $s$, $(T_s)_{\geq 2}$
is non-empty, containing some element
$(d_1,\dots,\widehat{d}_s,\dots,d_n)$.  Let
$f=x_1^{d_1}\cdots x_{s-1}^{d_{s-1}}x_{s+1}^{d_{s+1}}\cdots
x_x^{d_n}$. Since this has degree at least 2, the map \eqref{2.10.2}
\[
\partial: x_i\mapsto \begin{cases}
             0 &{\text{if $i\neq s$,}}\\
             f &{\text{if $i=s$}}
            \end{cases}
\]
extends to a locally nilpotent derivation of degree at least 2.

(2) $\Rightarrow$ (5) Here we need the hypothesis that
$\Z\subset k$. After localizing, we may assume
that $k$ is a field of characteristic zero.

Let $\partial$ be a nonzero locally
nilpotent derivation of $A\otimes C$.
Let $g_c:=\exp(c\partial)$ for $c\in k$. We know that the
constant term of $g_c(x_i)$ is zero for all $i$ and $c$.
Then the constant term of $\partial^n(x_i)$ is zero for all
$n$. If the degree of $\partial$ is not zero, then $g_c$ is
not $C$-affine, a contradiction.
\end{proof}

An immediate consequence of
Lemma \ref{yylem2.9} and Theorem \ref{yythm3.4} is:
if $C(k_{p_{ij}}[\underline{x}_n^{\pm 1}])
\subset k\langle x_1^{\pm \alpha_1},\dots, x_n^{\pm \alpha_n}
\rangle$
for some $\alpha_1,\dots, \alpha_n\geq 2$,
then $\Aut_{\uni}(A)$ is trivial.

The following is easy to check.

\begin{lemma}
\label{yylem3.5} Assume {\rm{(H2)}}. Then
$T_s=\emptyset$ if and only if $T_s$ is finite if and only if
$(T_s)_{\geq 2}=\emptyset$.
\end{lemma}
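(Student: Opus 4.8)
The plan is to verify the cycle of implications
\[
T_s=\emptyset \;\Longrightarrow\; (T_s)_{\geq 2}=\emptyset \;\Longrightarrow\; T_s \text{ finite} \;\Longrightarrow\; T_s=\emptyset,
\]
which yields all three equivalences at once. The first arrow is immediate, since $(T_s)_{\geq 2}\subseteq T_s$. The second arrow is elementary: if $(T_s)_{\geq 2}=\emptyset$, then every element $(d_1,\dots,\widehat{d}_s,\dots,d_n)$ of $T_s$ satisfies $\sum_{j\neq s}d_j\leq 1$, so $T_s$ is contained in the finite subset of $\N^{n-1}$ consisting of the zero vector together with the standard basis vectors $e_j$ ($j\neq s$); hence $|T_s|\leq n$. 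The only implication with genuine content is the last one, which I would prove in contrapositive form as the assertion that \emph{a nonempty $T_s$ is infinite}. This is precisely where hypothesis {\rm (H2)} is used.

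For the key step I would first extract from {\rm (H2)} a single positive integer $\ell$ with $p_{ij}^{\ell}=1$ for all $i,j$: by the remark preceding Lemma~\ref{yylem2.5}, {\rm (H2)} implies that the subgroup of $k^\times$ generated by the $p_{ij}$ equals $\langle q\rangle$ for a primitive $\ell$th root of unity $q$, whence $p_{ij}^{\ell}=1$. Now suppose $T_s\neq\emptyset$ and fix $d=(d_j)_{j\neq s}\in T_s$, so that $\prod_{j\neq s}p_{ij}^{d_j}=p_{is}$ for all $i\neq s$. Choose any index $j_0\neq s$ (possible since $n\geq 2$), and for each $m\geq 0$ set $d^{(m)}=d+m\ell e_{j_0}$. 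Then for every $i\neq s$,
\[
\prod_{j\neq s}p_{ij}^{d^{(m)}_j}
=\Bigl(\prod_{j\neq s}p_{ij}^{d_j}\Bigr)\,p_{ij_0}^{m\ell}
=p_{is}\cdot 1=p_{is},
\]
so $d^{(m)}\in T_s$ for all $m$. These vectors are pairwise distinct, as they differ in their $j_0$-th coordinate, and therefore $T_s$ is infinite, as required.

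The main obstacle here is minor: the whole argument rests on the single observation that {\rm (H2)} supplies a common exponent $\ell$ and that translating any solution of the defining system by a multiple of $\ell e_{j_0}$ preserves membership in $T_s$; everything else is a direct verification. One degenerate point worth flagging is that the implication ``$T_s\neq\emptyset\Rightarrow T_s$ infinite'' needs an index $j_0\neq s$, i.e. $n\geq 2$; for $n=1$ the set $T_s\subseteq\N^{0}$ is a single point and the statement is vacuous, so I assume $n\geq 2$ as elsewhere. Equivalently, one may phrase the translation step through Lemma~\ref{yylem2.9}(2): multiplying the central Laurent monomial $x_1^{d_1}\cdots x_s^{-1}\cdots x_n^{d_n}$ by the central element $x_{j_0}^{\ell}$ again yields a central monomial with $x_s$-exponent $-1$ and all remaining exponents nonnegative, recovering the same infinite family.
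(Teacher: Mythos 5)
Your proof is correct; note that the paper offers no proof of Lemma \ref{yylem3.5} at all (it is prefaced only by ``The following is easy to check''), so there is nothing to diverge from. Your argument---reducing everything to the implication that a nonempty $T_s$ is infinite, proved by translating a solution by $m\ell e_{j_0}$ where $\ell$ is a common order of the $p_{ij}$ supplied by (H2)---is exactly the intended one: it matches the paper's remark in the introduction that in the PI case a nonempty $T_s$ is infinite, and the same translation trick appears in the proof of Lemma \ref{yylem2.5} (``any multiple of $d$ is still a solution''); your flagging of the degenerate case $n=1$ is a reasonable extra precaution.
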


The next theorem is a version of Theorem \ref{yythm3.4} when (H1) is
replaced by (H2). Note that this is part of Theorem \ref{yythm3.1}.
Its proof is similar to the proof Theorem \ref{yythm3.4} and therefore
is omitted. Let $\Aut_{\uni\text{-}C}(A\otimes C)$ be the set of
$k$-algebra automorphisms $g$ of $A\otimes C$ such that $g|_C=Id_C$
and $g(x_i)=x_i+{\text{higher degree terms}}$ for all $i$.

\begin{theorem}
 \label{yythm3.6}  
 Let $A=k_{p_{ij}}[\underline{x}_n]$ be a skew polynomial ring
 satisfying {\rm{(H2)}}. The following are equivalent.
 \begin{enumerate}
  \item
 $\Aut_{\uni}(A)=\{1\}$
  \item
 For any commutative domain $C$ containing $k$,
$\Aut_{\uni\text{-}C}(A\otimes C)=\{1\}$.
  \item
$ T_s=\emptyset$ for all $s$.
\end{enumerate}
If $\Z\subset k$, then the above is also equivalent to
\begin{enumerate}
\item[(4)]
$\LNDer(A)=\{0\}$.
\item[(5)]
 For any commutative domain $C$ containing $k$,
every locally nilpotent
derivation $\partial$ of $A\otimes C$ with $\partial|_C=0$  is zero.
 \end{enumerate}
\end{theorem}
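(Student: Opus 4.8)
The plan is to run the proof of Theorem \ref{yythm3.4} essentially unchanged, with Lemma \ref{yylem3.5} as the only bridge: under (H2) one has $T_s=\emptyset$ if and only if $(T_s)_{\geq 2}=\emptyset$, so every place where the proof of Theorem \ref{yythm3.4} reads ``$(T_s)_{\geq 2}=\emptyset$'' may be replaced by ``$T_s=\emptyset$'' and conversely. First I would record the observation that condition (3) forces (H1): if some $x_s$ were central then $p_{is}=1$ for all $i$, so the zero tuple lies in $T_s$ and $T_s\neq\emptyset$; hence $T_s=\emptyset$ for all $s$ implies that no $x_i$ is central. Consequently the zero-constant-term conclusions of Lemma \ref{yylem3.2} become available and Lemma \ref{yylem3.3} applies (over any commutative domain $C\supseteq k$), and these are the engines of the argument.

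I would prove (1) $\Leftrightarrow$ (2) $\Leftrightarrow$ (3) as a short cycle. For (2) $\Rightarrow$ (1) take $C=k$, noting $\Aut_{\uni\text{-}k}(A)=\Aut_{\uni}(A)$. For (1) $\Rightarrow$ (3) I argue contrapositively: if $T_s\neq\emptyset$ then Lemma \ref{yylem3.5} supplies $d\in(T_s)_{\geq 2}$, and the elementary map $g(f_d,s)$ of \eqref{2.10.1} is a nonidentity unipotent automorphism (its value on $x_s$ is $x_s+f_d$ with $\deg f_d\geq 2$), so $\Aut_{\uni}(A)\neq\{1\}$. For (3) $\Rightarrow$ (2), given a nonidentity $g\in\Aut_{\uni\text{-}C}(A\otimes C)$, Lemma \ref{yylem3.3}(3) (applied with base ring $C$) produces an element of $(T_s)_{\geq 2}$ for some $s$, contradicting $T_s=\emptyset$ via Lemma \ref{yylem3.5}. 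Unlike in Theorem \ref{yythm3.4}, no passage to the associated graded ring is needed here, since condition (2) already speaks directly about unipotent automorphisms.

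Assume now $\Z\subset k$. The implication (5) $\Rightarrow$ (4) is the case $C=k$, and (4) $\Rightarrow$ (3) is again contrapositive: a nonempty $T_s$ gives, through Lemma \ref{yylem3.5} and \eqref{2.10.2}, a nonzero locally nilpotent derivation. The substantive direction is (3) $\Rightarrow$ (5), which contains (3) $\Rightarrow$ (4) as the case $C=k$. Here I would \emph{not} imitate Theorem \ref{yythm3.4}, whose derivation conclusion only covers derivations of nonzero degree; the stronger conclusion $\LNDer(A)=\{0\}$ must come from the discriminant. By Theorem \ref{yythm2.11}(3), condition (3) says exactly that $d_w(A/C(A))$ is dominating; since (H2) makes $A$ a PI ring, hence finite over $C(A)$, Theorem \ref{yythm1.13}(4) applies verbatim and yields $\LNDer(A)=\{0\}$, i.e.\ (4).

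For the relative statement (5) over an arbitrary commutative domain $C\supseteq k$ (note that $A\otimes C$ is the skew polynomial ring $C_{p_{ij}}[\underline{x}_n]$, hence a domain), I would repeat the discriminant argument relatively. By Lemma \ref{yylem1.12}(2) the discriminant of $A\otimes C$ over $C(A)\otimes C$ is again the dominating monomial $d_w(A/C(A))$; for $\partial\in\LNDer(A\otimes C)$ with $\partial|_C=0$, each $\exp(c\partial)$ is an automorphism, so $d_w$ is $\exp(c\partial)$-invariant up to a unit, and the Vandermonde argument of Proposition \ref{yypro1.5} gives $\partial(d_w)=\mu\,d_w$ with $\mu\in C$; local nilpotence forces $\mu$ to be nilpotent, hence $\mu=0$, so $\partial(d_w)=0$, whereupon the dominating property of $d_w=x_1^{a_1}\cdots x_n^{a_n}$ (all $a_i\geq 1$) forces $\partial=0$ as in the proof of Theorem \ref{yythm1.13}(4). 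The main obstacle I anticipate is precisely this relative step, and within it the \emph{degree-zero} part of $\partial$: a derivation that raises $x$-degree exponentiates into $\Aut_{\uni\text{-}C}(A\otimes C)=\{1\}$ and is thus killed by (3) $\Rightarrow$ (2), but a degree-preserving derivation exponentiates to a \emph{graded} automorphism, invisible to condition (2); this is why the discriminant is indispensable, and why Proposition \ref{yypro1.5}, stated under $B^\times=k^\times$, must be re-run for $A\otimes C$, whose unit group is $C^\times$. (One assumes throughout $n\geq 2$, the case $n=1$ being degenerate.)
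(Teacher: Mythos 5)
Your proof is correct. For (1) $\Leftrightarrow$ (2) $\Leftrightarrow$ (3) it coincides with what the paper intends: the authors omit the proof as ``similar to the proof of Theorem \ref{yythm3.4}'', and your cycle, with Lemma \ref{yylem3.5} translating $T_s=\emptyset$ into $(T_s)_{\geq 2}=\emptyset$ and the preliminary observation that (3) forces (H1) (so that Lemmas \ref{yylem3.2} and \ref{yylem3.3} are available over any base $C$), is exactly that adaptation. Where you genuinely depart from the template is in (3) $\Rightarrow$ (4)/(5), and your diagnosis of why a departure is needed is accurate: the exponential argument of Theorem \ref{yythm3.4}(2)$\Rightarrow$(5) only detects derivations of nonzero degree, since a derivation with a degree-preserving component $\partial(x_i)=\sum_j c_{ij}x_j+\cdots$ exponentiates to a non-unipotent automorphism and is invisible to condition (2). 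Your replacement --- Theorem \ref{yythm2.11}(3) turns (3) into the statement that $d_w(A/C(A))$ is dominating, and Theorem \ref{yythm1.13}(4) then gives $\LNDer(A)=\{0\}$ --- is sound and not circular, since Theorem \ref{yythm3.1} quotes Theorem \ref{yythm3.6} only for its clause (7)$\Leftrightarrow$(4), while Theorems \ref{yythm2.11} and \ref{yythm1.13} are independent of it. What the discriminant route buys is uniformity; what it costs is that for the relative statement (5) you must transport the discriminant to $A\otimes C$, and Lemma \ref{yylem1.12}(2) assumes $k$-flatness of $C$ and $A$-closedness of $A\otimes C$, neither of which is automatic for an arbitrary commutative domain $C\supseteq k$. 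A cleaner path to the same end is to note that $A\otimes C=C_{p_{ij}}[\underline{x}_n]$ is itself a skew polynomial ring over $C$ with the same parameters and hence the same (empty) sets $T_s$, so Theorems \ref{yythm2.11} and \ref{yythm1.13}(4) apply verbatim over the base ring $C$ to any locally nilpotent derivation killing $C$; this also disposes of your concern about $(A\otimes C)^\times=C^\times$ in Proposition \ref{yypro1.5}. Alternatively, the degree-zero gap admits an elementary patch in the spirit of Theorem \ref{yythm3.4} (presumably the ``similar'' argument the authors had in mind): if $c_{ij}\neq 0$ for some $j\neq i$, applying $\partial$ to the defining relations forces $p_{lj}=p_{li}$ for all $l$, i.e.\ the standard basis vector $e_j$ lies in $T_i$, contradicting (3); the diagonal coefficients then vanish by local nilpotence, so every nonzero locally nilpotent derivation has positive degree and the exponential argument finishes.
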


By Theorem \ref{yythm3.4} or Theorem \ref{yythm3.6}, if $A$ is the
algebra in Example \ref{yyex2.4}(4), then it is easy to check that
each $T_s=\emptyset$, so $\Aut(A)$ is affine. (Alternatively, one can
apply Lemma \ref{yylem2.9}.) Here is another example.

\begin{example}
\label{yyex3.7} Let $n=4$ and $i^2=-1$. Let
\[
p_{12}= i,\quad
p_{13}=i,\quad
p_{14}= i,\quad
p_{23}=-i,\quad
p_{24}=i,\quad
p_{34}=1.
\]
Then $A:=k_{p_{ij}}[x_1,x_2,x_3,x_4]$ is a PI algebra with its center
generated by $x_i^4$, $x_1^2x_2^2x_3^2$, $x_1^2x_2^2x_4^2$ and
$x_3^2x_4^2$. Therefore $C(A)$ is not isomorphic to the polynomial
ring; in fact, the center is not Gorenstein. One can check directly
that $C(k_{p_{ij}} [\underline{x}_n^{\pm 1}]) \subset k\langle
x_1^{\pm 2},\dots, x_4^{\pm 2} \rangle$. Therefore $\Aut(A)$ is affine
by Lemma \ref{yylem2.9} and Theorem \ref{yythm3.4}.
\end{example}

Along these lines, here is another part of Theorem \ref{yythm3.1}.

\begin{theorem}
\label{yythm3.8}
Let $A=k_{p_{ij}}[\underline{x}_n]$ be a skew polynomial ring
satisfying {\rm{(H2)}}. The following are equivalent.
\begin{enumerate}
\item
$\Aut(A)$ is affine.
\item
$C(k_{p_{ij}}[\underline{x}_n^{\pm 1}])
\subset k\langle x_1^{\pm \alpha_1},\dots, x_n^{\pm \alpha_n}
\rangle$
for some $\alpha_1,\dots, \alpha_n\geq 2$.
\item
$C(k_{p_{ij}}[\underline{x}_n])
\subset k\langle x_1^{\alpha_1},\dots, x_n^{\alpha_n}
\rangle$
for some $\alpha_1,\dots, \alpha_n\geq 2$.
\item
$T_s=\emptyset$ for all $s=1,\dots,n$.
\end{enumerate}
\end{theorem}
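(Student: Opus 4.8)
The plan is to split the four conditions into a ``center'' cluster (2)$\Leftrightarrow$(3)$\Leftrightarrow$(4), which is a purely combinatorial statement about central monomials, and the ``automorphism'' statement (1)$\Leftrightarrow$(4), which I would extract from the unipotent machinery already assembled in this section. Throughout I work with the $\Z^n$-grading and encode the center by the subgroup
\[
L=\{(e_1,\dots,e_n)\in\Z^n\mid x_1^{e_1}\cdots x_n^{e_n}\in C(k_{p_{ij}}[\underline{x}_n^{\pm 1}])\}\leq\Z^n,
\]
so that $C(k_{p_{ij}}[\underline{x}_n^{\pm 1}])$ is the span of the monomials $x^e$ with $e\in L$, and $C(A)$ is the span of those with $e\in L\cap\N^n$. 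By Lemma~\ref{yylem2.9}(2), $T_s$ is exactly the set of $e\in L$ with $e_s=-1$ and $e_j\ge 0$ for $j\ne s$. Hypothesis (H2) enters here: it guarantees $x_i^{\ell}\in C(A)$ for a common $\ell\ge 1$, hence $\ell\Z^n\subseteq L$, which is what lets me shift exponents into $\N^n$ at will.

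For the center cluster I would run the cycle (4)$\Rightarrow$(2)$\Rightarrow$(3)$\Rightarrow$(4). For (4)$\Rightarrow$(2), let $\beta_i\ge 1$ generate the image of the $i$th coordinate projection, $\pi_i(L)=\beta_i\Z$ (nonzero since $\ell\in\pi_i(L)$). If some $\beta_i=1$ there is $e\in L$ with $e_i=1$; negating and then adding suitable multiples of $\ell$ to the remaining coordinates (legal as $\ell\Z^n\subseteq L$) produces an element of $L$ with $i$th coordinate $-1$ and all others in $\N$, i.e.\ a point of $T_i$, contradicting (4). Hence every $\beta_i\ge 2$ and $L\subseteq\beta_1\Z\times\cdots\times\beta_n\Z$, which is (2). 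The step (2)$\Rightarrow$(3) is immediate upon intersecting $L\subseteq\prod_i\alpha_i\Z$ with $\N^n$. For (3)$\Rightarrow$(4) I argue contrapositively: an element of $T_s$ gives a central Laurent monomial with $x_s$-exponent $-1$; inverting it and clearing the remaining negative exponents by multiplying with the central powers $x_j^{\ell}$ yields an honest central monomial of $A$ whose $x_s$-exponent equals $1$, so $C(A)\not\subseteq k\langle x_1^{\alpha_1},\dots,x_n^{\alpha_n}\rangle$ for any $\alpha_s\ge 2$, and (3) fails.

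For (1)$\Leftrightarrow$(4) I would combine Theorem~\ref{yythm3.6} with Lemma~\ref{yylem3.2}. For (1)$\Rightarrow$(4): if (4) fails, Theorem~\ref{yythm3.6} supplies a nontrivial unipotent automorphism $g$; but any $g\in\Aut_{\uni}(A)\setminus\{1\}$ has $g(x_i)-x_i$ a nonzero sum of monomials of degree $\ge 2$ for some $i$, so $\deg g(x_i)\ge 2$ and $g$ is not affine, whence $\Aut(A)$ is not affine. Conversely, if (4) holds then no $x_s$ can be central---otherwise $p_{is}=1$ for all $i$ and the zero tuple lies in $T_s$---so (H1) holds; then Lemma~\ref{yylem3.2}(4) gives $\Aut(A)=\Aut_{\gr}(A)\ltimes\Aut_{\uni}(A)$, Theorem~\ref{yythm3.6} forces $\Aut_{\uni}(A)=\{1\}$, and graded automorphisms are affine, so $\Aut(A)=\Aut_{\gr}(A)$ is affine. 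Together with the center cluster this closes all four equivalences (taking $n\ge 2$, the relevant case).

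The main obstacle I expect is not any single implication but the careful bookkeeping in the center cluster: the only genuinely delicate point is the passage between the Laurent center in (2) and the ordinary center in (3), where one must check that a central monomial witnessing $\beta_i=1$ (equivalently $T_s\ne\emptyset$) can be cleared into $\N^n$ \emph{without disturbing the targeted coordinate}---precisely the place where $\ell\Z^n\subseteq L$, and hence (H2), is indispensable. The automorphism equivalence, by contrast, is essentially a repackaging of Theorem~\ref{yythm3.6}, the one subtlety being the reduction to (H1) via the observation that a central variable forces the zero tuple into some $T_s$.
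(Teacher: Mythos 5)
Your proposal is correct and follows essentially the same route as the paper: the center cluster (4)$\Rightarrow$(2)$\Rightarrow$(3)$\Rightarrow$(4) is the paper's argument with the minimal exponents $a_s,b_s$ repackaged as the coordinate projections of your lattice $L$ (with $\ell\Z^n\subseteq L$ playing the same shifting role), and the equivalence (1)$\Leftrightarrow$(4) via (H1), Lemma~\ref{yylem3.2}(4) and the unipotent machinery is exactly how the paper proceeds, except that it cites Theorem~\ref{yythm3.4} together with Lemma~\ref{yylem3.5} where you cite the (H2)-version Theorem~\ref{yythm3.6}. The only cosmetic difference is that the paper gets (H1) from (4)$\Rightarrow$(3) while you derive it directly from the observation that a central $x_s$ puts the zero tuple in $T_s$; both are fine.
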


\begin{proof}
(1) $\Rightarrow$ (4) If $T_s\neq \emptyset$, then $(T_s)_{\geq 2}\neq
\emptyset$. Hence, picking some element in $(T_s)_{\geq 2}$, the
construction 
\eqref{2.10.1} defines a non-affine automorphism of $A$.

(4) $\Rightarrow$ (2) Let $b_s$ be the smallest positive integer such
that $x_s^{b_s}$ is in the center of $A$ (and in the center of
$k_{p_{ij}}[\underline{x}_n^{\pm 1}]$). Since all $p_{ij}$ are roots
of unity, $b_s$ exists for each $s$.

For each $s$, let $a_s$ be the smallest positive integer such that
$x_1^{a_1}\cdots x_s^{a_s}\cdots x_n^{a_n}\in
C(k_{p_{ij}}[\underline{x}_n^{\pm 1}])$ for some $a_i$. Then every
monomial in the center is of the form $x_1^{c_1}\cdots x_s^{c_s}\cdots
x_n^{c_n}$, where $a_s\mid c_s$.  Suppose the assertion in (2)
fails. Then $a_s=1$ for some $s$.  By multiplying by $x_i^{-b_i}$ if
necessary, we may assume that there are $a_i>1$ for all $i\neq s$ such
that $x^{-a_1}\cdots x_s\cdots x_n^{-a_n}\in
C(k_{p_{ij}}[\underline{x}_n^{\pm 1}])$. Equivalently, $x^{a_1}\cdots
x_s^{-1}\cdots x_n^{a_n}\in C(k_{p_{ij}}[\underline{x}_n^{\pm
1}])$. Thus $T_s\neq \emptyset$ by Lemma \ref{yylem2.9}(2).

(2) $\Rightarrow$ (3) Clear.

(3) $\Rightarrow$ (4) Follows by Lemma \ref{yylem2.9}(2).

(4) $\Rightarrow$ (1) From the earlier parts, we know that (4)
$\Rightarrow$ (3); therefore no $x_i$ is central in $A$: $A$ satisfies
(H1). The proof now follows by Lemma \ref{yylem3.5} and Theorem
\ref{yythm3.4}.
\end{proof}

Now we can prove Theorem \ref{yythm3.1}.

\begin{proof}[Proof of Theorem \ref{yythm3.1}]
The equivalences of
(1)--(4) are given in Theorem \ref{yythm3.8}.

(4) $\Rightarrow$ (5) This is Theorem \ref{yythm2.11}(3).

(5) $\Rightarrow$ (6) Trivial.

(6) $\Rightarrow$ (1) This is Theorem \ref{yythm1.13}(1).

(7) $\Leftrightarrow$ (4) is given in Theorem \ref{yythm3.6}.
\end{proof}

The next proposition takes care of $\Aut_{\gr}(A)$ in many
cases.

\begin{proposition}
\label{yypro3.9}
Suppose that $p_{ij}\neq 1$ for all $i<j$. Let
\[
S=\{\sigma\in S_n\mid
p_{ij}=p_{\sigma(i)\sigma(j)} \quad
{\text{for all $i,j$}}\}.
\]
\begin{enumerate}
 \item
Then
\[
\Aut_{\gr}(A)=S\ltimes (k^\times)^n.
\]
 \item
Suppose the conditions in Theorem {\rm{\ref{yythm3.4}}(1)--(3)} hold.
Then
\[
\Aut(A)=\Aut_{\gr}(A)=S\ltimes (k^\times)^n.
\]
If, further, $\Z\subset k$, then
every locally nilpotent derivation of $A$ is zero.
\end{enumerate}
\end{proposition}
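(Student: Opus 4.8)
**The plan is to prove the two parts of Proposition~\ref{yypro3.9} in sequence, using the decomposition $\Aut(A)=\Aut_{\gr}(A)\ltimes \Aut_{\uni}(A)$ from Lemma~\ref{yylem3.2}(4) to reduce part~(2) to part~(1) together with the results of Theorem~\ref{yythm3.4}.**

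For part~(1), I would first show that each graded automorphism $g$ must permute the generators up to scalars. Since $g$ is graded, it restricts to a linear automorphism of $A_1=\bigoplus_i kx_i$, so $g(x_i)=\sum_j c_{ij}x_j$. The key step is to apply $g$ to the defining relation $x_jx_i=p_{ij}x_ix_j$ for each pair $i<j$ and extract constraints on the matrix $(c_{ij})$ by comparing coefficients of the degree-$2$ monomials in the quotient-free span. The hypothesis $p_{ij}\neq 1$ for all $i<j$ forces the ``mixing'' coefficients to vanish: if $g(x_i)$ genuinely involved two distinct generators $x_a,x_b$, then the relation between $g(x_i)$ and $g(x_j)$ would require $p_{ab}$ to take incompatible values, contradicting $p_{ab}\neq 1$. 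Hence each $g(x_i)=\lambda_i x_{\sigma(i)}$ for a scalar $\lambda_i\in k^\times$ and a permutation $\sigma\in S_n$. Feeding this back into the relations shows $p_{ij}=p_{\sigma(i)\sigma(j)}$, so $\sigma\in S$. Conversely every such pair $(\sigma,(\lambda_i))$ manifestly defines a graded automorphism, and the diagonal scalings $(k^\times)^n$ form a normal subgroup on which $S$ acts by permuting coordinates, giving the semidirect product $S\ltimes (k^\times)^n$.

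For part~(2), I would invoke Lemma~\ref{yylem3.2}(4), which gives $\Aut(A)=\Aut_{\gr}(A)\ltimes \Aut_{\uni}(A)$ whenever (H1) holds; note (H1) follows from the hypothesis $p_{ij}\neq 1$ for all $i<j$ via Lemma~\ref{yylem3.2}(1)$\Leftrightarrow$(2). Under the equivalent conditions of Theorem~\ref{yythm3.4}(1)--(3), the unipotent part $\Aut_{\uni}(A)$ is trivial, so $\Aut(A)=\Aut_{\gr}(A)$, which equals $S\ltimes(k^\times)^n$ by part~(1). The final claim about locally nilpotent derivations, under $\Z\subset k$, is immediate from Theorem~\ref{yythm3.4}(4)--(5): those parts state that every locally nilpotent derivation of nonzero degree vanishes, and since (H1) holds, Lemma~\ref{yylem3.2}(6) forces the constant term of $\partial(x_i)$ to be zero, ruling out degree-zero contributions as well, so $\partial=0$.

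The main obstacle will be the coefficient-comparison argument in part~(1): verifying rigorously that the condition $p_{ij}\neq 1$ eliminates off-diagonal mixing requires careful bookkeeping of which quadratic monomials $x_ax_b$ appear when one expands $g(x_j)g(x_i)-p_{ij}g(x_i)g(x_j)$ and normalizes using the straightening relations. The subtlety is that a priori $g$ could send $x_i$ to a linear combination, and one must rule out, for each pair, that a nontrivial two-term image survives; the clean way is to argue that the $p_{ij}$-eigenspace structure of the conjugation action (captured by the automorphisms $\phi_i$ of Definition~\ref{yydef2.1}) is preserved by $g$, so distinct generators with distinct commutation scalars cannot be merged. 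Everything else reduces to already-established lemmas and is routine.
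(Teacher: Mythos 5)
Your treatment of part (1) and of the automorphism statement in part (2) matches the paper's in substance: the paper disposes of part (1) by citing \cite[Lemma 2.5(e)]{KKZ3} for the fact that a graded automorphism must send each $x_i$ to a scalar multiple of some $x_{\sigma(i)}$ when all $p_{ij}\neq 1$, whereas you re-derive that fact. Your sketch can be made to work cleanly: comparing the coefficient of $x_a^2$ in $g(x_j)g(x_i)=p_{ij}g(x_i)g(x_j)$ gives $c_{ia}c_{ja}(1-p_{ij})=0$, so the supports of $g(x_1),\dots,g(x_n)$ in $\bigoplus_j kx_j$ are pairwise disjoint, and invertibility forces each support to be a singleton. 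The reduction of part (2) via Lemma \ref{yylem3.2}(4) and the triviality of $\Aut_{\uni}(A)$ under the hypotheses of Theorem \ref{yythm3.4} is exactly the paper's argument.

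There is, however, a genuine gap in your argument for the locally nilpotent derivation claim. Theorem \ref{yythm3.4}(4)--(5) kills every locally nilpotent derivation of \emph{nonzero} degree, and Lemma \ref{yylem3.2}(6) kills the constant term of $\partial(x_i)$, i.e.\ the degree $(-1)$ component of $\partial$. Neither statement addresses the degree-zero component: a derivation with $\partial(x_i)=\sum_j n_{ij}x_j$ has degree $0$ and has no constant terms, so your sentence ``the constant term is zero, ruling out degree-zero contributions'' conflates two different homogeneous pieces of $\partial$ and does not close the argument. The missing step can be supplied either by the paper's route --- exponentiate: $\exp(c\partial)\in\Aut(A)=S\ltimes(k^\times)^n$ for all $c$, so a Vandermonde argument as in Proposition \ref{yypro1.5} forces $\partial(x_i)\in kx_i$, and local nilpotence then gives $\partial(x_i)=0$ --- or directly: applying $\partial$ to $x_jx_i=p_{ij}x_ix_j$ and extracting the coefficient of $x_i^2$ yields $n_{ji}(1-p_{ij})=0$, so $n_{ji}=0$ for all $i\neq j$ since $p_{ij}\neq 1$, and then $\partial(x_i)=n_{ii}x_i$ with $n_{ii}=0$ by local nilpotence. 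With that repair your proof is complete.
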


\begin{proof} (1) It is clear that $S\ltimes (k^\times)^n\subset
\Aut_{\gr}(A)$. We claim that $S\ltimes (k^\times)^n\supset
\Aut_{\gr}(A)$.
Since $p_{ij}\neq 1$, every graded automorphism $g$
of $A$ is of the form $g: x_i\to c_i x_{\sigma(i)}$ for some
$c_i\in k^\times$ and $\sigma\in S_n$ \cite[Lemma 2.5(e)]{KKZ3}.
Then $\sigma\in S$. The claim is proved.

(2) $\Aut_{\uni}(A)$ is trivial by assumption, so 
$\Aut(A)=\Aut_{\gr}(A)$ by Lemma \ref{yylem3.2}.

The assertion about locally nilpotent derivations follows
from a similar argument in the proof of (2) $\Rightarrow$ (5) in
Theorem \ref{yythm3.4}.
\end{proof}

In the following special case, $\Aut(A)$ being  affine is equivalent to
$C(A)$ being isomorphic to a polynomial ring.

\begin{theorem}
\label{yythm3.10} 
Let $A=k_{p_{ij}}[\underline{x}_n]$ be a skew polynomial ring
satisfying {\rm{(H1)}} and {\rm{(H2)}}. Suppose that
the subgroup of $k^\times$ generated by parameters
$\{p_{ij}\mid i<j\}$ is equal to $\langle q\rangle$ where $\ell$ is
prime and $q$ is a primitive $\ell$th root of unity.
Then the following are equivalent.
\begin{enumerate}
\item
$\Aut(A)$ is affine.
\item
For any commutative domain $C$ which is $k$-flat, every $k$-algebra
automorphism of $A\otimes C$ is $C$-affine.
\item
$(k^\times)^n\subset \Aut(A)\subset S_n\ltimes (k^\times)^n$,
where $(k^\times)^n$ is viewed as $\Aut_{\Z^n-\gr}(A)$.
\item
$C(A)$ is isomorphic to a polynomial ring.
\item
$A$ is a free module over $C(A)$.
\item
The determinant $\det (\phi_{ij})_{n\times n}$ is invertible in
$\Z/(\ell)$,
where $\phi_{ij}$ are determined by $p_{ij}=q^{\phi_{ij}}$
for all $i$ and $j$.
\item
$d_w(A/C(A))$ is dominating where $w=\rk(A/C(A))$.
\end{enumerate}
If $\Z\subset k$, then the above are also equivalent to
\begin{enumerate}
\item[(8)]
$\Aut(A[t])=\Aut_{\tr}(A[t])$.
\item[(9)]
Every locally nilpotent derivation of $A$ is zero.
\end{enumerate}
\end{theorem}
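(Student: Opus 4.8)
The plan is to assemble the equivalences from the machinery already established, invoking the special hypothesis ($\ell$ prime, parameters generating $\langle q\rangle$) only where it is genuinely needed, namely to force the center to be a polynomial ring. First observe that both (H1) and (H2) hold, so Theorems~\ref{yythm3.1}, \ref{yythm3.4}, \ref{yythm3.6}, and \ref{yythm3.8} all apply, and these immediately supply several links. Indeed (1)~$\Leftrightarrow$~(2) is Theorem~\ref{yythm3.4} (for ``$\Rightarrow$'', a $k$-flat domain $C\neq 0$ is $k$-torsion-free, hence contains $k$, so Theorem~\ref{yythm3.4}(3)$\Rightarrow$(2) applies; for ``$\Leftarrow$'', take $C=k$); (1)~$\Leftrightarrow$~(7) is Theorem~\ref{yythm3.1}; and, when $\Z\subset k$, (1)~$\Leftrightarrow$~(9) is Theorem~\ref{yythm3.1}(7) (equivalently Theorem~\ref{yythm3.6}(4)). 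I also record that $\Aut(A)$ is affine if and only if $T_s=\emptyset$ for all $s$ [Theorem~\ref{yythm3.8}, (1)$\Leftrightarrow$(4)], and that (4)~$\Leftrightarrow$~(5) is exactly Lemma~\ref{yylem2.3}. It then remains to weave in conditions (3), (4), (6), and (8).

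For the center I would treat (6)~$\Leftrightarrow$~(4) by linear algebra over the field $\Z/(\ell)$. A monomial $x_1^{d_1}\cdots x_n^{d_n}$ is central exactly when $(\overline{d}_1,\dots,\overline{d}_n)$ lies in $\ker\Phi$ for $\Phi=(\phi_{ij})$ acting on $(\Z/(\ell))^n$. If $\det\Phi$ is invertible this kernel is trivial, forcing $\ell\mid d_i$ for all $i$, whence $C(A)=k[x_1^\ell,\dots,x_n^\ell]$ is a polynomial ring; this is Example~\ref{yyex2.4}(1). Conversely, if $\det\Phi=0$ then $\ker\Phi\neq 0$, and lifting a nonzero kernel vector to exponents in $\{0,\dots,\ell-1\}$ produces a central monomial with some exponent not divisible by $\ell$. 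Since (H1) and $\ell$ prime make $\ell$ the minimal positive power of any $x_i$ lying in the center, Lemma~\ref{yylem2.3} shows $C(A)$ is not a polynomial ring. This gives (6)~$\Leftrightarrow$~(4). The implication (4)~$\Rightarrow$~(1) is then immediate: if $C(A)=k[x_1^\ell,\dots,x_n^\ell]$ then condition (3) of Theorem~\ref{yythm3.1} holds with all $\alpha_i=\ell\geq 2$, so $\Aut(A)$ is affine.

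The crucial use of the hypothesis is the reverse implication (1)~$\Rightarrow$~(4), where Lemma~\ref{yylem2.5} does the work. Arguing contrapositively, suppose $C(A)$ is not a polynomial ring. Then Lemma~\ref{yylem2.5} produces a solution $(d_1,\dots,d_n)$ of $\prod_j p_{ij}^{d_j}=1$ with $d_s=1$ for some $s$. Rewriting gives $\prod_{j\neq s}p_{ij}^{d_j}=p_{is}^{-1}$, and raising to the $(\ell-1)$st power while using $p_{is}^{\ell}=1$ yields $\prod_{j\neq s}p_{ij}^{(\ell-1)d_j}=p_{is}$, so $((\ell-1)d_j)_{j\neq s}\in T_s$ and $T_s\neq\emptyset$. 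By Theorem~\ref{yythm3.8}, $\Aut(A)$ is then not affine. It is precisely here that $\ell$ being prime is indispensable, through Lemma~\ref{yylem2.5}. Combining the last three observations, (1)~$\Leftrightarrow$~(4)~$\Leftrightarrow$~(5)~$\Leftrightarrow$~(6)~$\Leftrightarrow$~(7), and also $\Leftrightarrow$~(9) when $\Z\subset k$.

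It remains to handle (3) and, under $\Z\subset k$, (8). For (3), the containment $(k^\times)^n=\Aut_{\Z^n\text{-}\gr}(A)\subset\Aut(A)$ is automatic, and any inclusion $\Aut(A)\subset S_n\ltimes(k^\times)^n$ forces each automorphism to send $x_i$ to $c_ix_{\sigma(i)}$, hence to be affine, giving (3)~$\Rightarrow$~(1). For (1)~$\Rightarrow$~(3) I would first use Lemma~\ref{yylem3.2} together with the triviality of $\Aut_{\uni}(A)$ (from affineness, via Theorem~\ref{yythm3.4}) to conclude $\Aut(A)=\Aut_{\gr}(A)$, so every automorphism is linear, $g(x_i)=\sum_j c_{ij}x_j$. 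Substituting into the relations $x_lx_k=p_{kl}x_kx_l$ and comparing coefficients of the normal monomials yields, for each pair, the constraints $c_{ka}c_{la}(1-p_{kl})=0$ and $c_{ka}c_{lb}(p_{ab}-p_{kl})+c_{kb}c_{la}(1-p_{kl}p_{ab})=0$. The main obstacle is to deduce from these, using that (1)~$\Leftrightarrow$~(6) makes $\Phi$ nonsingular (so no two columns of $\Phi$ coincide), that the matrix $(c_{ij})$ must be monomial: nonsingularity is exactly the non-degeneracy that prevents two variables from being mixed in a single $g(x_i)$, so $g$ has the form $x_i\mapsto c_ix_{\sigma(i)}$, as in the graded computation behind Proposition~\ref{yypro3.9}. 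Finally, for (8) under $\Z\subset k$: since (7) holds, Theorem~\ref{yythm1.13}(2) gives $\Aut(A[t])=\Aut_{\tr}(A[t])$, so (1)~$\Rightarrow$~(8); conversely, if some $T_s\neq\emptyset$ then, choosing $f_d\in X^{T_s}$ and replacing the scalar $c$ in \eqref{2.10.1} by the central variable $t$, the map $x_s\mapsto x_s+tf_d$ (fixing the other $x_i$ and $t$) is an automorphism of $A[t]$ with inverse $x_s\mapsto x_s-tf_d$ that is \emph{not} triangular, since $x_s+tf_d\notin A$. Hence $\neg(1)\Rightarrow\neg(8)$, closing the circle.
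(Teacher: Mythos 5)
Your overall architecture is sound and is essentially the paper's: the equivalences (1)$\Leftrightarrow$(7)$\Leftrightarrow$(9) from Theorem \ref{yythm3.1}, (4)$\Leftrightarrow$(5) from Lemma \ref{yylem2.3}, (1)$\Rightarrow$(4) via Lemma \ref{yylem2.5} (your $(\ell-1)$st-power trick is a clean variant of the paper's ``add multiples of $\ell$'' step, and both land in $T_s$ correctly), (6)$\Leftrightarrow$(4) by linear algebra over $\Z/(\ell)$ as in Example \ref{yyex2.4}(1), and (4)$\Rightarrow$(1) via Theorem \ref{yythm3.1}(3). Two of your routings genuinely differ from the paper's and are worth noting: for (2) you go through Theorem \ref{yythm3.4} rather than \cite[Lemma 3.2(1)]{CPWZ} (your observation that a nonzero $k$-flat domain contains $k$ is what makes this legitimate), and for (8) you close the loop by exhibiting, when some $T_s\neq\emptyset$, the explicitly non-triangular automorphism $x_s\mapsto x_s+tf_d$ of $A[t]$, instead of the paper's route (8)$\Rightarrow$(9) via \cite[Lemma 3.3]{CPWZ}. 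Both alternatives are correct and the second is arguably more self-contained.

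The one genuine gap is (1)$\Rightarrow$(3). You reduce correctly to showing that every \emph{graded} automorphism $g(x_i)=\sum_j c_{ij}x_j$ has monomial matrix $(c_{ij})$, and you write down the right coefficient constraints, but the decisive step --- ``nonsingularity of $\Phi$ is exactly the non-degeneracy that prevents two variables from being mixed'' --- is asserted, not proved. It is true, but it is not a one-line consequence of the constraints: if $c_{ka},c_{kb}\neq 0$ with $a\neq b$, the constraints only give an immediate contradiction (proportional columns) when $p_{ab}\neq 1$; when $p_{ab}=1$ one must run a further case analysis over all rows $l$ and columns $j$ to show $p_{aj}=p_{bj}$ for every $j$, i.e.\ that rows $a$ and $b$ of $\Phi$ coincide, contradicting $\det\Phi\neq 0$. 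You should either carry out that case analysis or do what the paper does and invoke Proposition \ref{yypro3.9}(1) (resting on \cite[Lemma 2.5(e)]{KKZ3}); note, however, that Proposition \ref{yypro3.9} is stated under the hypothesis $p_{ij}\neq 1$ for \emph{all} $i<j$, which is stronger than (H1) and is not forced by invertibility of $\det\Phi$ once $n\geq 4$, so if you take that route you still owe a sentence addressing the pairs with $p_{ij}=1$. Everything else in your write-up stands.
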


\begin{proof}
The equivalence of (1), (7) and (9) is given in
Theorem \ref{yythm3.1}.

(1) $\Rightarrow$  (3) If $\Aut(A)$ is affine, then
$\Aut(A)=\Aut_{\gr}(A)$. The assertion follows
from Proposition \ref{yypro3.9}(1).

(3) $\Rightarrow$  (1) Part (3) says that
there are no non-trivial unipotent automorphisms.
Hence every automorphism is affine by Lemma \ref{yylem3.2}(4).

(1) $\Rightarrow$ (4) If $C(A)$ is not a polynomial ring,
by Lemma \ref{yylem2.5}, there is a solution
$(d_1,\dots, d_n)\in \N^n$ to the system of equations
\[
\prod_{j=1}^n p_{ij}^{d_j}=1, \quad {\text{for all $i$}}
\]
with $d_s=1$. So for each $i$,
\[
p_{is}=p_{si}^{-1}=\prod_{j=1,j\neq s}^n p_{ij}^{-d_j}
=\prod_{j=1,j\neq s}^n p_{ij}^{-d_j+a\ell}
\quad {\text{for any $a\in \N$}}.
\]
Hence, for some $a>0$, $(-d_1+a\ell, \cdots,
-d_{s-1}+a\ell, -d_{s+1}+a\ell,
\cdots, -d_n+a\ell)\in \N^{n-1}$ is a solution to
\eqref{2.9.1} with $\sum_{j\neq s}
(-d_j+a\ell)\geq 2$. Thus $\Aut(A)$ is not affine, a contradiction.
Therefore $C(A)$ is a polynomial ring.

(4) $\Leftrightarrow$ (5) Lemma \ref{yylem2.3}.

(4) $\Rightarrow$ (6) follows from Lemma \ref{yylem2.5} and
linear algebra.

(6) $\Rightarrow$ (4) Example \ref{yyex2.4}(1).

(5) $\Rightarrow$ (7) Proposition \ref{yypro2.8}.

(7) $\Rightarrow$ (2) \cite[Lemma 3.2(1)]{CPWZ}.

(2) $\Rightarrow$ (1) is obvious.

(7) $\Rightarrow$ (8) Theorem \ref{yythm1.13}.

(8) $\Rightarrow$ (9) \cite[Lemma 3.3]{CPWZ}.
\end{proof}

Note that part (1) does not imply part (4) if $\ell$ is 4 (which is
not prime) -- see Example \ref{yyex3.7}.

Here are some cases in which the hypotheses of Proposition \ref{yypro3.9}
hold.

(1) Assume $p_{ij}=q$ for all $i<j$ and $q$ is not a root of unity. For any
fixed $s$ between 1 and $n$, the condition
\eqref{2.9.1} says, in this case, that
for any $i<s$,
\begin{equation}
\label{3.10.1}
\sum_{j<i} d_j-\sum_{j>i, j\neq s} d_j=-1
\end{equation}
and, for any $i>s$,
\begin{equation}
\label{3.10.2}
\sum_{j<i, j\neq s} d_j-\sum_{j>i} d_j=1
\end{equation}
for non-negative integers $(d_1,\dots, \widehat{d}_s,\dots,d_n)$.
If $n=2$, it is easy to check that there is no solution.
If $n=3$, there is one solution when $s=2$, which is
$(d_1,d_3)=(1,1)$. As in \cite[Theorem 1.4.6(i)]{AlC}, we have
\[
\begin{aligned}
\Aut&(k_q[x_1,x_2,x_3])\\
&=\{g: x_1\mapsto a_1x_1, \, x_2\mapsto a_2x_2+b x_1x_3, \, x_3\mapsto a_3 x_3,
\ {\text{where $a_i\in k^\times$, $b\in k$}}\}.
\end{aligned}
\]

Assume now $n\geq 4$. When $s=1$, taking $i=2$, \eqref{3.10.2} becomes
$-\sum_{j>2}d_j=1$, which has no solution.  Similarly, \eqref{3.10.1} has
no solution for $s=n$. When $1<s<n$, take $i=1$ and $i=n$; then
\eqref{3.10.1} and \eqref{3.10.2} (and the condition that $\sum_j d_j\geq 2$)
imply that $d_1=d_n=1$ and
$d_j=0$ for all $1<j<n, j\neq s$. Since $n\geq 4$, there is another $i$
with $i\neq 1,s,n$. Then either \eqref{3.10.1} or
\eqref{3.10.2} gives a contradiction. In summary, we recover
\cite[Theorem 1.4.6(ii)]{AlC}, which states that
$\Aut(k_q[x_1,\dots,x_n]) =(k^\times)^n$ if and only if $n=2$ or $n\geq 4$.

(2) If $p_{ij}=q$ for all $i<j$ and $q$ is a root of unity, then Example 1.8 shows that
$\Aut(A)$ is not affine when $n$ is odd. But one can check
by using Proposition \ref{yypro3.9} that if $n$ is even, then
$\Aut(A)$ is affine. We will give another proof of this fact later.

\begin{theorem}
\label{yythm3.11}
Let $s_0$ be an integer between $1$ and
$n$. Suppose that $T_s$ is finite
for all $s\neq s_0$. Then every unipotent automorphism
$g$ is a product of elementary automorphisms:
\[
g=g(F_1,s_{n_1}) g(F_2,s_{n_2})\cdots g(F_w,s_{n_w}).
\]
Moreover, we may choose that the degrees $\deg g(F_i, s_{n_i})$ are strictly
increasing, or alternatively, strictly decreasing. In either case,
the decomposition is unique.
\end{theorem}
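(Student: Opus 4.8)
The plan is to produce the increasing factorization by repeatedly peeling off the lowest elementary piece using Lemma~\ref{yylem3.3}(3), to deduce the decreasing one by applying this to $g^{-1}$, and to extract uniqueness from the degree bigrading. For existence, set $g^{(0)}=g$ and proceed inductively: if $g^{(i)}\neq 1$, then by Lemma~\ref{yylem3.3}(1) its degree $\deg g^{(i)}=(a_i,b_i)$ is finite, and Lemma~\ref{yylem3.3}(3) furnishes an elementary $h_{i+1}=g(F_{i+1},b_i)$ with $\deg h_{i+1}=\deg g^{(i)}$ and $\deg(h_{i+1}g^{(i)})>\deg g^{(i)}$; put $g^{(i+1)}=h_{i+1}g^{(i)}$. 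Then $g=h_1^{-1}\cdots h_m^{-1}g^{(m)}$, and since $h_i^{-1}=g(-F_i,b_{i-1})$ is again elementary with $\deg h_i^{-1}=\deg h_i=\deg g^{(i-1)}$, the peeled factors $e_i:=h_i^{-1}$ have strictly increasing degrees. If the process terminates at $g^{(m)}=1$, this is exactly the asserted increasing product.

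The crux is termination. The pairs $\deg g^{(i)}=(a_i,b_i)$ strictly increase, so each value is attained at most once. For the steps with $b_i\neq s_0$, the element of $(T_{b_i})_{\ge 2}$ chosen in Lemma~\ref{yylem3.3}(3) has total degree $a_i$ bounded by $M:=\max_{s\neq s_0}\max_{d\in (T_s)_{\ge 2}}\sum_j d_j$, which is finite precisely because each such $T_s$ (hence $(T_s)_{\ge 2}$) is finite; since only finitely many degree values have first coordinate $\le M$ and second coordinate in $\{1,\dots,n\}\setminus\{s_0\}$, there are only finitely many steps with $b_i\neq s_0$, say none after index $i_0$. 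Beyond $i_0$ every factor is an $s_0$-elementary, and using $g(F,s_0)g(F',s_0)=g(F+F',s_0)$ the tail collapses to $g^{(i)}=g(G_i,s_0)\,g^{(i_0)}$ with $G_i=\sum_{i_0<l\le i}F_l$ of strictly increasing top degree. The main obstacle is to rule out an infinite $s_0$-tail: were it infinite, passing to the $\mathfrak{m}$-adic completion would give $g(-G_\infty,s_0)\,g^{(i_0)}=\lim_i g^{(i)}$ with its non-diagonal parts annihilated (their lowest degrees tend to $\infty$), which I will show forces $g^{(i_0)}(x_{s_0})=x_{s_0}+G_\infty$ modulo its diagonal factor, absurd since $G_\infty=\sum_{l>i_0}F_l$ is a genuine power series whereas $g^{(i_0)}(x_{s_0})$ is a polynomial. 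The delicate point inside this step is that an $s_0$-substitution $x_{s_0}\mapsto x_{s_0}+F$ could a priori create new higher-degree non-diagonal terms through the diagonal factor $x_{s_0}(1+h')$; I will control this by first noting that once all the $b_i\neq s_0$ contributions are exhausted the images $g^{(i)}(x_j)$ for $j\neq s_0$ are divisible by $x_j$ (a property preserved by each $s_0$-substitution, which never removes an $x_j$ factor), and this rigidity pins down the diagonal data and keeps the relevant cancellations finite.

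For the decreasing factorization, note that $g^{-1}$ is again unipotent; applying the increasing result to $g^{-1}$ gives $g^{-1}=e_1\cdots e_w$ with strictly increasing degrees, whence $g=e_w^{-1}\cdots e_1^{-1}$ with strictly decreasing degrees and each $e_i^{-1}$ elementary. Uniqueness follows from the bigrading: in any increasing factorization $g=e_1\cdots e_w$ with $\deg e_1<\cdots<\deg e_w$, repeated use of Lemma~\ref{yylem3.3}(2) gives $\deg(e_1\cdots e_w)=\deg e_1=\deg g$, so if $e_1=g(F_1,s_1)$ then $s_1$ is the second coordinate of $\deg g$; moreover the factors $e_2,\dots,e_w$ act on $x_{s_1}$ only in degrees exceeding $\deg e_1$, so the lowest non-diagonal component of $g(x_{s_1})$ equals $F_1$, determining $e_1$ from $g$ alone. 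Replacing $g$ by $e_1^{-1}g$, whose degree is strictly larger and whose number of factors has dropped by one, and inducting yields uniqueness of the increasing factorization; the symmetric argument, peeling the top factor, gives uniqueness of the decreasing one.
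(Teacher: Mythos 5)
Your overall strategy (peel off the lowest elementary factor via Lemma \ref{yylem3.3}(3), get the decreasing case from $g^{-1}$, read off uniqueness from the bigrading) is the paper's strategy, and the reduction of termination to "only finitely many steps with $b_i\neq s_0$, hence an eventual $s_0$-tail" is correct and matches the paper's use of the finiteness of $T_s$ for $s\neq s_0$. But the step you yourself flag as the main obstacle --- ruling out an infinite $s_0$-tail --- is where the proof actually lives, and your treatment of it is a sketch with a real gap. The completion argument as stated does not yield a contradiction: from $g^{(i_0)}=g(-G_\infty,s_0)\,g^{(\infty)}$ with $g^{(\infty)}(x_s)=x_s(1+h'_s)$ one only gets that the $x_{s_0}$-free part of $g^{(i_0)}(x_{s_0})$ equals $-G_\infty u_0$ where $u_0=1+(\text{positive degree})$ is the $x_{s_0}$-free part of a unit of the completion; a polynomial can perfectly well equal an infinite series times such a unit, so "$G_\infty$ is a genuine power series, absurd" does not follow. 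Your proposed repair --- that $g^{(i)}(x_j)$ is divisible by $x_j$ for $j\neq s_0$ --- is true but controls the wrong variables: the cancellation you need to exclude happens in the diagonal factor of $g^{(i_0)}(x_{s_0})$ itself, and nothing in your sketch shows that factor is trivial. The phrases "I will show" and "this rigidity pins down the diagonal data" stand in for the essential argument.

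What is missing is exactly the direct structural analysis the paper performs once $\deg g$ exceeds the maximum total degree $M$ occurring in the finite sets $\bigcup_{s\neq s_0}T_s$: (i) for $j\neq s_0$, the lowest non-diagonal component of $g(x_j)$ would have to be supported on $(T_j)_{\ge 2}$ in degree $>M$, hence vanishes, so $g(x_j)=x_j(1+h')$, and since $x_j$ is not a product of two non-units in $A$ (units of $A$ are scalars) one gets $g(x_j)=x_j$ exactly; (ii) writing $g(x_{s_0})=\sum_i f_i x_{s_0}^i$ and $g^{-1}(x_{s_0})=\sum_j h_j x_{s_0}^j$ with coefficients in $k_{p_{ij}}[x_1,\dots,\widehat{x}_{s_0},\dots,x_n]$ and comparing top $x_{s_0}$-degrees in $gg^{-1}(x_{s_0})=x_{s_0}$ forces $g(x_{s_0})=x_{s_0}+f$ with $f$ a \emph{polynomial} supported on $X^{T_{s_0}}$. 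This shows the tail automorphism is already a finite product of commuting $s_0$-elementary automorphisms, so no limit or completion is needed and termination is immediate. Without step (ii) (or a completed version of your completion argument that actually pins down the diagonal part of $g^{(i_0)}(x_{s_0})$), the proof is incomplete.
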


\begin{proof}
We will construct the factorization and show that the degrees $\deg
g(F_i,s_{n_i})$ are strictly increasing. Replacing $g$ by $g^{-1}$, we
obtain the case when the degrees are strictly decreasing.

We use downward induction on $\deg g$. By the hypothesis
that every $T_s$ except possibly $T_{s_0}$ is finite, we
first assume that $\deg g=(a,s)$ and $a> \sum_j d_j$ for any
$(d_1,\dots,\widehat{d}_s,\dots,d_n)\in \bigcup_{s\neq s_0}T_s$.
For any $i\neq s_0$, if
\[
g(x_i)=x_i(1+h')+h_{t_i}+{\text{(higher degree terms)}},
\]
then the proof of Lemma \ref{yylem3.3}(3) shows that
$h_{t_i}=0$ and $g(x_i)=x_i(1+h')$. Since $x_i$ is not a product
of two non-units, $g(x_i)=x_i$ for all $i\neq s_0$.

Now let $g(x_{s_0})=x_{s_0}+
{\text{(higher degree terms)}}$ and write $g(x_{s_0})=\sum f_i x_{s_0}^i$
and $g^{-1}(x_{s_0})=\sum_j h_j x_{s_0}^j$
with $f_i, h_j\in k_{p_{ij}}[x_1,\dots, \widehat{x}_{s_0},\dots,x_n]$.
Suppose $m,q$ are maximal so that $f_m h_q\neq 0$. Then
\[
x_{s_0}=gg^{-1}(x_{s_0})
=h_q f_{m}^{q} x_{s_0}^{mq} + \text{(lower degree terms)}.
\]
Thus $m=q=1$ and $h_1\in k^\times$. Similarly,
$f_1\in k^\times$. Since $g$ and $g^{-1}$ are unipotent,
$f_1=h_1=1$. This means that $g(x_{s_0})=x_{s_0}+\sum_f c_f f$,
where $f$ ranges over a set of monomials in
$k_{p_{ij}}[x_1,\dots, \widehat{x}_{s_0},\dots,x_n]$.
Now the argument in the proof of Lemma \ref{yylem3.3}(3)
shows that $f\in T_{s_0}$ for all $f$. In this case, it is easy to see that
$g$ can be decomposed into the form as given,
\[
g=g(F_1, s_0)g(F_2, s_0)\cdots g(F_w, s_0),
\]
and these $g(F_i, s_0)$ commute. Uniqueness follows
from the fact that each $F_i$ is precisely a homogeneous component
of $f$.

Next assume that $g$ is not the identity. By Lemma \ref{yylem3.3}(3),
$\deg g(F,s)g>\deg g$ for some $F$ and $s$.
By the inductive hypothesis, $g(F,s)g=g(F_2, s_2)\cdots g(F_w, s_w)$.
Then $g=g(-F,s)g(F_2, s_2)\cdots g(F_w, s_w)$. Let $F_1=-F$ and $s_1=s$.
The uniqueness of $(F_1,s_1)$ can be read off from the proof of
Lemma \ref{yylem3.3}(3) and the fact that $\deg g(F_i,s_i)$ are
increasing. The inductive hypothesis also says that the $(F_i,s_i)$ are unique
for $i>1$. The assertion now follows.
\end{proof}

\begin{proof}[Proof of Theorem \ref{yythm0.4}]
Let $g$ be in $\Aut(A)$. Since $A$ satisfies (H1), $g(x_i)$
has no constant term by Lemma \ref{yylem3.2}.
Then the associated graded map $\gr g$ is a graded
(hence affine) automorphism of $A$. Now $g (\gr g)^{-1}$ is
a unipotent automorphism. The assertion follows from the equation
$g=[g(\gr g)^{-1}] (\gr g)$ and Theorem \ref{yythm3.11}.
\end{proof}

To conclude this section we give some examples.

\begin{example}
 \label{yyex3.12}
\begin{enumerate}
\item
Let $q$ be a primitive $\ell$th root of unity and $\ell=abc$, where
$a,b,c\geq 2$ are pairwise coprime.
If $p_{12}=q^a$, $p_{13}=q^b$ and $p_{23}=q^c$ and $A=k_{p_{ij}}[x_1,
x_2, x_3]$,
then one can check that $T_s=\emptyset$ for $s=1,2,3$ and
$\Aut(A)=(k^\times)^3$. Similar statements can be made if there are
more than three generators.
\item
If $A=k_{p_{ij}}[x_1, x_2, x_3]$ is not PI, then it is easy to
check that each $T_s$ is finite for $s=1,2,3$.
As a consequence of Theorem \ref{yythm3.11},
$\Aut(A)$ is tame.  Here is an explicit example.
Assume that $q$ is not a root of unity. Let $p_{12}=q^m$, $p_{13}=q$ and
$p_{23}=q^n$ for some integers $m,n\geq 1$. Then $T_1=\emptyset$,
$T_2=\{(n, \widehat{d}_2,m)\}$ and $T_3=\emptyset$. Hence every automorphism of
$k_{p_{ij}}[x_1,x_2,x_3]$ is of the form
\[
\begin{aligned}
x_1 &\mapsto a_1 x_1,\\
x_2&\mapsto a_2 x_2 +b x_1^n x_3^m,\\
x_3&\mapsto a_3x_3,
\end{aligned}
\]
where $a_i\in k^\times$ and $b\in k$ [Theorem \ref{yythm3.11}].
This should be compared with \cite[Theorem 1.4.6(i)]{AlC}.
\end{enumerate}
\end{example}

\begin{example}
 \label{yyex3.13}
\begin{enumerate}
\item \cite[Example 3.8]{CPWZ}
If $p_{12}=1$, $p_{13}=q$, $p_{23}=q$, where $q$ is not a root of unity,
then the system of equations \eqref{2.9.1} for $s=1$ and for $s=2$ has only
one solution $(d_2,d_3)=(1,0)$, and the system of equations for $s=3$
has no solution. Therefore these systems of equations have no solution
with $\sum_j d_j\geq 2$. By Theorem \ref{yythm3.4}, every automorphism
of $A=k_{p_{ij}} [x_1,x_2,x_3]$ is affine.
\item
By the analysis of the case $n=2$, every automorphism of
$B=k_p[x_4,x_5]$ (when $p\neq 1$ and $p^w=1$) is affine.
\item
The tensor product $C=A\otimes B$ is a skew polynomial
ring $k_{p_{ij}}[x_1,\dots,x_5]$. But $C$ has a non-affine
automorphism determined by
\[
\begin{aligned}
  g(x_1)&= x_1+ x_2 x_4^w x_5^w,\\
  g(x_i)&=x_i, \quad {\text{for all $i>1$}}.
  \end{aligned}
\]
\end{enumerate}
So even if $A$ and $B$ only have affine automorphisms, $A\otimes B$
may have non-affine automorphisms. Compare this with
\cite[Theorem 5.5]{CPWZ}.
\end{example}

\section{Miscellaneous operations and constructions}
\label{yysec4}

In this section we discuss some general methods that deal with
automorphisms and discriminants, for use in proving Theorem
\ref{yythm0.2}. Two examples: in Subsection \ref{yysec4.1} we develop
tools to study automorphisms of tensor products of algebras. In
Subsection \ref{yysec4.4} we look at filtered algebras: if $B$ is
filtered and $C$ is a central subalgebra of $B$; then with some extra
hypotheses, $\gr d_{w}(B/C) = d_{w}(\gr B/ \gr C)$ (Proposition
\ref{yypro4.10}).

\subsection{Tensor products and the categories $\Aff_{-s}$ and $\Af_{-s}$}
\label{yysec4.1}

Recall from \cite[Definition 2.4]{CPWZ} that the category
$\Af$ consists of all $k$-flat algebras $A$ satisfying the
following conditions:
\begin{enumerate}
\item
$A$ is an algebra with standard filtration such that the
associated graded ring $\gr A$ is a connected graded domain,
\item
$A$ is a finitely generated free module over its center $C(A)$, and
\item
the discriminant $d(A/C(A))$ is dominating.
\end{enumerate}
The morphisms in this category are isomorphisms of algebras.

We extend this definition to a more general situation.

\begin{definition}
\label{yydef4.1}
Let $s$ be a non-negative integer.
\begin{enumerate}
\item
Let $\Aff_{-s}$ be the category
consists of all $k$-flat algebras $A$ satisfying the
following conditions:
\begin{enumerate}
\item
$A$ is an algebra with standard filtration such that the
associated graded ring $\gr A$ is a connected graded domain,
\item
the $w$-discriminant $d_w(A/C(A))$ is $(-s)$-dominating where
$w$ is the rank $\rk(A/C(A))$.
\end{enumerate}
\item
Let $\Af_{-s}$ be the category
consists of all $k$-flat algebras $A$ satisfying the
following conditions:
\begin{enumerate}
\item
$A$ is in $\Aff_{-s}$, and
\item
$A$ is a finitely generated free module over its center.
\end{enumerate}
\end{enumerate}
The morphisms in these categories are isomorphisms of algebras.
\end{definition}

\begin{remark}
\label{yyrem4.2}
\begin{enumerate}
\item
$\Af=\Af_0$.
\item
$\Af_{-s}$ is a full subcategory of $\Aff_{-s}$ for any $s$.
\item
If $A$ is in $\Aff_{-s}$, then every automorphism
of $A$ is $(-s)$-affine; see the proof of
\cite[Lemma 2.6]{CPWZ}.
\end{enumerate}
\end{remark}

Let $A$ be a noncommutative domain. Let $\mathcal{D}:=
\{d_i\}_{i\in I}$ be a subset of $A$ with gcd $y$. Let
$\mathcal{D}^n$ denote $\{d_{i_1} \cdots d_{i_n}
\mid d_{i_s}\in \mathcal{D}\} \subseteq A$. Let $A'$ be
another domain. We say $\mathcal{D}$ is \emph{$A'$-saturated}
if for every positive integer $n$
and every $0\neq f\in A'$, the subset
$\mathcal{D}^n\otimes f$ in $A\otimes A'$ has gcd
$y^n\otimes f$ (also written as $y^n f$).
If $\mathcal{D}$ is a subset of monomials in
$k_{p_{ij}}[\underline{x}_n]$, then $\mathcal{D}$ is
$A'$-saturated for any domain $A'$.

\begin{lemma}
\label{yylem4.3}
Let $A$ and $A'$ be two domains with generating sets $X$ and $X'$ and
with semi-bases $b$ and $b'$ over their centers $C(A)$ and $C(A')$.
Suppose that $C(A)$ and $C(A')$ are $k$-flat and that $b'$ is a quasi-basis.
Let $m=\rk(A/C(A))$ and $n=\rk(A'/C(A'))$. Let $w=mn$.
\begin{enumerate}
\item
The discriminant $d_w(A\otimes A'/C(A\otimes A'))$ is the
gcd of elements in $\mathcal{D}(X/b)^n \otimes
\mathcal{D}(X'/b')^m$.
\item
Suppose that $A'$ is free over $C(A')$ and that $\mathcal{D}(X/b)$ is
$A'$-saturated. Then $d_w(A\otimes A'/C(A\otimes A'))=d_m(A/C(A))^n
d_n(A'/C(A'))^m$.
\end{enumerate}
\end{lemma}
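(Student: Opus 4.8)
\emph{The plan} is to compute $d_w(A\otimes A'/C(A\otimes A'))$ directly from Lemma~\ref{yylem1.11}(3) applied to the generating set $X\otimes X'$, and then to obtain part~(2) as a short corollary of part~(1) together with the definition of $A'$-saturation.

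First I would record the invariants of the tensor product. Working over the fields of fractions $F$ of $C(A)$ and $F'$ of $C(A')$, the module $A_F$ is free on $b$ and $A'_{F'}$ is free on $b'$, so $A\otimes A'$ becomes free on $b\otimes b'=\{b_i\otimes b'_j\}$ after inverting the central elements; hence $b\otimes b'$ is a semi-basis, $X\otimes X'$ generates $A\otimes A'$ over its center $C(A)\otimes C(A')$, and $\rk(A\otimes A'/C(A\otimes A'))=mn=w$. The regular trace is multiplicative, $\tr((a\otimes a')(c\otimes c'))=\tr(ac)\,\tr(a'c')$, because left multiplication by $a\otimes a'$ is the tensor product of the two left-multiplication operators; thus the Gram matrix $\bigl(\tr((b_i\otimes b'_j)(b_k\otimes b'_l))\bigr)$ is the Kronecker product of $\bigl(\tr(b_ib_k)\bigr)$ and $\bigl(\tr(b'_jb'_l)\bigr)$, and the identity $\det(M\otimes N)=(\det M)^{n}(\det N)^{m}$ yields
\[
d_w(b\otimes b':\tr)=d_m(b:\tr)^{n}\,d_n(b':\tr)^{m}.
\]
By Lemma~\ref{yylem1.11}(3), $d_w(A\otimes A'/C(A\otimes A'))=\gcd\mathcal{D}((X\otimes X')/(b\otimes b'))$, so part~(1) reduces to showing that this set and $\mathcal{D}(X/b)^{n}\otimes\mathcal{D}(X'/b')^{m}$ have the same gcd.

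The core step is the analysis of the change-of-basis determinants $\det(W:b\otimes b')$ for $w$-element subsets $W=\{x_\alpha\otimes x'_\alpha\}\subset X\otimes X'$, and here I would exploit that $b'$ is a \emph{quasi-basis}: each $x'_\alpha$ equals $(x'_\alpha:b'_{j(\alpha)})\,b'_{j(\alpha)}$, a scalar times a single basis vector. Expanding $x_\alpha\otimes x'_\alpha$ in $b\otimes b'$ then involves only the columns indexed by $(i,j(\alpha))$, so after grouping the rows by the value $j(\alpha)\in\{1,\dots,n\}$ the matrix $(W:b\otimes b')$ is block diagonal; a nonzero determinant forces each of the $n$ blocks to be $m\times m$, and pulling the scalar $(x'_\alpha:b'_{j(\alpha)})$ out of each row gives
\[
\det(W:b\otimes b')=\pm\Bigl(\prod_{j=1}^{n}\det(Z_j:b)\Bigr)\Bigl(\prod_{\alpha=1}^{w}(x'_\alpha:b'_{j(\alpha)})\Bigr),
\]
where $Z_j\subset X$ is the $m$-element set of $x$-parts of the rows in block $j$. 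The first factor is a product of $n$ elements of $X/b$, and since each set $C'_j$ of scalars $(x':b'_j)$ is used exactly $m$ times, the second factor regroups into a product of $m$ elements of $X'/b'$. Combined with the trace-discriminant identity, this shows that each element of $\mathcal{D}((X\otimes X')/(b\otimes b'))$ is, up to sign, an element of $\mathcal{D}(X/b)^{n}\otimes\mathcal{D}(X'/b')^{m}$. Conversely, given any prescribed $Z_1,\dots,Z_n$ with $\det(Z_j:b)\neq0$ and any prescribed $g_1,\dots,g_m\in X'/b'$, pairing the $l$-th element of $Z_j$ with the $b'_j$-factor of $g_l$ builds an admissible $W$ realizing exactly that product; distinctness of the $w$ chosen tensors is automatic because the $m$ elements within each $Z_j$ are distinct. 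Hence the two generating sets coincide up to units and have equal gcd, which proves~(1).

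For part~(2), assume $A'$ is free over $C(A')$ and take $X'=b'$; then the only $n$-element subset of $X'$ is $b'$ itself, so $X'/b'=\{1\}$ and $\mathcal{D}(X'/b')^{m}=\{d_n(A'/C(A'))^{m}\}$ is a single element, which I write as $f$. The set in part~(1) therefore becomes $\mathcal{D}(X/b)^{n}\otimes f$. By Lemma~\ref{yylem1.11}(3), $\gcd\mathcal{D}(X/b)=d_m(A/C(A))$, so the hypothesis that $\mathcal{D}(X/b)$ is $A'$-saturated gives
\[
\gcd\bigl(\mathcal{D}(X/b)^{n}\otimes f\bigr)=d_m(A/C(A))^{n}\,f=d_m(A/C(A))^{n}\,d_n(A'/C(A'))^{m}.
\]
By part~(1) this gcd equals $d_w(A\otimes A'/C(A\otimes A'))$, which is the asserted formula.

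\emph{The main obstacle} is the bookkeeping in part~(1): establishing the block-diagonal factorization of $\det(W:b\otimes b')$ and, especially, the realizability direction, so that the two generating sets coincide up to units rather than one merely being contained in the other. The quasi-basis property of $b'$, which turns each $x'$ into a scalar multiple of a single $b'_j$, is what drives the block structure, while the $k$-flatness of the centers underlies the structural identification of $C(A\otimes A')$, $b\otimes b'$ and the regular trace as tensor products.
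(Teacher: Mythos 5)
Your proof is correct and follows essentially the same route as the paper's: both identify $b\otimes b'$ as a semi-basis, use the quasi-basis property of $b'$ to factor $\det(W:b\otimes b')$ into a product of $n$ change-of-basis determinants over $b$ times scalars from the $C'_j$, and invoke the Kronecker-product identity $d_w(b\otimes b':\tr)=d_m(b:\tr)^n d_n(b':\tr)^m$ together with Lemma~\ref{yylem1.11}. The only difference is that you spell out the realizability (converse containment) step needed for equality of gcds, which the paper leaves implicit in ``the assertion follows.''
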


\begin{proof} (1) Since $b$ and $b'$ are semi-bases of $A$ and $A'$
respectively, $b\otimes b'$ is a semi-basis of $A\otimes A'$ over its
center. Also $X\otimes X'$ is a generating set of $A\otimes A'$
over its center. For each subset
$Z:=\{x_{i_s}\otimes x'_{j_s}\}_{s=1}^w$ of $X\otimes X'$ with
$\det(Z: b\otimes b')\neq 0$, since $b'=\{b'_1, \dots, b'_n \}$ is a
quasi-basis, one can rewrite $Z$ as
\[
Z=\{x_{i_{s,t}}\otimes c_{s,t} b'_t\mid 1\leq s\leq m, 1\leq t\leq n\},
\]
where $c_{s,t}\in C'_t$ (where $C'_t$ is the set of nonzero elements
of the form $(x': b'_t)$, as in Definition~\ref{yydef1.10}). Let
$\widehat{Z}$ be the subset
\[
\quad \{x_{i_{s,t}}\otimes b'_t\mid 1\leq s\leq m, 1\leq t\leq n\}.
\]
Then $\det(Z:\widehat{Z})=\prod_{s,t} c_{s,t}$, which is in
$[(C'_1)\cdots (C'_n)]^m=(X'/b')^m$. By linear algebra,
\[
\det (\widehat{Z}:b\otimes b')=\pm \prod_{t=1}^n \det
(\{x_{1,t},\dots,x_{m,t}\}: b)\in (X/b)^n.
\]
By the proof of \cite[Lemma 5.3]{CPWZ},
$d_w(b\otimes b':tr)=d_m(b:\tr)^n d_n(b':\tr)^m$. For any two
subsets $Z_1$ and $Z_2$ of $X\otimes X'$, we have
\[
d_w(Z_1,Z_2:\tr)= \pm \alpha_1 \alpha_2 \beta_1 \beta_2
d_w(b\otimes b':\tr),
\]
where $\alpha_1$ and $\alpha_2$ are the product of $n$ elements
of the form $\det (\{x_{i_1},\dots,x_{i_m}\}: b)$ and $\beta_1$
and $\beta_2$ are the product of $m$ elements of the form
$\prod_{s=1}^n c_s$. Therefore
$d_w(Z, \widehat{Z}:\tr)$ is in $\mathcal{D}(X/b)^n\otimes
\mathcal{D}(X'/b')^m$. The assertion follows.

(2) If $A'$ is free over $C(A')$, we take $X'=b'$ to be a basis
of $A'$. In this case, $\mathcal{D}(X'/b')$ is a singleton
$\{y\}$, where $y=d_n(A'/C(A'))$. By part (1), the $w$-discriminant
of $A\otimes A'$ over its center is the gcd of
$\mathcal{D}(X/b)^n\otimes y^m$. The assertion follows from
the $A'$-saturatedness of $\mathcal{D}(X/b)$.
\end{proof}

\begin{lemma}
\label{yylem4.4}
Let $s$ and $t$ be non-negative integers.
Assume that $A$ and $B$ are $k$-flat filtered algebras
such that $\gr A\otimes \gr B$ is a connected graded domain.
In part {\rm{(4)}} we also assume that the center of $A$
is $k$-flat.
\begin{enumerate}
\item
If $A\in \Af_{-s}$ and $B\in \Af_{-t}$, then
$A\otimes B\in \Af_{-(s+t)}$.
\item
If $A\in \Aff_{-s}$, then
$A[t]\in \Aff_{-(s+1)}$.
\item
If $A\in \Af_{-s}$, then
$A[t]\in \Af_{-(s+1)}$.
\item
Suppose $A$ is in $\Aff_{-s}$ and $B$ is in
$\Af_{-t}$. If there is a generating set $X$ of $A$ containing a semi-basis
$b$ such that $\mathcal{D}(X/b)$ is $B$-saturated, then
$A\otimes B$ is in $\Aff_{-(s+t)}$.
\end{enumerate}
\end{lemma}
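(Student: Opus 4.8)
The plan is to deduce all four parts from two already-established computations — Lemma~\ref{yylem4.3} for the tensor products and Lemma~\ref{yylem1.12}(2) for the polynomial extensions — together with a single auxiliary fact about dominating elements, which I will call the \emph{product claim}: if $A$ has $p$ generators and $f\in A$ is $(-s)$-dominating (in, up to permutation, the first $p-s$ of them), and $B$ has $q$ generators and $g\in B$ is $(-t)$-dominating (in the first $q-t$ of them), then for all positive integers $a,b$ the element $f^a g^b\in A\otimes B$ is $(-(s+t))$-dominating. The count is consistent, since $f^a g^b$ uses $(p-s)+(q-t)=(p+q)-(s+t)$ of the $p+q$ generators of $A\otimes B$. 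This is the one genuinely new ingredient; the rest is bookkeeping with flatness, centers, and ranks.

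To prove the product claim I would argue directly from Definition~\ref{yydef1.6}(2). Fix an $\N$-filtered PI algebra $T$ with $\gr T$ a connected graded domain and a set $\{z_1,\dots,z_{(p+q)-(s+t)}\}$ that is linearly independent in $T/F_0T$. Split it as $Z_A=\{z_1,\dots,z_{p-s}\}$ and $Z_B=\{z_{p-s+1},\dots\}$; each is again linearly independent. Dominance of $f$ and of $g$ supplies lifts $\widetilde f\in k\langle x_1,\dots,x_{p-s}\rangle$ and $\widetilde g\in k\langle x_{p-s+1},\dots\rangle$ with the stated properties, and $\widetilde f^a\widetilde g^b$ is then a lift of $f^a g^b$. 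If either $\widetilde f(Z_A)=0$ or $\widetilde g(Z_B)=0$ the product vanishes; otherwise, because $\gr T$ is a domain, degrees are additive, so $\deg\bigl(\widetilde f(Z_A)^a\widetilde g(Z_B)^b\bigr)=a\deg\widetilde f(Z_A)+b\deg\widetilde g(Z_B)\geq a\deg f+b\deg g=\deg(f^a g^b)$, and the inequality is strict once some $z_{i_0}$ has degree $>1$ (apply the strict clause of Definition~\ref{yydef1.6}(2) to whichever factor owns that variable). This is the argument behind \cite[Theorem 5.5]{CPWZ}, now carried out in the $(-s)$-graded bookkeeping.

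Granting the product claim, part (1) follows quickly. Since $A,B\in\Af$ are $k$-flat and free over their centers, those centers are $k$-flat (direct summands of free modules), $C(A\otimes B)=C(A)\otimes C(B)$, and $A\otimes B$ is free over this center of rank $w=mn$, where $m=\rk(A/C(A))$ and $n=\rk(B/C(B))$; moreover $\gr(A\otimes B)=\gr A\otimes\gr B$ is a connected graded domain by hypothesis. Taking $X=b$, $X'=b'$ to be bases makes $\mathcal D(X/b)$ a singleton, hence automatically $B$-saturated, so Lemma~\ref{yylem4.3}(2) gives $d_w(A\otimes B/C(A\otimes B))=d_m(A/C(A))^n\,d_n(B/C(B))^m$. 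The product claim (with $f=d_m(A/C(A))$, $g=d_n(B/C(B))$, $a=n$, $b=m$) shows this is $(-(s+t))$-dominating, so $A\otimes B\in\Af_{-(s+t)}$. Part (4) is identical except that $A$ is no longer free; now the hypothesis that $\mathcal D(X/b)$ is $B$-saturated is precisely what lets me invoke Lemma~\ref{yylem4.3}(2), producing the same product formula with $d_m(A/C(A))$ the modified $m$-discriminant, which is $(-s)$-dominating because $A\in\Aff_{-s}$. Since $\Aff_{-(s+t)}$ requires no freeness, membership follows.

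For parts (2) and (3) I would write $A[t]=A\otimes k[t]$. One checks directly that $C(A[t])=C(A)[t]$ and, passing to fraction fields, that $\rk(A[t]/C(A[t]))=\rk(A/C(A))=w$, while $\gr(A[t])=(\gr A)[t]$ is a connected graded domain. As $A\otimes k[t]$ is $A$-closed and $(A[t])^\times=A^\times$, Lemma~\ref{yylem1.12}(2) gives $d_w(A[t]/C(A[t]))=d_w(A/C(A))$. This element does not involve the new generator $t$, so it still uses the same $n-s$ variables, now among the $n+1$ generators of $A[t]$; since the defining condition in Definition~\ref{yydef1.6}(2) is a universal statement depending only on the number of variables actually used, $(-s)$-dominance in $A$ becomes $(-(s+1))$-dominance in $A[t]$ verbatim, proving (2). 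For (3), freeness of $A$ over $C(A)$ with basis $b$ forces $A[t]=A\otimes_{C(A)}C(A)[t]$ to be free over $C(A[t])=C(A)[t]$ with the same basis $b$, so combined with (2) we get $A[t]\in\Af_{-(s+1)}$. The main obstacle is the product claim — specifically the degree-additivity step and keeping the variable counts matched; everything else (flatness, identification of centers and ranks, and the two cited lemmas) is routine.
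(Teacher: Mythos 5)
Your proposal is correct and follows essentially the same route as the paper: both reduce parts (1) and (4) to the product formula $d_w(A\otimes B/C(A\otimes B))=d_m(A/C(A))^n d_n(B/C(B))^m$ (via Lemma~\ref{yylem4.3}(2) or \cite[Lemma 5.3]{CPWZ}) plus the fact that a product of a $(-s)$-dominating and a $(-t)$-dominating element is $(-(s+t))$-dominating, and both reduce parts (2) and (3) to Lemma~\ref{yylem1.12}(2) together with the observation that adjoining the variable $t$ shifts the defect by one. The only difference is that you spell out the proof of the product claim, which the paper asserts "follows from the definition"; your argument for it (splitting the test set, using degree additivity in the domain $\gr T$) is sound.
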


\begin{proof} (1) By hypothesis, $\gr(A\otimes B)\cong
\gr A\otimes \gr B$
is a connected graded domain. It is clear that $A\otimes B$
is finitely generated free over its center
$C(A)\otimes C(B)$. It remains to show that the discriminant
is $(-(s+t))$-dominating.
By \cite[Lemma 5.3]{CPWZ},
$d(A\otimes B/C(A\otimes B))=d(A/C(A))^n d(B/C(B))^m$,
where $m=\rk (A/C(A))$ and $n=\rk(B/C(B))$.
When $d(A/C(A))$ is $(-s)$-dominating and
$d(B/C(B))$ is $(-t)$-dominating, it follows from the
definition that $d(A/C(A))^n d(B/C(B))^m$ is
$(-(s+t))$-dominating.

(2) Let $Y=\bigoplus_{i=1}^n k x_i$ be the generating space of $A$
as in Definition \ref{yydef1.6}. Then $Y'=Y\oplus kt$ is a
generating space of $A[t]$. By Lemma \ref{yylem1.12}(2),
$d_w(A[t]/C(A[t]))=d_w(A/C(A))$. If $d_w(A/C(A))$ is
$(-s)$-dominating with respect to $Y$, then it is
$(-(s+1))$-dominating with respect to $Y'$.

(3) This is a special case of (2).

(4) By Lemma \ref{yylem4.3}(2), $d_w(A\otimes B/C(A\otimes B))=
d_m(A/C(A))^n d_n(B/C(B))^m$. Then the proof of
part (1) works.
\end{proof}

One immediate application of Lemma \ref{yylem4.4}(4) is the
following: assume $A:=k_{p_{ij}}[\underline{x}_n]$ satisfies
(H2). Suppose that $C(A)\subset k\langle x_1^{a_1},\dots,
x_n^{a_n}\rangle$, where $a_i\geq 2$ for all $i$. By Theorem
\ref{yythm3.1}, $A$ is in $\Aff_0$. Let $B$ be the
algebra $k\langle x,y\rangle/(y^2x-xy^2,yx^2+x^2y)$ given
in \cite[Example 5.1]{CPWZ}, which is in $\Af=\Af_0$. By
Lemma \ref{yylem4.4}(4), $A\otimes B$ is in $\Aff_0$.
Then $\Aut(A\otimes B)$ is affine by Theorem \ref{yythm1.13}.

We generalize the notion of elementary automorphisms
of skew polynomial rings, as in the
introduction and \eqref{2.10.1}, as follows.
Suppose that $Y:=\bigoplus_{i=1}^n kx_i$ generates $A$ and let
$g\in \Aut(A)$. We say that $g$ is \emph{elementary} if there is
an $s$ such that $g(x_i)=x_i$ for all $i\neq s$.
In general elementary automorphisms are relatively easy to
determine when all relations of $A$ are understood.

An automorphism $g\in \Aut(A)$ is called \emph{tame}
if it is generated by affine and elementary automorphisms, and
a subgroup $G$ of $\Aut(A)$ is \emph{tame} if every $g$ in
$G$ is tame. Let $A$ be a connected graded algebra. Recall that
$g\in \Aut(A)$ is called unipotent if $g(x)-x$ is a linear
combination of homogeneous elements of degree at least 2 for all $x
\in A_1$.

\begin{proposition}
\label{yypro4.5} Suppose that $A$ is a graded domain generated
by $Y:=\bigoplus_{i=1}^n kx_i$ in degree 1 and that $A$ is in
$\Aff_{-1}$. Then every unipotent automorphism is
elementary. If, further, for every automorphism $h\in\Aut(A)$,
$h(x_i)$ has no constant term, then $\Aut(A)$ is tame.
\end{proposition}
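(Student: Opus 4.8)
The plan is to read off both assertions from Remark~\ref{yyrem4.2}(3), which states that every automorphism of an algebra in $\Aff_{-1}$ is $(-1)$-affine, together with the definitions of unipotent and elementary automorphisms.

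First I would settle the claim that every unipotent automorphism $g$ is elementary. Since $A\in\Aff_{-1}$, Remark~\ref{yyrem4.2}(3) shows that $g$ is $(-1)$-affine, so by Definition~\ref{yydef1.7}(1) there is at most one index $s$ with $\deg g(x_s)\neq 1$, while $\deg g(x_i)=1$ for every $i\neq s$. Because $g$ is unipotent, $g(x_i)=x_i+(\text{terms of degree}\geq 2)$ for each $i$. For $i\neq s$ the condition $\deg g(x_i)=1$ forces the higher-degree terms to vanish, so $g(x_i)=x_i$. Thus $g$ fixes every generator except possibly $x_s$, which is precisely the definition of an elementary automorphism.

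For the second assertion I would use the standard splitting of an automorphism into its degree-$1$ (linear) part and a unipotent correction, as in the proof of Theorem~\ref{yythm0.4}. Let $h\in\Aut(A)$. By hypothesis $h(x_i)$ has no constant term, so $h(x_i)\in A_{\geq 1}$; since $A_{\geq 1}$ is generated as an ideal by the $x_i$, the map $h$ preserves $A_{\geq 1}$ and hence the order filtration $\{A_{\geq n}\}_n$. The same holds for $h^{-1}$, so $h$ induces a graded automorphism $\gr h$ of $\gr_{A_{\geq 1}}A\cong A$, given on generators by $x_i\mapsto (h(x_i))_1$; being graded, $\gr h$ is affine. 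A direct check of degree-$1$ parts shows that $u:=h\,(\gr h)^{-1}$ satisfies $u(x_i)=x_i+(\text{higher degree terms})$, so $u$ is unipotent. By the first part $u$ is elementary, and the factorization $h=u\,(\gr h)$ exhibits $h$ as a product of an elementary and an affine automorphism. Hence every automorphism is tame, so $\Aut(A)$ is tame.

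The steps involving the definitions are immediate; I expect the only point requiring care to be the verification that $\gr h$ is a genuine graded \emph{automorphism}. This follows because both $h$ and $h^{-1}$ preserve the order filtration (using the hypothesis that \emph{every} automorphism sends generators to elements with zero constant term), so the induced endomorphism of $\gr_{A_{\geq 1}}A\cong A$ is invertible, with inverse induced by $h^{-1}$; equivalently, the linear part of $h$ is invertible with inverse the linear part of $h^{-1}$. The remainder is bookkeeping with homogeneous components, paralleling the argument in Section~\ref{yysec3}.
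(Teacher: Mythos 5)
Your proposal is correct and follows the same route as the paper: Remark~\ref{yyrem4.2}(3) gives that a unipotent automorphism is $(-1)$-affine, hence fixes all but one generator and is elementary, and the factorization $h=[h(\gr h)^{-1}](\gr h)$ into a unipotent times a graded (affine) automorphism gives tameness. Your extra care in checking that $\gr h$ is genuinely invertible is a reasonable elaboration of a step the paper leaves implicit, but it is not a different argument.
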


\begin{proof} Let $g$ be a unipotent automorphism and
write $g(x_i)=x_i+f_i$, where $f_i$ is a linear combination
of homogeneous elements of degree at least 2. Since the discriminant
is $(-1)$-dominating, $g$ is $(-1)$-affine by Remark \ref{yyrem4.2}(3).
Hence $\deg g(x_i)\leq 1$ for all but one $i$. Thus $g(x_i)=x_i$
for all but one $i$. Therefore $g$ is elementary.

If $h(x_i)$ has no constant term, then $\gr h$ is a graded
automorphism (hence affine) and $h (\gr h)^{-1}$ is a unipotent
automorphism. The final assertion follows the equation
$h=[h(\gr h)^{-1}] (\gr h)$.
\end{proof}

A special case is when $A$ is the algebra $k_{p_{ij}}[\underline{x}_n]$
that satisfies (H1) and (H2). By Lemma \ref{yylem3.2}(4), every
automorphism of $A$ is generated by graded and unipotent
automorphisms. If $A$ is in $\Aff_{-1}$, then $T_s=\emptyset$
except for one $s$ [Theorem \ref{yythm2.11}(2)]. By the proof
of Theorem \ref{yythm3.11}, every unipotent automorphism is
of the form \eqref{2.10.1}. Applying the above to the algebra
$D$ in Example \ref{yyex1.3}(4),  we obtain  that every automorphism
of $D$ is determined by
\[
g(x_s)=\begin{cases} a_1x_1+\sum_{i,j}b_{ij} x_2^{2+4i}x_3^{2+4j}, &s=1,\\
a_sx_s, & s=2, 3,\end{cases}
\]
where $a_s\in k^\times$ and $b_{ij}\in k$ for all $s$, $i$ and $j$.

\subsection{Mod-$p$ reduction}
\label{yysec4.2}

In this subsection we introduce a general method that deals with
automorphisms of certain non-PI algebras.  Let $K$ be a commutative
domain. We write $\Aut_{\af}(A)$ for the set of affine automorphisms
of an algebra $A$.

\begin{lemma}
\label{yylem4.6} Let $K$ be finitely generated over $\Z$.
Suppose $S$ is a filtered $K$-algebra such that
$\gr S$ is locally finite and connected graded. Suppose
that
$\gr S$ is a free $K$-module, namely, each $(\gr S)_i$ is free over $K$.
\begin{enumerate}
\item
If, for every quotient field $F\cong K/\mathfrak{m}$,
$\Aut(S\otimes_K F)=\Aut_{\af}(S\otimes_K F)$, then
$\Aut(S)=\Aut_{\af}(S)$.
\item
If, for every quotient field $F\cong K/\mathfrak{m}$,
$\Aut(S\otimes_K F[t])=\Aut_{\tr}(S\otimes_K F[t])$, then
$\Aut(S[t])=\Aut_{\tr}(S[t])$.
\item
If, for every quotient field $F\cong K/\mathfrak{m}$,
every locally nilpotent derivation of $S\otimes_K F$ is zero,
then every locally nilpotent derivation of $S$ is zero,
\end{enumerate}
\end{lemma}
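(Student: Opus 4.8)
The plan is to prove all three parts by a single ``witness-and-reduce'' strategy, the point being that each failure (of affineness, of triangularity, of vanishing of a derivation) is detected by a single nonzero coefficient in $K$, which survives a well-chosen reduction. First I would record the structural facts that make reduction well behaved. Because $\gr S$ is connected graded, locally finite, and $K$-free, the short exact sequences $0\to F_{n-1}S\to F_nS\to (\gr S)_n\to 0$ split degreewise, so $S$ (and likewise $S[t]$) is a free $K$-module with a homogeneous $K$-basis $\{m_\alpha\}$ lifting a homogeneous basis of $\gr S$; in particular each $m_\alpha$ has a well-defined degree and, in each degree, there are only finitely many basis elements. For a maximal ideal $\mathfrak{m}\subset K$ with $F=K/\mathfrak{m}$, freeness of $\gr S$ gives $\gr(S\otimes_K F)\cong \gr S\otimes_K F$, so $\{\overline{m_\alpha}\}$ is an $F$-basis of $S\otimes_K F$ with the same degrees, and the degree and the $t$-degree of an element are computed by the same coefficients before and after reduction. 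Since $(-)\otimes_K F$ is functorial on $K$-algebra maps, any $g\in\Aut(S)$ reduces to $\bar g=g\otimes_K\mathrm{id}_F\in\Aut(S\otimes_K F)$ (inverse $\overline{g^{-1}}$), any $h\in\Aut(S[t])$ to $\bar h\in\Aut((S\otimes_K F)[t])$, and any $K$-linear derivation $\partial$ to a derivation $\bar\partial$. Finally, as $K$ is finitely generated over $\Z$ it is a Jacobson ring, so $\bigcap_{\mathfrak{m}}\mathfrak{m}$ equals the nilradical, which is $0$ since $K$ is a domain; hence for every $0\neq c\in K$ there is a maximal $\mathfrak{m}$ with $c\notin\mathfrak{m}$, whence $\bar c\neq 0$ in $F$.

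With this in hand, parts (1) and (3) follow by contraposition. For (1), if $g\in\Aut(S)$ is not affine then for some generator $x_i$ the expansion $g(x_i)=\sum_\alpha c_{i,\alpha}m_\alpha$ has a nonzero coefficient $c^\ast=c_{i,\alpha_0}$ on a basis element $m_{\alpha_0}$ of degree $\geq 2$; choosing $\mathfrak{m}$ with $c^\ast\notin\mathfrak{m}$, the reduction $\bar g$ sends $x_i$ to an element whose degree-$\geq 2$ part still contains $\bar c^\ast\,\overline{m_{\alpha_0}}\neq 0$, so $\bar g$ is a non-affine automorphism of $S\otimes_K F$, contradicting the hypothesis. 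Part (3) is identical: a nonzero locally nilpotent $\partial$ has $\partial(x_i)\neq 0$ for some $i$, hence a nonzero coefficient $c^\ast$, and reducing modulo a maximal ideal avoiding $c^\ast$ gives $\bar\partial\neq 0$. The only extra point is that $\bar\partial$ is again locally nilpotent: every $y\in S\otimes_K F$ lifts to some $x\in S$ (lift the finitely many basis coefficients, possible as $F=K/\mathfrak{m}$), and then $\bar\partial^{\,N}(y)=\overline{\partial^{\,N}(x)}=0$ for $N\gg 0$. Thus $S\otimes_K F$ admits a nonzero locally nilpotent derivation, a contradiction.

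Part (2) runs the same way once non-triangularity is reduced to a single coefficient, and this reduction is the heart of the matter. The key claim is that $h\in\Aut(S[t])$ is triangular if and only if $h(x_i)\in S$ for all $i$; the forward direction is the definition of triangular. For the converse, assume $h(S)\subseteq S$. Since $t$ is central, $h(t)\in C(S)[t]$, say $h(t)=\sum_d b_dt^d$ with $b_d\in C(S)$. Here one uses that $\gr S$ is a connected graded domain, so $S$ is a domain: comparing top $t$-degree terms in the equation $t=h(u)$ forces $\deg_t h(t)=1$, i.e.\ $h(t)=b_1t+b_0$ with $b_0,b_1\in C(S)$, and the same comparison yields $u_1\in S$ with $h(u_1)b_1=1$, so $b_1\in S^\times$. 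As $\gr S$ is a connected graded domain one also has $S^\times=K^\times$, hence $b_1\in K^\times$; the top-degree argument applied to targets in $S$ shows $h|_S\in\Aut(S)$. Thus $h$ is triangular. Granting the claim, a non-triangular $h$ has some $h(x_i)=\sum_{\alpha,d}c_{i,\alpha,d}\,m_\alpha t^d$ with a nonzero coefficient $c^\ast$ in $t$-degree $d\geq 1$; reducing modulo a maximal ideal avoiding $c^\ast$ gives $\bar h(x_i)\notin S\otimes_K F$, so $\bar h$ is non-triangular over $F$, contradicting the hypothesis.

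The main obstacle is exactly the characterization in part (2): showing that ``$h(x_i)\in S$ for all $i$'' already forces $h$ to be triangular. This is where, beyond the free-module bookkeeping, one must invoke that $\gr S$ is a connected graded domain (so that $S$ is a domain with $S^\times=K^\times$ and $t$-degrees are additive); the analogous statement in the free-over-center setting is \cite[Lemma 3.2]{CPWZ}, which I would cite directly when its hypotheses apply rather than reprove. The remaining ingredients — functoriality of reduction, preservation of (both) degrees via freeness of $\gr S$, and the Jacobson-ring input producing a witness-preserving maximal ideal — are routine, and I expect no difficulty there.
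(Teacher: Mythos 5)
Your proposal is correct and follows essentially the same route as the paper: detect the failure (of affineness, triangularity, or vanishing) by a single nonzero coefficient in $K$ with respect to a homogeneous $K$-basis lifted from $\gr S$, then reduce modulo a maximal ideal avoiding that coefficient (the paper localizes $K$ at the coefficient and takes a quotient field, which is the same Jacobson-ring argument in different clothing). The one place you go beyond the paper is in part (2), where you actually justify the claim that $h(x_i)\in S$ for all $i$ forces $h$ to be triangular — a step the paper's proof takes for granted when it asserts that a non-triangular $h$ must have some $h(x_i)$ of positive $t$-degree.
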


\begin{proof}
For every quotient field $F\cong K/\mathfrak{m}$, $S\otimes_K F$
is filtered and $\gr (S\otimes_K F)$ is naturally isomorphic to
$(\gr S)\otimes_K F$, so we identify these two algebras. Since $K$
is finitely generated over $\Z$, $F$ is a finite field.

Since $\gr S$ is free over $K$, there is a $K$-basis of
$\gr S$, say,
\begin{equation}
\label{4.6.1}
\{1\}\cup \{x_i\}\cup \{ {\text{higher degree terms}}\},
\end{equation}
where $\bigoplus_i kx_i$ generates $S$ as an algebra.
We use the same symbols for a $K$-basis of $S$ by lifting, and also for
an $F$-basis of $S\otimes_K F$ (as $S\otimes_K F$ is free over $F$),
and for an $F$-basis of $(\gr S)\otimes_K F$.

(1) Proceed by contradiction and suppose there is a non-affine
automorphism $g\in \Aut(S)$. Then we have
\[
g(x_i)=a_i +\sum_{i'} b_{ii'} x_{i'}+\sum_j c_{ij} y_j,
\]
where $a_i, b_{ii'}, c_{ij}\in K$, some $c_{i_0j_0}\neq 0$, and $y_j$
are basis elements in \eqref{4.6.1} with degree at least 2. Let $K'$
be the localization $K[c_{i_0j_0}^{-1}]$ and let $F$ be a quotient
field of $K'$. Since $K'$ is finitely generated over $\Z$, $F$ is a
finite field. This implies that the composition $K\to K'\to F$ is
surjective and $F$ is a quotient field of $K$. Note that $g\otimes_K
F$ is an automorphism of $S\otimes_K F$. Since $c_{i_0j_0}\neq 0$ in
$F$, $g\otimes_K F$ is not affine, contradicting hypothesis.
Therefore the assertion follows.

(2) Proceed by contradiction and suppose there is a non-triangular
automorphism $h\in \Aut(S[t])$. Then there is an $i$
such that
\[
h(x_i)=\sum_{j\geq 0} f_j t^j,
\]
where $f_j\in S$ and $f_n\neq 0$ for some $n>0$.
Writing $\{z_s \}_s$ for the basis given in \eqref{4.6.1}, write
$f_n=\sum_s c_s z_s$ for some $c_s\neq 0$. Let $K'$
be the localization $K[c_s^{-1}]$ and let $F$ be a quotient
field of $K'$. Since $K'$ is finitely generated over $\Z$, $F$ is a
finite field. This implies that the composition $K\to K'\to F$ is
surjective and $F$ is a quotient field of $K$. Note that $h\otimes_K
F$ is an automorphism of $S\otimes_K F[t]$. Since $c_s\neq 0$ in
$F$, $h\otimes_K F$ is not triangular, contradicting hypothesis.
Therefore the assertion follows.

(3) The proof is similar and omitted.
\end{proof}

\subsection{Factor rings}
\label{yysec4.3}

In this subsection we assume that $A$ is filtered algebra with filtration
$\{F_i A\}_{i\geq 0}$ such that the associated graded algebra is
a domain. Let $Y=\bigoplus_{i=1}^n kx_i$ be a submodule of $F_1 A$
such that $F_1 A=Y\oplus k$. Assume that $A$ is finitely generated free
over its center $R$. Let $I$ be an ideal of $R$ and let $\overline{\cdot}$
denote the factor map $R\to R/I=: \overline{R}$  and the factor map
$A\to A/I=: \overline{A}$.

\begin{proposition}
\label{yypro4.7} Retain the above notations. Suppose that
\begin{enumerate}
\item
$Y\cong \overline{Y}$,
\item
the center of $\overline{A}$ is $\overline{R}$.
\item
the associated graded ring $\gr \overline{A}$ is a domain.
\end{enumerate}
Then $\overline{A}$ is finitely generated free over $\overline{R}=C(\overline{A})$ and
$d(\overline{A}/\overline{R})=\overline{d(A/R)}$.
As a consequence, if $d(A/R)$ is $(-s)$-dominating, so is
$d(\overline{A}/\overline{R})$.
\end{proposition}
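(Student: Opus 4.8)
The plan is to reduce everything to base change along the central quotient $R\to \overline{R}:=R/I$, together with the compatibility of the regular trace with this reduction, and then to transfer the $(-s)$-dominating property using the fact that a filtered quotient can only decrease degrees. First I would observe that since $I\subseteq R=C(A)$ is central, the ideal $IA$ is two-sided and $\overline{A}=A/IA=A\otimes_R\overline{R}$. As $A$ is finitely generated free over $R$, say with basis $Z=\{z_1,\dots,z_w\}$ where $w=\rk(A/R)$, base change gives that $\overline{A}$ is free over $\overline{R}$ with basis $\overline{Z}=\{\overline{z_1},\dots,\overline{z_w}\}$, again of rank $w$. Hypothesis (2) then identifies $\overline{R}$ with $C(\overline{A})$, so $\overline{A}$ is finitely generated free over its center $\overline{R}$, as claimed. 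Note that $\overline{R}$ need not be a domain, but this causes no trouble: by Definition \ref{yydef1.1}(3) the discriminant of a free module is computed with $F=\overline{R}$ directly, so no field of fractions is needed.

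Next I would prove the discriminant identity by checking that the regular trace commutes with reduction mod $I$. For $a\in A$, left multiplication $\ell_a$ has a matrix $M_a\in M_w(R)$ with respect to $Z$, and $\tr_A(a)=\tr_{\INT}(M_a)$. Reducing the structure constants mod $I$, the matrix of $\ell_{\overline{a}}$ with respect to $\overline{Z}$ is exactly $\overline{M_a}$, so $\tr_{\overline{A}}(\overline{a})=\overline{\tr_A(a)}$. Applying this to the Gram matrix, and using that $d(A/R)=_{R^\times}\det(\tr_A(z_iz_j))$ while $d(\overline{A}/\overline{R})=_{\overline{R}^\times}\det(\tr_{\overline{A}}(\overline{z_i}\,\overline{z_j}))$, I obtain
\[
d(\overline{A}/\overline{R}) =_{\overline{R}^\times} \det\big(\overline{\tr_A(z_iz_j)}\big) = \overline{\det(\tr_A(z_iz_j))} =_{\overline{R}^\times} \overline{d(A/R)}.
\]
The only point to verify is that a unit of $R$ maps to a unit of $\overline{R}$, which is immediate.

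Finally, for the consequence I would unwind Definition \ref{yydef1.6}(2). Hypothesis (3) guarantees that $\gr\overline{A}$ is a connected graded domain, so $\overline{A}$ is a legitimate ambient algebra in which being $(-s)$-dominating makes sense. Using hypothesis (1), the isomorphism $Y\cong\overline{Y}$ identifies the generators $x_i$ of $A$ with those of $\overline{A}$, so any lift of $d(A/R)$ in the free algebra $k\langle x_1,\dots,x_{n-s}\rangle$ is simultaneously a lift of $d(\overline{A}/\overline{R})=\overline{d(A/R)}$. Given a test algebra $T$ and elements $y_1,\dots,y_{n-s}$ as in the definition, the evaluation $f(y_1,\dots,y_{n-s})\in T$ is the same element whether we regard $f$ as a lift of $d(A/R)$ or of $d(\overline{A}/\overline{R})$; in particular it vanishes in one case iff it vanishes in the other. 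The only quantity that changes is the ambient degree of the discriminant, and since $A\to\overline{A}$ is a surjection of filtered algebras one always has $\deg_{\overline{A}}d(\overline{A}/\overline{R})\leq\deg_A d(A/R)$. Thus each inequality witnessing that $d(A/R)$ is $(-s)$-dominating — namely $\deg_T f(y)\geq\deg_A d(A/R)$, strict when some $\deg y_{i_0}>1$ — implies the corresponding inequality with $\deg_A d(A/R)$ replaced by the smaller $\deg_{\overline{A}}d(\overline{A}/\overline{R})$, so $d(\overline{A}/\overline{R})$ is $(-s)$-dominating.

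The step I expect to require the most care is this last transfer. The naive worry is that passing to a quotient could lower the degree of the discriminant and spoil the dominating property; the key observation is that the degree can only drop, and a \emph{lower} ambient degree makes the defining inequalities \emph{easier} to satisfy. The actual content is therefore bookkeeping — verifying that a single free-algebra lift serves both algebras (which is exactly where $Y\cong\overline{Y}$ is used) and that the quotient filtration yields the degree inequality in the correct direction — rather than any hard estimate. I would also note the degenerate case $\overline{d(A/R)}=0$ (i.e.\ $d(A/R)\in IA$), in which $d(\overline{A}/\overline{R})$ is not a nonzero element; here the assertion is understood to apply only when the image is nonzero, which is the situation arising in the intended applications.
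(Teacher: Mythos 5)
Your proof is correct and follows essentially the same route as the paper's: freeness via base change $\overline{A}\cong A\otimes_R\overline{R}$, the identity $\tr(\overline{f})=\overline{\tr(f)}$ giving $d(\overline{A}/\overline{R})=\overline{d(A/R)}$, and the domain hypothesis on $\gr\overline{A}$ to make the dominating property transfer. Your careful unwinding of the last step (one free-algebra lift serving both algebras, and the observation that the degree can only drop under the quotient, which only helps) is exactly the content the paper compresses into a single sentence.
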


\begin{proof} Since $\overline{A}\cong A\otimes_R \overline{R}$,
$\overline{A}$ is finitely generated free over $\overline{R}$:
we may use the $R$-free basis of $A$
for the $\overline{R}$-free basis of $\overline{A}$. Then
$\tr(\overline{f})=\overline{\tr(f)}$ for all $f \in A$, and consequently
$d(\overline{A}/\overline{R})=\overline{d(A/R)}$.
The last assertion follows from the fact $\gr \overline{A}$
is a domain.
\end{proof}

In general if $d(\overline{A}/\overline{R})$ is $(-s)$-dominating,
$d(A/R)$ may not be $(-s)$-dominating. Consider the following example.

\begin{example}
\label{yyex4.8}
Let $A$ be the algebra $k\langle x,y\rangle/(y^2x-xy^2, yx^2-x^2y)$.
Then the center $R$ of $A$ is generated by $x^2,y^2$ and $z:=xy+yx$,
and the discriminant $d(A/C(A))=(xy-yx)^4$. It is easy to check
that $(xy-yx)^4$ is not dominating in $A$.

Let $\overline{A}$ be
the algebra $A/(x^6-y^2)$, which is studied in
\cite[Example 5.8]{CPWZ}. By Proposition \ref{yypro4.7},
$d(\overline{A}/\overline{R})=\overline{d(A/R)}=(xy-yx)^4$
which can be written as $(z-2x^4)^2(z+2x^4)^2$ in
$\overline{R}$. By the analysis in \cite[Example 5.8]{CPWZ}
which uses a non-standard filtration determined by $\deg x=1$
and $\deg y=3$, $(z-2x^4)^2(z+2x^4)^2$ is dominating.
\end{example}

\subsection{Discriminants of filtered algebras}
\label{yysec4.4}

Let $\Lambda$ be a totally ordered abelian
semigroup (e.g., $\N^n$ with the left lexicographic ordering).
We say $B$ is a $\Lambda$-filtered algebra if there is a
filtration $F=\{F_g B\mid g\in \Lambda\}$ such that
$B=\bigcup_{g\in \Lambda} F_g B$.
The associated graded algebra is defined to be
\[
\gr_F B=\bigoplus_{g\in \Lambda} F_g B/F_{<g} B,
\]
where $F_{<g}=\sum_{h<g} F_h B$.  For every nonzero $f\in B$,
we can define the degree of $f$ to be the degree of
$\gr f$ in $\gr_F B$.

We do not assume that $\gr B$ is connected graded,
even if $\Lambda=\N$. Inductively, we identify
the $k$-module $B_g$ with the graded $k$-module
$\bigoplus_{h\leq g} (\gr B)_h$ (with some choices)
so that taking the principal term of $f$,
denoted by $\gr(f)$, can be realized as a projection $B_g\to (\gr B)_h$
if $f\in B_h\setminus B_{<h}$. So $B$ is identified with
$\bigoplus_{g\in \Lambda}(\gr B)_g$ as a $k$-module, and we use
$\xi: \gr B\to B$ denote the inverse of this identification map.
By using $\xi$, elements in $\gr B$ can be viewed as elements
in $B$. Two elements $f$ and $g$ in $B$ or in $\gr B$ are said to be
\emph{$\lambda$-equivalent} if both $\deg f$ and $\deg g$ are no more
than $\lambda$ and $\deg (f-g)<\lambda$. In this case we write
$f\equiv_\lambda g$.

Let $C$ be the center (or more generally, a central subalgebra)
of $B$ such that $B$ is finitely generated free over $C$ with a
basis $b=\{b_1=1,b_2, \cdots,b_w\}$. It is clear that $R:=\gr C$
is a central  subalgebra of $\gr B$. Let $\gr b$ denote the
set $\{\gr b_1,\dots, \gr b_w\}$. Suppose that
\begin{equation}\label{4.8.1}
{\text{$\gr B$ is finitely generated free over $\gr C$ with a
basis $\gr b$.}}
\end{equation}
Note that in general, even if $C$ is the center of $B$, \eqref{4.8.1}
could fail. The following lemma is easy.

\begin{lemma}
\label{yylem4.9}
Assume \eqref{4.8.1} and let $\lambda,\lambda'\in \Lambda$.
The following hold.
\begin{enumerate}
\item
If $\deg (f)=\lambda\geq \deg(g)=\lambda'$, then
$\gr(fg)\equiv_{\lambda+\lambda'}
\gr(f)\gr(g)$, $\gr(af)=a\gr f$ for $a\in k$, and
$\gr(f+g)\equiv_{\lambda} \gr(f)+gr(g)$.
\item
If $\deg f\leq \lambda$, then
$\tr(\gr f)\equiv_{\lambda} \gr \tr(f)$.
\end{enumerate}
\end{lemma}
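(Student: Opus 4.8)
The plan is to prove part (1) first, as it is essentially the statement that the principal-symbol map is multiplicative and additive on the filtered algebra $B$. For the product, if $\gr(f)\gr(g)\neq 0$ in $\gr B$ then $\deg(fg)=\lambda+\lambda'$ and $\gr(fg)=\gr(f)\gr(g)$, so the two sides literally coincide; if $\gr(f)\gr(g)=0$, then $fg\in F_{<\lambda+\lambda'}B$, so both $\gr(fg)$ and the product $\gr(f)\gr(g)$ (viewed in $B$ through $\xi$) lie in degree $<\lambda+\lambda'$ and hence are $(\lambda+\lambda')$-equivalent. The scalar identity $\gr(af)=a\gr(f)$ is immediate from the $k$-linearity of the projection $B_g\to(\gr B)_h$ and of $\xi$. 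For the sum, if $\lambda>\lambda'$ then the top term of $f+g$ is that of $f$, so $\gr(f+g)=\gr(f)$ and the difference $\gr(f+g)-(\gr(f)+\gr(g))=-\gr(g)$ has degree $\lambda'<\lambda$; if $\lambda=\lambda'$ then either $\gr(f)+\gr(g)\neq 0$, giving $\gr(f+g)=\gr(f)+\gr(g)$ exactly, or $\gr(f)+\gr(g)=0$, in which case $\gr(f+g)$ has degree $<\lambda$ and the difference is again $\lambda$-small.

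For part (2) I would express both traces through left multiplication against the bases. Write $fb_j=\sum_i c_{ij}b_i$ with $c_{ij}\in C$, so that $\tr(f)=\sum_j c_{jj}$, and $\gr(f)\gr(b_j)=\sum_i\bar c_{ij}\gr(b_i)$ with $\bar c_{ij}\in R$, so that $\tr(\gr f)=\sum_j\bar c_{jj}$. The crucial input is the following consequence of \eqref{4.8.1}: because $\gr b$ is a free $R$-basis of $\gr B$, for any $x=\sum_i c_i b_i$ with $c_i\in C$ one has $\deg x=\max_i\deg(c_ib_i)$ and $\gr x=\sum_i\gr(c_i)\gr(b_i)$, summed over the indices $i$ achieving the maximal degree. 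The point is that the candidate top term $\sum_i\gr(c_i)\gr(b_i)$ cannot vanish, since the $\gr(b_i)$ are $R$-linearly independent and the surviving $\gr(c_i)\in R$ are nonzero; this is exactly where freeness of $\gr B$ over $R$ is used to forbid an unexpected drop in degree.

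Applying the product identity of part (1) gives $\gr(fb_j)\equiv_{\lambda+\deg b_j}\gr(f)\gr(b_j)$, and feeding this into the structural fact I read off, for each $j$, that $\deg c_{jj}\le\lambda$ and that $\bar c_{jj}=\gr(c_{jj})$ whenever $\deg c_{jj}=\lambda$ while $\bar c_{jj}=0$ whenever $\deg c_{jj}<\lambda$; in either case $\bar c_{jj}\equiv_\lambda\gr(c_{jj})$. Summing over $j$---which preserves $\lambda$-equivalence as every term has degree $\le\lambda$---gives $\tr(\gr f)\equiv_\lambda\sum_j\gr(c_{jj})$, and one last application of the additivity from part (1) to the finite sum $\tr(f)=\sum_j c_{jj}$ yields $\sum_j\gr(c_{jj})\equiv_\lambda\gr(\tr f)$, as required. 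The main obstacle I anticipate is precisely the degree bookkeeping in part (2): I must exclude the possibility that a diagonal summand $c_{jj}b_j$ has degree strictly larger than $\deg(fb_j)$, which would allow $\deg c_{jj}>\lambda$; this is controlled entirely by the no-cancellation statement coming from the freeness hypothesis \eqref{4.8.1}, and everything else is routine degree counting.
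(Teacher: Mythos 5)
Your argument for part (2) is exactly the paper's: expand $fb_j$ in the $C$-basis, use the freeness hypothesis \eqref{4.8.1} to rule out a degree drop in $\sum_i c_{ij}b_i$ (so that $\deg c_{jj}\le\lambda$ and the diagonal coefficients of $\gr f$ agree with $\gr(c_{jj})$ up to $\equiv_\lambda$), then sum over the diagonal using part (1); the paper merely states part (1) as clear, whereas you verify it. The proposal is correct and takes essentially the same route, just with the bookkeeping made explicit.
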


\begin{proof} (1) Clear.

(2)  It suffices to show the assertion when $\lambda=\deg f$.
By \eqref{4.8.1}, $f b_i=\sum_j r_{ij} b_j$ for some $r_{ij}\in R$ and
$\deg r_{ij}b_j\leq \deg fb_i=:\phi$. Then
\[
\gr(f) \gr(b_i)=\gr(fb_i)
\equiv_{\phi} \sum_j \gr(r_{ij}b_j)
\equiv_{\phi} \sum_j \gr(r_{ij}) \gr (b_j)
\]
with $\deg \gr(r_{ii})\leq \phi-\deg (b_i)=\lambda$. Hence
$\tr(\gr(f))\equiv_{\lambda} \sum_i \gr(r_{ii})$
\end{proof}

\begin{proposition}
\label{yypro4.10}
Retain the above notation and
assume \eqref{4.8.1}. If $d_{w}(\gr B/R)$ is nonzero, then
$\gr d_w(B/C)=d_w(\gr B/R)$.
\end{proposition}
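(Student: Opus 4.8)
The plan is to compute the principal term $\gr d_w(B/C)$ directly from the permutation expansion of the determinant defining the discriminant, and to use the nonvanishing hypothesis to rule out cancellation of leading terms. First I would record the two facts that make everything homogeneous. Since $B$ is finitely generated free over $C$ with basis $b=\{b_1,\dots,b_w\}$, Definition~\ref{yydef1.1}(1),(3) together with \eqref{1.10.2} let me take $d_w(B/C)=_{B^\times}\det(\tr(b_ib_j))_{w\times w}$, and similarly, by \eqref{4.8.1}, $d_w(\gr B/R)=_{(\gr B)^\times}\det(\tr(\gr b_i\cdot\gr b_j))_{w\times w}$ with respect to the homogeneous basis $\gr b$. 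Because $\gr B$ is graded, $R=\gr C$ is a graded central subalgebra, and $\gr b$ is homogeneous, the regular trace $\tr\colon\gr B\to R$ is homogeneous of degree $0$: for homogeneous $h$ the diagonal entries of the matrix of left multiplication by $h$ in the basis $\gr b$ lie in $R$ and are homogeneous of degree $\deg h$. Hence $\tr(\gr b_i\cdot\gr b_j)$ is either $0$ or homogeneous of degree $\mu_{ij}:=\deg b_i+\deg b_j$.

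Next I would compare the two matrices entry by entry. Since $\gr B$ is a domain and $\gr b_i,\gr b_j$ are nonzero homogeneous elements, $\gr b_i\cdot\gr b_j\neq 0$; so by Lemma~\ref{yylem4.9}(1) the product $b_ib_j$ has degree exactly $\mu_{ij}$ and $\gr(b_ib_j)=\gr b_i\cdot\gr b_j$. Feeding this into Lemma~\ref{yylem4.9}(2) gives
\[
\gr\bigl(\tr(b_ib_j)\bigr)\equiv_{\mu_{ij}}\tr\bigl(\gr(b_ib_j)\bigr)=\tr(\gr b_i\cdot\gr b_j).
\]
Thus each entry $\tr(b_ib_j)$ has degree at most $\mu_{ij}$, and its degree-$\mu_{ij}$ homogeneous component is precisely $\tr(\gr b_i\cdot\gr b_j)$ (possibly $0$).

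Finally I would push this through the determinant. Expanding,
\[
d_w(B/C)=_{B^\times}\sum_{\sigma\in S_w}\operatorname{sgn}(\sigma)\prod_{i=1}^w\tr(b_ib_{\sigma(i)}),
\]
and since $\sigma$ is a permutation, each summand has degree at most $\sum_i\mu_{i\sigma(i)}=2\sum_i\deg b_i=:D$, a bound independent of $\sigma$. By the entrywise comparison and Lemma~\ref{yylem4.9}(1), the degree-$D$ component of $\prod_i\tr(b_ib_{\sigma(i)})$ equals $\prod_i\tr(\gr b_i\cdot\gr b_{\sigma(i)})$. Summing over $\sigma$, the degree-$D$ component of $d_w(B/C)$ is therefore $\sum_{\sigma}\operatorname{sgn}(\sigma)\prod_i\tr(\gr b_i\cdot\gr b_{\sigma(i)})=d_w(\gr B/R)$. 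The crux, and the only place the hypothesis enters, is that $\gr$ need not commute with a sum: a priori the degree-$D$ parts of the various summands could cancel, forcing $\deg d_w(B/C)<D$. The assumption that $d_w(\gr B/R)\neq 0$ says exactly that this top-degree component survives, so no such cancellation occurs; hence $\deg d_w(B/C)=D$ and $\gr d_w(B/C)=d_w(\gr B/R)$, as claimed. The main thing to watch is the bookkeeping that every summand is controlled by the same total degree $D$ and that, for each fixed $\sigma$, the entrywise leading terms assemble into $\prod_i\tr(\gr b_i\cdot\gr b_{\sigma(i)})$ without internal cross-cancellation.
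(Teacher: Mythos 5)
Your proposal is correct and follows essentially the same route as the paper's proof: bound the degree of each permutation summand by $N=2\sum_i\deg b_i$ using Lemma \ref{yylem4.9}, identify the degree-$N$ component of $\det(\tr(b_ib_j))$ with $d_w(\gr B/R)$, and invoke the hypothesis $d_w(\gr B/R)\neq 0$ to see that this top component survives. Your write-up merely makes explicit (entrywise comparison, then the possible cancellation across permutations) what the paper compresses into its chain of $\equiv_N$ equivalences.
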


\begin{proof} Since $\gr b$ is a basis of $\gr B$ over $R$, $d_w(\gr B/R)$
is homogeneous of degree $N:=2 \sum_{i=1}^w \deg (\gr b_i)$ [Lemma
\ref{yylem2.6}].  Let $\sigma$ be in $S_w$.
By Lemma \ref{yylem4.9}(2), $\deg \tr(b_i b_{\sigma(i)})
\leq \deg b_i+\deg b_{\sigma(i)}$, so $\deg \prod_{i=1}^w
\tr(b_i b_{\sigma(i)})\leq N$. Now we compute:
\[
\begin{aligned}
\gr d_w(B/C)&=\gr\;  [\det (tr(b_ib_j))]
=\gr\; [\sum_{\sigma\in S_w} (-1)^{|\sigma|} \prod_{i=1}^w
\tr(b_i b_{\sigma(i)})]\\
&\equiv_N \sum_{\sigma\in S_w} (-1)^{|\sigma|}
\gr [\prod_{i=1}^w \tr(b_i b_{\sigma(i)})]\\
&\equiv_N \sum_{\sigma\in S_w} (-1)^{|\sigma|}
\prod_{i=1}^w \gr [\tr(b_i b_{\sigma(i)})]\\
&\equiv_N \sum_{\sigma\in S_w} (-1)^{|\sigma|}
\prod_{i=1}^w \tr(\gr[b_i b_{\sigma(i)}])\\
&\equiv_N \sum_{\sigma\in S_w} (-1)^{|\sigma|}
\prod_{i=1}^w \tr(\gr(b_i)\gr( b_{\sigma(i)}))\\
&\equiv_N d_w(\gr B/R).
\end{aligned}
\]
The assertion follows.
\end{proof}

\subsection{Locally nilpotent derivations}
\label{yysec4.5}

As in the previous subsection, let $\Lambda$ be a totally ordered
abelian semigroup and let $B$ be a finitely generated
$\Lambda$-filtered algebra. Let $\partial$ be a derivation of $B$. Let
$X$ be a set of generators of $B$ as a $k$-algebra.  Define the degree
of $\partial$, denoted by $\deg \partial$, to be the maximal element
of $\deg \partial (x)-\deg x$ for all $x\in X$ (to construct $\deg
\partial (x) - \deg x$, one may have to pass to a totally ordered
abelian group containing $\Lambda$). By the Leibniz rule, $\deg
\partial (f)\leq \deg \partial +\deg f$ for all $f\in B$.  Suppose
$\deg \partial\in \Lambda$ exists, and define $\gr \partial$ by
\[
(\gr \partial) (\gr f)=\begin{cases} 0 & \deg \partial (f)<\deg
\partial+\deg f\\
\gr (\partial(f)) &\deg \partial (f)=\deg
\partial+\deg f\end{cases}
\]
for all $\gr f\in \gr_F B$. It is easy to see that this definition
is independent of the choice of $f\in B$. The following lemma
is not hard and the proof is omitted.

\begin{lemma}
\label{yylem4.11} Let $\Lambda$ be a totally ordered abelian
semigroup and $B$ be a finitely generated $\Lambda$-filtered algebra.
If $\partial$ is a nonzero derivation, then $\gr \partial$ is a
nonzero homogeneous derivation of degree $\deg \partial$.
If $\partial$ is locally nilpotent, then so is $\gr \partial$.
\end{lemma}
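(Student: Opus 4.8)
The plan is to identify $\gr\partial$ with the map induced on the associated graded $\gr_F B=\bigoplus_g F_g B/F_{<g}B$ by the filtered operator $\partial$, and then to read off all three assertions from this description. First I would record that $\partial$ has filtration degree $d:=\deg\partial$ in the strong sense that $\partial(F_\lambda B)\subseteq F_{\lambda+d}B$ and $\partial(F_{<\lambda}B)\subseteq F_{<\lambda+d}B$ for every $\lambda$; both inclusions follow from the bound $\deg\partial(f)\le d+\deg f$ noted just before the lemma, together with translation-invariance of the order on $\Lambda$. Consequently $\partial$ descends to a well-defined $k$-linear map $(\gr B)_\lambda=F_\lambda B/F_{<\lambda}B\to F_{\lambda+d}B/F_{<\lambda+d}B=(\gr B)_{\lambda+d}$ sending the class $\gr f$ to the class of $\partial(f)$. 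That class equals $\gr(\partial(f))$ exactly when $\deg\partial(f)=\lambda+d$ and is $0$ when $\deg\partial(f)<\lambda+d$, so the induced map coincides with the case-defined $\gr\partial$ of the statement; this simultaneously reproves independence of the choice of representative $f$.

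With this description in hand, homogeneity and nonvanishing are immediate. The induced map raises degree by exactly $d$, so $\gr\partial$ is homogeneous of degree $d=\deg\partial$. For nonvanishing, I would pick a generator $x_0\in X$ at which the maximum defining $\deg\partial$ is attained, so that $\deg\partial(x_0)=d+\deg x_0$ and $\partial(x_0)\ne 0$; then $(\gr\partial)(\gr x_0)=\gr(\partial(x_0))\ne 0$, whence $\gr\partial\ne 0$. The Leibniz rule is obtained by pushing $\partial(fg)=\partial(f)g+f\partial(g)$ to the quotient: for homogeneous $\gr f,\gr g$ of degrees $\lambda,\mu$ the product $\gr f\cdot\gr g$ is the class of $fg$ in degree $\lambda+\mu$, and applying the induced map sends it to the class of $\partial(fg)$ in degree $\lambda+\mu+d$, which is the class of $\partial(f)g$ plus the class of $f\partial(g)$, that is $(\gr\partial)(\gr f)\,\gr g+\gr f\,(\gr\partial)(\gr g)$.

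For local nilpotence I would observe that, for the same reason, $(\gr\partial)^n$ is the map induced by $\partial^n$; by an easy induction $(\gr\partial)^n(\gr f)$ is the class of $\partial^n(f)$ in $(\gr B)_{\lambda+nd}$. If $\partial$ is locally nilpotent, then for each homogeneous $\gr f$ there is an $N$ with $\partial^N(f)=0$, hence $(\gr\partial)^N(\gr f)=0$; since any element of $\gr B$ is a finite sum of homogeneous pieces, taking the maximum of the corresponding exponents shows that $\gr\partial$ is locally nilpotent.

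The only genuinely delicate step is the initial identification: checking that $\partial$ carries the $F_{<\lambda}$ levels into $F_{<\lambda+d}$ (so the quotient map is well defined) and that this quotient map matches the case definition given in the text. Once that bookkeeping is settled, the three conclusions drop out formally. The main temptation to avoid is proving the Leibniz rule by explicit leading-term manipulation in $\gr B$, which would force a case split according to whether $\gr f\cdot\gr g$ drops in degree and would appear to require $\gr B$ to be a domain; working directly with the graded quotients sidesteps this and uses no domain hypothesis.
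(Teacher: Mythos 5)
Your argument is correct. The paper itself omits the proof of Lemma \ref{yylem4.11} (it only remarks that the lemma ``is not hard''), so there is nothing to compare against; your write-up supplies exactly the argument the authors presumably had in mind. The key step --- checking $\partial(F_\lambda B)\subseteq F_{\lambda+\deg\partial}B$ and $\partial(F_{<\lambda}B)\subseteq F_{<\lambda+\deg\partial}B$ so that $\partial$ induces a degree-$(\deg\partial)$ map on $\gr_F B$ agreeing with the case-defined $\gr\partial$ --- is handled properly, and it cleanly yields well-definedness, homogeneity, the Leibniz rule, nonvanishing (via a generator attaining the maximum in the definition of $\deg\partial$, which exists since $B$ is finitely generated), and local nilpotence via $(\gr\partial)^n$ being induced by $\partial^n$. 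Your closing observation that this formulation avoids the case split on whether $\gr f\cdot\gr g$ drops in degree, and hence needs no domain hypothesis on $\gr B$, is a genuine point in its favor, and is consistent with the lemma's statement, which imposes no such hypothesis.
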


\section{$q$-quantum Weyl algebras}
\label{yysec5}

Fix $q\in k^{\times}$ and let $A_q=k\langle x,y\rangle /(yx=qxy+1)$.
If $q=1$, $A_1$ is the usual
first Weyl algebra. In this section we assume that $q\neq 1$. When $q=-1$,
the automorphism group of $A_{-1}$ was studied in
\cite{CPWZ}. If $q\neq \pm 1$, it is well-known that
$\Aut(A_q)=k^\times$ \cite{AlD}. The purpose of this section is not
to give another proof this result, but to compute the discriminant
of this algebra, in order to describe the automorphism group of other
related algebras (such as the tensor product of $A_q$'s).

Suppose $q$ is a primitive $n$th root of unity for some $n\geq 2$. 
In keeping with the notation in previous sections,
let $B=A_q$. We consider $B$ as an $\N$-filtered algebra with
$\deg x=1$ and $\deg y=0$. The following lemma is easy
to check. We identify $x$ and $y$ with $\gr x$ and $\gr y$ in
$\gr B$.

\begin{lemma}
\label{yylem5.1} Retain the above notation and
let $q$ be a primitive $n$th root of unity for some $n\geq 2$.
\begin{enumerate}
\item
$B$ is an $\N$-filtered algebra with $\deg x=1$ and $\deg y=0$
such that $\gr B=k_q[x,y]$ with $\deg x=1$ and $\deg y=0$.
\item
The center of $B$ is $C:=k[x^n,y^n]$. Let $R=\gr C$. Then $R$,
which is the polynomial subalgebra $k[x^n,y^n]$ of $k_q[x,y]$, is the
center of $\gr B$.
\item
There is a subset $b=\{x^iy^j\mid 0\leq i,j \leq n\}\subset B$ such that
$B$ is a finitely generated free module over $C$ with the basis $b$.
\item
$\gr B$ is a finitely generated free module over $R=\gr C$ with the
basis $\gr b$.
\item
The condition \eqref{4.8.1} in Subsection {\rm{\ref{yysec4.4}}} holds.
\end{enumerate}
\end{lemma}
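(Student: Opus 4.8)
The plan is to handle the five parts in order, with (1)--(2) carrying the real content and (3)--(5) reducing to bookkeeping with the PBW basis $\{x^iy^j\mid i,j\geq 0\}$ of $A_q$. For part (1), I would filter $B$ by $x$-degree, taking $F_mB$ to be the $k$-span of the PBW monomials $x^iy^j$ with $i\leq m$. The relation $yx=qxy+1$ is not homogeneous for this grading, but its top-degree part is $yx-qxy$ (the summand $1$ living in degree $0$); reading off this leading part shows $\gr(y)\gr(x)=q\,\gr(x)\gr(y)$, and since the monomials $x^my^j$ $(j\geq 0)$ form a $k$-basis of $F_mB/F_{m-1}B$, a rank count identifies $\gr B$ with $k_q[x,y]$. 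I would emphasize that this is a \emph{non-standard} filtration, with $(\gr B)_0=k[y]\neq k$, so $\gr B$ is not connected graded; this is precisely the framework of Subsection~\ref{yysec4.4}.

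For part (2), I would first verify by induction the identity $yx^k=q^kx^ky+\tfrac{q^k-1}{q-1}\,x^{k-1}$, together with its $x\leftrightarrow y$ counterpart; since $q^n=1$ the lower-order term vanishes at $k=n$, giving $yx^n=x^ny$ and $xy^n=y^nx$, so $k[x^n,y^n]\subseteq C(B)$. To promote this to an equality I would pass to associated graded rings. Because $\gr B=k_q[x,y]$ is a domain, for any central $z$ the leading term $\gr(z)$ commutes with $\gr(x)=x$ and $\gr(y)=y$, whence $\gr C(B)\subseteq C(\gr B)$. By Lemma~\ref{yylem2.2}(1) (equivalently Example~\ref{yyex2.4}(2), the case of an even number $2$ of variables) one has $C(\gr B)=k[x^n,y^n]=:R$, and directly $\gr\!\big(k[x^n,y^n]\big)=R$. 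The inclusions $R=\gr\!\big(k[x^n,y^n]\big)\subseteq\gr C(B)\subseteq C(\gr B)=R$ then force $\gr C(B)=R$; comparing $k[x^n,y^n]\subseteq C(B)$ degree by degree (their associated graded rings now coincide) yields $C(B)=k[x^n,y^n]$. This argument simultaneously records $R=C(\gr B)$, which is the second assertion of part (2).

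For parts (3)--(4), I would exploit the residue decomposition of the PBW basis: writing $i=na+i'$ and $j=nb+j'$ with $0\leq i',j'<n$, centrality of $x^n,y^n$ gives $x^iy^j=(x^n)^a(y^n)^b\,x^{i'}y^{j'}$, so that $B=\bigoplus_{0\leq i',j'<n}C\cdot x^{i'}y^{j'}$. Hence $B$ is free over $C$ on the $n^2$ monomials $x^{i'}y^{j'}$ with $0\leq i',j'\leq n-1$ (so the index bound in the displayed set should read $n-1$, giving rank $n^2$). The identical computation over $R$ inside $k_q[x,y]$ proves part (4), and part (5) is nothing but part (4) restated as condition~\eqref{4.8.1}, using $R=\gr C$ from part (2).

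The only step that is not routine is pinning down $C(B)$ exactly in part (2): the inclusion $k[x^n,y^n]\subseteq C(B)$ is immediate, but excluding further central elements depends on the associated-graded sandwich, and hence on $\gr B$ being a domain and on the known center of the skew polynomial ring $k_q[x,y]$.
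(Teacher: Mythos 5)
Your proof is correct; the paper offers no argument of its own (it merely declares the lemma ``easy to check''), and the route you take -- the non-standard filtration by $x$-degree with $(\gr B)_0=k[y]$, the $q$-binomial identity $yx^k=q^kx^ky+\frac{q^k-1}{q-1}x^{k-1}$ to get $k[x^n,y^n]\subseteq C(B)$, the associated-graded sandwich $R=\gr(k[x^n,y^n])\subseteq \gr C(B)\subseteq C(\gr B)=R$ to force equality, and the residue decomposition $x^iy^j=(x^n)^a(y^n)^bx^{i'}y^{j'}$ for freeness -- is exactly the intended one, since it feeds directly into the framework of Subsection \ref{yysec4.4} and Proposition \ref{yypro5.2}. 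You are also right that the index range in part (3) should read $0\leq i,j\leq n-1$ (rank $n^2$), which is what the paper itself uses when it sets $r=w=n^2$ in the proof of Proposition \ref{yypro5.2}.
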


\begin{proposition}
\label{yypro5.2} Suppose $q$ is a primitive $n$th root of
unity with $n \geq 2$. Then
\[
d(A_q/C(A_q))=_{k^\times} (x^ny^n)^{n(n-1)} +\cwlt.
\] 
As a consequence, $d(A_q/C(A_q))$ is dominating.
\end{proposition}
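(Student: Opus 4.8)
The plan is to pass to the associated graded algebra, where the discriminant has already been computed, and then transport the answer back to $A_q$ using Proposition~\ref{yypro4.10}. By Lemma~\ref{yylem5.1}, $B=A_q$ is finitely generated free over its center $C=k[x^n,y^n]$, its associated graded ring is $\gr B=k_q[x,y]$ with center $R=\gr C=k[x^n,y^n]$, and condition \eqref{4.8.1} holds; this is precisely the input needed for Proposition~\ref{yypro4.10}. So the first step is to compute the discriminant of the skew polynomial ring $\gr B$, which is already accessible.

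Since $\gr B=k_q[x,y]$ is a skew polynomial ring in two variables whose center is the polynomial ring $k[x^n,y^n]$, Proposition~\ref{yypro2.8} (with the number of variables equal to $2$ and $\alpha_1=\alpha_2=n$, so $r=n^2$) gives
\[
d(\gr B/R)=_{k^\times}(x^{n-1}y^{n-1})^{n^2}=(x^ny^n)^{n(n-1)};
\]
the scalar $r^r=(n^2)^{n^2}$ is a unit because $n^2=\rk(B/C)$ is nonzero in $k$ [Lemma~\ref{yylem2.7}(7)], so $d(\gr B/R)\neq 0$ (one could alternatively quote Corollary~\ref{yycor2.12} with two, hence evenly many, variables). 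I would then invoke Proposition~\ref{yypro4.10} to conclude $\gr d_w(B/C)=d_w(\gr B/R)$ with $w=n^2$. Because both $B$ over $C$ and $\gr B$ over $R$ are free of rank $w$, we have $d_w(B/C)=_{B^\times}d(B/C)$ and $d_w(\gr B/R)=_{(\gr B)^\times}d(\gr B/R)$, and both $B^\times$ and $(\gr B)^\times$ equal $k^\times$ (units must have degree zero in the $x$-filtration, hence lie in $k[y]$, whose units are scalars). Thus the top-degree part of $d(B/C)$ with respect to the $x$-filtration is $c\,x^{n^2(n-1)}y^{n^2(n-1)}$ for some $c\in k^\times$.

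The hard part is that this filtered statement only controls the \emph{$x$-exponents}: writing $d(B/C)=\sum d_{ij}x^iy^j$ in the PBW basis, it shows that every term has $i\le n^2(n-1)$ and that among the terms of top $x$-degree only $y^{n^2(n-1)}$ survives, but it says nothing about the $y$-exponents of the lower terms. To close this gap I would run the identical argument for the \emph{opposite} filtration $\deg x=0$, $\deg y=1$; the relation $yx=qxy+1$ again has associated graded $k_q[x,y]$ (the unit $1$ drops in lower degree), the analogue of Lemma~\ref{yylem5.1} holds by symmetry, and Proposition~\ref{yypro4.10} now shows that the top $y$-degree part of $d(B/C)$ is likewise $c'\,x^{n^2(n-1)}y^{n^2(n-1)}$, forcing $j\le n^2(n-1)$ for every term. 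Combining the two filtrations, every monomial $x^iy^j$ appearing in $d(B/C)$ satisfies $i\le n^2(n-1)$ and $j\le n^2(n-1)$, with equality in both coordinates only for the single term $c\,(x^ny^n)^{n(n-1)}$; hence every other term is $\cwlt$ this monomial, so that
\[
d(A_q/C(A_q))=_{k^\times}(x^ny^n)^{n(n-1)}+\cwlt.
\]
Finally, since the leading monomial $x^{n^2(n-1)}y^{n^2(n-1)}$ has both exponents positive (as $n\ge 2$), the remark following Definition~\ref{yydef1.6} shows that $d(A_q/C(A_q))$ is dominating. I expect the symmetric second-filtration step to be the only genuine subtlety, the remainder being a direct application of machinery already in place.
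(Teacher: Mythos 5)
Your proposal is correct and follows essentially the same route as the paper: compute $d(\gr A_q/\gr C)$ via Proposition \ref{yypro2.8}, transfer it to $A_q$ via Proposition \ref{yypro4.10} using Lemma \ref{yylem5.1}, and then control the $y$-exponents by the symmetric argument with the opposite filtration — exactly the "by symmetry (or using a different filtration)" step in the paper's proof. The extra detail you supply on that second filtration is a faithful expansion of what the paper leaves implicit.
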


\begin{proof} Retain the notation in Lemma \ref{yylem5.1},
let $B=A_q$, $C=C(B)$ and $R=C(\gr B)$.
By Proposition \ref{yypro2.8} (with $r=w=n^2$),
$d_w(\gr B/R)=(x^ny^n)^{n(n-1)}$. By Proposition \ref{yypro4.10},
$\gr d_w(B/C)=(x^ny^n)^{n(n-1)}$. In particular,
$d_w(B/C)\neq 0$. Write $d:=d_w(B/C)=(x^ny^n)^{n(n-1)}+
\sum_{i,j}a_{i,j} x^i y^j$ with $a_{ij}\in k$.
Then the equation $d_w(\gr B/R)=(x^ny^n)^{n(n-1)}$ implies that
\[
d=(y^{n^2(n-1)}) x^{n^2(n-1)}+\sum_{i<n^2(n-1)} x^i
(\sum_j a_{ij} y^j).
\]
This means that if $a_{ij}\neq 0$, then $i<n^2(n-1)$. By symmetry
(or using a different filtration of $B$), one sees that
if $a_{ij}\neq 0$, then $j<n^2(n-1)$. Thus the assertion
follows.
\end{proof}

Based on computer calculations, we make the following conjecture.

\begin{conjecture}
\label{yycon5.3} Suppose $q$ is a primitive $n$th root of
unity. Then
\[
d(A_q/C(A_q))=_{k^\times } ((1-q)^nx^ny^n-1)^{n(n-1)}.
\]
\end{conjecture}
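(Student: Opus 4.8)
The plan is to exhibit an explicit central element whose zero locus is the conjectured divisor, and then to argue that the discriminant is a power of it. Put $z=1+(q-1)xy\in A_q$. Using $yx=qxy+1$ one computes $zx=qxz$ and $zy=q^{-1}yz$; since $q^n=1$, it follows that $z^n$ commutes with $x$ and $y$, so $z^n\in C:=C(A_q)=k[x^n,y^n]$ by Lemma~\ref{yylem5.1}. To pin $z^n$ down exactly I would use two gradings. Under the $\mathbb Z$-grading with $\deg x=1$, $\deg y=-1$, both the relation and $z$ are homogeneous of weight $0$, so $z^n$ lies in the weight-$0$ part of $C$, which is $k[x^ny^n]$. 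Under the filtration of Lemma~\ref{yylem5.1} ($\deg x=1,\deg y=0$) one has $\gr z=(q-1)xy$ and $(xy)^n=q^{\binom{n}{2}}x^ny^n$ in $\gr B=k_q[x,y]$, so $z^n$ has filtration degree $n$ with leading term $(q-1)^nq^{\binom{n}{2}}x^ny^n$. Together with the constant term $1$, these force $z^n=(q-1)^nq^{\binom{n}{2}}x^ny^n+1$. Since $q$ is a primitive $n$th root of unity, a short parity check gives $q^{\binom{n}{2}}=(-1)^{n+1}$, whence $z^n=1-(1-q)^nx^ny^n=-\bigl((1-q)^nx^ny^n-1\bigr)$. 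As $n(n-1)$ is even, Conjecture~\ref{yycon5.3} is then equivalent to the assertion $d(A_q/C)=_{k^\times}(z^n)^{n(n-1)}$.

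Next I would reduce this to a ramification statement. The centre $C=k[x^n,y^n]$ is a polynomial ring, hence a UFD, and $z^n$ corresponds up to the unit $-1$ to $(1-q)^nuv-1$ in the variables $u=x^n,v=y^n$, which is irreducible; thus $z^n$ is a prime of $C$. By Proposition~\ref{yypro5.2}, $d:=d(A_q/C)$ is a nonzero element of $C$ whose leading term (in the $\deg x=1,\deg y=0$ filtration) is $(x^ny^n)^{n(n-1)}$. The key claim is that $A_q$ is Azumaya over $C$ after inverting $z^n$, equivalently that the regular trace form is nondegenerate on every closed fibre $\bar A:=A_q\otimes_C\kappa(\mathfrak p)$ with $z^n\notin\mathfrak p$. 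Granting this, $d$ is a unit in the localization $C[z^{-n}]$, so the only prime of $C$ dividing $d$ is $z^n$; as $C$ is a UFD, $z^n$ is prime, and $d\neq0$, we obtain $d=c\,(z^n)^m$ for some $c\in C^\times=k^\times$ and $m\geq1$. Comparing the leading term $(x^ny^n)^{n(n-1)}$ of $d$ with that of $(z^n)^m$, namely $\bigl((q-1)^nq^{\binom{n}{2}}\bigr)^m(x^ny^n)^m$, forces $m=n(n-1)$ and absorbs $c$, giving $d=_{k^\times}(z^n)^{n(n-1)}=_{k^\times}\bigl((1-q)^nx^ny^n-1\bigr)^{n(n-1)}$.

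Finally I would prove the fibre claim representation-theoretically, and I expect this to be the main obstacle. Each fibre $\bar A$ is $n^2$-dimensional (since $A_q$ is free of rank $n^2$ over $C$) and is generated by the images of $x,y$ subject to $yx=qxy+1$, $x^n=a$, $y^n=b$, where $\mathfrak p=(x^n-a,y^n-b)$ over $\bar k$. On any finite-dimensional $\bar A$-module the central $z$ acts with eigenvalues permuted by multiplication by $q$ (because $zx=qxz$), so on a hypothetical $n$-dimensional simple module its eigenvalues form a geometric progression $\omega,q\omega,\dots,q^{n-1}\omega$ with $\omega^n=z^n\bmod\mathfrak p$; these are distinct precisely when $\omega\neq0$, i.e.\ when $z^n\notin\mathfrak p$. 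In that case explicit weighted cyclic-shift operators $x e_i=\mu_i e_{i+1}$ and $y e_i=\nu_i e_{i-1}$ (indices mod $n$) solving $\mu_i\nu_{i+1}=q\nu_i\mu_{i-1}+1$ with $\prod_i\mu_i=a$ and $\prod_i\nu_i=\pm b$ give an irreducible $n$-dimensional representation, so by a dimension count $\bar A\cong M_n(\bar k)$, whose regular trace form is $n$ times the matrix trace form and is nondegenerate because $n\neq0$ in $k$ (a primitive $n$th root of unity exists only when $\mathrm{char}\,k\nmid n$). When instead $z^n\in\mathfrak p$, the element $z$ is nilpotent, forcing a nonzero radical and a degenerate trace form, so the ramification locus is exactly $V(z^n)$. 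The delicate points — and the likely reason this is still only a conjecture — are verifying solvability of the shift-operator equations uniformly over all points with $z^n\neq0$, including degenerate strata such as $x^n=0$, and checking carefully that nondegeneracy of the regular trace form on $\bar A$ is equivalent to $\bar A$ being separable, so that $V(d)=V(z^n)$ with the correct multiplicity.
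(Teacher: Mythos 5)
First, a point of comparison: the paper does not prove this statement at all --- it is stated as a conjecture ``based on computer calculations'' and is verified only for $n=2$ by citing \cite[Example 1.7(1)]{CPWZ} --- so there is no proof of record to measure yours against. That said, your outline is a genuinely viable strategy and its algebraic core is correct. With $z=1+(q-1)xy$ one indeed has $zx=qxz$ and $zy=q^{-1}yz$, hence $z^n$ central; the two-grading argument correctly forces $z^n=1+(q-1)^nq^{\binom{n}{2}}x^ny^n=1-(1-q)^nx^ny^n$ (the parity check $q^{\binom{n}{2}}=(-1)^{n+1}$ and the constant term are right); $z^n$ is irreducible in $C=k[x^n,y^n]$; and at a point with $z^n\in\mathfrak p$ the image of $z$ in the fibre is a nonzero nilpotent element which is moreover \emph{normal} (you should say this explicitly: normality is what makes $z\bar{A}$ a nilpotent two-sided ideal, hence puts it in the radical of the trace form). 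Combined with Proposition \ref{yypro5.2}, unique factorization in $C$, and the leading-term comparison that pins down the exponent $m=n(n-1)$, everything does reduce to your key claim that the trace form is nondegenerate on every geometric fibre with $z^n\notin\mathfrak p$.

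That fibre claim is the one genuine gap in what you wrote, but it can be closed exactly as you sketch, including the strata $x^n=0$ or $y^n=0$ that worry you. On a simple module of $\bar{A}=A_q\otimes_C\kappa(\mathfrak p)$ the central scalar $\zeta:=1-(1-q)^nab$ is nonzero, so $z$ is diagonalizable with the $n$ distinct eigenvalues $\omega q^i$, $\omega^n=\zeta$ (here $\operatorname{char}k\nmid n$ since a primitive $n$th root of unity exists). Since $xy=(z-1)/(q-1)$ and $yx=(qz-1)/(q-1)$, the shift equations decouple to $\mu_i\nu_{i+1}=\lambda_{i+1}$ with $\lambda_i=(\omega q^i-1)/(q-1)$, $\prod_i\mu_i=a$ and $\prod_i\nu_i=b$ (no sign), and the compatibility $\prod_i\lambda_i=ab$ follows from $\prod_i(1-\omega q^i)=1-\zeta=(1-q)^nab$. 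If all $\lambda_i\neq0$ one solves freely; if some $\lambda_i=0$ then $\zeta=1$, i.e.\ $ab=0$, exactly one $\lambda_i$ vanishes, and one kills the corresponding $\mu$ and/or $\nu$ according to which of $a,b$ vanish. In every case the resulting $n$-dimensional module is simple because the $z$-eigenvalues are distinct and one can move between eigenlines, so $\bar{A}\to M_n(\bar{\kappa})$ is onto and hence an isomorphism by counting dimensions, and the regular trace form is $n$ times the matrix trace form, nondegenerate as $n\neq0$ in $k$. One small correction: nondegeneracy of the regular trace form is not \emph{equivalent} to separability (the regular trace of $M_p$ in characteristic $p$ is identically zero); you only need the implication $\bar{A}\cong M_n(\bar{\kappa})$ with $n$ invertible $\Rightarrow$ nondegenerate, which is all the argument uses. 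With this lemma written out in full, your outline becomes an actual proof of the conjecture --- something the paper does not supply.
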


This conjecture holds when $n=2$: see \cite[Example 1.7(1)]{CPWZ}.

For the rest of this section we consider the tensor
product of $q$-quantum Weyl algebras. Use the letter $B$ for the
tensor product $A_{q_1}\otimes \cdots \otimes A_{q_m}$. The following
corollary follows immediately from Proposition \ref{yypro5.2}
and \cite[Theorem 5.5]{CPWZ}.

\begin{corollary}
\label{yycor5.4}
Let $B=A_{q_1}\otimes \cdots \otimes A_{q_m}$ and assume that
each $1\neq q_i$ is a root of unity. Then $B$ is in $\Af$,
namely, $d(B/C(B))$ is dominating. As a consequence,
$\Aut(B)$ is affine.
\end{corollary}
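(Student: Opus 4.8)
The plan is to show that $B$ lies in the category $\Af$ and then to read off affineness from the criterion already established. First I would treat each tensor factor on its own. Since $q_i\neq 1$ is a root of unity, it is a primitive $n_i$th root of unity for some $n_i\geq 2$, so Proposition \ref{yypro5.2} applies to $A_{q_i}$. With the standard filtration one has $\gr A_{q_i}=k_{q_i}[x,y]$, a connected graded domain; $A_{q_i}$ is finitely generated free over its center $k[x^{n_i},y^{n_i}]$ with basis $\{x^a y^b\mid 0\leq a,b<n_i\}$; and $d(A_{q_i}/C(A_{q_i}))=_{k^\times}(x^{n_i}y^{n_i})^{n_i(n_i-1)}+\cwlt$ is dominating. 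These are exactly the three defining conditions, so each $A_{q_i}$ lies in $\Af=\Af_0$.

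Next I would close under tensor products by iterating Lemma \ref{yylem4.4}(1) with $s=t=0$. Writing $B=(A_{q_1}\otimes\cdots\otimes A_{q_{m-1}})\otimes A_{q_m}$ and assuming inductively that the first factor is in $\Af_0$, the hypothesis of Lemma \ref{yylem4.4} that needs checking is that the associated graded of the tensor product be a connected graded domain. But $\gr B\cong k_{q_1}[x_1,y_1]\otimes\cdots\otimes k_{q_m}[x_m,y_m]$ is itself a skew polynomial ring on $2m$ generators (equal parameters $q_i$ within each block, $1$ across blocks), hence a noetherian connected graded domain. Lemma \ref{yylem4.4}(1) then gives $B\in\Af_0=\Af$, and in particular $d_w(B/C(B))$ is dominating, where $w=\rk(B/C(B))$; via \cite[Lemma 5.3]{CPWZ} this discriminant is, up to a unit, the product of suitable powers of the factor discriminants. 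This step is precisely the $s=t=0$ instance of the quoted \cite[Theorem 5.5]{CPWZ}.

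Finally, since $B\in\Af_0\subseteq\Aff_0$, Remark \ref{yyrem4.2}(3) (the case $s=0$) shows that every automorphism of $B$ is $0$-affine, i.e.\ affine; equivalently, dominance of $d_w(B/C(B))$ gives local dominance and Theorem \ref{yythm1.13}(1) applies directly. The one point to watch — and the main, though modest, obstacle — is the compatibility of filtrations: Proposition \ref{yypro5.2} computes the factor discriminant using the auxiliary filtration $\deg x=1,\ \deg y=0$, whereas membership in $\Af$ requires dominance with respect to the \emph{standard} filtration. This is already resolved inside Proposition \ref{yypro5.2}, whose leading term $(x^{n_i}y^{n_i})^{n_i(n_i-1)}$ is a monomial in which both $x$ and $y$ occur with positive exponent, so it is dominating in the standard sense by the criterion recorded after Definition \ref{yydef1.6}.
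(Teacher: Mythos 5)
Your proposal is correct and follows essentially the same route as the paper, which derives the corollary immediately from Proposition \ref{yypro5.2} together with \cite[Theorem 5.5]{CPWZ} (equivalently, the $s=t=0$ case of Lemma \ref{yylem4.4}(1) that you invoke). Your extra care about the filtration used in Proposition \ref{yypro5.2} versus the standard one is a worthwhile clarification but does not change the argument.
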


From now on we do not assume that the parameters $q_i$ are roots of unity.
Here is the first part of Theorem \ref{yythm0.2}.

\begin{theorem}
\label{yythm5.5} Let $B=A_{q_1}\otimes \cdots \otimes A_{q_m}$ be
defined as before. Assume that $q_i\neq 1$ for all $i=1,\dots,m$.
Then every algebra automorphism of $B$ is affine.
\end{theorem}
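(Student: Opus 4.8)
The plan is to reduce to the root-of-unity case already settled in Corollary~\ref{yycor5.4} by means of the mod-$p$ reduction technique of Lemma~\ref{yylem4.6}. Because the $q_i$ need not be roots of unity, $B$ need not be PI nor finite over its center, so Theorem~\ref{yythm1.13} cannot be applied directly; the idea is instead to descend the algebra to a base ring that is finitely generated over $\Z$, whose residue fields are then automatically finite, forcing the reduced parameters to become roots of unity different from $1$. Throughout I give $B$ the standard filtration with $\deg x_i=\deg y_i=1$, so that $\gr B=\bigotimes_{i=1}^m k_{q_i}[x_i,y_i]$ is a connected graded domain and affineness is measured against this filtration.

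First I would fix an arbitrary $g\in\Aut(B)$ and descend it. The images $g(x_i),g(y_i)$ and $g^{-1}(x_i),g^{-1}(y_i)$ involve only finitely many coefficients of $k$. Let $K$ be the subring of $\Frac(k)$ generated over $\Z$ by $q_1^{\pm1},\dots,q_m^{\pm1}$, by $(q_1-1)^{-1},\dots,(q_m-1)^{-1}$, and by all of these coefficients; then $K$ is a finitely generated $\Z$-algebra. Let $S=A_{q_1}\otimes\cdots\otimes A_{q_m}$ be the tensor product over $K$, filtered as above. Each factor is an Ore extension $K[x_i][y_i;\sigma_i,\delta_i]$ with $\sigma_i(x_i)=q_i x_i$ (an automorphism, since $q_i\in K^\times$) and $\delta_i(x_i)=1$, so $S$ is free over $K$ on the monomials $\prod_i x_i^{a_i}y_i^{b_i}$, and $\gr S=\bigotimes_i K_{q_i}[x_i,y_i]$ is connected graded, locally finite, and free over $K$. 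Since $K$ contains the coefficients of both $g$ and $g^{-1}$, these maps restrict to mutually inverse $K$-algebra endomorphisms of $S$, so $g$ restricts to some $g_0\in\Aut(S)$.

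Next I would verify the hypotheses of Lemma~\ref{yylem4.6} for $S$ over $K$ and apply it. For any maximal ideal $\mathfrak{m}\subset K$, the residue field $F=K/\mathfrak{m}$ is finite because $K$ is finitely generated over $\Z$; hence $F^\times$ is a finite group and the image $\bar q_i$ of each $q_i$ is a root of unity, while $\bar q_i\neq 1$ because $q_i-1$ is a unit of $K$ and therefore has nonzero image. Thus $S\otimes_K F\cong A_{\bar q_1}\otimes\cdots\otimes A_{\bar q_m}$ over $F$ with each $1\neq\bar q_i$ a root of unity, so Corollary~\ref{yycor5.4} gives that every automorphism of $S\otimes_K F$ is affine. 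Lemma~\ref{yylem4.6}(1) then yields $\Aut(S)=\Aut_{\af}(S)$; in particular $g_0$ is affine, i.e.\ $\deg g_0(x_i),\deg g_0(y_i)\le 1$. Since $g_0$ and $g$ agree on the generators and the filtration is the same, $g$ is affine as well, and as $g$ was arbitrary this proves the theorem.

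The one genuinely delicate step is the descent bookkeeping: arranging a single finitely generated ring $K\subseteq\Frac(k)$ that simultaneously supports the algebra structure (through $q_i^{\pm1}$), guarantees that $q_i\neq 1$ persists after reduction (through $(q_i-1)^{-1}$, which may fail to lie in $k$ itself, forcing one to work inside $\Frac(k)$), and contains enough of $k$ to express the chosen automorphism and its inverse. Once $K$ is in place, the freeness of $\gr S$ over $K$ is exactly what lets Lemma~\ref{yylem4.6} funnel the question to the finite residue fields, where Corollary~\ref{yycor5.4} applies without change.
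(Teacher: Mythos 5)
Your proposal is correct and follows essentially the same route as the paper's proof: descend $g$ and $g^{-1}$ to a finitely generated $\Z$-subalgebra $K$ containing $q_i^{\pm 1}$, $(q_i-1)^{-1}$ and the relevant coefficients, observe that $S$ is an iterated Ore extension free over $K$ with $\gr S$ a skew polynomial ring, and then apply Lemma \ref{yylem4.6}(1) together with Corollary \ref{yycor5.4} at the finite residue fields, where the reduced parameters are roots of unity different from $1$. The only cosmetic difference is that the paper phrases the final step as a proof by contradiction while you argue directly; the substance is identical.
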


\begin{proof} Let $Y$ be the subspace $\bigoplus_{i=1}^m
(kx_i\oplus ky_i)$. Then $Y$ is a generating space of $B$ and
$B$ is a filtered algebra with standard filtration defined by
$F_n B=(Y\oplus k)^n$ (and with $\deg x_i=\deg y_i=1$ for all $i$).
Clearly, $\gr B$ is a skew polynomial ring. So we have a monomial
basis for the algebra $B$.

Proceed by contradiction and assume that there is an automorphism
$g$ of $B$ which is not affine. Write
$g(x_i), g(y_i), g^{-1}(x_i), g^{-1}(y_i)$ as linear combinations
of the monomial basis, and let $K$ be the $\Z$-subalgebra of
$k$ generated by the collection of the nonzero coefficients $\{c_w\}_w$
of $g(x_i), g(y_i), g^{-1}(x_i), g^{-1}(y_i)$, along with $\{c_w^{-1}\}_w$,
$\{q_i^{\pm 1}\}_i$ and $\{(q_i-1)^{-1}\}_i$. (If $k$ is not a field,
adjoin inverses as necessary.) Let $S$ be the
$K$-subalgebra of $B$ generated by $\{x_i\}_{i=1}^m \bigcup
\{y_i\}_{i=1}^m$. By the definition of $K$,
both $g$ and $g^{-1}$ are well-defined as algebra homomorphisms
of $S$. Since $g\circ g^{-1}$ and $g^{-1}\circ g$ are the identity
when restricted to the $K$-subalgebra $S\subset B$, $g$ is an
automorphism of $S$ with inverse $g^{-1}$. Since the relations
of $B$ (and of $S$) are of the form
\begin{equation}
\label{5.5.1}
y_i x_i= q_i x_i y_i+1, \quad [x_i,x_j]=[x_i,y_j]=[y_i,y_j]=0
\end{equation}
for all $i\neq j$, one can check that $\gr S$ is a skew polynomial algebra
with base ring $K$ (or $S$ is an iterated Ore extension starting with $K$).
In fact, it is free over $K$. Hence the hypotheses of Lemma
\ref{yylem4.6} hold.

Now consider a finite quotient field $F=K/\mathfrak{m}$. Then
the image of $q_i$, denoted by $\bar{q}_i$, is not 1 in $F$.
Since $S$ is an iterated Ore extension, $S\otimes_K F$
is also an iterated Ore extension with the relation
\eqref{5.5.1} with $q_i$ being replaced by $\bar{q}_i$.
Therefore $S\otimes_K F$ is isomorphic to the product
of quantum Weyl algebras $A_{\bar{q}_i}$ over the field $F$,
where $\bar{q}_i\neq 1$. Since $F$ is a finite field, $\bar{q}_i$
is a root of unity. By Corollary \ref{yycor5.4}
$\Aut(S\otimes_K F)=\Aut_{\af}(S\otimes_K F)$. 
By Lemma \ref{yylem4.6}(1),
$\Aut(S)=\Aut_{\af}(S)$, which contradicts the fact that
$g|_S$ is not affine. The assertion follows.
\end{proof}

To prove the rest of Theorem \ref{yythm0.2} (and Theorem
\ref{yythm5.7} below), we need the following lemma.

\begin{lemma}
\label{yylem5.6} Let $B=A_{q_1}\otimes \cdots \otimes A_{q_m}$
with $q_i\neq 1$ for all $i$. Let $Y$ be the subspace $\bigoplus_{i=1}^m
(kx_i\oplus ky_i)$. Let $g$ be a
(necessarily affine) automorphism of $B$.
\begin{enumerate}
\item
$g(Y)=Y$.
\item
For each $i$, either $g(x_i)=b_i x_{i'}$
and $g(y_i)=f_i y_{i'}$ for some $i'$, or $g(x_i)=c_i y_{i'}$
and $g(y_i)=e_i x_{i'}$ for some $i'$.
\item
If $k$ is a field, then $\Aut_{\af}(B)$ is an algebraic group that fits
into the exact sequence
\[
1\to (k^\times)^m\to \Aut_{\af}(B)\to S\to 1,
\]
where $S$ is the finite group generated by all automorphisms
$g$ of the form $g(x_i)=x_{i'}$ and $g(y_i)=y_{i'}$ for some $i'$,
or $g(x_i)=y_{i'}$ and $g(y_i)=x_{i'}$ for some $i'$.
\item
If $q_i\neq q_j^{-1}$ for all $i,j$, then there is a permutation
$\sigma\in S_m$ and $b_i\in k^\times$ such that $g(x_i)=b_i x_{\sigma(i)}$
and $g(y_i)=b_i^{-1} y_{\sigma(i)}$ for all $i$. Further
$q_i=q_{\sigma(i)}$ for all $i$.
\item
If $q_i\neq \pm 1$ and $q_i\neq q_j^{\pm 1}$ for all $i\neq j$, then
$\Aut_{\af}(B)=(k^\times)^m$.
\item
If $q_i=q\neq \pm 1$ for all $i$, then $\Aut_{\af}(B)=S_m\rtimes (k^\times)^m$.
\end{enumerate}
\end{lemma}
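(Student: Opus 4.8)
The plan is to build everything on Theorem \ref{yythm5.5}, which tells us that every automorphism $g$ of $B$ is affine, and then to determine the precise shape of affine automorphisms by passing to the associated graded ring. First I would note that since $g$ and $g^{-1}$ are both affine, $g(F_1 B)=F_1 B$, so $g$ is a filtered automorphism and $\gr g$ is a graded automorphism of the quantum affine space $\gr B=\bigotimes_{i=1}^m k_{q_i}[x_i,y_i]$. Writing $g(x_i)=\kappa_i+\ell_i$ and $g(y_i)=\mu_i+m_i$ with $\kappa_i,\mu_i\in k$ and $\ell_i,m_i\in Y$, the leading parts are $\gr g(x_i)=\ell_i$ and $\gr g(y_i)=m_i$.

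The key step is to show each $\ell_i$ (and $m_i$) is a scalar multiple of a single generator. Since $x_i$ is normal in $\gr B$ and $\gr g$ is a graded automorphism, $\ell_i$ is a normal degree-one element of $\gr B$. For a degree-one element $f=\sum_u c_u z_u$ (with $z_u$ ranging over the generators $x_j,y_j$), normality is equivalent to $z_v f=\lambda_v f z_v$ for a scalar $\lambda_v$ for every generator $z_v$, which forces all generators in the support of $f$ to share the same commutation character. A direct computation shows the character of $x_i$ is nontrivially supported only on $y_i$ (value $q_i$) and that of $y_i$ only on $x_i$ (value $q_i^{-1}$); because every $q_i\neq 1$, these $2m$ characters are pairwise distinct. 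Hence $\ell_i=b_i z_{u(i)}$ and $m_i=f_i z_{v(i)}$ for single generators. Matching $\gr g(y_i)\gr g(x_i)=q_i\,\gr g(x_i)\gr g(y_i)$ then forces $\{z_{u(i)},z_{v(i)}\}=\{x_{i'},y_{i'}\}$ for some $i'$, with either $q_{i'}=q_i$ (unswapped) or $q_{i'}=q_i^{-1}$ (swapped); as $\gr g$ permutes generators bijectively, $i\mapsto i'$ is a permutation. This gives the two alternatives of part (2) for the leading terms.

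To finish (1) and (2) I would kill the constants $\kappa_i,\mu_i$ by expanding $g(y_i)g(x_i)-q_i\,g(x_i)g(y_i)=1$ in $B$ and comparing filtration components, using $y_{i'}x_{i'}=q_{i'}x_{i'}y_{i'}+1$: the degree-one components give $b_i\mu_i(1-q_i)=0$ and $\kappa_i f_i(1-q_i)=0$, so $\kappa_i=\mu_i=0$ since $b_i,f_i\neq 0$ and $q_i\neq 1$, proving $g(Y)=Y$ and the exact forms of part (2); the constant component records $f_i b_i=1$ (unswapped) or $c_i e_i=-q_i^{-1}$ (swapped). Parts (3)--(6) are then bookkeeping. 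For (3), part (2) shows $g$ is determined by its linear action on $Y$, exhibiting $\Aut_{\af}(B)$ as a closed subgroup of $GL(Y)$, hence an algebraic group; the ``shape'' map sending $g$ to its underlying permutation-with-swaps lands in a finite group $S$ (a subgroup of $S_m\ltimes(\mathbb{Z}/2)^m$), its kernel is exactly the diagonal torus $(k^\times)^m$ of maps $x_i\mapsto\lambda_i x_i$, $y_i\mapsto\lambda_i^{-1}y_i$ (normal since shapes compose), and the pure permutation/swap automorphisms split the sequence, giving $\Aut_{\af}(B)=S\ltimes(k^\times)^m$ with $r=m$. For (4), $q_i\neq q_j^{-1}$ rules out all swaps, so every $g$ is unswapped with $q_i=q_{\sigma(i)}$ and $g(y_i)=b_i^{-1}y_{\sigma(i)}$. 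Part (5) adds $q_i\neq q_j$ for $i\neq j$, forcing $\sigma=\mathrm{id}$ and leaving only the torus. Part (6) is the opposite extreme: with all $q_i=q\neq\pm1$ every permutation is allowed, giving $S_m\ltimes(k^\times)^m$.

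The main obstacle is the key step: showing the leading term $\ell_i$ is a single generator up to scalar. Because the cross-factor generators commute (their pairwise parameter is $1$), one cannot invoke the standard fact that graded automorphisms of a skew polynomial ring with all parameters $\neq 1$ permute the variables (as in \cite{KKZ3}); instead the argument must run through the pairwise distinctness of commutation characters, and it is precisely here that the hypothesis $q_i\neq 1$ for all $i$ is used essentially.
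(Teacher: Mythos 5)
Your proposal is correct, but it reaches the key classification step (part (2)) by a genuinely different route. The paper works entirely inside $B$: it first kills the constant terms by extracting the degree-one component of $g(y_i)g(x_i)-q_ig(x_i)g(y_i)=1$ and using linear independence of $\{1,x_i,y_i\}$, and then expands that same relation in full against the monomial basis, using that $B$ is a domain to force the quadratic "$xx$" and "$yy$" blocks $(1-q_i)\bigl(\sum_s e_{is}x_s\bigr)\bigl(\sum_s b_{is}x_s\bigr)$ and $(1-q_i)\bigl(\sum_t f_{it}y_t\bigr)\bigl(\sum_t c_{it}y_t\bigr)$ to vanish, after which the cross terms yield $b_{is}f_{it}=0$ for $s\neq t$ and hence single-variable images. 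You instead pass to $\gr B=\bigotimes_i k_{q_i}[x_i,y_i]$ and argue that $\gr g(x_i)$ is a normal degree-one element, which must be a scalar multiple of a single generator because the commutation characters of the $2m$ generators are pairwise distinct when every $q_i\neq 1$; the constants are then killed afterwards by the same degree-one comparison. Your route is more structural and correctly explains why the standard graded-automorphism lemma \cite[Lemma 2.5(e)]{KKZ3} (invoked in Proposition \ref{yypro3.9}, where all $p_{ij}\neq 1$) cannot be quoted directly here, since cross-factor generators commute; the paper's route is more elementary and self-contained. The one place where your sketch is thinner than it should be is the assertion that normality of a degree-one $f=\sum_u c_uz_u$ forces $z_vf=\lambda_vfz_v$: normality only gives $z_vf=fw_v$ for some degree-one $w_v$ a priori, and one must use the $\Z^{2m}$-grading (comparing multidegrees $e_u+e_v$ on the two sides) to conclude $w_v\in kz_v$ and hence that all generators in the support of $f$ share one character. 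That argument is routine and does go through, so this is a point to be expanded rather than a gap. Parts (1) and (3)--(6) in your write-up match the paper's bookkeeping, including the identification of the kernel with the torus $x_i\mapsto\lambda_ix_i$, $y_i\mapsto\lambda_i^{-1}y_i$ and the exclusion of swaps via $q_i=q_{i'}^{-1}$.
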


\begin{proof} (1) Write
\begin{align*}
g(x_i)&=a_i+ \sum_{s=1}^n b_{is} x_s+\sum_{t=1}^n c_{it}y_t=a_i+X_i,\\
g(y_i)&=d_i+ \sum_{s=1}^n e_{is} x_s+\sum_{t=1}^n f_{it}y_t=d_i+Y_i,
\end{align*}
where $a_i, b_{is}, c_{it}, d_i, e_{is}, f_{it}\in k$.
Applying $g$ to the relation $1=y_i x_i-q x_iy_i$ (where we write $q=q_i$), we have
\begin{align*}
1&=g(y_i)g(x_i)-qg(x_i)g(y_i)\\
&=Y_iX_i+a_iY_i+d_iX_i+a_id_i-q(X_iY_i+a_iY_i+d_iX_i+a_id_i)\\
&=Y_iX_i-qX_iY_i+(1-q)[a_i Y_i+d_i X_i+a_id_i].
\end{align*}
By using the relations of $B$, the degree 1 part of the above equation
is
\[
0=(1-q)[a_i Y_i+d_i X_i].
\]
Since $q\neq 1$, $a_i Y_i+d_i X_i=0$.
If $a_i$ or $d_i$ is nonzero, then $X_i$ and $Y_i$ are linearly
dependent, which contradicts the fact that $\{1,x_i,y_i\}$
is linearly independent. Therefore $a_i=d_i=0$ for all $i$.
The assertion follows.

(2) We keep the notation from part (1), and we know that $a_i=d_i=0$
for all $i$. Note that the $x_s$'s commute and the $y_t$'s commute. Then
\begin{align*}
1&=g(y_i)g(x_i)-q_ig(x_i)g(y_i)=Y_iX_i-q_iX_iY_i\\
&=\left(\sum_{s=1}^n e_{is} x_s+\sum_{t=1}^n f_{it}y_t\right)
\left(\sum_{s=1}^n b_{is} x_s+\sum_{t=1}^n c_{it}y_t\right)\\
&\qquad
-q_i\left(\sum_{s=1}^n b_{is} x_s+\sum_{t=1}^n c_{it}y_t\right)
\left(\sum_{s=1}^n e_{is} x_s+\sum_{t=1}^n f_{it}y_t\right)\\
&=(1-q_i)\left[ \left(\sum_{s=1}^n e_{is} x_s\right) \left(\sum_{s=1}^n b_{is} x_s\right)
+\left(\sum_{t=1}^n f_{it}y_t\right) \left(\sum_{t=1}^n c_{it}y_t\right)\right] \\
&\qquad + \left(\sum_{s=1}^n e_{is} x_s\right) \left(\sum_{t=1}^n c_{it}y_t\right)
+\left(\sum_{t=1}^n f_{it}y_t\right) \left(\sum_{s=1}^n b_{is} x_s\right)\\
&\qquad -q_i\left(\sum_{s=1}^n b_{is} x_s\right) \left(\sum_{t=1}^n f_{it}y_t\right)
-q_i\left(\sum_{t=1}^n c_{it}y_t\right) \left(\sum_{s=1}^n e_{is} x_s\right).
\end{align*}
By using the monomial basis of $B$, one sees that
\[
\left(\sum_{s=1}^n e_{is} x_s\right) \left(\sum_{s=1}^n b_{is} x_s\right)=
\left(\sum_{t=1}^n f_{it}y_t \right) \left(\sum_{t=1}^n c_{it}y_t\right)=0.
\]
Since $B$ is a domain, we have either
\[
\sum_{s=1}^n e_{is} x_s=0=\sum_{t=1}^n c_{it}y_t
\]
or
\[
\sum_{s=1}^n b_{is} x_s=0=\sum_{t=1}^n f_{it}y_t.
\]
In the first case, the equation becomes
\begin{align*}
1&=\sum_{s\neq t}b_{is}f_{it}(1-q_i)x_sy_t+\sum_sb_{is}f_{is}
[(q_s-q_i)x_sy_s+1].
\end{align*}
This implies that $b_{is}f_{it}=0$ for all $s\neq t$.
As a consequence, $b_{is}$ is zero except for one $s$ and
$f_{it}$ is zero except for one $t$. The assertion follows.
The argument for the second case is similar.

(3) This follows from part (2).

(4) Suppose that $g(x_i)=c_iy_{i'}$ and $g(y_i)=e_i x_{i'}$ for some $i'$.
Applying $g$ to $1=y_ix_i-q_i x_iy_i$ we have
\[
1=c_ie_i (x_{i'}y_{i'}-q_iy_{i'}x_{i'})=
 c_ie_i [(1-q_iq_{i'})x_{i'}y_{i'}-q_i].
\]
which implies that $q_i=q_{i'}^{-1}$, a contradiction. By part (2),
we have that for each $i$, $g(x_i)=b_ix_{i'}$ and $g(y_i)=f_i y_{i'}$.
Further, by the relation, one has that $q_i=q_{i'}$ and $f_i=b^{-1}_i$.
The assignment $i\mapsto i'$ defines the required permutation $\sigma$.
Finally it is easy to check that $q_i=q_{\sigma(i)}$ for all $i$.

(5,6) Follows from part (4).
\end{proof}

\begin{theorem}
\label{yythm5.7} Let $B=A_{q_1}\otimes \cdots \otimes A_{q_m}$.
Assume that $q_i\neq 1$ for all $i$. Then
\begin{enumerate}
 \item
Every automorphism of $B$ is affine. As a consequence, the
following hold.
\begin{enumerate}
\item
If $q_i\neq \pm1$ and $q_i\neq q_j^{\pm 1}$ for all $i\neq j$, then
$\Aut(B)=(k^\times)^m$.
\item
If $q_i=q\neq \pm 1$ for all $i$, then $\Aut(B)=S_m \rtimes (k^\times)^m$.
\end{enumerate}
\item
The automorphism group of $B[t]$ is triangular.
\item
If $k$ is a field, then $\Aut(B)$ is an algebraic group that fits
into the exact sequence
\[
1\to (k^\times)^m\to \Aut(B)\to S\to 1
\]
for some finite group $S$.
\item
If $\Z\subset k$, $\LNDer(B)=\{0\}$.
\end{enumerate}
\end{theorem}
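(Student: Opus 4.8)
The plan is to extract all four parts from three ingredients already in hand: the affineness of $\Aut(B)$ (Theorem \ref{yythm5.5}), the rigid shape of affine automorphisms (Lemma \ref{yylem5.6}), and the two transfer tools, namely mod-$p$ reduction (Lemma \ref{yylem4.6}) feeding into the discriminant criterion (Theorem \ref{yythm1.13}) on the finite fibres. Parts (1) and (3) are essentially immediate. Theorem \ref{yythm5.5} already gives that every automorphism of $B$ is affine, so $\Aut(B)=\Aut_{\af}(B)$; then consequence (a) is exactly Lemma \ref{yylem5.6}(5), consequence (b) is exactly Lemma \ref{yylem5.6}(6), and (when $k$ is a field) the exact sequence in part (3) is precisely that of Lemma \ref{yylem5.6}(3).

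For part (2) I would run the same mod-$p$ reduction that proves Theorem \ref{yythm5.5}. Assuming toward a contradiction that some $h\in\Aut(B[t])$ is not triangular, I form a finitely generated $\Z$-subalgebra $K\subseteq k$ containing all coefficients of $h$ and $h^{-1}$ together with their inverses and the elements $q_i^{\pm1}$, $(q_i-1)^{-1}$, and let $S$ be the corresponding $K$-form of $B$, so that $h$ restricts to a non-triangular automorphism of $S[t]$ and $\gr S$ is a free $K$-module (a skew polynomial ring, as in the proof of Theorem \ref{yythm5.5}). For each finite quotient field $F=K/\mathfrak m$, the fibre $S\otimes_K F$ is a tensor product of quantum Weyl algebras whose parameters $\bar q_i\neq1$ are automatically roots of unity, hence lies in $\Af$ by Corollary \ref{yycor5.4}; since parts (1)--(2) of Theorem \ref{yythm1.13} do not require $\Z\subset k$, this yields $\Aut((S\otimes_K F)[t])=\Aut_{\tr}((S\otimes_K F)[t])$. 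Lemma \ref{yylem4.6}(2) then forces $\Aut(S[t])=\Aut_{\tr}(S[t])$, contradicting $h|_{S[t]}$.

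Part (4) cannot be reached this way, because locally nilpotent derivations are not rigid in positive characteristic (cf.\ the failure noted after Proposition \ref{yypro1.5}), so the finite-field fibres are useless here; I would instead argue directly in characteristic zero. Since $\Z\subset k$, I may localize to assume $\Q\subset k$ (an LND of $B$ extends to the base change, into which $B$ embeds, so vanishing is reflected). Let $\partial\in\LNDer(B)$. For every $c$ the map $\exp(c\partial)$ is an automorphism, hence affine by part (1), so $\exp(c\partial)(Y)=Y$ by Lemma \ref{yylem5.6}(1); expanding $\exp(c\partial)(x_i)=\sum_j \tfrac{c^j}{j!}\partial^j(x_i)\in Y$ and applying the Vandermonde argument of Proposition \ref{yypro1.5} shows $\partial^j(Y)\subseteq Y$, so $\partial$ restricts to a nilpotent operator on the finite-dimensional space $Y$. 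Thus $\exp(c\partial)|_Y=\exp(c\,\partial|_Y)$ is unipotent, while by Lemma \ref{yylem5.6}(2) it is also a monomial matrix (a permutation of the basis $\{x_i,y_i\}$ scaled by units, possibly interchanging the $x$- and $y$-blocks). Comparing characteristic polynomials over $\Frac(k)$, the factor contributed by a cycle of length $\ell$ with scalar product $\rho$ is $\lambda^\ell-\rho$, which in characteristic zero is separable; for the whole product to equal $(\lambda-1)^{2m}$ every cycle must have length $1$ and every scalar must be $1$, so $\exp(c\partial)|_Y$ is the identity for all $c$. Differentiating at $c=0$ gives $\partial|_Y=0$, and since $Y$ generates $B$ we conclude $\partial=0$.

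I expect the genuine obstacle to be exactly this last point. The reduction machinery that handles parts (1)--(3) fails for derivations, so part (4) must be obtained from the rigidity of \emph{affine} automorphisms, and the crux is the elementary but essential fact that a unipotent monomial linear map is trivial — equivalently, that the only one-parameter unipotent subgroup inside the finite-by-torus group $\Aut_{\af}(B)=S\ltimes(k^\times)^m$ is the trivial one.
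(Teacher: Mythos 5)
Your proposal is correct. Parts (1)--(3) follow the paper exactly: part (1) is Theorem \ref{yythm5.5} together with Lemma \ref{yylem5.6}(5,6), part (3) is Lemma \ref{yylem5.6}(3), and for part (2) the paper simply states that the argument is the mod-$p$ reduction of Theorem \ref{yythm5.5} rerun for $B[t]$, which is precisely what you spell out (Lemma \ref{yylem4.6}(2), Corollary \ref{yycor5.4}, and Theorem \ref{yythm1.13}(2) on the finite fibres). Part (4) is where you take a genuinely different route. The paper localizes so that $\Q\subseteq k$ and then deduces part (4) from part (2) via \cite[Lemma 3.3(2)]{CPWZ}: a locally nilpotent derivation $\partial$ of $B$ yields the automorphism $x\mapsto\sum_i \tfrac{t^i}{i!}\partial^i(x)$ of $B[t]$, whose triangularity forces $\partial=0$. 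You bypass part (2) and the external lemma entirely, using only part (1) together with the finer structure in Lemma \ref{yylem5.6}(1,2): the Vandermonde argument of Proposition \ref{yypro1.5} shows $\partial$ preserves $Y$ and is nilpotent there, so $\exp(c\partial)|_Y$ is a unipotent matrix which Lemma \ref{yylem5.6}(2) also forces to be monomial, and in characteristic zero a unipotent monomial matrix is the identity. Both arguments are sound. Yours is more self-contained and isolates the true mechanism --- that $\Aut(B)$, being finite-by-torus, contains no nontrivial one-parameter unipotent subgroup, so mere affineness of automorphisms (which would not suffice, as $k[x]$ shows) is upgraded by the rigidity of Lemma \ref{yylem5.6}(2); the paper's route is shorter given the cited machinery but requires part (2), and hence a second pass through the mod-$p$ reduction, as input.
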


\begin{proof}
The main assertion in part (1) is Theorem \ref{yythm5.5}.
Parts(a,b) follow from
the main assertion and Lemma \ref{yylem5.6}(5,6).

The proof of part (2) is similar to the proof of Theorem
\ref{yythm5.5} and omitted.

(3) This is a consequence of part (1) and Lemma \ref{yylem5.6}(3).

(4) By localizing $k$, we may assume that $\Q \subseteq k$.
Then this is a consequence of part (2) and \cite[Lemma 3.3(2)]{CPWZ}.
\end{proof}

Theorem \ref{yythm0.2} follows from Theorem \ref{yythm5.7}.

\subsection*{Acknowledgments}

The authors would like to thank Ken Goodearl, Colin
Ingalls, Rajesh Kulkarni, and Milen Yakimov for several
conversations on this topic during the Banff workshop in
October 2012 and the NAGRT program at MSRI in the Spring of 2013.
S. Ceken was supported by the Scientific and Technological Research
Council of Turkey (TUBITAK), Science Fellowships and Grant Programmes
Department (Programme no. 2214). Y.H. Wang was supported by the
Natural Science Foundation of China (grant no. 10901098 and 11271239).
J. J. Zhang was
supported by the US National Science Foundation (NSF grant No. DMS
0855743).

\providecommand{\bysame}{\leavevmode\hbox to3em{\hrulefill}\thinspace}
\providecommand{\MR}{\relax\ifhmode\unskip\space\fi MR }
\providecommand{\MRhref}[2]{%

\href{http://www.ams.org/mathscinet-getitem?mr=#1}{#2} }
\providecommand{\href}[2]{#2}


\begin{thebibliography}{10}

\bibitem[AlC]{AlC}
J. Alev and M. Chamarie,
D{\'e}rivations et Automorphismes de Quelques Alg{\'e}bres Quantiques,
\emph{Comm. Algebra}, {\bf 20}(6) (1992), 1787--1802.

\bibitem[AlD]{AlD}
J. Alev and F. Dumas,
Rigidit{\'e} des plongements des quotients primitifs minimaux de
$U_q(sl(2))$ dans l'alg{\'e}bre quantique de Weyl-Hayashi,
\emph{Nagoya Math. J.} {\bf 143} (1996), 119--146.

\bibitem[AD]{AnD}
N. Andruskiewitsch and F. Dumas,
On the automorphisms of $U^+_q (g)$, Quantum groups, 107--133,
IRMA Lect. Math. Theor. Phys., 12, Eur. Math. Soc., Z{\"u}rich, 2008.

\bibitem[AW]{AW}
S. Alaca and K.S. Williams, \emph{Introductory Algebraic Number
Theory}, Cambridge Univ. Press, Cambridge, 2004.

\bibitem[BJ]{BJ}
V.V. Bavula and D.A. Jordan,
Isomorphism problems and groups of automorphisms for generalized Weyl
algebras. \emph{Trans. Amer. Math. Soc.} {\bf 353} (2001), no. 2, 769--794.

\bibitem[CPWZ]{CPWZ}
S. Ceken, J. Palmieri, Y.-H. Wang and J.J. Zhang,
The discriminant controls automorphism groups of noncommutative
algebras, preprint (2013).

\bibitem[GTK]{GTK}
J. G{\'o}mez-Torrecillas and L. El Kaoutit,
The group of automorphisms of the coordinate ring of quantum
symplectic space, Beitr{\"a}ge Algebra Geom. {\bf 43} (2002),
no. 2, 597--601.

\bibitem[GY]{GY}
K. R. Goodearl and M. T. Yakimov,
Unipotent and Nakayama automorphisms of quantum nilpotent algebras,
preprint (2013), arXiv:1311.0278.

\bibitem[Ju]{Ju}
H. W. E. Jung, {\"U}ber ganze birationale Transformationen der Ebene,
\emph{J. reine angew. Math.},
{\bf 184} (1942), 161--174.

\bibitem[KKZ1]{KKZ1}
E. Kirkman, J. Kuzmanovich, and J.J. Zhang,
Rigidity of graded regular algebras,
\emph{Trans. Amer. Math. Soc.} {\bf  360} (2008), no. 12, 6331--6369.

\bibitem[KKZ2]{KKZ2}
E. Kirkman, J. Kuzmanovich and J.J. Zhang,
Gorenstein subrings of invariants under
Hopf algebra actions,
\emph{J. Algebra} {\bf 322} (2009), no. 10, 3640--3669.

\bibitem[KKZ3]{KKZ3}
E. Kirkman, J. Kuzmanovich and J.J. Zhang,
Shephard-Todd-Chevalley theorem for
skew polynomial rings,
\emph{Algebr. Represent. Theory} {\bf 13}  (2010), no. 2, 127--158

\bibitem[MR]{MR}
J.C. McConnell and J.C. Robson,
\emph{Noncommutative Noetherian Rings}, Wiley, Chichester, 1987.

\bibitem[Re]{Re}
I. Reiner,
\emph{Maximal orders}, London Mathematical Society Monographs.
New Series, 28. The Clarendon Press, Oxford University Press, Oxford, 2003.

\bibitem[SU]{SU}
I. Shestakov and U. Umirbaev,
The tame and the wild automorphisms of polynomial rings in
three variables, \emph{J. Amer. Math. Soc.} {\bf 17} (1) (2004)
197--227.

\bibitem[SVdB]{SVdB}
J. T. Stafford and M. Van den Bergh,
Noncommutative resolutions and rational singularities,
\emph{Michigan Math. J.}, {\bf 57}, (2008), 659--674.

\bibitem[SAV]{SAV}
M. Su{\'a}rez-Alvarez and Q. Vivas,
Automorphisms and isomorphism of quantum generalized Weyl algebras,
preprint (2012), arXiv:1206.4417v1.

\bibitem[St]{St}
W. A. Stein, \emph{Algebraic Number Theory: A Computational Approach},
preprint, \url{http://wstein.org/books/ant/}.

\bibitem[vdK]{vdK}
W. van der Kulk, On polynomial rings in two variables,
\emph{Nieuw Archief voor Wisk.} (3) {\bf 1}
(1953), 33--41.


\bibitem[Y1]{Y1}
M. Yakimov, Rigidity of quantum tori and the Andruskiewitsch-Dumas conjecture,
\emph{Selecta Math.} (2013) (to appear),
preprint, arXiv:1204.3218.

\bibitem[Y2]{Y2}
M. Yakimov, The Launois-Lenagan conjecture,
\emph{J. Algebra} \textbf{392} (2013), 1--9,
arXiv:1204.5440.

\end{thebibliography}
\end{document}